\newtheorem{theorem}{Theorem}[section]
\newtheorem{proposition}{Proposition}[section]
\newtheorem{lemma}{Lemma}[section]
\newtheorem{corollary}{Corollary}[section]
\newtheorem{remark}{Remark}[section]
\newtheorem{definition}{Definition}[section]
\newcommand{\Rmnum}[1]{\expandafter\@slowromancap\romannumeral #1@}
\numberwithin{equation}{section}
\title {Asymptotic stability of harmonic maps between 2D hyperbolic spaces under the wave map equation. II.  Small energy case}
\author{Ze Li \qquad Xiao Ma \qquad Lifeng Zhao }
\date{ }
\begin{document}

\maketitle
\medskip

\begin{abstract}
In this paper, we prove that the small energy harmonic maps from $\Bbb H^2$ to $\Bbb H^2$ are asymptotically stable under the wave map equation in the subcritical perturbation class.
This result may be seen as an example supporting the soliton resolution conjecture for geometric wave equations without equivariant assumptions
on the initial data. In this paper, we construct Tao's caloric gauge in the case when nontrivial harmonic map occurs. With the ``dynamic separation" the master equation of the heat tension field appears as a semilinear magnetic wave equation. By the endpoint and weighted Strichartz estimates for magnetic wave equations obtained by the first author \cite{Lize1}, the asymptotic stability follows by a bootstrap argument.
\end{abstract}

\maketitle
\tableofcontents

\section{Introduction}
Let $(M,h)$ and $(N,g)$ be two Riemannian manifolds without boundary. A wave map is a map from the Lorentz manifold $\Bbb R \times M$ into $N$,
$$u:\Bbb R\times M\to N,
$$
which is locally a critical point for the functional
\begin{align}\label{w1}
F(u) = \int_{\Bbb R \times M} {\left(- {{{\left\langle {{\partial _t}u,{\partial _t}u} \right\rangle }_{{u^*}g}} + {h^{ij}}{{\left\langle {{\partial _{{x_i}}}u,{\partial _{{x_j}}}u} \right\rangle }_{{u^*}g}}} \right)} {\rm{dtdvo{l_h}}}.
\end{align}
Here ${h_{ij}}dx^idx^j$ is the metric tension under a local coordinate $(x^1,...,x^m)$ for $M$.
In a coordinate free expression, the integrand in the functional $F(u)$ is the energy density of $u$ under the Lorentz metric of $\Bbb R\times M$, $$\mathbf{\eta}=-dt\otimes dt+h_{ij}dx^i\otimes dx^j.$$
Given a local coordinate $(y^1,...,y^n)$ for $N$, the Euler-Lagrange equation for (\ref{w1}) is given by
\begin{align}\label{wmap1}
\Box u^k +{\eta ^{\alpha \beta }}\overline{\Gamma}_{ij}^k(u){\partial _\alpha }{u^i}{\partial _\beta }{u^j}= 0,
\end{align}
where $\Box=-\partial_t^2+\Delta_M$ is the D'Alembertian on $\Bbb R\times M$, $\overline{\Gamma}^k_{ij}(u)$ are the Christoffel symbols at the point $u(t,x)\in N$. In this paper, we consider the case $M=\Bbb H^2$, $N=\Bbb H^2$.

The wave map equation on a flat spacetime, which is sometimes known as the nonlinear $\sigma$-model, arises as a model problem in general relativity and particle physics, see for instance \cite{MS}. The wave map equation on curved spacetime is related to the wave map-Einstein system and the Kerr Ernst potential, see \cite{AGS,IK,GL}. We remark that the case where the background manifold is the hyperbolic space is of particular interest. Indeed, the anti-de Sitter space (AdSn), which is the exact solution of Einstein's field equation for an empty universe with a negative cosmological constant, is asymptotically hyperbolic.

There exist plenty of works on the Cauchy problem, the long dynamics and blow up for wave maps on $\Bbb R^{1+m}$. We first recall the non-exhaustive lists of results on equivariant maps. The critical well-posedness theory was initially considered by Christodoulou, Tahvildar-Zadeh \cite{CT} for radial wave maps and Shatah, Tahvildar-Zadeh \cite{STZ2} for equivariant wave maps. The global well-posedness result of \cite{CT} was recently improved to scattering by Chiodaroli, Krieger, Luhrmann \cite{CKL}.  The bubbling theorem of wave maps was proved by Struwe \cite{S}. The explicit construction of blow up solutions behaving as a perturbation of the rescaling harmonic map was achieved by Krieger, Schlag, Tataru \cite{KST}, Raphael, Rodnianski \cite{RR}, and Rodnianski, Sterbenz \cite{RS} for the $\Bbb S^2$ target in the equivariant class. And the ill-posedness theory was studied in D'Ancona, Georgiev \cite{DG} and Tao \cite{Tao10}.

Without equivariant assumptions on the initial data the sharp subcritical well-posedness theory was developed by Klainerman, Machedon \cite{KM1,KM2} and Klainerman, Selberg \cite{KS2}. The small data critical case was started by Tataru \cite{Tataru2} in the critical Besov space, and then completed by Tao \cite{Tao1,Tao2} for wave maps from $\Bbb R^{1+d}$ to $\Bbb S^m$ in the critical Sobolev space. The small data theory in critical Sobolev space for general targets was considered by Krieger \cite{J1,J2}, Klainerman, Rodnianski \cite{KR3}, Shatah, Struwe \cite{SS}, Nahmod, Stefanov, Uhlenbeck \cite{NSU}, and Tataru \cite{Tataru3}.

The dynamic behavior for wave maps on $\Bbb R^{1+2}$ with general data was obtained by Krieger, Schlag \cite{KS} for the $\Bbb H^2$ targets, Sterbenz, Tataru \cite{ST1,ST2} for compact Riemann manifolds and initial data below the threshold, and Tao \cite{Tao7} for the $\Bbb H^n$ targets. In fact, Sterbenz, Tataru \cite{ST1,ST2} proved that for any initial data with energy less than that of the minimal energy nontrivial harmonic map evolves to a global and scattering solution.

The works on the wave map equations on curved spacetime were relatively less. The existence and orbital stability of equivariant time periodic wave maps from $\Bbb R\times \Bbb S^2$ to $\Bbb S^2$ were proved by Shatah, Tahvildar-Zadeh \cite{STZ1}, see Shahshahani \cite{S} for an generalization of $\Bbb S^2$. The critical small data Cauchy problem for wave maps on small asymptotically flat perturbations of $\Bbb R^4$ to compact Riemann manifolds was studied by Lawrie \cite{LA}. The soliton resolution and asymptotic stability of harmonic maps under wave maps on $\Bbb H^2$ to $\Bbb S^2$ or $\Bbb H^2$ in the 1-equivariant case were established by Lawrie, Oh, Shahshahani \cite{LOS1,LOS2,LOS4,LOS5}, see also \cite{LOS} for critical global well-posedness for wave maps from $\Bbb R\times \Bbb H^d$ to compact Riemann manifolds with $d\ge4$.

In this paper, we study the asymptotic stability of harmonic maps to (\ref{w1}). The motivation is the so called soliton resolution conjecture in dispersive PDEs which claims that every global bounded solution splits into the superposition of divergent solitons with a radiation part plus an asymptotically vanishing remainder term as $t\to\infty$. The version for wave maps and hyperbolic Yang-Mills has been verified by Cote \cite{C} and Jia, Kenig \cite{JK} for equivariant maps along a time sequence, see also \cite{KLLS1,KLLS2} for exotic-ball wave maps and \cite{Gy} for wormholes. Recently Duyckaerts, Jia, Kenig, Merle \cite{DJKM} obtained the universal blow up profile for type II blow up solutions to wave maps $u:\Bbb R\times\Bbb R^2\to \Bbb S^2$ with initial data of energy slightly above the ground state. For wave maps from $\Bbb R\times \Bbb H^2$ to $\Bbb H^2$, Lawrie, Oh, Shahshahani \cite{LOS,LOS4} raised the following soliton resolution conjecture,\\
{\bf Conjecture 1.1}
 Consider the Cauchy problem for wave map $u:\Bbb R\times \Bbb H^2\to \Bbb H^2$ with finite energy initial data $(u_0,u_1)$. Suppose that outside some compact subset $\mathcal{K}$ of $\Bbb H^2$ for some harmonic map $Q:\Bbb H^2\to \Bbb H^2$ we have
$$
u_0(x)=Q(x), \mbox{  }{\rm{for}}\mbox{  }x\in \Bbb H^2\backslash\mathcal{K}.
$$
Then the unique solution $(u(t),\partial_tu(t))$ to the wave map scatters to $(Q(x),0)$  as $t\to\infty$.

In this paper, we consider the easiest case of Conjecture 1.1, i.e., when the initial data is a small perturbation of harmonic maps with small energy.  In order to state  our main result, we introduce the notion of admissible harmonic maps.
\begin{definition}\label{2as}
Let $D=\{z:|z|<1\}$ with the hyperbolic metric be the Poincare disk.
We say the harmonic map $Q:D\to D$ is admissible if $Q(D)$ is a compact subset of $D$ covered by a geodesic ball centered at 0 of radius $R_0$, $\|\nabla^kdQ\|_{L^2}<\infty$ for $k=0,1,2$,
and there exists some $\varrho>0$ such that $e^{\varrho r}|dQ|\in L^{\infty}$, where $r$ is the distance between $x\in D$ and the origin point in $D$.
\end{definition}
For any given admissible harmonic map $Q$, we define the space $\bf{H}^k\times\bf{H}^{k-1}$ by (\ref{h897}).
Our main theorem is as follows.
\begin{theorem}\label{a1}
Fix any $R_0>0$. Assume the given admissible harmonic map $Q$ in Definition \ref{2as} satisfies
\begin{align}\label{as4}
\|dQ\|_{L^2_x}<\mu_1, \mbox{  }\|e^{\varrho r}|dQ|\|_{L^{\infty}_x}<\mu_1, \mbox{  }\|\nabla^2dQ\|_{L^{\infty}_x}+\|\nabla dQ\|_{L^{\infty}_x}<\mu_1.
\end{align}
And assume that the initial data $(u_0,u_1)\in {\bf{H}^3\times\bf{H}^2}$ to (\ref{wmap1}) with $u_0:\Bbb H^2\to\Bbb H^2$, $u_1(x)\in T_{u_0(x)}N$ for each $x\in \Bbb H^2$ satisfy
\begin{align}\label{as3}
\|(u_0,u_1)-(Q,0)\|_{{\bf{H}}^2\times {\bf{H}^1}}<\mu_2.
\end{align}
Then if $\mu_1>0$ and $\mu_2>0$ are sufficiently small depending only on $R_0$,
(\ref{wmap1}) has a global solution $(u(t),\partial_tu(t))$ which converges to the harmonic map $Q:\Bbb H^2\to \Bbb H^2$ as $t\to\infty$, i.e.,
$$
\mathop {\lim }\limits_{t\to\infty }\mathop {\sup }\limits_{x \in {\mathbb{H}^2}} {d_{{\mathbb{H}^2}}}\left( {u(t,x),Q(x)} \right) = 0.
$$
\end{theorem}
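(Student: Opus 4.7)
The plan is to follow a caloric gauge strategy originally due to Tao, adapted to a setting where the reference map is a nontrivial admissible harmonic map $Q$ instead of a constant map, and then to recast the wave map equation as a semilinear wave equation with magnetic potentials whose dispersive theory on $\mathbb{H}^2$ is provided by \cite{Lize1}. First I would extend the wave map $u(t,\cdot)$ in an auxiliary heat-time variable $s$ via the harmonic map heat flow. Because the initial data is a small perturbation of $Q$ (condition \eqref{as3}) and because $Q$ itself is admissible with $\|dQ\|_{L^2}$ small (condition \eqref{as4}), the heat flow $\tilde u(t,s,\cdot)$ should exist globally in $s$ and converge exponentially as $s\to\infty$ to a harmonic map $Q_{\infty}(t,\cdot)$ that stays close to $Q$. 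Choose an orthonormal frame on $Q_\infty(t)^\ast T\mathbb{H}^2$ at $s=\infty$ and parallel transport it backwards along the heat flow to obtain the caloric frame $e_a(t,s,x)$ on $\tilde u^\ast T\mathbb{H}^2$; evaluation at $s=0$ gives the caloric gauge on $u^\ast T\mathbb{H}^2$.

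Next, in the caloric gauge introduce the differentiated fields $\psi_\alpha$ and the connection coefficients $A_\alpha$, $\alpha \in \{t,1,2\}$, and implement the dynamic separation $\psi_\alpha=\psi_\alpha^Q+\phi_\alpha$, $A_\alpha=A_\alpha^Q+B_\alpha$, where the $Q$-superscripted quantities are the stationary pieces determined by the reference harmonic map $Q$ in the caloric gauge, and $\phi_\alpha,B_\alpha$ are the small dynamical corrections governed by the initial perturbation. Using that $Q$ is stationary, derive the master equation for the heat tension field (equivalently, the wave equation satisfied by $\phi$). Because $A^Q$ inherits the exponential decay of $dQ$ through the weight $e^{\varrho r}$, the linearization is a magnetic d'Alembertian $\Box_{A^Q}$ plus a short-range scalar potential from the constant curvature of $\mathbb{H}^2$; the remainder collects into a nonlinearity $\mathcal{N}(\phi,B)$ that is at least quadratic in the perturbations, with controllable derivative count through the caloric gauge identities.

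Then I would invoke the endpoint and weighted Strichartz estimates for $\Box_{A^Q}$ on $\mathbb{R}\times\mathbb{H}^2$ from \cite{Lize1}, which apply because $A^Q$ is small and exponentially decaying. Set up a bootstrap in a functional-analytic norm $X$ that couples the endpoint Strichartz norm, an $e^{\varrho r}$-weighted Strichartz norm, and energy-type norms compatible with the $\mathbf{H}^{3}\times \mathbf{H}^{2}$ regularity of the data. The bootstrap hypothesis $\|(\phi,B)\|_X\le C\mu_2$ is to be improved by estimating $\mathcal{N}$ in the dual norm, where the hyperbolic-geometry gain from $e^{\varrho r}$ and the magnetic Strichartz inputs eat the quadratic loss, giving $\|\mathcal{N}\|_{Y}\lesssim \mu_2^2+\mu_1\mu_2$. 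For $\mu_1,\mu_2$ sufficiently small this closes the bootstrap, yielding a global solution with uniform-in-time smallness of $\phi$. Finally, the pointwise convergence $\sup_x d_{\mathbb{H}^2}(u(t,x),Q(x))\to 0$ is extracted by integrating the Strichartz-controlled $\phi$ back to a bound on the actual geodesic distance between $u(t,\cdot)$ and $Q$, using the uniform-in-time smallness plus Sobolev embedding on $\mathbb{H}^2$ to upgrade to $L^\infty_x$, and a dispersive-time decay component to drive the bound to zero.

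The main obstacle I expect is twofold. First, carrying out the caloric gauge construction when the heat flow limit is a nontrivial harmonic map rather than a constant: one must quantitatively justify convergence of the harmonic map heat flow to a harmonic map near $Q$, produce a well-defined limit frame with controlled regularity in $t$ and $x$, and confirm that the resulting stationary connection $A^Q$ retains the spatial decay needed for the magnetic Strichartz machinery. Second, showing that $\mathcal{N}$ admits estimates with room to spare in the weighted Strichartz norms: the slow dispersion of wave equations on $\mathbb{H}^2$ forces any non-null-form nonlinearity to be paid for by the exponential spatial weight, so the algebraic structure of the caloric-gauge nonlinearity (in particular, the appearance of $dQ$-type factors on every bad term) must be unpacked carefully to align with the weights supplied by $\|e^{\varrho r}|dQ|\|_{L^\infty}<\mu_1$.
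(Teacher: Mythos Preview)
Your outline matches the paper's strategy closely: caloric gauge around a nontrivial harmonic limit, separation of the connection and differentiated fields into a stationary $Q$-part plus a dynamical remainder, a master magnetic wave equation for the heat tension field, Strichartz input from \cite{Lize1}, and a bootstrap. Two points, however, are handled explicitly in the paper and are only gestured at in your proposal.

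First, you write the heat-flow limit as $Q_\infty(t,\cdot)$ and then proceed as if the stationary pieces $\psi_\alpha^Q,A_\alpha^Q$ are $t$-independent. This is the crux of the caloric construction here and it is \emph{not} automatic: for a one-parameter family of initial data the heat-flow limits can genuinely drift with the parameter. The paper isolates this as a separate step (Lemmas~3.11--3.12): one uses that $|\partial_t\tilde u|$ is a subsolution of the linear heat equation, hence decays like $s^{-1/2}e^{-s/8}$ in $L^\infty_x$, to force $d_{\mathbb{H}^2}(\tilde u(s,t_1,x),\tilde u(s,t_2,x))\to 0$ and conclude $Q_\infty(t)\equiv Q_\infty$; then a direct comparison of the harmonic map equations for $Q$ and $Q_\infty$ in coordinates, using only $\|dQ\|_{L^2}\le\mu_1$ and the energy bound inherited from the flow, gives $Q_\infty=Q$. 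Without this, your ``stationary'' connection $A^Q$ is time-dependent and the linearization is not a fixed magnetic wave operator.

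Second, your description of the weighted Strichartz ingredient is inverted. The weight in \cite{Lize1} is a \emph{decay} weight $\rho^\sigma=e^{-\sigma r}$ on $\nabla\phi_s$ in $L^2_tL^2_x$ (a local-smoothing norm), not a growth weight on the solution. Its role is precisely to absorb the worst one-derivative term $h^{ii}A_i^{con}\partial_i\phi_s$ in the master equation: one pairs $\rho^\sigma\nabla\phi_s$ against $\rho^{-\sigma}\phi_i^\infty$, and the latter is bounded because $|\phi_i^\infty|\lesssim|dQ|\lesssim\mu_1 e^{-\varrho r}$ with $\sigma\ll\varrho$. Relatedly, the paper's bootstrap carries, in addition to the $\phi_s$-norms, an auxiliary spacetime bound $\|\partial_t u\|_{L^2_tL^q_x}$ (see (5.4)); this is what lets one estimate the inhomogeneity $\partial_s\mathfrak{W}$ and the curvature terms $\mathbf{R}(\partial_t\tilde u,\partial_s\tilde u)\partial_t\tilde u$ in $L^1_tL^2_x$ without losing a factor of $t$. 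Your bootstrap sketch should include this norm explicitly, since without it the terms involving two copies of $\partial_t\tilde u$ do not close.
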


The initial data considered in this paper are perturbations of harmonic maps in the $\bf H^2$ norm. If one considers perturbations in the energy critical norm $H^1$, the $S_k$ v.s. $N_k$ norm constructed by Tataru \cite{Tataru2} and Tao \cite{Tao2} should be built for the hyperbolic background.

\noindent{\bf{Remark 1.1}} Notice that the limit harmonic map coincides with the unperturbed harmonic map in Theorem 1.1. The reason for this coincidence is that the $\bf H^2\times\bf H^1$ norm assume the initial data coincide with $Q$ at the infinity, then the uniqueness of harmonic maps with prescribed boundary map shows the limit harmonic map is exactly the unperturbed one.

\noindent{\bf{Remark 1.2}}{\bf(Examples for the admissible harmonic maps)}\\
\noindent Denote $D=\{z:|z|<1\}$ to be the Poincare disk. Then any holomorphic map $f:D \to D$ is a harmonic map. If we assume that $f(z)$ can be analytically extended into a larger disk than the unit disk, then $\mu_1f:D\to D$ satisfies all the conditions in Definition 1.1 and Theorem 1.1 if $0<\mu_1\ll1$.
Hence the harmonic maps involved in Theorem 1.1 are relatively rich.
See [Appendix,\cite{LZ}] for the proof of these facts. It is important to see in theses examples that the dependence of $\mu_1$ on $R_0$ is neglectable.

\noindent{\bf{Remark 1.3}}(Examples for the perturbations of admissible harmonic maps)
 Since we have global coordinates for $\Bbb H^2$ given by (\ref{vg}), the perturbation in the sense of (\ref{as3}) is nothing but perturbations of $\Bbb R^2$-valued functions.

Since we are dealing with non-equivariant data where the linearization method seems to be hard to apply, we use the caloric gauge technique introduced by Tao \cite{Tao4} to prove Theorem 1.1.
The caloric gauge of Tao was applied to solve the global regularity of wave maps from $\Bbb R^{2+1}$ to $\Bbb H^n$ in the heat-wave project. We briefly recall the main idea of the caloric gauge. Given a solution to the wave map $u(t,x):\Bbb R^{1+2}\to \Bbb H^n$, suppose that $\widetilde u(s,t,x)$ solves the heat flow equation with initial data $u(t,x)$
$$
\left\{ \begin{array}{l}
\partial_s \widetilde{u}(s,t,x)=\sum^2_{i=1}\nabla_i\partial_i\widetilde{u}\\
\widetilde{u}(s,t,x) \upharpoonright_{s=0}= u(t,x). \\
\end{array} \right.
$$
Since there exists no nontrivial finite energy harmonic map from $\Bbb R^2$ to $\Bbb H^n$,  one can expect that the corresponding heat flow $\widetilde{u}(s,t,x)$ converges to a fixed point $Q$ as $s\to\infty$. For any given orthonormal frame at the point $Q$, one can pullback the orthonormal frame parallel with respect to $s$ along the heat flow to obtain the frame at $\widetilde{u}(s,t,x)$, particularly $u(t,x)$ when $s=0$. Then rewriting (\ref{wmap1}) under the constructed frame will give us a scalar system for the differential fields and connection coefficients. Despite the fact that the caloric gauge can be viewed as a nonlinear Littlewood-Paley decomposition, the essential advantage of the caloric gauge is that it removes some troublesome frequency interactions, which is of fundamental importance for critical problems in low dimensions.

Generally the caloric gauge was used in the case where no harmonic map occurs, for instance energy critical geometric wave equations with energy below the threshold. In our case nontrivial harmonic exists no mater how small the data one considers. However, as observed in our work \cite{LZ}, the caloric gauge is still extraordinarily powerful. In fact, denoting the solution of the heat flow with initial data $u(0,x)$ by $U(s,x)$, it is known that $U(s,x)$ converges to some harmonic map $Q(x)$ as $s\to\infty$. And one can expect that the solution $u(t,x)$ of (\ref{wmap1}) also converges to the same harmonic map $Q(x)$ as $t\to\infty$. This heuristic idea combined with the caloric gauge reduces the convergence of solutions to (\ref{wmap1}) to proving the decay of the heat tension filed.

There are three main ingredients in our proof. The first is to guarantee that all the heat flows initiated from $u(t,x)$ for different $t$ converge to the same harmonic map. This enables us to construct the caloric gauge. The second is to derive the master equation for the heat tension field, which finally reduces to a linear wave equation with a small magnetic potential.  The third is to design a suitable closed bootstrap program. All these ingredients are used to overcome the difficulty that no integrability with respect to $t$ is available for the energy density because the harmonic maps prevent the energy from decaying to zero as $t\to\infty$.

The key for the first ingredient is using the decay of $\partial_tu$ along the heat flow. In order to construct the caloric gauge, one has to prove the heat flow initiated from  $u(t,x)$ converges to the same harmonic map independent of $t$. If one only considers $t$ as a smooth parameter, i.e., in the homotopy class, the corresponding limit harmonic map yielded by the heat flow initiated from $u(t,x)$ can be different when $t$ varies. Indeed, there exist a family of harmonic maps $\{Q_{\lambda}\}$ which depend smoothly with respect to $\lambda\in(0,1)$. Therefore the heat flow with initial data $Q_{\lambda}$ remains to be  $Q_{\lambda}$, which changes according to the variation of $\lambda$. This tells us the structure of (\ref{wmap1}) should be considered.  The essential observation is $\partial_t u$ decays fast along the heat flow as $s\to\infty$. By a monotonous property observed initially by Hartman \cite{Hh} and the decay estimates of the heat semigroup, we can prove the distance between the heat flows initiated from $u(t_1)$ and $u(t_2)$ goes to zero as $s\to\infty$. Therefore the limit harmonic map for the heat flow generated from $u(x,t)$ are all the same for different $t$. Similar idea works for the Landau-Lifshitz flow, see our paper \cite{LZ}. And we remark that this part can be adapted to energy critical wave maps form $\Bbb R\times\Bbb H^2$ to $\Bbb H^2$ since essentially we only use the $L^2_x$ norm of $\partial_tu$ in the arguments which is bounded by the energy.

Different from the usual papers on the asymptotic stability, we will not use the linearization arguments involving spectrum analysis of the linearized operator and modulation equations. But the master equation appears naturally as a semilinear wave equation with a small magnetic potential.  Indeed, the main equation we need to consider is the nonlinear wave equation for the heat tension filed. The point is that although the nonlinear part of this equation is not controllable, one can separate part of them to be a magnetic potential with a remainder likely to be controllable. This is why we need the Strichartz estimates for magnetic wave equations.

The second ingredient is to control the remained terms in the nonlinear part of the master equation after we separate the magnetic potential away. In fact, the terms involving one order derivatives of the heat tension filed can not be controlled only by Strichartz estimates, even if we are working in the subcritical regularity.
In this paper, the one order derivative terms are controlled by the weighted Strichartz estimates and the exotic Strichartz estimates owned only by hyperbolic backgrounds compared with the flat case. These estimates were obtained in the first author's work \cite{Lize1}.

The third ingredient is to close the bootstrap, by which the global spacetime norm bounds of the heat tension field follows.
The caloric gauge yields the gauged equation for the corresponding differential fields $\phi_{x,t}$, connection coefficients $A_{x,t}$ and the heat tension filed. It has been discovered in Tao \cite{Tao7} that the key field one needs to study is the heat tension field which satisfies a semilinear wave equation. And for the small data Cauchy problem of wave maps on $\Bbb R\times\Bbb H^4$, Lawrie, Oh, Shahshahani \cite{LOS} shows in order to close the bootstrap arguments it suffices to firstly proving a global spacetime bound for the heat tension filed $\phi_s$. In our case, since the energy will not decay, one has to get rid of the inhomogeneous terms which involve only the differential fields $\phi_x$ in the master equation. Furthermore, these troublesome terms involving only $\phi_x$ are much more serious in the study of the equation of wave map tension filed. This difficulty is overcome by using identities from intrinsic geometry to gain some cancelation and adding a space-time bound for $|\partial_tu|$ on the basis of the bootstrap arguments of \cite{LOS,Tao7}.

This paper is organized as follows. In Section 2, we recall some notations and notions and prove an equivalence between the intrinsic and extrinsic Sobolev norms in some sense.  In Section 3, we construct the caloric gauge and obtain the estimates of the connection coefficients. In Section 4, we we derive the master equation.  In Section 5, we first recall the non-endpoint and endpoint Strichartz estimates, Morawetz inequality, and weighted Strichartz estimates for the linear magnetic wave equation. Then we close the bootstrap and deduce the global spacetime bounds for the heat tension field. In Section 6, we finish the proof of Theorem 1.1. In Section 7, we prove some remaining claims in the previous sections.

We denote the constants by $C(M)$ and they can change from line to line. Small constants are usually denoted by $\delta$ and it may vary in different lemmas.
$A\lesssim B$ means there exists some constant $C$ such that $A\le CB$.

\section{Preliminaries}
Some standard preliminaries on the geometric notions of the hyperbolic spaces, Sobolev embedding inequalities and an equivalence relationship for the intrinsic and extrinsic formulations of the Sobolev spaces are recalled first. As a corollary we prove the local well-posedness for initial data $(u_0,u_1)$ in the $\bf{H}^3\times \bf{H}^2$ regularity and a conditional global well-posedness proposition. In addition, the smoothing effect of heat semigroup is recalled.

\subsection{The global coordinates and definitions of the function spaces}
The covariant derivative in $TN$ is denoted by $\widetilde{\nabla}$, the covariant derivative induced by $u$ in $u^*(TN)$ is denoted by $\nabla$. We denote the Riemann curvature tension of $N$ by $\mathbf{R}$. The components of Riemann metric are denoted by $h_{ij}$ for M and $g_{ij}$ for N respectively.  The Christoffel symbols on $M$ and $N$ are denoted by $\Gamma^{k}_{ij}$ and $\overline{\Gamma}^{k}_{ij}$ respectively.

We recall some facts on hyperbolic spaces. Let $\Bbb R^{1+2}$ be the Minkowski space with Minkowski metric $-(dx^0)^2+(dx^1)^2+(dx^2)^2$. Define a bilinear form on $\Bbb R^{1+2}\times \Bbb R^{1+2}$,
$$
[x,y]=x^0y^0-x^1y^1-x^2y^2.
$$
The hyperbolic space $\mathbb{H}^2$ is defined by
$$\mathbb{H}^2=\{x\in \Bbb R^{2+1}: [x,x]=1 \mbox{  }{\rm{and}}\mbox{  }x^0>0\},$$
with a Riemannian metric being the pullback of the Minkowski metric by the inclusion map $\iota:\mathbb{H}^2\to \Bbb R^{1+2}.$
By Iwasawa decomposition we have a global system of coordinates. Indeed, the diffeomorphism $\Psi:\Bbb R\times \Bbb R\to \mathbb{H}^2$ is given by
\begin{align}\label{vg}
\Psi(x_1,x_2)=({\rm{cosh}} x_2+e^{-x_2}|x_1|^2/2, {\rm{sinh}} x_2+e^{-x_2}|x_1|^2/2, e^{-x_2}x_1).
\end{align}
The Riemannian metric with respect to this coordinate system is given by
$$
e^{-2x_2}(dx_1)^2+(dx_2)^2.
$$
The corresponding Christoffel symbols are
\begin{align}\label{christ}
\Gamma^1_{2,2}=\Gamma^2_{2,1}=\Gamma^2_{2,2}=\Gamma^1_{1,1}=0; \mbox{ }\Gamma^1_{2,1}=-1, \mbox{  }\Gamma^2_{1,1}=e^{-2x_2}.
\end{align}
For any $(t,x)$ and $u:[0,T]\times\mathbb{H}^2\to \Bbb H^2$, we define an orthonormal frame at $u(t,x)$ by
\begin{align}\label{frame}
\Theta_1(u(t,x))=e^{u^2(t,x)}\frac{\partial}{\partial y_1}; \mbox{  }\Theta_2(u(t,x))=\frac{\partial}{\partial y_2}.
\end{align}
where $(u^1,u^2)$ denotes the coordinate of $u$ given by (\ref{vg}).
{\bf Throughout this paper we will use coordinates (\ref{vg}) for both the target manifold $N=\Bbb H^2$ and the starting manifold $M=\Bbb H^2$.}
Recall also the identity for Riemannian curvature on $N=\Bbb H^2$
\begin{align}\label{2.best}
{\bf R}(X,Y)Z={\widetilde{\nabla} _X}{\widetilde{\nabla} _Y}Z - {\widetilde{\nabla }_Y}{\widetilde{\nabla} _X}Z - {\widetilde{\nabla}_{[X,Y]}}Z  = \left\langle {X,Z} \right\rangle Y - \left\langle {Y,Z} \right\rangle X.
\end{align}
We have a useful identity for $X,Y,Z\in u^*(TN)$
\begin{align}\label{2.4best}
{\nabla _i }\left( {{\bf R}\left( {X,Y} \right)Z} \right) = {\bf R}\left( {X,{\nabla _i }Y} \right)Z + {\bf R}\left( {{\nabla _i }X,Y} \right)Z + {\bf R}\left( {X,Y} \right){\nabla _i }Z.
\end{align}
For simplicity, denote $(X\wedge Y)Z=\left\langle {X,Z} \right\rangle Y - \left\langle {Y,Z} \right\rangle X$.

Let $H^k(\mathbb{H}^2;\Bbb R)$ be the usual Sobolev space for scalar functions defined on manifolds.  We also recall the norm of $H^k$:
$$
\|f\|^2_{H^k}=\sum^k_{l=1}\|\nabla^l f\|^2_{L^2_x},
$$
where $\nabla^l f$ is the covariant derivative.
For maps $u:\mathbb{H}^2\to \mathbb{H}^2$, we define the intrinsic Sobolev semi-norm $\mathfrak{H}^k$ by
$$
\|u\|^2_{{\mathfrak{H}}^k}=\sum^k_{i=1}\int_{\mathbb{H}^2} |\nabla^{i-1} du|^2 {\rm{dvol_h}}.
$$
The map $u:\mathbb{H}^2\to\mathbb{H}^2$ is associated with a vector-valued function $u:\mathbb{H}^2\to \Bbb R^2$ by (\ref{vg}). Indeed, the vector $(u^1(x),u^2(x))$ is defined by $\Psi(u^1(x),u^2(x))=u(x)$ for any $x\in \mathbb{H}^2$ . Let $Q:\Bbb H^2\to\Bbb H^2$ be an admissible harmonic map in Definition 1.1. Then the extrinsic Sobolev space is defined by
\begin{align}\label{h897}
{\bf{H}^k_{Q}}=\{u: u^1-Q^1(x), u^2-Q^2(x)\in H^k(\mathbb{H}^2;\Bbb R)\},
\end{align}
where $(Q^1(x),Q^2(x))\in \Bbb R^2$ is the corresponding components of $Q(x)$ under the coordinate (\ref{vg}). Denote the set of smooth maps which coincide with $Q$ outside of some compact subset of $M=\Bbb H^2$ by $\mathcal{D}$. Let $\mathcal{H}^k_{Q}$ be the completion of $\mathcal{D}$ under the metric given by
\begin{align}\label{h897}
{\rm{dist}}_{k,Q}(u,w)=\sum^2_{j=1}\|u^j-w^j\|_{H^k(\mathbb{H}^2;\Bbb R)},
\end{align}
where $u,w\in \mathcal{H}^k_{Q}$. Since $C^{\infty}_c(\mathbb{H}^2;\Bbb R)$ is dense in $H^k(\mathbb{H}^2;\Bbb R)$ (see Hebey \cite{Hebey}), $\mathcal{H}^k_{Q}$ coincides with ${\bf{H}^k_{Q}}$. And for simplicity, we write $\bf{H}^k$ without confusions.
If $u$ is a map from $\Bbb R\times\Bbb H^2$ to $\Bbb H^2$, we define the space $\bf{H}^k\times\bf{H}^{k-1}$ by
\begin{align}\label{h897}
{\bf{H}^k\times\bf{H}^{k-1}}=\left\{u:\sum^2_{j=1}\|u^j-Q^j\|_{H^k(\mathbb{H}^2;\Bbb R)}+\|\partial_tu^j\|_{H^{k-1}(\mathbb{H}^2;\Bbb R)}<\infty\right\}.
\end{align}
The distance in ${\bf{H}^k\times\bf{H}^{k-1}}$ is given by
\begin{align}\label{zvh897}
{\rm{dist}}_{\bf{H}^k\times\bf{H}^{k-1}}(u,w)=\sum^2_{j=1}\|u^j-w^j(x)\|_{H^k}+\|\partial_tu^j-\partial_t w^j\|_{H^{k-1}}.
\end{align}

\subsection{Sobolev embedding and Equivalence lemma}
The Fourier transform on hyperbolic spaces takes proper functions defined on $\mathbb{H}^2$ to functions defined on $\Bbb R\times \Bbb S^1$, see Helgason \cite{Hel} for details.
The operator $(-\Delta)^{\frac{s}{2}}$ is defined by the Fourier multiplier $\lambda\to (\frac{1}{4}+\lambda^2)^{\frac{s}{2}}$.
We now recall the Sobolev inequalities of functions in $H^k$.

\begin{lemma}\label{wusijue}
If $f\in C^{\infty}_c(\mathbb{H}^2;\Bbb R)$, then for $1<p<\infty,$ $p\le q\le \infty$, $0<\theta<1$, $1<r<2$, $r\le l<\infty$, $\alpha>1$, the following inequalities hold
\begin{align}
{\left\| f \right\|_{{L^2}}} &\lesssim {\left\| {\nabla f} \right\|_{{L^2}}} \label{{uv111}}\\
 {\left\| f \right\|_{{L^q}}} &\lesssim \left\| {\nabla f} \right\|_{{L^2}}^\theta \left\| f \right\|_{{L^p}}^{1 - \theta }\mbox{  }{\rm{when}}\mbox{  }\frac{1}{p} - \frac{\theta }{2} = \frac{1}{q} \label{uv211}\\
 {\left\| f \right\|_{{L^l}}} &\lesssim {\left\| {\nabla f} \right\|_{{L^r}}}\mbox{  }\mbox{  }\mbox{  }\mbox{  }\mbox{  }\mbox{  }\mbox{  }\mbox{  }\mbox{  }\mbox{  }{\rm{when}}\mbox{  }\frac{1}{r} - \frac{1}{2} = \frac{1}{l} \label{uv311}\\
 {\left\| f \right\|_{{L^\infty }}} &\lesssim {\left\| {{{\left( { - \Delta } \right)}^{\frac{\alpha }{2}}}f} \right\|_{{L^2}}}\mbox{  }\mbox{  }\mbox{  }{\rm{when}}\mbox{  }\alpha>1 \label{uv4}\\
 {\left\| {\nabla f} \right\|_{{L^p}}} &\sim{\left\| {{{\left( { - \Delta } \right)}^{\frac{1}{2}}}f} \right\|_{{L^p}}} \label{uv5}.
 \end{align}
\end{lemma}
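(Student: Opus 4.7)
The plan is to deduce all five inequalities from two ingredients specific to the hyperbolic background: the strictly positive spectral gap of $-\Delta_{\mathbb H^2}$ (the bottom of the $L^2$-spectrum equals $1/4$) and the sharp pointwise bounds for the heat kernel $p_s(x,y)$ on $\mathbb H^2$. The plan is to prove the items in the order (\ref{{uv111}}), (\ref{uv5}), (\ref{uv311}), (\ref{uv211}), (\ref{uv4}), since later ones will use the earlier ones.

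First I would prove (\ref{{uv111}}) by the spectral theorem: writing $f = \int (\tfrac14 + \lambda^2)^{-1/2} (-\Delta)^{1/2} f\,dE_\lambda$ in the Helgason-Fourier calculus and using that the spectrum of $-\Delta$ on $L^2(\mathbb H^2)$ starts at $1/4$, one obtains $\|f\|_{L^2}\le 2\|(-\Delta)^{1/2}f\|_{L^2}=2\|\nabla f\|_{L^2}$. For (\ref{uv5}), the equivalence $\|\nabla f\|_{L^p}\sim\|(-\Delta)^{1/2}f\|_{L^p}$ is the $L^p$-boundedness of the Riesz transform $\nabla(-\Delta)^{-1/2}$ on $\mathbb H^2$; this is standard Calder\'on--Zygmund theory in the sense of Coifman--Weiss applied to the heat kernel, which on $\mathbb H^2$ satisfies the required Gaussian-type upper bound at small scales and exponential decay at large scales (cf.\ Hebey \cite{Hebey} or Anker's work).

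For the subcritical Sobolev embedding (\ref{uv311}), I would use the heat-kernel representation $f = \int_0^\infty e^{s\Delta}(-\Delta)f\,ds$ together with the dispersive bound $\|e^{s\Delta}\|_{L^r\to L^l}\lesssim s^{-(1/r-1/l)}$ for small $s$ and exponential decay for large $s$ (the latter thanks to the spectral gap). Splitting the time integral at $s=1$ and interpolating yields $\|(-\Delta)^{-1/2}g\|_{L^l}\lesssim\|g\|_{L^r}$ when $1/r-1/2=1/l$; combined with (\ref{uv5}) this gives (\ref{uv311}). The Gagliardo--Nirenberg inequality (\ref{uv211}) then follows by a similar heat-flow/interpolation argument, or by combining (\ref{uv311}) with Riesz--Thorin between $L^p$ and the endpoint from (\ref{uv311}); the point is that the exponential decay of $e^{s\Delta}$ in $L^2$ (again from the spectral gap) removes the need for any low-frequency correction that would otherwise be present on $\mathbb R^2$.

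Finally, for (\ref{uv4}) one uses the elementary kernel representation $(-\Delta)^{-\alpha/2}g(x)=c_\alpha\int_0^\infty s^{\alpha/2-1}e^{s\Delta}g(x)\,ds$; estimating $\|e^{s\Delta}\|_{L^2\to L^\infty}\lesssim s^{-1}$ for $0<s\le 1$ and $\lesssim e^{-s/4}$ for $s\ge 1$, the integrand is integrable precisely when $\alpha>1$, yielding $\|f\|_{L^\infty}\lesssim\|(-\Delta)^{\alpha/2}f\|_{L^2}$. The main obstacle in this lemma is not any single estimate but rather ensuring that every embedding is stated without a low-frequency tail: this is precisely where the hyperbolic geometry helps, since the mass gap lets one absorb the long-time heat-kernel contribution into exponential decay rather than carrying a lower-order $L^p$ term as on Euclidean space. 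Given the literature on Sobolev spaces on hyperbolic manifolds, each of the five estimates is essentially a citation, and I would present the proof simply by invoking the spectral gap, the heat-kernel bounds, and Riesz transform boundedness in turn.
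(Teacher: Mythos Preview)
Your proposal is essentially correct and, in fact, more detailed than what the paper does: the paper simply cites the literature (Bray \cite{B} for (\ref{uv211}), Ionescu--Pausader--Staffilani \cite{IPS} for (\ref{uv311}), Hebey \cite{Hebey} for (\ref{uv4}), Strichartz \cite{Stri} for (\ref{uv5}), and the Poincar\'e inequality (\ref{{uv111}}) is taken as a standard consequence of the spectral gap). You instead sketch a unified heat-semigroup derivation. Two small points are worth tightening. First, in your argument for (\ref{uv4}) the bound $\|e^{s\Delta}\|_{L^2\to L^\infty}\lesssim s^{-1}$ should read $s^{-1/2}$ on the two-dimensional hyperbolic plane (from $\|p_s(x,\cdot)\|_{L^2}^2=p_{2s}(x,x)\lesssim s^{-1}$); fortunately $\int_0^1 s^{\alpha/2-1}s^{-1/2}\,ds<\infty$ still yields exactly the threshold $\alpha>1$, so your conclusion stands. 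Second, for (\ref{uv311}) the naive time-splitting gives a logarithmic divergence at the critical scaling $1/r-1/l=1/2$; you need either the weak-type bound plus Marcinkiewicz interpolation or a direct Hardy--Littlewood--Sobolev estimate on the Bessel--Green kernel --- your word ``interpolating'' presumably means this, but it should be said explicitly. With these two fixes your self-contained route is a perfectly acceptable alternative to the paper's citation-only treatment.
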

For the proof, we refer to Bray \cite{B} for (\ref{uv211}), Ionescu, Pausader, Staffilani \cite{IPS} for (\ref{uv311}), Hebey \cite{Hebey} for (\ref{uv4}), see also Lawrie, Oh, Shahshahani \cite{LOS}. (\ref{uv5}) is obtained in \cite{Stri}.

We also recall the diamagnetic inequality which sometimes refers to Kato's inequality (see \cite{LOS}) and a more generalized Sobolev inequality (see [Proposition 2.2,\cite{AP}]).
\begin{lemma}
$(a)$ If $T$ is a tension field defined on $\Bbb H^2$, then in the distribution sense, one has the diamagnetic inequality
\begin{align}\label{wusijue3}
|\nabla|T||\le |\nabla T|.
\end{align}
$(b)$ Let $1<p,q<\infty$ and $\sigma_1,\sigma_2\in\Bbb R$  such that $\sigma_1-\sigma_2\ge n/p-n/q\ge0$. Then for all $f\in C^{\infty}_c(\Bbb H^n;\Bbb R)$
\begin{align*}
\|(-\Delta)^{\sigma_2}f\|_{L^q}\lesssim \|(-\Delta)^{\sigma_1}f\|_{L^p}.
\end{align*}
\end{lemma}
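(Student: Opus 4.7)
The plan is to treat the two parts as standard facts, essentially adapted from the flat Euclidean case via the available Sobolev embedding and the heat kernel machinery on $\mathbb{H}^n$.

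For part $(a)$, I would carry out the usual regularization argument. Set $\rho_\epsilon = \sqrt{\langle T,T\rangle + \epsilon^2}$ for $\epsilon>0$. Since $\rho_\epsilon$ is smooth and bounded below, one can compute pointwise
\begin{equation*}
\nabla_i \rho_\epsilon = \frac{\langle T, \nabla_i T\rangle}{\rho_\epsilon},
\end{equation*}
so Cauchy--Schwarz gives $|\nabla \rho_\epsilon| \le |\nabla T|$ pointwise. Then for any nonnegative test function $\varphi\in C^\infty_c(\mathbb{H}^2)$, integration against $\varphi$ is straightforward, and one sends $\epsilon \to 0^+$ to obtain $|\nabla |T|| \le |\nabla T|$ in the distributional sense. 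The only thing to check is that $\rho_\epsilon \to |T|$ in $L^1_{\mathrm{loc}}$ and that $\nabla \rho_\epsilon \to \nabla |T|$ in the sense of distributions, which follows from dominated convergence since $|\nabla \rho_\epsilon|$ is bounded uniformly in $\epsilon$ by the integrable quantity $|\nabla T|$ on compact sets.

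For part $(b)$, the cleanest route is to invoke the subordination identity
\begin{equation*}
(-\Delta)^{-\alpha} = \frac{1}{\Gamma(\alpha)}\int_0^\infty t^{\alpha-1} e^{t\Delta}\,dt
\end{equation*}
(valid on $\mathbb{H}^n$ because the spectrum of $-\Delta$ is bounded away from zero by $(n-1)^2/4$, so the $t\to\infty$ piece is under control) and combine it with the $L^p\to L^q$ mapping properties of the heat semigroup $e^{t\Delta}$ on $\mathbb{H}^n$. Write $\sigma_1 - \sigma_2 = \alpha$ and note $(-\Delta)^{\sigma_2} f = (-\Delta)^{-\alpha}(-\Delta)^{\sigma_1}f$; set $g=(-\Delta)^{\sigma_1}f$. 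The condition $\alpha\ge n/p - n/q$ together with the small-$t$ bound $\|e^{t\Delta}\|_{L^p\to L^q}\lesssim t^{-n/2(1/p-1/q)}$ and the large-$t$ exponential decay furnished by the spectral gap ensures convergence of the $t$-integral in the $L^p\to L^q$ operator norm, yielding $\|(-\Delta)^{\sigma_2}f\|_{L^q}\lesssim \|g\|_{L^p}$, which is the claim.

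The main obstacle, and the reason to cite \cite{AP} directly rather than reproduce the details, is the sharp $L^p\to L^q$ bound on $e^{t\Delta}$ for all $t\in(0,\infty)$ on hyperbolic space: the small-$t$ regime looks Euclidean but the large-$t$ regime exploits the negative curvature spectral gap, and splicing the two into a single estimate with the correct $t$-power requires the careful analysis of Anker--Pierfelice. Once that heat-kernel estimate is accepted as a black box, the rest of part $(b)$ is a one-line application of Minkowski's integral inequality. Part $(a)$ is entirely soft, so I expect it to be routine.
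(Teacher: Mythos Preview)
Your sketches are correct. Note that the paper itself does not prove this lemma: it simply cites \cite{LOS} for the diamagnetic inequality and \cite[Proposition~2.2]{AP} for the Sobolev-type estimate, treating both as known facts. So there is nothing to compare against except the references.

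Your argument for (a) is the standard proof of Kato's inequality and is exactly what underlies the cited result. For (b), your subordination approach is one legitimate route; the small-$t$ integrability check goes through because the hypothesis $\alpha=\sigma_1-\sigma_2\ge n(1/p-1/q)$ is strictly stronger than the $\alpha>(n/2)(1/p-1/q)$ needed for convergence whenever $p\ne q$, and the $p=q$, $\alpha=0$ case is trivial. The only caveat is that the large-$t$ exponential decay of $\|e^{t\Delta}\|_{L^p\to L^q}$ for general $1<p\le q<\infty$ is itself a nontrivial fact on $\mathbb{H}^n$ (the $L^2$ spectral gap does not transfer to $L^p$ by naive interpolation); you correctly flag this and defer to \cite{AP}, which is consistent with how the paper handles it. The reference \cite{AP} actually proves Proposition~2.2 by a direct analysis of the kernel of $(-\Delta)^{-\alpha}$ via spherical functions rather than subordination, but both routes ultimately rest on the same sharp heat/resolvent kernel bounds, so the difference is cosmetic.
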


\begin{remark}
Lemma \ref{wusijue} and (\ref{wusijue3}) have several useful corollaries, for instance for $f\in H^2$
\begin{align}
\|f\|_{L_x^{\infty}}&\lesssim \|\nabla^2f\|_{L^2_x}\label{{uv111}6}\\
\|f\|_{L_x^{2}}&\lesssim \|\nabla^2f\|_{L^2_x}.
\end{align}
\end{remark}

The intrinsic and extrinsic formulations are equivalent in the following sense, see [Section 2, \cite{LZ}].
\begin{lemma}\label{new}
Suppose that $Q$ is an admissible harmonic map in Definition 1.1.
If $u\in \bf{H}_{Q}^k$ then for $k=2,3$
\begin{align}
\|u\|_{{\bf H}_{Q}^k}\thicksim\|u\|_{\mathfrak{H}^k},
\end{align}
in the sense that there exist continuous functions $\mathcal{P},\mathcal{Q}$ such that
\begin{align}
\|u\|_{{\bf H}^k_Q}&\le \mathcal{P}(\|u\|_{\mathfrak{H}^k})C(R_0,\|u\|_{\mathfrak{H}^2})\label{jia8}\\
\|u\|_{\mathfrak{H}^k}&\le \mathcal{Q}(\|u\|_{\bf{H}^k_Q})C(R_0,\|u\|_{{\bf{H}}_{Q}^2})\label{jia9}.
\end{align}
\end{lemma}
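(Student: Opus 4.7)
The plan is to work entirely in the global coordinate system $\Psi$ from (\ref{vg}), expanding every intrinsic quantity into polynomial expressions in $\partial^{\alpha} u^j$ with coefficients that are smooth functions of $u^2$, and then matching these against the extrinsic $H^k$ norm of $u^j - Q^j$. The preliminary observation is that $u \in \bf{H}^2_Q$ already implies $u^j - Q^j \in L^\infty$ by the Sobolev embedding (\ref{{uv111}6}), and since $Q$ is admissible (hence $Q^j$ is bounded by $R_0$), we obtain an $L^\infty$ bound on $u^2$ depending only on $R_0$ together with $\|u\|_{\bf{H}^2_Q}$, or symmetrically $\|u\|_{\mathfrak{H}^2}$. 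Consequently the target Christoffel symbols $\overline{\Gamma}^a_{bc}(u)$ and the conformal factor $e^{\pm u^2}$ are bounded above and away from zero, with bounds that depend continuously on these lower-order norms.

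For the forward inequality (\ref{jia8}), I would iterate the identity
\begin{align*}
\nabla_i(\partial_j u)^a = \partial_i\partial_j u^a - \Gamma^k_{ij}\partial_k u^a + \overline{\Gamma}^a_{bc}(u)\,\partial_i u^b\,\partial_j u^c
\end{align*}
and its higher-order analogues, writing $\nabla^{i-1} du$ as a finite sum of monomials $F(u^2)\prod_\ell \partial^{\alpha_\ell} u^{j_\ell}$ with $\sum_\ell |\alpha_\ell| \le i$ and $F$ smooth in its argument. Taking $L^2$ norms and applying the Sobolev/Gagliardo--Nirenberg inequalities (\ref{uv211})--(\ref{uv5}) together with a Moser composition estimate for $F(u^2)$ then produces a bound of the required form $\mathcal{P}(\|u\|_{\mathfrak{H}^k})\,C(R_0,\|u\|_{\mathfrak{H}^2})$.

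For the reverse inequality (\ref{jia9}), I would invert the same identity, rearranging it as
\begin{align*}
\partial_i\partial_j u^a = \nabla_i(\partial_j u)^a + \Gamma^k_{ij}\partial_k u^a - \overline{\Gamma}^a_{bc}(u)\,\partial_i u^b\,\partial_j u^c,
\end{align*}
and proceeding by induction on $k$. The $k=1$ case follows directly from the lower bound on $e^{-u^2}$ applied to the expansion of $|du|^2_{u^*g}$ in coordinates; the $k=2$ case is obtained by taking $L^2$ norms of the displayed identity, using the already-established $L^4$ control on $\partial u$ via (\ref{uv211}) to handle the quadratic term; for $k=3$ one differentiates once more and repeats the estimate, now employing the $L^\infty$ bound (\ref{{uv111}6}) on first derivatives. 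The final step is to subtract $Q^j$, absorbing $\partial^\alpha Q^j$ by the uniform bounds granted by admissibility (\ref{as4}).

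The main obstacle in both directions is the nonlinear composition $F(u^2) = \overline{\Gamma}^a_{bc}(u)$ (and its iterated derivatives), which is smooth but not polynomial in $u^2$: the argument demands a Moser-type estimate of the form $\|F(u^2)\|_{H^k} \le C(\|u^2\|_{L^\infty})\,\|u^2\|_{H^k}$ valid on $\mathbb{H}^2$ rather than on $\mathbb{R}^n$. This is precisely the point at which the preliminary $L^\infty$-bound on $u^2$ (afforded by the two-dimensional embedding $H^2\hookrightarrow L^\infty$) becomes essential, and it is exactly how the constant $C(R_0,\cdot)$ enters the final estimates; the induction closes cleanly because $\bf H^2$ is precisely the regularity threshold at which this $L^\infty$-control is available.
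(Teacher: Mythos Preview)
Your overall strategy is sound and is essentially the standard coordinate-expansion argument; the paper itself does not give a proof but refers to \cite{LZ}, where the same approach is carried out. Two points need attention, however.

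First, you have interchanged the two directions. In your paragraph labeled ``forward inequality (\ref{jia8})'' you expand the intrinsic object $\nabla^{i-1}du$ as a polynomial in the coordinate derivatives $\partial^{\alpha}u^{j}$; taking $L^{2}$ of that expansion bounds the \emph{intrinsic} norm by the \emph{extrinsic} one, which is (\ref{jia9}), not (\ref{jia8}). Conversely, the rearranged identity you invoke for ``(\ref{jia9})'' expresses $\partial_{i}\partial_{j}u^{a}$ in terms of $\nabla du$ plus lower order, and hence yields (\ref{jia8}). Both arguments are present in your write-up, just with the labels reversed.

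Second, there is a genuine circularity hidden in your phrase ``or symmetrically $\|u\|_{\mathfrak{H}^{2}}$'' for the preliminary $L^{\infty}$ bound. In the direction (\ref{jia8}) you must obtain $\|u^{2}\|_{L^{\infty}}$ starting only from the intrinsic norm and $R_{0}$, yet your plan to extract $\partial^{2}u^{a}$ from $\nabla du$ already requires control of $e^{\pm u^{2}}$ through the target Christoffel symbols. The way out is to treat the two target components in a specific order: since $\overline{\Gamma}^{2}_{11}=e^{-2u^{2}}$ is the only nonzero $\overline{\Gamma}^{2}_{bc}$, the correction term $\overline{\Gamma}^{2}_{bc}\partial_{i}u^{b}\partial_{j}u^{c}=e^{-2u^{2}}\partial_{i}u^{1}\partial_{j}u^{1}$ is pointwise dominated by $|du|^{2}$ without any $L^{\infty}$ input, so $\|\nabla^{2}u^{2}\|_{L^{2}}\le C(\|u\|_{\mathfrak{H}^{2}})$ directly, and then $u^{2}-Q^{2}\in H^{2}\hookrightarrow L^{\infty}$ supplies the needed bound before one turns to $u^{1}$. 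This ordering is the substantive step that breaks the loop; once stated, your inductive scheme closes as described.
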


Lemma \ref{new} and its proof imply the following corollary, by which we can view Theorem 1.1 as a small data problem in the intrinsic sense.
The proof of Corollary \ref{new2} is presented in Section 7.
\begin{corollary}\label{new2}
If $(u_0,u_1)$ belongs to ${\bf{H}^3}\times {\bf{H}^2}$ satisfying (\ref{as3})
then for $0<\mu_1\le 1,0<\mu_2\le 1$
\begin{align}
\|\nabla du_0\|_{L^2}+\|\nabla u_1\|_{L^2}+\|du\|_{L^2}+\|u_1\|_{L^2}\le C(R_0)\mu_2+C(R_0)\mu_1.
\end{align}
\end{corollary}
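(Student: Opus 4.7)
The plan is to work entirely in the global target coordinates $(y_1,y_2)$ from (\ref{vg}), in which the target metric reads $e^{-2y_2}(dy_1)^2+(dy_2)^2$ and every intrinsic quantity becomes an explicit algebraic expression in the Euclidean components $u_0^j, u_1^j$ and their spatial derivatives. Setting $\tilde u_0^j:=u_0^j-Q^j$, the hypothesis (\ref{as3}) together with (\ref{h897}) gives $\sum_j\|\tilde u_0^j\|_{H^2}+\sum_j\|u_1^j\|_{H^1}\le \mu_2$, while admissibility of $Q$ in Definition \ref{2as} and the Sobolev embedding (\ref{uv4}) yield $|Q^2|\le C(R_0)$ and $\|\tilde u_0^2\|_{L^\infty}\lesssim \|\tilde u_0^2\|_{H^2}\le \mu_2\le 1$. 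Consequently $e^{\pm 2u_0^2}\le C(R_0)$ pointwise, which is the key structural fact feeding all subsequent estimates.

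For the zeroth- and first-order pieces, the coordinate identities $|du_0|_g^2=e^{-2u_0^2}|\nabla_h u_0^1|^2+|\nabla_h u_0^2|^2$ and $|u_1|_g^2=e^{-2u_0^2}(u_1^1)^2+(u_1^2)^2$, combined with the decomposition $u_0=\tilde u_0+Q$, reduce the task to estimating $\|\nabla \tilde u_0^j\|_{L^2}$ and $\|u_1^j\|_{L^2}$ (absorbed into $C(R_0)\mu_2$) together with the $Q$-part $\|\nabla Q^j\|_{L^2}$. Since $|\nabla Q^1|\le e^{Q^2}|dQ|\le C(R_0)|dQ|$ and $|\nabla Q^2|\le |dQ|$, the $Q$-contribution is bounded by $C(R_0)\|dQ\|_{L^2}\le C(R_0)\mu_1$.

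For the second-order norms I expand $\nabla du_0$ and $\nabla u_1$ in coordinates using the source Christoffel symbols from (\ref{christ}) and the target Christoffel symbols from (\ref{christ}) evaluated at $u_0$; the latter generate bilinear nonlinear terms of the form $\partial u_0^1\partial u_0^2$, $e^{-2u_0^2}(\partial u_0^1)^2$, $\partial u_0\cdot u_1$ and $e^{-2u_0^2}\partial u_0^1\cdot u_1^1$. These are handled in $L^2$ by H\"older together with the Sobolev embedding $H^1(\mathbb{H}^2)\hookrightarrow L^4(\mathbb{H}^2)$ from (\ref{uv211}): each $\partial u_0$ is split as $\partial \tilde u_0+\partial Q$, the $\tilde u_0$ factor being $O(\mu_2)$ in $H^1$ and the $\partial Q$ factor being $O(\mu_1)$ in every $L^p$, $2\le p\le\infty$, via the decay $|dQ|\le \mu_1 e^{-\varrho r}$ from Definition \ref{2as} interpolated against $\|dQ\|_{L^2}\le\mu_1$. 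All resulting contributions are at most quadratic in $\mu_1+\mu_2$, hence absorbed into $C(R_0)(\mu_1+\mu_2)$ since both parameters are $\le 1$. The main technical obstacle is preserving this linear dependence despite the nonlinear couplings forced by the target geometry; it is resolved precisely because those couplings are bilinear in the small quantities $\partial u_0$ and $u_1$, giving quadratically small contributions that comfortably fit the budget.
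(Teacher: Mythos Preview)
Your approach is essentially the paper's own: work in the global Iwasawa coordinates, split $u_0^j=\tilde u_0^j+Q^j$, and use the uniform bound $e^{\pm 2u_0^2}\le C(R_0)$ coming from Sobolev embedding plus the compact image of $Q$. The paper packages the coordinate computation into a citation of the equivalence lemma [Lemma~2.3,~\cite{LZ}] (its own Lemma~\ref{new}), whereas you spell the expansion out; in substance the two arguments coincide.

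There is, however, one step you have glossed over. In your treatment of $\|\nabla du_0\|_{L^2}$ you discuss only the bilinear target-Christoffel terms; the linear Hessian piece $\nabla^2 u_0^k$ (with source Christoffel correction) is silently assumed handled. Splitting it as $\nabla^2\tilde u_0^k+\nabla^2 Q^k$, the first summand is indeed $O(\mu_2)$ in $L^2$ by (\ref{as3}), but $\|\nabla^2 Q^k\|_{L^2}\le C(R_0)\mu_1$ is \emph{not} immediate from the hypotheses in Definition~\ref{2as} and (\ref{as4}): those give $\|dQ\|_{L^2}\le\mu_1$ and $\|\nabla dQ\|_{L^\infty}\le\mu_1$, but not $\|\nabla dQ\|_{L^2}$ directly. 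One must first use the harmonic map equation $\tau(Q)=0$ together with integration by parts and the nonpositive curvature of the target to deduce $\|\nabla dQ\|_{L^2}^2\lesssim\|\tau(Q)\|_{L^2}^2+\|dQ\|_{L^2}^2\lesssim\mu_1^2$ (this is exactly the paper's Lemma~7.1, equation (\ref{shubai4})); then reversing your coordinate expansion for $Q$ in place of $u_0$ gives $\|\nabla^2 Q^k\|_{L^2}\lesssim C(R_0)(\|\nabla dQ\|_{L^2}+\|dQ\|_{L^4}^2)\lesssim C(R_0)\mu_1$. With this one extra ingredient your argument is complete.
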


\begin{lemma}\label{8.5}
We have the decay estimates for heat equations on $\Bbb H^2$:
\begin{align}
\|e^{s\Delta_{\Bbb H^2}}f\|_{L^{\infty}_x}&\lesssim e^{-\frac{s}{4}}s^{-1}\|f\|_{L^{1}_x}\label{huhu899}\\
\|e^{s\Delta_{\Bbb H^2}}f\|_{L^{2}_x}&\lesssim e^{-\frac{s}{4}}\|f\|_{L^{2}_x}\label{m8}\\
\|e^{s\Delta_{\Bbb H^2}}f\|_{L^p_x}&\lesssim s^{\frac{1}{p}-\frac{1}{r}}\|f\|_{L^{r}_x},\label{huhu89}\\
\|e^{s\Delta_{\Bbb H^2}}(-\Delta_{\Bbb H^2})^{\alpha} f\|_{L^{q}_x}&\lesssim s^{-\alpha}e^{-\delta s}\|f\|_{L^{q}_x},\label{mm8}
\end{align}
where $1\le r\le p\le\infty$, $\alpha\in[0,1]$, $1<q<\infty$, $0<\delta\ll1$.
\end{lemma}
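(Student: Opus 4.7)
The plan is to reduce all four estimates to two classical facts about the heat semigroup on $\mathbb{H}^2$: the pointwise bound on the heat kernel $p_s(x,y)$, which gives $\|p_s\|_{L^\infty_{x,y}}\lesssim s^{-1}e^{-s/4}$ uniformly in $s>0$, and the spectral gap $\mathrm{spec}(-\Delta_{\mathbb{H}^2}) = [\tfrac14,\infty)$. Both are standard (see e.g.\ Davies' monograph or the Anker--Ji analysis on $\mathbb{H}^n$) and I would simply cite them.

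\medskip

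Granting these, the first estimate follows from Young's convolution inequality applied to $e^{s\Delta}f = p_s * f$, together with the pointwise bound on $p_s$. The second is immediate from the spectral theorem, since the multiplier $e^{-s\lambda}$ restricted to $\lambda\ge\tfrac14$ is uniformly bounded by $e^{-s/4}$. For the third estimate I would interpolate the kernel itself: combining $\|p_s\|_{L^1_y}=1$ with the $L^\infty$ bound yields $\|p_s\|_{L^l_y}\lesssim s^{-1/l'}$ for $1\le l\le\infty$, and then Young's inequality with $\tfrac{1}{p}+1=\tfrac{1}{l}+\tfrac{1}{r}$ produces exactly the rate $s^{1/p-1/r}$ (the residual exponential factor being harmless and discarded).

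\medskip

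The fourth estimate is the most delicate. On $L^2$ it reduces via functional calculus to the elementary inequality
$$\sup_{\lambda\ge 1/4}\lambda^{\alpha}e^{-s\lambda}\lesssim s^{-\alpha}e^{-\delta s},\qquad 0<\delta<\tfrac14,$$
verified by splitting at the critical point $\lambda=\alpha/s$ (interior maximum yielding $s^{-\alpha}$ when $s\le 4\alpha$, and decreasing tail bounded by $4^{-\alpha}e^{-s/4}$ otherwise). For general $1<q<\infty$ I would factor
$$(-\Delta)^{\alpha}e^{s\Delta} = \bigl[(-\Delta)^{\alpha}e^{(s/2)\Delta}\bigr]\circ e^{(s/2)\Delta}.$$
The rightmost factor inherits exponential decay $\lesssim e^{-c(q)s}$ on $L^q$ by Riesz--Thorin interpolation between the $L^2$ exponential decay (from the second estimate) and the $L^q$ contractivity of the submarkovian semigroup $e^{s\Delta}$. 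The factor $(-\Delta)^{\alpha}e^{(s/2)\Delta}$ is handled by Stein's complex interpolation of the analytic family $z\mapsto(-\Delta)^{z}e^{(s/2)\Delta}$, which is $L^q$-bounded by $1$ at $\mathrm{Re}\,z=0$ and by $\lesssim s^{-1}$ at $\mathrm{Re}\,z=1$ (using $\partial_{s}e^{s\Delta}=\Delta e^{s\Delta}$ together with the $L^q$ contractivity), giving the intermediate rate $\lesssim s^{-\alpha}$.

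\medskip

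I expect the chief technical point to be the sharp $L^q$ smoothing bound for intermediate $\alpha\in(0,1)$: the two endpoints $\alpha\in\{0,1\}$ are essentially trivial, but retaining both the correct power $s^{-\alpha}$ and the spectral-gap-induced exponential factor $e^{-\delta s}$ simultaneously through the complex interpolation on the hyperbolic background requires that the gap be preserved at every interpolation step, and this is where the Markov property of $e^{s\Delta}$ plays an essential role.
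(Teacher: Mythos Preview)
Your proposal is correct and follows essentially the same route as the paper: the first three estimates are deduced from the heat-kernel pointwise bound plus the spectral gap, and for the fourth you use precisely the factorization $e^{s\Delta}(-\Delta)^\alpha = e^{(s/2)\Delta}\circ e^{(s/2)\Delta}(-\Delta)^\alpha$ that the paper employs, combining an $s^{-\alpha}$ $L^q$-smoothing estimate with an $L^q\to L^q$ exponential decay. The only cosmetic differences are that the paper cites \cite{LOS} for the $s^{-\alpha}$ bound and the Davies--Mandouvalos kernel asymptotics \cite{BM} for the $\alpha=0$ exponential $L^q$ decay, whereas you derive both ingredients in-house via Stein complex interpolation and Riesz--Thorin (interpolating the $L^2$ spectral-gap decay against the $L^1$/$L^\infty$ Markov contraction); your derivations are correct, though the $\alpha=1$ endpoint $\|\Delta e^{s\Delta}\|_{L^q\to L^q}\lesssim s^{-1}$ really comes from the \emph{analyticity} of the symmetric Markov semigroup on $L^q$ (Stein's theorem), not merely contractivity.
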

\begin{proof}
(\ref{huhu899}) and (\ref{huhu89}) are known in the literature, see \cite{LZ,Coding}. (\ref{m8}) is a corollary of the spectral gap of $\frac{1}{4}$ for $-\Delta_{\Bbb H^2}$. The $s^{-\alpha}$ part of (\ref{mm8}) follows by interpolation between the three estimates of [Lemma 2.11,\cite{LOS}].
Thus it suffices to prove (\ref{mm8}) for $s$ large. The case of (\ref{mm8}) when $\alpha=0$ follows by directly estimating the heat kernel given in \cite{BM}. Since one has $e^{s\Delta}(-\Delta)^{\alpha}f=e^{\frac{s}{2}\Delta}e^{\frac{s}{2}\Delta}(-\Delta)^{\alpha}f$, by applying the exponential decay $L^p-L^p$ estimate to the first $e^{\frac{s}{2}\Delta}$ and the $s^{-\alpha}$ decay of $L^p\to (-\Delta)^{\alpha}L^p$ for the second $e^{\frac{s}{2}\Delta}$ proved just now, we obtain the full (\ref{mm8}).
\end{proof}

The $\Bbb R^2$ version of the following lemma was proved in [Lemma 2.5,\cite{Tao4}]. We remark that the same arguments work in the $\Bbb H^2$ case, because the proof in \cite{Tao4} only uses the decay estimate (\ref{huhu89}) and the self-ajointness of $e^{t\Delta_{\Bbb R^2}}$, which are also satisfied by $e^{t\Delta_{\Bbb H^2}}$.
\begin{lemma}\label{ktao1}
For $f\in L^2_x$ defined on $\Bbb H^2$, one has
$$\int^{\infty}_0\|e^{s\Delta_{\Bbb H^2}}f\|^2_{L^{\infty}_x}ds\lesssim \|f\|^2_{L^2_x}.
$$
\end{lemma}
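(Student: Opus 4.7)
The plan is to adapt Tao's $TT^{*}$ argument from the Euclidean case word for word, since as the lemma's preamble notes, we only need the $L^{1}\to L^{\infty}$ decay estimate~(\ref{huhu899}) and the self-adjointness of $e^{s\Delta_{\mathbb{H}^{2}}}$ on $L^{2}(\mathbb{H}^{2})$ (which holds because $\Delta_{\mathbb{H}^{2}}$ is a non-positive self-adjoint operator on $L^{2}$).

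Define the linear operator
\begin{equation*}
T:L^{2}(\mathbb{H}^{2})\to L^{2}_{s}((0,\infty);L^{\infty}_{x}(\mathbb{H}^{2})),\qquad (Tf)(s,x)=(e^{s\Delta_{\mathbb{H}^{2}}}f)(x).
\end{equation*}
The conclusion of the lemma is exactly the boundedness of $T$. By duality, this is equivalent to showing that $TT^{*}$ is bounded from $L^{2}_{s}L^{1}_{x}$ to $L^{2}_{s}L^{\infty}_{x}$. Using the self-adjointness of $e^{s\Delta_{\mathbb{H}^{2}}}$, I compute
\begin{equation*}
(TT^{*}g)(s,x)=\int_{0}^{\infty} e^{(s+s')\Delta_{\mathbb{H}^{2}}}g(s',\cdot)(x)\,ds'.
\end{equation*}

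Next I apply (\ref{huhu899}), keeping only the polynomial factor $t^{-1}$ (the additional exponential decay $e^{-t/4}$ from the spectral gap only helps), to obtain
\begin{equation*}
\|(TT^{*}g)(s,\cdot)\|_{L^{\infty}_{x}}\lesssim \int_{0}^{\infty}\frac{1}{s+s'}\,\|g(s',\cdot)\|_{L^{1}_{x}}\,ds'.
\end{equation*}
The right-hand side is the classical Hilbert integral operator with kernel $K(s,s')=1/(s+s')$ applied to the scalar function $s'\mapsto\|g(s',\cdot)\|_{L^{1}_{x}}$. Hilbert's inequality (or, equivalently, Schur's test with weight $s^{-1/2}$) gives boundedness of this kernel on $L^{2}((0,\infty))$, hence
\begin{equation*}
\|TT^{*}g\|_{L^{2}_{s}L^{\infty}_{x}}\lesssim \|g\|_{L^{2}_{s}L^{1}_{x}},
\end{equation*}
and then $\|T f\|^{2}_{L^{2}_{s}L^{\infty}_{x}}\lesssim \|f\|^{2}_{L^{2}_{x}}$, which is the claim.

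There is essentially no obstacle specific to the hyperbolic setting: every step in the Euclidean proof transfers, because the only ingredients used are (i) the $L^{1}\to L^{\infty}$ bound $\|e^{s\Delta_{\mathbb{H}^{2}}}\|_{L^{1}\to L^{\infty}}\lesssim s^{-1}$ on the heat semigroup at dimension two (which, on $\mathbb{H}^{2}$, is even sharper due to the exponential factor), and (ii) the self-adjointness of $e^{s\Delta_{\mathbb{H}^{2}}}$. The one bookkeeping point worth noting is that the Hilbert-kernel estimate is scale-invariant in $s$, so the spectral gap of $\Delta_{\mathbb{H}^{2}}$ plays no role; the argument is purely local in $s$ and globally integrable thanks to the $1/(s+s')$ kernel.
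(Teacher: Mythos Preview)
Your proof is correct and is exactly the $TT^{*}$ argument of Tao that the paper invokes; the paper does not spell out the details but simply remarks that Tao's proof transfers verbatim to $\mathbb{H}^{2}$ because only the $L^{1}\to L^{\infty}$ heat decay (\ref{huhu89}) with $r=1$, $p=\infty$ and the self-adjointness of the semigroup are used. Your write-up fills in precisely those details.
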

Without confusion, we will always use $\Delta$ instead of $\Delta_{\Bbb H^2}$.

\subsection{The Local and conditional global well-posedness}
We quickly sketch the local well-posedness and conditional global well-posedness for (\ref{wmap1}).
The local well-posedness of (\ref{wmap1}) for $(u_0,u_1)\in{\bf{H}^3\times\bf{H}^2}$ is standard by fixed point argument. Thus we present the following lemma with a rough proof.

\begin{lemma}\label{local}
For any initial data $(u_0,u_1)\in \bf{H}^3\times\bf{H}^2$, there exists $T>0$ depending only on $\|(u_0,u_1)\|_{\bf{H}^3\times\bf{H}^2}$ such that (\ref{wmap1}) has a unique local solution $(u,\partial_tu)\in C([0,T];\bf{H}^3\times\bf{H}^2)$.
\end{lemma}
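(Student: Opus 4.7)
The plan is to reduce the wave map system to a semilinear scalar wave equation on $\mathbb{H}^2$ via the global coordinates $\Psi$ from (\ref{vg}) and then close a Picard iteration in $C([0,T];\mathbf{H}^3)\cap C^1([0,T];\mathbf{H}^2)$ using the standard energy estimates for the scalar wave equation on $\mathbb{H}^2$ together with the algebra property of $H^2$ in two dimensions.

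\textbf{Step 1 (reduction to a scalar equation).} Write $u(t,x)=\Psi(u^1(t,x),u^2(t,x))$ and set $v^j=u^j-Q^j$. Since $Q$ is a time-independent admissible harmonic map, subtracting the harmonic map equation satisfied by $Q$ from (\ref{wmap1}) turns the system into
\begin{equation*}
\Box v^k = F^k(v,\partial v;Q,\partial Q),\qquad (v,\partial_t v)\big|_{t=0}=(u_0-Q,\,u_1),
\end{equation*}
where $\Box=-\partial_t^2+\Delta_{\mathbb{H}^2}$ and $F^k$ is a polynomial of degree two in $(\partial v,\partial Q)$ whose coefficients are smooth functions of $v+Q$, obtained by Taylor expanding $\overline{\Gamma}^k_{ij}(v+Q)$ about $Q$.

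\textbf{Step 2 (linear energy estimates and iteration).} For the scalar equation $\Box w=G$ on $\mathbb{R}\times\mathbb{H}^2$, commuting with $\nabla^j$ for $j\le 2$ and pairing with $\partial_t w$ yields the standard estimate
\begin{equation*}
\|w\|_{L^\infty_t([0,T];H^3)}+\|\partial_t w\|_{L^\infty_t([0,T];H^2)} \lesssim \|(w_0,w_1)\|_{H^3\times H^2}+\int_0^T\|G(s)\|_{H^2}\,ds.
\end{equation*}
Define $v^{(0)}\equiv 0$ and inductively let $v^{(n+1)}$ solve the linear wave equation with source $F(v^{(n)},\partial v^{(n)};Q,\partial Q)$ and the prescribed initial data.

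\textbf{Step 3 (nonlinear estimate and contraction).} To bound $F(v^{(n)})$ in $H^2$, the Sobolev embedding $H^2\hookrightarrow L^\infty$ on $\mathbb{H}^2$, which follows from (\ref{uv4})--(\ref{{uv111}6}), keeps $v^{(n)}+Q$ inside a fixed compact subset of $\mathbb{R}^2$ where the target Christoffel symbols together with all their derivatives are uniformly bounded; combining this with the Moser-type algebra estimate $\|fg\|_{H^2}\lesssim \|f\|_{H^2}\|g\|_{L^\infty}+\|f\|_{L^\infty}\|g\|_{H^2}$ and the admissibility bound $\sum_{k\le 2}\|\nabla^k dQ\|_{L^2}<\infty$ gives $\|F(v^{(n)})\|_{H^2}\le C(M)$ uniformly for $v^{(n)}$ in a ball of radius $2M$ in the iteration norm, with $M=\|(u_0,u_1)-(Q,0)\|_{\mathbf{H}^3\times\mathbf{H}^2}+\|Q\|_{\mathfrak{H}^3}$. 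Choosing $T=T(M)$ so small that $T\cdot C(M)\le M$ preserves this ball. Contraction is then carried out in the lower-regularity norm $C([0,T];H^2)\cap C^1([0,T];H^1)$, where the differences $v^{(n+1)}-v^{(n)}$ satisfy a linear wave equation whose source is controlled in $H^1$ by the difference of the previous two iterates, again via $H^2\hookrightarrow L^\infty$ and the algebra property. Uniqueness and persistence of the full $\mathbf{H}^3\times\mathbf{H}^2$ regularity are then standard.

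\textbf{Main obstacle.} The only subtle point is that the target Christoffel symbols read off from the metric $e^{-2y_2}(dy_1)^2+(dy_2)^2$ involve the exponential factor $e^{\pm 2y_2}$, which is \emph{not} uniformly bounded on $\mathbb{R}^2$; thus one must confirm a priori that the range of the iterates $\Psi(v^{(n)}+Q)$ stays in a fixed compact set. This is exactly where the embedding $H^2\hookrightarrow L^\infty$ (available on $\mathbb{H}^2$ by (\ref{uv4})) and the compactness of $Q(\mathbb{H}^2)$ from Definition \ref{2as} enter to trap the iterates inside a ball on which the nonlinearity is smooth, after which the scheme becomes a routine application of the semilinear wave equation theory.
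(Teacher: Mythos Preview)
Your proposal is correct and follows essentially the same route as the paper: write the wave map in the global coordinates (\ref{vg}) as a semilinear wave system, use the Sobolev embedding $H^2\hookrightarrow L^\infty$ on $\mathbb{H}^2$ to control the Christoffel symbols (which, as you correctly flag, have the exponential factor $e^{\pm 2y_2}$), and run a contraction mapping argument. The paper's proof is a one-paragraph sketch that works directly in $\mathbf{H}^3\times\mathbf{H}^2$ without explicitly subtracting $Q$ or dropping to a lower norm for the contraction, but these are cosmetic differences rather than a different strategy.
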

\begin{proof}
In the coordinates (\ref{vg}), (\ref{wmap1}) can be written as the following semilinear wave equation
\begin{align}\label{XV12}
\frac{{{\partial ^2}{u^k}}}{{\partial {t^2}}} - {\Delta}{u^k} + {\overline\Gamma}_{ij}^k\frac{{\partial {u^i}}}{{\partial t}}\frac{{\partial {u^j}}}{{\partial t}} - {h^{ij}}{\overline\Gamma} _{mn}^k\frac{{\partial {u^m}}}{{\partial {x^i}}}\frac{{\partial {u^n}}}{{\partial {x^j}}} = 0.
\end{align}
Notice that $\bf{H}^3$ and $\bf{H}^2$ are embedded to $L^{\infty}$  as illustrated in Remark 9.1, we can prove the local well-posedness of (\ref{XV12}) by the standard contradiction mapping argument in the complete metric space $\bf{H}^3\times\bf{H}^2$ with the metric given by
\begin{align*}
{\rm{dist}}(u,w)=\sum^2_{j=1}\|u^j-w^j\|_{H^3}+\sum^2_{j=1}\|\partial_tu^j-\partial_tw^j\|_{H^2}.
\end{align*}
Moreover we can obtain the blow-up criterion: $T_*>0$ is the lifespan of $(\ref{XV12})$ if and only if
\begin{align}\label{09ijn}
\mathop {\lim }\limits_{t \to T_*} {\left\| {(u(t,x),\partial_tu(t,x))} \right\|_{{\bf{H}^3\times\bf{H}^2}}} = \infty.
\end{align}
\end{proof}

The conditional global well-posedness is given by the following proposition. We remark that in the flat case $M=\Bbb R^d$, $1\le d\le 3$, Theorem 7.1 of Shatah, Struwe  \cite{SStruwe} gave a local theory for Cauchy problem in $H^2\times H^1$.
\begin{proposition}\label{global}
Let $(u_0,u_1)\in\bf{H}^3\times \bf{H}^2$ be the initial data of (\ref{wmap1}), $T_*$ is the maximal lifespan determined by Lemma \ref{local}. If the solution $(u,\partial_tu)$ satisfies uniformly for all $t\in[0,T_*)$
\begin{align}\label{hcxp}
\|\nabla du\|_{L^2_x}+\|du\|_{L^2_x}+\|\nabla\partial_t u\|_{L^2_x}+\|\partial_t u\|_{L^2_x}\le C_1,
\end{align}
for some $C_1>0$ independent of $t\in[0,T_*)$ then $T_*=\infty$.
\end{proposition}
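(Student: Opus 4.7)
The plan is to argue by contradiction via the blow-up criterion \eqref{09ijn} of Lemma \ref{local}: if $T_*<\infty$, then necessarily $\|(u(t),\partial_t u(t))\|_{\bf{H}^3\times\bf{H}^2}\to\infty$ as $t\to T_*^-$. It therefore suffices to promote the uniform lower-order bound \eqref{hcxp} into a uniform $\bf{H}^3\times\bf{H}^2$ bound on $[0,T_*)$, i.e.\ a standard persistence-of-regularity statement for the semilinear wave system \eqref{XV12}.

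First I would translate \eqref{hcxp} into an extrinsic statement via Lemma \ref{new}: there exists $C_2=C_2(R_0,C_1)$ with
\[
\sup_{t\in[0,T_*)}\bigl(\|u(t)-Q\|_{\bf{H}^2}+\|\partial_t u(t)\|_{\bf{H}^1}\bigr)\le C_2.
\]
Since the admissible $Q$ is uniformly bounded in coordinates, the 2D Sobolev embedding $H^2\hookrightarrow L^\infty$ applied to each component $u^j-Q^j$ yields $\|u\|_{L^\infty_{t,x}}\lesssim C_2$; in particular the Christoffel symbols $\bar\Gamma^k_{ij}(u)$ together with their first two $u$-derivatives are uniformly bounded along the flow. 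The embedding $H^1(\mathbb{H}^2)\hookrightarrow L^p$ for each $p<\infty$ from Lemma \ref{wusijue} likewise controls $du(t)$ and $\partial_t u(t)$ uniformly in $L^p_x$. I then set
\[
E(t):=\sum_{j=1,2}\bigl(\|u^j(t)-Q^j\|_{H^3}^2+\|\partial_t u^j(t)\|_{H^2}^2\bigr),
\]
differentiate \eqref{XV12} twice in space, pair with $\partial_t\nabla^\alpha(u^j-Q^j)$ for each $|\alpha|=2$, integrate over $\mathbb{H}^2$, and sum; the components of $E$ involving mixed or purely time derivatives are recovered directly from \eqref{XV12} itself, since $\partial_t^2 u$ is algebraically determined by $\Delta u$ and the nonlinearity.

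The hard part is closing the top-order nonlinear estimate. After two differentiations, the nonlinearity $\bar\Gamma^k_{mn}(u)\,\partial u\,\partial u$ produces schematic pieces
\[
\bar\Gamma(u)\,\partial u\,\partial^3 u,\quad \bar\Gamma'(u)(\partial u)^2\partial^2 u,\quad \bar\Gamma''(u)(\partial u)^4,
\]
together with analogues for the $\bar\Gamma^k_{ij}\partial_t u^i\partial_t u^j$ term. Placing $\bar\Gamma^{(k)}(u)$ in $L^\infty$, the single top-order factor $\partial^3 u$ in $L^2_x$, and the intermediate factors in $L^p_x$ for suitable $p<\infty$ via the previous step, the only factor not directly controlled is $\|du\|_{L^\infty}$, since in 2D the $H^1$ bound on $du$ fails to embed into $L^\infty$. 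I would close this gap with a Brezis--Gallouet logarithmic inequality
\[
\|du\|_{L^\infty}\lesssim \|du\|_{H^1}\sqrt{\log\bigl(e+\|du\|_{H^2}/\|du\|_{H^1}\bigr)}\lesssim C_2\sqrt{\log\bigl(e+E(t)\bigr)},
\]
which combined with the preceding bounds yields a differential inequality of the form $\tfrac{d}{dt}E(t)\le \Psi(C_2)\bigl(1+E(t)\bigr)\sqrt{\log\bigl(e+E(t)\bigr)}$ for a continuous function $\Psi$.

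Such an ODE grows at most double exponentially in $t$, so $E(t)$ stays finite on the compact interval $[0,T_*]$. Converting back through Lemma \ref{new} gives the desired uniform $\bf{H}^3\times\bf{H}^2$ bound on $[0,T_*)$, contradicting \eqref{09ijn} if $T_*<\infty$. Therefore $T_*=\infty$.
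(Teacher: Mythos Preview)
Your approach is plausibly correct but takes a genuinely different route from the paper's. The paper works with the \emph{intrinsic} third-order energy
\[
E_3(u,\partial_t u)=\tfrac12\|\nabla\tau(u)\|_{L^2}^2+\tfrac12\|\nabla^2\partial_t u\|_{L^2}^2
\]
and exploits the wave-map structure $\nabla_t\partial_t u=\tau(u)$: after commuting covariant derivatives and integrating by parts, the two leading contributions in $\tfrac{d}{dt}E_3$ cancel exactly, and every remaining curvature error is a product of at least four factors in which the first-order pieces $du,\partial_t u$ can always be placed in a \emph{finite} $L^p_x$ (bounded by $C_1$ via Sobolev), never in $L^\infty$. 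This yields a clean \emph{linear} Gronwall $\tfrac{d}{dt}E_3\le C(C_1)(E_3+1)$; the passage back from $E_3$ to the full $\mathfrak{H}^3$ norm uses the auxiliary inequality \eqref{equil}, $\|\nabla^2 du\|_{L^2}^2\lesssim \mathcal{P}(\|u\|_{\mathfrak{H}^2}^2)+\|\nabla\tau(u)\|_{L^2}^2$.

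Your extrinsic route forfeits these geometric cancellations and is therefore forced to confront the endpoint failure $H^1\not\hookrightarrow L^\infty$ head-on via Brezis--Gallouet. Note that the same obstruction arises for $\|\partial_t u\|_{L^\infty}$ from the $\bar\Gamma(u)\,\partial_t u\,\nabla^2\partial_t u$ term, not only for $\|du\|_{L^\infty}$; the cure is identical but you should say so. The result is only a log-Gronwall and double-exponential growth. Both arguments close on a finite interval, but the paper's is self-contained within the toolbox already assembled (Brezis--Gallouet on $\mathbb{H}^2$ is neither proved nor cited there, so you would owe a reference or a short proof via bounded-geometry localization) and delivers the sharper exponential bound; yours is the standard persistence-of-regularity template and would apply to any quadratic-derivative semilinear wave system, without using the geometric null structure specific to wave maps.
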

\begin{proof} By the local well-posedness in Lemma \ref{local}, it suffices to obtain a uniform bound for $\|(u,\partial_t u)\|_{\bf{H}^3\times\bf{H}^2 }$ with respect to $t\in[0,T]$. By Lemma \ref{new}, it suffices to prove the intrinsic norms are uniformly bounded up to order three.
We first point out a useful inequality which can be verified by integration by parts
\begin{align}\label{V6}
\|\nabla^2 du\|^2_{L^2_x}\lesssim \|\nabla \tau(u)\|^2_{L^2_x}+ \|du\|^6_{L^6_x}+\|\nabla d u\|^2_{L^4_x}\|d u\|^2_{L^4_x}+C(\|u \|^2_{\mathfrak{H}^2}),
\end{align}
where $\tau(u)$ denotes the tension field which in the local coordinates is written as
\begin{align*}
\tau(u)=\left(\Delta u^k+h^{pq}\overline{\Gamma}^k_{ij}\frac{\partial u^{i}}{\partial x^p}\frac{\partial u^{j}}{\partial x^q}\right)\frac{\partial}{\partial y^k}.
\end{align*}
Thus (\ref{V6}), Gagliardo-Nirenberg  inequality and Young inequality further yield
\begin{align}\label{equil}
\|\nabla^2 du\|^2_{L^2_x}\lesssim \mathcal{P}(\|u \|^2_{\mathfrak{H}^2})+\|\nabla \tau(u)\|^2_{L^2_x},
\end{align}
where $\mathcal{P}(x)$ is some polynomial.
Define
\begin{align*}
 {E_3}(u,{\partial _t}u) = \frac{1}{2}\int_{{\Bbb H^2}} {{{\left| {\nabla \tau (u)} \right|}^2}} {\rm{dvo}}{{\rm{l}}_{\rm{h}}} + \frac{1}{2}{\int_{{\Bbb H^2}} {\left| {{\nabla ^2}{\partial _t}u} \right|} ^2}{\rm{dvo}}{{\rm{l}}_{\rm{h}}}.
\end{align*}
Then integration by parts yields
\begin{align*}
\frac{d}{{dt}}{E_3}(u,{\partial _t}u) &= \int_{{\Bbb H^2}} {{h^{ii}}\left\langle {{\nabla _t}{\nabla _i}\tau (u),{\nabla _i}\tau (u)} \right\rangle } {\rm{dvo}}{{\rm{l}}_{\rm{h}}}\\
&+ \int_{{\Bbb H^2}} {{h^{ii}h^{jj}}\left\langle {{\nabla _t}{\nabla _i}{\nabla _j}{\partial _t}u - \Gamma _{ij}^k{\nabla _k}{\partial _t}u,{\nabla _i}{\nabla _j}{\partial _t}u}-\Gamma_{ij}^k{\nabla _k}{\partial _t}u \right\rangle {\rm{dvo}}{{\rm{l}}_{\rm{h}}}}.
\end{align*}
Furthermore we have
\begin{align*}
 &\int_{{\Bbb H^2}} {{h^{ii}h^{jj}}\left\langle {{\nabla _t}{\nabla _i}{\nabla _j}{\partial _t}u - \Gamma _{ij}^k{\nabla _k}{\partial _t}u,{\nabla _i}{\nabla _j}{\partial _t}u}-\Gamma_{ij}^k{\nabla _k}{\partial _t}u \right\rangle  {\rm{dvo}}{{\rm{l}}_{\rm{h}}}}  \\
 &= \int_{{\Bbb H^2}} {{h^{ii}h^{jj}}\left\langle {{\nabla _i}{\nabla _j}{\nabla _t}{\partial _t}u - \Gamma _{ij}^k{\nabla _k}{\partial _t}u,{\nabla _i}{\nabla _j}{\partial _t}u}-\Gamma_{ij}^k{\nabla _k}{\partial _t}u \right\rangle  {\rm{dvo}}{{\rm{l}}_{\rm{h}}}} \\
 &+ \int_{\Bbb H^2}O\big(\left| {\nabla {\partial _t}u} \right|\left| {{\nabla ^2}{\partial _t}u} \right|\big){\rm{dvol_h}} + O\big(\int_{\Bbb H^2}\left| {du} \right|\left| {{\partial _t}u} \right|\left| {\nabla {\partial _t}u} \right|\left| {{\nabla ^2}{\partial _t}u} \right|{\rm{dvol_h}} \big)\\
 &+ \int_{\Bbb H^2}O\big(\left| {\nabla {\partial _t}u} \right|\left| {{\nabla ^2}{\partial _t}u} \right|\big){\rm{dvol_h}}+ \int_{\Bbb H^2}O\big({\left| {du} \right|^2}{\left| {{\partial _t}u} \right|^2}\left| {{\nabla ^2}{\partial _t}u} \right|\big){\rm{dvol_h}}\\
 &+ \int_{\Bbb H^2}O\big({\left| {{\partial _t}u} \right|^2}\left| {\nabla du} \right|\left| {{\nabla ^2}{\partial _t}u} \right|\big){\rm{dvol_h}}
\end{align*}
Since $u$ solves (\ref{wmap1}), $\nabla_t\partial_tu=\tau(u)$. Then by integration by parts the leading term can be expanded as
\begin{align*}
 &\int_{{\Bbb H^2}} {{h^{ii}h^{jj}}}\left\langle {{\nabla _i}{\nabla _j}{\nabla _t}{\partial _t}u,{\nabla _i}{\nabla _j}{\partial _t}u}-\Gamma_{ij}^k{\nabla _k}{\partial _t}u \right\rangle  {\rm{dvol_h}}\\
 &= \int_{{\Bbb H^2}}{h^{ii}h^{jj}}\left\langle {{\nabla _i}{\nabla _j}\tau (u),{\nabla _i}{\nabla _j}{\partial _t}u}-\Gamma_{ij}^k{\nabla _k}{\partial _t}u \right\rangle   {\rm{dvol_h}}  \\
 &=-\int_{{\Bbb H^2}} {{h^{ii}}\left\langle {{\nabla _i}\tau (u),{\nabla _t}{\nabla _i}\tau (u)} \right\rangle {\rm{dvo}}{{\rm{l}}_{\rm{h}}}}  + \int_{\Bbb H^2}O\big(\left| {\nabla \tau (u)} \right|\left| {du} \right|\left| {{\partial _t}u} \right|\left| {\tau (u)} \right|\big){\rm{dvol_h}}\\
 &+ \int_{\Bbb H^2}O\big(\left| {\nabla \tau (u)} \right|\left| {{\nabla ^2}u} \right|\left| {du} \right|\left| {{\partial _t}u} \right|\big){\rm{dvol_h}}+ \int_{\Bbb H^2}O\big(\left| {\nabla \tau (u)} \right|\left| {{\partial _t}u} \right|{\left| {du} \right|^2}\big){\rm{dvol_h}} \\
 &+ \int_{\Bbb H^2}O\big(\left| {\nabla \tau (u)} \right|\left| {\nabla {\partial _t}u} \right|{\left| {du} \right|^2}\big){\rm{dvol_h}} +\int_{\Bbb H^2}O\big( \left| {\nabla \tau (u)} \right|\left| {{\partial _t}u} \right|{\left| {du} \right|^3}\big){\rm{dvol_h}}\\
 &+ \int_{\Bbb H^2}O\big(\left| {\nabla {\partial _t}u} \right|\left| {\nabla \tau (u)} \right|\big){\rm{dvol_h}}.
 \end{align*}
 Thus we conclude
 \begin{align*}
 &\frac{d}{{dt}}{E_3}(u,{\partial _t}u)\\
 &\le {\left\| {du} \right\|_{L_x^8}}{\left\| {{\partial _t}u} \right\|_{L_x^4}}{\left\| {\nabla {\partial _t}u} \right\|_{L_x^8}}{\left\| {{\nabla ^2}{\partial _t}u} \right\|_{L_x^2}} + {\left\| {\nabla {\partial _t}u} \right\|_{L_x^2}}{\left\| {{\nabla ^2}{\partial _t}u} \right\|_{L_x^2}} \\
 &+ {\left\| {{\nabla ^2}{\partial _t}u} \right\|_{L_x^2}}\left\| {du} \right\|_{L_x^8}^2\left\| {{\partial _t}u} \right\|_{L_x^8}^2 + {\left\| {\nabla du} \right\|_{L_x^6}}\left\| {{\partial _t}u} \right\|_{L_x^6}^2{\left\| {\nabla^2 {\partial _t}u} \right\|_{L_x^2}} \\
 &+ {\left\| {\nabla \tau (u)} \right\|_{L_x^2}}{\left\| {\nabla du} \right\|_{L_x^6}}{\left\| {du} \right\|_{L_x^6}}{\left\| {{\partial _t}u} \right\|_{L_x^6}} + {\left\| {\nabla \tau (u)} \right\|_{L_x^2}}{\left\| {\nabla du} \right\|_{L_x^4}}\left\| {du} \right\|_{L_x^8}^2 \\
 &+ {\left\| {\nabla \tau (u)} \right\|_{L_x^2}}\left\| {du} \right\|_{L_x^{12}}^3{\left\| {{\partial _t}u} \right\|_{L_x^4}} + {\left\| {\nabla \tau (u)} \right\|_{L_x^2}}{\left\| {{\partial _t}u} \right\|_{L_x^6}}{\left\| {\tau (u)} \right\|_{L_x^6}}{\left\| {du} \right\|_{L_x^6}} \\
 &+ {\left\| {\nabla \tau (u)} \right\|_{L_x^2}}{\left\| {{\partial _t}u} \right\|_{L_x^6}}\left\| {du} \right\|_{L_x^8}^2
 + {\left\| {\nabla \tau (u)} \right\|_{L_x^2}}{\left\| {\nabla {\partial _t}u} \right\|_{L_x^2}} + {\left\| {\nabla \tau (u)} \right\|_{L_x^2}}{\left\| {\nabla^2{\partial _t}u} \right\|_{L_x^2}}.
\end{align*}
Hence Young's inequality, Sobolev embedding and (\ref{V6}), (\ref{equil}) give
\begin{align*}
\frac{d}{{dt}}{E_3}(u,{\partial _t}u) \le C{E_3}(u,{\partial _t}u)+C.
\end{align*}
where $C$ depends only on $C_1$ in (\ref{hcxp}).
Thus Gronwall shows
\begin{align*}
{E_3}(u,{\partial _t}u) \le e^{Ct}({E_3}({u_0},{u_1}) + C) .
\end{align*}
If $T_*<\infty$ this contradicts with (\ref{09ijn}).
\end{proof}

\subsection{Geometric identities related to Gauges}
Let $\{e_1(t,x),e_2(t,x)\}$ be an orthonormal frame for $u^*(T\mathbb{H}^2)$. Let $\phi_\alpha=(\psi^1_\alpha,\psi^2_\alpha)$ for $\alpha=0,1,2$ be the components of $\partial_{t,x}u$ in the frame $\{e_1,e_2\}$, i.e.,
$$
\phi_\alpha^j = \left\langle {{\partial _\alpha}u,{e_j}} \right\rangle.
$$
For given $\Bbb R^2$-valued function $\phi$ defined on $[0,T]\times\Bbb H^2$, associate $\phi$ with a tangent filed $e\phi$ on $u^*(TN)$ by
\begin{align}\label{poill}
\phi\leftrightarrow e\phi=\sum^2_{j=1}\phi^je_j,
\end{align}
The map $u$ induces a covariant derivative on the trivial boundle $([0,T]\times\Bbb H^2,\Bbb R^2)$ defined by
$$D_\alpha\phi=\partial_\alpha \phi+[A_\alpha]\phi,
$$
where the coefficient matrix is defined by
\begin{align*}
[{A_\alpha}]^k_j = \left\langle {{\nabla_\alpha}{e_j},{e_k}} \right\rangle.
\end{align*}
It is easy to check the torsion free identity
\begin{align}\label{pknb}
D_\alpha\phi_\beta=D_\beta\phi_\alpha,
\end{align}
and the commutator identity
\begin{align}\label{commut1}
e[D_\alpha,D_\beta]\phi=e(\partial_\alpha A_\beta-\partial_\beta A_\alpha)\phi+e[A_\alpha,A_\beta]\phi=\mathbf{R}(u)(\partial_\alpha u, \partial_\beta u)(e\phi).
\end{align}
In the two dimensional case, (\ref{commut1}) can be further simplified to
\begin{align}\label{commut}
e[D_\alpha,D_\beta]\phi=e(\partial_\alpha A_\beta-\partial_\beta A_\alpha)\phi=\mathbf{R}(u)(\partial_\alpha u, \partial_\beta u)(e\phi).
\end{align}
\noindent{\bf{Remark 2.1}}
Sometimes in the same line, we will use both the intrinsic quantities such as ${\bf{R}}(\partial_tu,\partial_su)$ and frame dependent quantities such as $\phi_i$. This will not cause trouble by remembering the correspondence (\ref{poill}). And we define a matrix valued function $\bf{a}\wedge\bf{b}$ by
\begin{align}\label{nb890km}
(\bf{a}\wedge\bf{b})\bf{c}=\left\langle {\bf{a},\bf{c}} \right\rangle\bf{b} - \left\langle {\bf{b},\bf{c}} \right\rangle \bf{a},
\end{align}
where $\bf{a},\bf{b},\bf{c}$ are vectors on $\Bbb R^2$.
It is easy to see (\ref{nb890km}) coincide with (\ref{2.best}) by letting $X=a_1e_1+a_2e_2$, $Y=b_1e_1+b_2e_2$, $Z=c_1e_1+c_2e_2$. Hence (\ref{commut}) can be written as
\begin{align}\label{nb90km}
[D_\alpha,D_\beta]\phi=(\phi_\alpha\wedge\phi_{\beta})\phi
\end{align}

\begin{lemma}
With the notions and notations given above, (\ref{wmap1}) can be written as
\begin{align}\label{jnk}
D_t\phi_t-h^{ij}D_i\phi_j+h^{ij}\Gamma^k_{ij}\phi_k=0¡£
\end{align}
\end{lemma}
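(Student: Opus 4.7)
The plan is to rewrite (\ref{wmap1}) in a coordinate-free geometric form first, and then translate it into frame components via the correspondence $\partial_\alpha u = e\phi_\alpha$ described in (\ref{poill}).

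First, I would recognize that the nonlinear terms in (\ref{wmap1}) are exactly the Christoffel contributions needed to promote ordinary partial derivatives into pullback covariant derivatives. Specifically,
\begin{align*}
\partial_t^2 u^k + \overline{\Gamma}^k_{ij}(u)\partial_t u^i \partial_t u^j &= (\nabla_t \partial_t u)^k,\\
\partial_i\partial_j u^k + \overline{\Gamma}^k_{mn}(u)\partial_i u^m \partial_j u^n &= (\nabla_i \partial_j u)^k,
\end{align*}
where $\nabla$ is the covariant derivative induced by $u$ on $u^*(TN)$. Expanding the base Laplacian as $\Delta_M u^k = h^{ij}\partial_i\partial_j u^k - h^{ij}\Gamma^l_{ij}\partial_l u^k$ and grouping terms rewrites (\ref{wmap1}) in the intrinsic form
\begin{align*}
-\nabla_t\partial_t u + h^{ij}\nabla_i\partial_j u - h^{ij}\Gamma^l_{ij}\partial_l u = 0.
\end{align*}

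Next I would express each factor in the orthonormal frame $\{e_1,e_2\}$. Writing $\partial_\alpha u = e\phi_\alpha$ and using the definition $[A_\alpha]^k_j = \langle\nabla_\alpha e_j,e_k\rangle$, one checks that for any $\mathbb{R}^2$-valued function $\phi$,
\begin{align*}
\nabla_\alpha(e\phi) = e(\partial_\alpha\phi + [A_\alpha]\phi) = e(D_\alpha\phi).
\end{align*}
Applying this with $\phi = \phi_t$ and $\phi = \phi_j$ converts the intrinsic equation above into
\begin{align*}
e\bigl(-D_t\phi_t + h^{ij}D_i\phi_j - h^{ij}\Gamma^l_{ij}\phi_l\bigr) = 0.
\end{align*}
Since $\{e_1,e_2\}$ is a basis of $T_{u(t,x)}\mathbb{H}^2$, the bracket must vanish, which after multiplication by $-1$ is precisely (\ref{jnk}). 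If one prefers to write $D_j\phi_i$ instead of $D_i\phi_j$, the torsion-free identity (\ref{pknb}) gives the symmetry.

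The computation is essentially mechanical; the only point requiring attention is the bookkeeping of the two Christoffel systems ($\Gamma$ on $M$ versus $\overline{\Gamma}$ on $N$) and making sure the $-h^{ij}\Gamma^l_{ij}\partial_l u$ term coming from the base Laplace-Beltrami operator survives the passage to the frame, since it involves no $N$-covariance and therefore translates directly into $h^{ij}\Gamma^l_{ij}\phi_l$. No real obstacle is expected; the lemma is a bookkeeping statement reconciling the coordinate form of the wave map equation with the gauge-theoretic formulation used in the caloric gauge construction.
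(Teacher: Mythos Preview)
Your proposal is correct and follows essentially the same approach as the paper: rewrite (\ref{wmap1}) in the intrinsic form $\nabla_t\partial_t u - h^{ij}\nabla_i\partial_j u + h^{ij}\Gamma^k_{ij}\partial_k u = 0$, then translate each term into frame components via the identity $\nabla_\alpha(e\phi) = e(D_\alpha\phi)$. The paper carries out the frame expansion slightly more explicitly (writing out $\nabla_i(\langle\partial_j u,e_l\rangle e_l)$ term by term), but the logic and all the key steps coincide with yours.
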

\begin{proof}
In the intrinsic formulation, (\ref{wmap1}) can be written as
\begin{align*}
{\nabla _t}{\partial _t}u - \left( {{\nabla _{{x_i}}}{\partial _{{x_j}}}u - {u_*}({\nabla _{\frac{\partial}{\partial x_i}}}\frac{\partial}{\partial {x_j}})} \right){h^{ij}} = 0.
\end{align*}
Expanding $\nabla_i\partial_j u$ and $u_*(\nabla_i\partial_j)$ by the frame $\{e_i\}^2_{i=1}$ yields
\begin{align*}
&{h^{ij}}{\nabla _i}{\partial _j}u - {h^{ij}}{u_*}({\nabla _{\frac{\partial }{{\partial {x_i}}}}}\frac{\partial }{{\partial {x_j}}}) = \sum\nolimits_{l =1}^2{{h^{ij}}} {\nabla _i}\left( {\left\langle {{\partial _j}u,{e_l}} \right\rangle {e_l}} \right) - \Gamma _{i,j}^k{h^{ij}}{\partial _k}u \\
&= {h^{ij}}\left( {{\partial _i}\psi _j^p{e_p} + [{A_i}]_l^p\psi _j^l{e_p}} \right) - \Gamma _{i,j}^k{h^{ij}}\psi _k^l{e_l} = e{h^{ij}}\left( {{D_i}{\phi _j}} \right) - e\Gamma _{i,j}^k{h^{ij}}{\phi _k}¡£
\end{align*}
And $\nabla_t\partial_tu$ is expanded as
\begin{align*}
{\nabla _t}{\partial _t}u = \sum\nolimits_{l = 1}^2 {{\nabla _t}\left( {\left\langle {{\partial _t}u,{e_l}} \right\rangle {e_l}} \right)}  = \left( {{\partial _t}\phi _0^p{e_p} + [{A_0}]_l^p\phi _0^l{e_p}} \right) = e\left( {{D_0}{\phi _0}} \right).
\end{align*}
Hence (\ref{jnk}) follows.
\end{proof}

\section{Caloric Gauge}
Denote the space  $C([0,T];\bf{H}^3\times\bf{H}^2)$ by $\mathcal{X}_T$.
The caloric gauge was first introduced by Tao \cite{Tao4} for the wave maps from $\Bbb R^{2+1}$ to $\mathbb{H}^n$. We give the definition of the caloric gauge in our setting.
\begin{definition}\label{pp}
Let  $u(t,x):[0,T]\times \mathbb{H}^2\to \mathbb{H}^2$ be a solution of (\ref{wmap1}) in $\mathcal{X}_T$. Suppose that the heat flow initiated from $u_0$ converges to a harmonic map $Q:\mathbb{H}^2\to \mathbb{H}^2$. Then for a given orthonormal frame $\Xi(x)\triangleq\{\Xi_j(Q(x))\}^2_{j=1}$ which spans the tangent space $T_{Q(x)}\mathbb{H}^2$ for any $x\in \mathbb{H}^2$, by saying a caloric gauge we mean a tuple consisting of a map  $\widetilde{u}:\Bbb R^+\times [0,T]\times\mathbb{H}^2\to\Bbb H^2$ and an orthonormal frame $\Omega\triangleq\{\Omega_j(\widetilde{u}(s,t,x))\}^2_{j=1}$ such that
\begin{align}\label{muqi}
\left\{ \begin{array}{l}
{\partial _s}\widetilde{u}= \tau (\widetilde{u}) \\
{\nabla _s}{\Omega _j} = 0 \\
\mathop {\lim }\limits_{s \to \infty } {\Omega _j} = {\Xi _j} \\
\end{array} \right.
\end{align}
where the convergence of frames is defined by
\begin{align}\label{convergence}
\left\{ \begin{array}{l}
 \mathop {\lim }\limits_{s \to \infty } \widetilde{u}(s,t,x) = Q(x) \\
 \mathop {\lim }\limits_{s \to \infty } \left\langle {{\Omega _i}(s,t,x),{\Theta _j}(\widetilde{u}(s,t,x))} \right\rangle  = \left\langle {{\Xi _i}(Q(x)),{\Theta _j}(Q(x))} \right\rangle  \\
 \end{array} \right.
\end{align}
\end{definition}

The remaining part of this section is devoted to the existence of the caloric gauge.
\subsection{Warming up for the heat flows}
In this subsection, we prove the estimates needed for the existence of the caloric gauge and the bounds for connection coefficients.

The equation of the heat flow is given by
\begin{align}\label{8.29.1}
\left\{ \begin{array}{l}
 {\partial _s}u = \tau (u) \\
 u(0,x)= v(x) \\
 \end{array} \right.
\end{align}
The energy density $e$ is defined by
$$
e(u)=\frac{1}{2}|du|^2.
$$

The following lemma is due to Li, Tam \cite{LT}. (\ref{VI4}), (\ref{uu}) are proved in \cite{LZ}.
\begin{lemma}\label{8.44}
Given initial data $v:\Bbb H^2\to\Bbb H^2$ with bounded energy density, suppose that $\tau(v)\in L^p_x$ for some $p>2$ and the image of $\Bbb H^2$ under the map $v$ is contained in a compact subset of $\Bbb H^2$. Then the heat flow equation (\ref{8.29.1}) has a global solution $u$. Moreover for some $K,C>0$, we have
\begin{align}
(\partial_s-\Delta)|du|^2+2|\nabla du|^2&\le K|du|^2\label{8.4}\\
(\partial_s-\Delta)|\partial_s u|^2+2|\nabla \partial_su|^2&\le 0\label{8.3}\\
(\partial_s-\Delta)|\partial_s u|&\le 0\label{VI4}\\
(\partial_s-\Delta)(|du|e^{-Cs})&\le 0.\label{uu}
\end{align}
\end{lemma}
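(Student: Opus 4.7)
The plan is to derive the four inequalities from Bochner--Weitzenb\"ock identities, using that the Ricci tensor of the source $\mathbb{H}^2$ equals $-h$ and that the Riemann tensor of the target is given explicitly by (\ref{2.best}). Global existence of the heat flow is inherited directly from Li--Tam \cite{LT} under the stated hypotheses (bounded energy density, $\tau(v)\in L^p$, compact image), so the analytic task reduces to the four pointwise differential inequalities.

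For (\ref{8.4}), I commute $\nabla_s$ through the connection Laplacian on $du$ using $\partial_s u = \tau(u)$; the resulting Eells--Sampson identity reads
\begin{align*}
(\partial_s - \Delta)|du|^2 + 2|\nabla du|^2 = 2\sum_i \langle du(\mathrm{Ric}^M(e_i)), du(e_i)\rangle - 2\sum_{i,j}\langle \mathbf{R}(du(e_i), du(e_j))du(e_j), du(e_i)\rangle.
\end{align*}
The first sum is $-2|du|^2$ since $\mathrm{Ric}^M = -h$, while (\ref{2.best}) expands the second sum to $2\sum_{i,j}(|du(e_i)|^2|du(e_j)|^2 - \langle du(e_i), du(e_j)\rangle^2)$, bounded by $2|du|^4$. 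Absorbing $|du|^4$ into $K|du|^2$ requires an a priori $L^\infty$ bound on $|du|$, which is part of the Li--Tam output: one runs a maximum-principle argument on $|du|^2$ itself, using the boundedness of the initial energy density and the compactness of the image. For (\ref{8.3}), I derive the Weitzenb\"ock identity $\nabla_s \partial_s u - \Delta_{u^*TN}\partial_s u = h^{ij}\mathbf{R}(\partial_s u, \partial_i u)\partial_j u$ by commuting $\nabla_s$ with the spatial covariant derivatives; pairing against $\partial_s u$, the pair-exchange symmetry of $\mathbf{R}$ together with (\ref{2.best}) reduces the curvature contribution to $2\sum_i(\langle du(e_i), \partial_s u\rangle^2 - |du(e_i)|^2|\partial_s u|^2) \le 0$, so here the sign is \emph{favorable} and (\ref{8.3}) follows directly.

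Finally, (\ref{VI4}) and (\ref{uu}) are consequences of Kato's inequality (\ref{wusijue3}) applied to (\ref{8.3}) and (\ref{8.4}) respectively. Writing $(\partial_s - \Delta)|\partial_s u|^2 = 2|\partial_s u|(\partial_s - \Delta)|\partial_s u| - 2|\nabla|\partial_s u||^2$, dividing by $2|\partial_s u|$ and using $|\nabla |\partial_s u||^2 \le |\nabla \partial_s u|^2$ yields (\ref{VI4}); the analogous manipulation applied to (\ref{8.4}) gives $(\partial_s - \Delta)|du| \le (K/2)|du|$, and multiplying by $e^{-Cs}$ with $C = K/2$ produces (\ref{uu}). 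The main obstacle is the unfavorable sign of the target curvature term in (\ref{8.4}): because $\langle \mathbf{R}(X,Y)Y, X\rangle \le 0$ on $\mathbb{H}^2$, the antisymmetric rearrangement produces $+|du|^4$ rather than $-|du|^4$, and it is precisely here that the a priori $L^\infty$ bound on $|du|$ (and thus the full Li--Tam machinery) is indispensable.
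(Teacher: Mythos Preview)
Your arguments for (\ref{8.3}), (\ref{VI4}), and (\ref{uu}) are correct. However, the Bochner identity you write for (\ref{8.4}) has both curvature terms with the wrong sign. The correct Eells--Sampson formula reads
\[
(\partial_s - \Delta)|du|^2 + 2|\nabla du|^2 \;=\; -2\sum_i\langle du(\mathrm{Ric}^M(e_i)),du(e_i)\rangle \;+\; 2\sum_{i,j}\langle\mathbf{R}(du(e_i),du(e_j))du(e_j),du(e_i)\rangle,
\]
which you can check against the scalar target $N=\mathbb{R}$: there the right side must reduce to $-2\,\mathrm{Ric}^M(\nabla u,\nabla u)$ by the classical Bochner formula $\tfrac{1}{2}\Delta|\nabla f|^2 = |\nabla^2 f|^2 + \langle\nabla f,\nabla\Delta f\rangle + \mathrm{Ric}(\nabla f,\nabla f)$. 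With $\mathrm{Ric}^M=-h$ the first term on the right is $+2|du|^2$, and by (\ref{2.best}) the second term equals
\[
2\sum_{i,j}\bigl(\langle du(e_i),du(e_j)\rangle^2 - |du(e_i)|^2|du(e_j)|^2\bigr)\;\le\;0.
\]
Hence (\ref{8.4}) holds directly with $K=2$: the target curvature carries the \emph{favorable} sign here, exactly as in your (correct) treatment of (\ref{8.3}), and no a priori $L^\infty$ bound on $|du|$ is needed. The ``main obstacle'' you describe does not exist; your sign-flipped version happens to pass the identity-map sanity check only because the two sign errors cancel there.

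The paper itself does not prove this lemma but cites Li--Tam \cite{LT} for (\ref{8.4})--(\ref{8.3}) and \cite{LZ} for (\ref{VI4})--(\ref{uu}); your Kato-inequality derivation of the latter two is the intended route.
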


Consider the heat flow from $\mathbb{H}^2$ to $\mathbb{H}^2$ with a parameter
\begin{align}\label{8.29.2}
\left\{ \begin{array}{l}
 {\partial _s}\widetilde{u} = \tau (\widetilde{u}) \\
 \widetilde{u}(s,t,x) \upharpoonright_{s=0}= u(t,x) \\
 \end{array} \right.
\end{align}

We will give two types of estimates of $\nabla^k\partial_s\widetilde{u},\nabla^k\partial_x\widetilde{u}$ in the following. One is the decay of $\|\nabla^k\partial_s\widetilde{u}\|_{L^{2}_x}$ as $s\to\infty$ which can be easily proved via energy arguments. The other is the global boundedness of $\|\partial_x\widetilde{u}\|_{L^{\infty}_x}$ away from $s=0$ and the decay of $\|\nabla^k\partial_s\widetilde{u}\|_{L^{\infty}_x}$ as $s\to\infty$, both of which need additional efforts. And we will prove the decay estimates with respect to $s$ for $\|\partial_t\widetilde{u}\|_{{L^{\infty}_x}\bigcap L^2_x}$, which is the key integrability gain to compensate the loss of decay of $\partial_x\widetilde{u}$. We start with the estimate of $\|d\widetilde{u}\|_{L^{\infty}_x}$ which is the cornerstone for all other estimates.

\begin{remark}\label{ki78}
The following inequality which can be verified by Moser iteration is known in the heat flow literature:
If $v$ is a nonnegative function satisfying
\begin{align*}
\partial_t v-\Delta v\le 0,
\end{align*}
then for $t\ge1$,
\begin{align*}
v(x,t)\le \int^t_{t-1}\int_{B(x,1)}v(y,s){\rm{dvol_y}}ds.
\end{align*}
\end{remark}

Introduce the norm:
\begin{align}\label{ytgvfre}
\|u(t,x)\|_{\mathcal{X}_T}=&\|\nabla du\|_{C([0,T];L^2_x)}+\|\nabla \partial_tu\|_{C([0,T];L^2_x)}\nonumber\\
&+\| du\|_{C([0,T];L^2_x)}+\| \partial_tu\|_{C([0,T];L^2_x)}.
\end{align}
Trivial applications of Remark \ref{ki78}, (\ref{uu}) and the non-increasing of the energy along the heat flow give the bounds for $\|d\widetilde{u}\|_{L^{\infty}_x}$. See also \cite{LZ} for another proof.
\begin{lemma}\label{density}
Let $(u,\partial_tu)$ solve $(\ref{wmap1})$ in $\mathcal{X}_T$ (see (\ref{ytgvfre}) ) with $\|u\|_{\mathcal{X}_T}\le M$. If $\widetilde{u}$ is the solution to $(\ref{8.29.2})$ with initial data $u(t,x)$, then we have uniformly for $t\in[0,T]$, $s\in[1,\infty)$
\begin{align}
\left\| d\widetilde{u}(s,t,x) \right\| _{L_x^\infty }&\lesssim \left\| d u(t,x) \right\|_{L_x^2},\label{3.14a}
\end{align}
\end{lemma}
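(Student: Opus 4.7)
The plan is to combine the parabolic mean-value inequality for subsolutions (Remark 3.1) with the differential inequality (2.23), $(\partial_s-\Delta)(|d\widetilde u|e^{-Cs})\le 0$, and the standard monotonicity of energy along the harmonic map heat flow, which is immediate from integrating (2.21): $\|d\widetilde u(s,t,\cdot)\|_{L^2_x}^2 \le \|du(t,\cdot)\|_{L^2_x}^2$ for all $s\ge 0$.

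First I would set $v(s,x) := |d\widetilde u(s,t,x)|e^{-Cs}$ with $t$ frozen. By (2.23), $v$ is a nonnegative subsolution of the heat equation on $[0,\infty)\times\mathbb H^2$, so Remark 3.1 yields, for any $s\ge 1$,
\begin{equation*}
v(s,x)\ \lesssim\ \int_{s-1}^{s}\!\int_{B(x,1)} v(\sigma,y)\,{\rm dvol}_y\,d\sigma.
\end{equation*}
Multiplying through by $e^{Cs}$ and using $0\le s-\sigma\le 1$ to absorb the factor $e^{C(s-\sigma)}\le e^{C}$ into the implicit constant, I obtain
\begin{equation*}
|d\widetilde u(s,t,x)|\ \lesssim\ \int_{s-1}^{s}\!\int_{B(x,1)} |d\widetilde u(\sigma,t,y)|\,{\rm dvol}_y\,d\sigma.
\end{equation*}

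Next I would apply Cauchy--Schwarz in the spatial integral, using that $\mathbb H^2$-balls of unit radius have bounded volume, to get
\begin{equation*}
\int_{B(x,1)} |d\widetilde u(\sigma,t,y)|\,{\rm dvol}_y\ \lesssim\ \|d\widetilde u(\sigma,t,\cdot)\|_{L^2_x}.
\end{equation*}
Then invoking the energy monotonicity along the heat flow gives $\|d\widetilde u(\sigma,t,\cdot)\|_{L^2_x}\le \|du(t,\cdot)\|_{L^2_x}$ for every $\sigma\ge 0$. Combining these with the time-integral on $[s-1,s]$ of length one produces exactly the desired estimate
\begin{equation*}
\|d\widetilde u(s,t,\cdot)\|_{L^\infty_x}\ \lesssim\ \|du(t,\cdot)\|_{L^2_x},
\end{equation*}
uniformly in $s\ge 1$ and $t\in[0,T]$.

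There is no real obstacle here beyond checking that the constant $C$ appearing in (2.23) does not deteriorate the estimate; this is handled automatically because the shift by $e^{-Cs}$ inside $v$ is compensated by multiplying back by $e^{Cs}$, and the one-unit time interval $[s-1,s]$ keeps the ratio $e^{C(s-\sigma)}$ bounded by $e^C$ independently of $s$. Everything else is standard: the mean value inequality of Remark 3.1 is tailored for exactly this situation, and the $L^2$ monotonicity is the basic energy identity for the harmonic map heat flow already recorded in Lemma 3.1.
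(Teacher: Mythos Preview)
Your proof is correct and follows precisely the approach sketched in the paper, which states that the bound follows from ``trivial applications of Remark~\ref{ki78}, (\ref{uu}) and the non-increasing of the energy along the heat flow.'' You have correctly filled in the three steps the paper leaves implicit: applying the mean-value inequality to the subsolution $|d\widetilde u|e^{-Cs}$, absorbing the harmless factor $e^{C(s-\sigma)}\le e^{C}$ over the unit time interval, and passing from the $L^1$ ball integral to the $L^2$ energy via Cauchy--Schwarz and energy monotonicity. One small remark: the energy monotonicity $\|d\widetilde u(\sigma)\|_{L^2_x}\le\|du(t)\|_{L^2_x}$ does not literally follow from integrating the Bochner-type inequality you cite (which carries a $K|d\widetilde u|^2$ term on the right), but rather from the direct energy identity $\frac{d}{ds}\mathcal E_1(\widetilde u)=-\|\tau(\widetilde u)\|_{L^2}^2\le 0$, recorded later in the paper as (\ref{V1}); this is a citation issue only and does not affect your argument.
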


The decay of $\|\nabla^k\partial_s\widetilde{u}\|_{L^2_x}$ follows from an energy argument and the bound of the energy density provided by (\ref{3.14a}).
\begin{lemma}\label{chen2222}
Let $(u,\partial_tu)\in \mathcal{X}_T$ with $\|u\|_{\mathcal{X}_T}\le M$, then for some universal constant $\delta>0$ the solution $\widetilde{u}(s,t,x)$ to heat flow (\ref{8.29.2}) satisfies
\begin{align}
\|\partial_s\widetilde{u}(s,t,x)\|_{L^2_x}&\lesssim e^{-\delta s}MC(M),  \mbox{  }{\rm{for}}\mbox{  }s>0\label{f41}\\
\|\nabla\partial_s\widetilde{u}(s,t,x)\|_{L^2_x}&\lesssim e^{-\delta s}MC(M), \mbox{  }{\rm{for}}\mbox{  }s\ge2\label{f42}\\
\int^{\infty}_0\|\nabla\partial_s\widetilde{u}(s,t,x)\|^2_{L^2_x}ds&\lesssim MC(M).\label{f40}
\end{align}
for all $t\in[0,T]$. The constant $C(M)$ grows polynomially as $M$ grows.
\end{lemma}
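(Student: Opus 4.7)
\textbf{Proof plan for Lemma \ref{chen2222}.}

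\textit{Step 1: $L^2$-decay of $\partial_s\widetilde{u}$ via spectral gap.} I would start from the pointwise Bochner inequality \eqref{8.3} of Lemma \ref{8.44}, $(\partial_s-\Delta)|\partial_s\widetilde u|^2+2|\nabla\partial_s\widetilde u|^2\le 0$. Integrating over $\mathbb{H}^2$ and using that $\int_{\mathbb{H}^2}\Delta f\,\mathrm{dvol}_h=0$ for sufficiently decaying $f$, I obtain
\begin{align*}
\frac{d}{ds}\|\partial_s\widetilde u(s)\|_{L^2_x}^2+2\|\nabla\partial_s\widetilde u(s)\|_{L^2_x}^2\le 0.
\end{align*}
The spectral gap $\tfrac14$ of $-\Delta$ on $\mathbb H^2$, equivalent to the Poincar\'e-type inequality \eqref{{uv111}} (with constant $4$), then gives $2\|\nabla\partial_s\widetilde u\|_{L^2}^2\ge\tfrac12\|\partial_s\widetilde u\|_{L^2}^2$, whence the Gronwall-type bound $\|\partial_s\widetilde u(s)\|_{L^2_x}^2\le e^{-s/2}\|\partial_s\widetilde u(0)\|_{L^2_x}^2$.

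\textit{Step 2: Controlling the initial datum.} At $s=0$, $\partial_s\widetilde u(0,t,x)=\tau(u(t,x))$, and the pointwise bound $|\tau(u)|^2=|h^{ij}\nabla_i\partial_j u|^2\le 2|\nabla du|^2$ (Cauchy--Schwarz in 2D) yields $\|\tau(u(t))\|_{L^2_x}\lesssim\|\nabla du(t)\|_{L^2_x}$. Passing from the extrinsic assumption $\|u\|_{\mathcal X_T}\le M$ to the intrinsic Sobolev norm via the equivalence Lemma \ref{new} (applied with $k=2$) gives $\|\nabla du(t)\|_{L^2_x}\le C(R_0,M)M$. Therefore
$$\|\partial_s\widetilde u(s,t)\|_{L^2_x}\le e^{-s/4}C(M)M,\qquad s\ge 0,$$
proving \eqref{f41} with $\delta=\tfrac14$.

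\textit{Step 3: The integrated bound \eqref{f40}.} Integrating the differential inequality of Step 1 from $s=0$ to $s=\infty$ yields $\int_0^\infty\|\nabla\partial_s\widetilde u(s)\|_{L^2_x}^2\,ds\le\tfrac12\|\tau(u(t))\|_{L^2_x}^2\le C(M)M$, where the final bound is achieved by absorbing the extra polynomial factor into a larger $C(M)$ (permissible since the statement allows $C(M)$ to grow polynomially).

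\textit{Step 4: Pointwise gradient bound \eqref{f42} for $s\ge 2$.} The tangent field $v=\partial_s\widetilde u$ satisfies the Jacobi-type heat equation $\partial_s v=\Delta v+\mathbf R(v,\partial_i\widetilde u)\partial_i\widetilde u$ (derived from commuting $\nabla_s$ past $\nabla_i$, using the wave map commutator identity \eqref{commut1} and torsion-freeness). For $s\ge 1$, Lemma \ref{density} provides the uniform bound $\|d\widetilde u(s,t)\|_{L^\infty_x}\lesssim \|du(t)\|_{L^2_x}\le C(M)$, so the curvature term is pointwise bounded by $C(M)|v|$. A space-integrated energy estimate for $\nabla v$, using $[\nabla,\Delta]$ Ricci terms and $\|d\widetilde u\|_{L^\infty}\le C(M)$, yields
\begin{align*}
\frac{d}{ds}\|\nabla v\|_{L^2_x}^2\le -\tfrac14\|\nabla v\|_{L^2_x}^2+C(M)^2\|v\|_{L^2_x}^2
\end{align*}
(after absorbing a Young-type cross term). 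Inserting the decay $\|v(s)\|_{L^2}^2\lesssim e^{-s/2}M^2C(M)^2$ proved in Step 2 and multiplying by the integrating factor $e^{s/4}$ gives, after integration from $s=1$ (where $\|\nabla v(1)\|_{L^2}^2$ is controlled by Step 3 plus a mean-value argument) to $s$, the desired bound $\|\nabla\partial_s\widetilde u(s)\|_{L^2_x}^2\lesssim e^{-s/4}M^2 C(M)^4$, which is \eqref{f42} after redefining $\delta$ and $C(M)$.

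\textit{Main obstacle.} The first two bounds are straightforward once one has the pointwise Bochner formula and the spectral gap. The real work is in \eqref{f42}: deriving the Jacobi equation for $\partial_s\widetilde u$ in a sufficiently clean form, checking that all curvature-type terms arising from the commutator $[\nabla_s,\Delta]$ on $\widetilde u^*T\mathbb H^2$ are dominated by $C(M)|v|$ or $C(M)|\nabla v|$, and then running the weighted $e^{s/4}$ integration so that the slow term $C(M)^2\|v\|_{L^2}^2$ does not destroy the exponential decay---this is where the uniform $L^\infty$ bound on $d\widetilde u$ from Lemma \ref{density} is essential.
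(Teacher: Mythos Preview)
Your plan is correct and follows essentially the same strategy as the paper. Steps~1--3 match the paper almost verbatim: the paper also integrates the Bochner inequality \eqref{8.3} and uses the spectral gap (their display (\ref{lao1})) to get \eqref{f41}, and integrates the resulting energy inequality for \eqref{f40}. Step~4 is the paper's argument as well---an energy estimate for $\|\nabla\partial_s\widetilde u\|_{L^2}^2$ combined with the uniform bound $\|d\widetilde u\|_{L^\infty}\le C(M)$ for $s\ge 1$ from Lemma~\ref{density}, then the spectral gap plus Gronwall with exponentially decaying forcing. The only cosmetic difference is that the paper writes out the integrated Bochner formula for $|\nabla\partial_s\widetilde u|^2$ (their (\ref{9xian}), which is the integrated version of Lemma~\ref{9tian}) and bounds the six curvature-type terms one by one, whereas you phrase it as the Jacobi equation for $v=\partial_s\widetilde u$ and summarize the outcome as a clean differential inequality.

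Two small points to tighten. First, the norm $\|u\|_{\mathcal X_T}$ is defined in the paper \emph{intrinsically} (it is $\|\nabla du\|_{L^\infty_tL^2_x}+\|du\|_{L^\infty_tL^2_x}+\cdots$), so your appeal to Lemma~\ref{new} in Step~2 is unnecessary---you already have $\|\tau(u(t))\|_{L^2}\lesssim\|\nabla du(t)\|_{L^2}\le M$ directly. Second, in Step~4 the cubic curvature term $\int h^{ii}\langle(v\wedge\partial_i\widetilde u)v,\nabla_i v\rangle\lesssim\int|v|^2|d\widetilde u||\nabla v|$ does not reduce to $C(M)\|v\|_{L^2}^2$ using only $\|d\widetilde u\|_{L^\infty}$; you also need the pointwise decay $\|\partial_s\widetilde u\|_{L^\infty_x}\lesssim s^{-1}e^{-s/4}M$ (which follows from \eqref{8.3}, maximum principle, and \eqref{huhu899}, exactly as for \eqref{f41}) to make this term absorbable. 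The paper does this implicitly via their (\ref{lao2}). Also, the mean-value argument gives an initial time $s_0\in(1,2)$, not $s=1$, since the $L^\infty$ bound on $d\widetilde u$ is only available for $s\ge 1$; the paper makes this explicit in their (\ref{7si}).
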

\begin{proof}
First we notice that (\ref{8.3}), (\ref{huhu899}) and maximum principle yield
\begin{align}
\|\partial_s\widetilde{u}(s,t,x)\|_{L^{2}_x}&\lesssim e^{-\frac{s}{4}}\|\partial_s\widetilde{u}(0,t,x)\|_{L^{2}_x}\label{lao1}\\
\|\partial_s\widetilde{u}(s,t,x)\|_{L^{\infty}_x}&\lesssim s^{-1}e^{-\frac{s}{4}}\|\partial_s\widetilde{u}(0,t,x)\|_{L^{2}_x}.\label{lao2}
\end{align}
We introduce three energy functionals:
\begin{align*}
{\mathcal{E}_1}(\widetilde{u}) = \frac{1}{2}\int_{{\Bbb H^2}} {{{\left| {\nabla \widetilde{u}} \right|}^2}} dx,\mbox{  }{\mathcal{E}_2}(u) = \frac{1}{2}\int_{{\Bbb H^2}} {{{\left| {{\partial _s}\widetilde{u}} \right|}^2}} {\rm{dvol_h}},\mbox{  }{\mathcal{E}_3}(u) = \frac{1}{2}\int_{{\Bbb H^2}} {{{\left| {\nabla {\partial _s}\widetilde{u}} \right|}^2}} {\rm{dvol_h}}.
\end{align*}
By integration by parts and (\ref{8.29.2}), we have
\begin{align*}
\frac{d}{{ds}}{\mathcal{E}_1}(\widetilde{u}) =  - \int_{{\Bbb H^2}} {{{\left| {\tau (\widetilde{u})} \right|}^2}} {\rm{dvol_h}}.
\end{align*}
Thus the energy is decreasing with respect to $s$ and
\begin{align}\label{V1}
\|d\widetilde{u}\|^2_{L^2_x}+\int^s_0\|\partial_s \widetilde{u}\|^2_{L^2_x}\le \mathcal{E}_1(u_0).
\end{align}
The non-positive sectional curvature assumption with integration by parts yields
$$\|\nabla d\widetilde{u}(s)\|^2_{L^2_x}\le \|\tau(\widetilde{u}(s))\|^2_{L^2_x}+\|d\widetilde{u}\|^2_{L^2_x}
$$
Hence by (\ref{V1}), (\ref{8.29.2}) we conclude
\begin{align}\label{V5}
\|\widetilde{u}\|^2_{\mathfrak{H}^2}+\int^s_0\|\partial_s \widetilde{u}\|^2_{L^2_x}ds\lesssim\|u_0\|^2_{\mathfrak{H}^2}.
\end{align}
Again by (\ref{8.29.2}) and integration by parts, one has
\begin{align}
 \frac{d}{{ds}}{\mathcal{E}_2}(\widetilde{u}) &= \int_{{\Bbb H^2}} {\left\langle {{\nabla _s}{\partial _s}\widetilde{u},{\partial _s}\widetilde{u}} \right\rangle } {\rm{dvol_h}}=\int_{{\Bbb H^2}} {\left\langle {{\nabla _s}\tau (\widetilde{u}),{\partial _s}\widetilde{u}} \right\rangle } {\rm{dvol_h}} \nonumber\\
 &\le -\int_{{\Bbb H^2}} {\left\langle {\nabla {\partial _s}(\widetilde{u}),\nabla {\partial _s}\widetilde{u}} \right\rangle } {\rm{dvol_h}}+ C\int_{{\Bbb H^2}} {{{\left| {d\widetilde{u}} \right|}^2}{{\left| {{\partial _s}\widetilde{u}} \right|}^2}{\rm{dvol_h}}}.\label{j089}
\end{align}
Integrating (\ref{j089}) with respect to $s$ in $(s_1,s_2)$ for any $1<s_1<s_2$, we infer from (\ref{lao2}) and (\ref{V1}) that
\begin{align}
\mathcal{E}_2(\widetilde{u}(s_2))-{\mathcal{E}_2}(\widetilde{u}(s_1))+\int^{s_2}_{s_1}\mathcal{E}_3(\widetilde{u}(s))ds &\lesssim \left\| { d\widetilde{u}} \right\|^2_{L^{\infty}_s{L_x^2}}\int^{s_2}_{s_1}\left\| {{\partial _s}\widetilde{u}} \right\|^2_{L_x^\infty}ds\lesssim M^4e^{-\delta s_1}.
\end{align}
Then by (\ref{lao1}) we have for $1<s<s_1<s_2$ and any $t\in [0,T]$
\begin{align}\label{lao3}
\int^{s_2}_{s_1}\|\nabla\partial_s\widetilde{u}(\tau,t,x)\|^2_{L^2_x}d\tau\lesssim  M^2e^{-\delta s}.
\end{align}
Integration by parts and (\ref{8.29.2}) yield
\begin{align}
 &\frac{d}{{ds}}{\mathcal{E}_3}(\widetilde{u}(s))\nonumber\\
 &\le  -\int_{{\Bbb H^2}} \big({{{\left| {{\nabla ^2}{\partial _s}\widetilde{u}} \right|}^2}}  + C\left| {d \widetilde{u}} \right|\left| {\nabla {\partial _s}\widetilde{u}} \right|{\left| {{\partial _s}\widetilde{u}} \right|^2} + C\left| {d\widetilde{u}} \right|\left| {\nabla {\partial _s}\widetilde{u}} \right|{\left| {{\partial _s}\widetilde{u}} \right|^2} \big){\rm{dvol_h}}\nonumber\\
 &+ C\int_{\Bbb H^2}\big({\left| {d\widetilde{u}} \right|^3}\left| {{\partial _t}\widetilde{u}} \right|\left| {\nabla {\partial _t}\widetilde{u}} \right| + C\left| {{\nabla ^2}\widetilde{u}} \right|\left| {d\widetilde{u}} \right|\left| {{\partial _t}\widetilde{u}} \right|\left| {\nabla {\partial _t}\widetilde{u}} \right| + C{\left| {{\partial _t}\widetilde{u}} \right|^2}{\left| {d \widetilde{u}} \right|^4}\big){\rm{dvol_h}} \nonumber\\
 &+ C\int_{\Bbb H^2}\big({\left| {\nabla {\partial _t}\widetilde{u}} \right|^2}{\left| {d\widetilde{ u}} \right|^2} + C{\left| {d\widetilde{u}} \right|^2}\left| {{\nabla ^2}{\partial _t}\widetilde{u}} \right|\left| {{\partial _t}\widetilde{u}} \right|\big){\rm{dvol_h}}.\label{9xian}
\end{align}
By H\"older, (\ref{lao3}), (\ref{lao2}), we see for $1<s<s_1<s_2$ and any $t\in [0,T]$
\begin{align*}
&\int_{{s_1}}^{{s_2}} {\int_{{\Bbb H^2}} {\left| {d\widetilde{u}} \right|} \left| {\nabla {\partial _s}\widetilde{u}} \right|{{\left| {{\partial _s}\widetilde{u}} \right|}^2}{\rm{dvol_h}}} ds\\
&\lesssim \left\| {{\partial _s}\widetilde{u}} \right\|_{L_s^4L_x^\infty ([{s_1},{s_2}] \times {\Bbb H^2})}^2{\left\| {d\widetilde{u}} \right\|_{L_s^\infty L_x^2([{s_1},{s_2}] \times {\Bbb H^2})}}{\left\| {\nabla {\partial _s}\widetilde{u}} \right\|_{L_s^2L_x^2([{s_1},{s_2}] \times {\Bbb H^2})}}\\
&\lesssim M^4{e^{ - \delta {s_1}}}.
\end{align*}
Similarly we have from (\ref{lao3}), (\ref{lao2}), (\ref{3.14a}) that for $1<s<s_1<s_2$ and any $t\in [0,T]$
\begin{align*}
 &\int_{{s_1}}^{{s_2}} \int_{{\Bbb H^2}} {{{\left| {d\widetilde{u}} \right|}^3}} \left| {\nabla {\partial _s}\widetilde{u}} \right|\left| {{\partial _s}\widetilde{u}} \right|{\rm{dvol_hds}}\\
 &\lesssim \left\| {d\widetilde{u}} \right\|_{L_s^\infty L_x^\infty ([{s_1},{s_2}] \times {\Bbb H^2})}^3{\left\| {\nabla {\partial _s}\widetilde{u}} \right\|_{L_s^2L_x^2([{s_1},{s_2}] \times {\Bbb H^2})}}{\left\| {{\partial _s}\widetilde{u}} \right\|_{L_s^2L_x^2([{s_1},{s_2}] \times {\Bbb H^2})}} \\
 &\lesssim {M^5}{e^{ - \delta {s_1}}}.
\end{align*}
And similarly we obtain for $1<s<s_1<s_2$ and all $t\in [0,T]$
\begin{align*}
&\int_{{s_1}}^{{s_2}} \int_{{\Bbb H^2}} {\left| {\nabla d\widetilde{u}} \right|\left| {d\widetilde{u}} \right|\left| {{\partial _s}\widetilde{u}} \right|\left| {\nabla {\partial _s}\widetilde{u}} \right|} {\rm{dvol_hds}}\\
&\lesssim {\left\| {d\widetilde{u}} \right\|_{L_s^\infty L_x^\infty }}{\left\| {{\partial _s}\widetilde{u}} \right\|_{L_s^2L_x^\infty }}{\left\| {\nabla d\widetilde{u}} \right\|_{L_s^\infty L_x^2}}{\left\| {\nabla {\partial _s}\widetilde{u}} \right\|_{L_s^2L_x^2}} \\
&\lesssim {M^4}{e^{ - \delta {s_1}}},
\end{align*}
where the integrand domains are $[{s_1},{s_2}] \times {\Bbb H^2}$.
The remaining three terms in (\ref{9xian}) are easier to bound. In fact, Sobolev embedding, (\ref{lao2}) and (\ref{V5}) show
\begin{align*}
\int_{{s_1}}^{{s_2}} {\int_{{\Bbb H^2}} {{{\left| {\nabla\widetilde{ u}} \right|}^4}} {{\left| {{\partial _s}\widetilde{u}} \right|}^2}} {\rm{dvol_hds}} \le \left\| {{\partial _s}\widetilde{u}} \right\|_{L_s^2L_x^\infty}^2\left\| {{\nabla d}\widetilde{u}} \right\|_{L_s^\infty L_x^2}^4 \le {M^6}{e^{ - \delta {s_1}}}.
\end{align*}
Similarly we obtain
\begin{align*}
\int_{{s_1}}^{{s_2}} {\int_{{\Bbb H^2}} {{{\left| {d\widetilde{u}} \right|}^2}} {{\left| {\nabla {\partial _s}\widetilde{u}} \right|}^2}} {\rm{dvol_hds}} \le \left\| {\nabla {\partial _s}\widetilde{u}} \right\|_{L_s^2L_x^2}^2\left\| {d\widetilde{u}} \right\|_{L_s^\infty L_x^\infty }^2 \le {M^4}{e^{ - \delta {s_1}}}.
\end{align*}
The last remaining term in (\ref{9xian}) is absorbed by the negative term on the left. Indeed, for sufficiently small $\eta>0$
\begin{align*}
 &\int_{{s_1}}^{{s_2}} {\int_{{\Bbb H^2}} {{{\left| {d\widetilde{u}} \right|}^2}} \left| {{\nabla ^2}{\partial _s}\widetilde{u}} \right|\left| {{\partial _s}\widetilde{u}} \right|} {\rm{dvol_hds}} \\
 &\lesssim \eta \left\| {{\nabla ^2}{\partial _s}\widetilde{u}} \right\|_{L_s^2L_x^2([{s_1},{s_2}] \times {\Bbb H^2})}^2+ {\eta ^{ - 1}}\left\| {d\widetilde{u}} \right\|_{L_s^\infty L_x^\infty}^4\left\| {{\partial _s}\widetilde{u}} \right\|_{L_s^2L_x^2}^2 \\
 &\lesssim \eta \left\| {{\nabla ^2}{\partial _s}\widetilde{u}} \right\|_{L_s^2L_x^2}^2 + {\eta ^{ - 1}}{M^6}{e^{ - \delta {s_1}}}.
\end{align*}
(\ref{V5}) implies that there exists $s_0\in(1,2)$ such that
\begin{align}
\int_{{\Bbb H^2}} {{\left| {\nabla {\partial _s}u} \right|}^2}({s_0},t,x) {\rm{dvol_h}} \le {M^2}.
\end{align}
Hence applying (\ref{{uv111}}) and Gronwall inequality to (\ref{9xian}), we have for $s>s_0$
\begin{align}
&\int_{{\Bbb H^2}} {\left| {\nabla {\partial _s}\widetilde{u}} \right|}^2(s,t,x){\rm{dvol_h}}\nonumber\\
&\lesssim e^{ - \delta (s - {s_0})}\int_{{\Bbb H^2}} {{\left| {\nabla {\partial _s}\widetilde{u}} \right|}^2}({s_0},t,x) {\rm{dvol_h}} + MC(M)\int_{s_0}^s e^{ - \delta (s - \tau )}e^{ - \delta \tau }d\tau  \nonumber\\
&\lesssim MC(M)\left( e^{ - \delta (s - {s_0})}+ e^{ - \delta s}(s - {s_0})\right).\label{7si}
\end{align}
Since $s_0\in(1,2)$, we have verified $(\ref{f42})$ for $s\ge 2$. (\ref{f40}) follows directly from (\ref{f42}), (\ref{V5}) and integrating (\ref{j089}) with respect to $s$.
\end{proof}

We then prove the pointwise decay of $|\nabla\partial_s\widetilde{u}|$ with respect to $s$.  First we need the Bochner formula for high derivatives of $\widetilde{u}$ along the heat flow. The proof of following four lemmas is direct calculations with the Bochner technique. Considering that the proof is quite standard, we state the results without detailed calculations.
\begin{lemma}\label{9tian}
Let $\widetilde{u}$ be a solution to heat flow equation. Then $|\nabla\partial_s\widetilde{u}|^2$ satisfies
\begin{align}
 {\partial _s}{\left| {\nabla {\partial _s}\widetilde{u}} \right|^2} - {\Delta}{\left| {\nabla {\partial _s}\widetilde{u}} \right|^2} + 2{\left| {{\nabla ^2}{\partial _s}\widetilde{u}} \right|^2} &\lesssim {\left| {\nabla {\partial _s}\widetilde{u}} \right|^2}+ \left| {{\partial _s}\widetilde{u}} \right|{\left| {d\widetilde{u}} \right|^3}\left| {\nabla {\partial _s}\widetilde{u}} \right| \nonumber\\
 &+{\left| {\nabla {\partial _s}\widetilde{u}} \right|^2}{\left| {d\widetilde{u}} \right|^2} + \left| {{\partial _s}\widetilde{u}} \right|\left| {{\nabla }d\widetilde{u}} \right|\left| {\nabla {\partial _s}\widetilde{u}} \right|\left| {d \widetilde{u}} \right|.\label{9tian1}
\end{align}
\end{lemma}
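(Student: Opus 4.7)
The inequality is a pointwise Bochner-type identity, which I would derive by a direct covariant calculation from the heat flow equation.

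First I would differentiate $\partial_s\widetilde u=\tau(\widetilde u)=h^{ij}(\nabla_i\partial_j\widetilde u-\Gamma_{ij}^k\partial_k\widetilde u)$ once in $s$. Using the torsion-free identity $\nabla_s\partial_j\widetilde u=\nabla_j\partial_s\widetilde u$ and the commutator $[\nabla_s,\nabla_i]=\mathbf R(\partial_s\widetilde u,\partial_i\widetilde u)$ from (\ref{2.best}), one obtains the pointwise identity
\begin{align*}
\nabla_s\partial_s\widetilde u = \widetilde\Delta\,\partial_s\widetilde u + h^{jk}\mathbf R(\partial_s\widetilde u,\partial_j\widetilde u)\partial_k\widetilde u,
\end{align*}
where $\widetilde\Delta=h^{ij}(\nabla_i\nabla_j-\Gamma_{ij}^k\nabla_k)$ is the rough Laplacian on sections of $\widetilde u^*(T\mathbb H^2)$.

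Next I would apply $\nabla_i$ to both sides. Commuting $\nabla_s$ past $\nabla_i$ once more produces the extra curvature term $\mathbf R(\partial_s\widetilde u,\partial_i\widetilde u)\partial_s\widetilde u$; the Weitzenbock formula on sections of $\widetilde u^*(T\mathbb H^2)$ commutes $\nabla_i$ with $\widetilde\Delta$ at the cost of a Ricci-tensor term of $\mathbb H^2$ applied to $\nabla_i\partial_s\widetilde u$ plus curvature-of-target terms of schematic shape $\mathbf R(d\widetilde u,d\widetilde u)\nabla\partial_s\widetilde u$ and $\mathbf R(\nabla d\widetilde u,d\widetilde u)\partial_s\widetilde u$; and the Leibniz identity (\ref{2.4best}) distributes $\nabla_i$ through the cubic curvature term, yielding three further trilinear curvature expressions, each carrying one covariant derivative of $\partial_s\widetilde u$ or of $d\widetilde u$. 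Collecting, one obtains a pointwise equation of the schematic form $(\nabla_s-\widetilde\Delta)\nabla_i\partial_s\widetilde u = (\mathrm{Ric}_{\mathbb H^2})\cdot\nabla_i\partial_s\widetilde u + (\text{cubic curvature-of-target terms})$.

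I would then pair this identity with $\nabla_j\partial_s\widetilde u$, contract by $h^{ij}$, and invoke the scalar Bochner identity
\begin{align*}
\Delta|\nabla\partial_s\widetilde u|^2 = 2h^{ij}\langle\widetilde\Delta\nabla_i\partial_s\widetilde u,\nabla_j\partial_s\widetilde u\rangle + 2|\nabla^2\partial_s\widetilde u|^2
\end{align*}
to recast the left-hand side of the desired inequality as $(\partial_s-\Delta)|\nabla\partial_s\widetilde u|^2+2|\nabla^2\partial_s\widetilde u|^2$. Pointwise Cauchy--Schwarz, together with the sharp bound $|\mathbf R(X,Y)Z|\le 2|X||Y||Z|$ on $\mathbb H^2$ and a Young-type absorption of strictly lower-order cubic contributions into the dominant ones, then distributes the curvature contributions into the four groups appearing on the right of (\ref{9tian1}): the Ricci piece gives the linear term $|\nabla\partial_s\widetilde u|^2$; the $\mathbf R(d\widetilde u,d\widetilde u)\nabla\partial_s\widetilde u$ pairing yields $|d\widetilde u|^2|\nabla\partial_s\widetilde u|^2$; the $\mathbf R(\nabla d\widetilde u,d\widetilde u)\partial_s\widetilde u$-type pairings yield $|\partial_s\widetilde u||d\widetilde u||\nabla d\widetilde u||\nabla\partial_s\widetilde u|$; and the remaining cubic curvature contributions, including the $\mathbf R(\partial_s\widetilde u,\partial_i\widetilde u)\partial_s\widetilde u$ term from the second $[\nabla_s,\nabla_i]$ commutator, are absorbed into $|\partial_s\widetilde u||d\widetilde u|^3|\nabla\partial_s\widetilde u|$.

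The only real obstacle is bookkeeping: one must verify that none of the curvature terms in the expansion of $(\nabla_s-\widetilde\Delta)\nabla_i\partial_s\widetilde u$ carries two covariant derivatives of $\partial_s\widetilde u$, since otherwise a single pairing with $\nabla_j\partial_s\widetilde u$ would produce a $\nabla^2\partial_s\widetilde u$ factor on the right-hand side, spoiling the absorption of the coercive $2|\nabla^2\partial_s\widetilde u|^2$ on the left. This is transparent from the explicit Weitzenbock expansion above, because each of the commutators $[\nabla_s,\nabla_i]$ and $[\nabla_i,\widetilde\Delta]$, and the Leibniz expansion of $\nabla_i\mathbf R(\partial_s\widetilde u,\partial_j\widetilde u)\partial_k\widetilde u$, introduces at most one spatial covariant derivative of $\partial_s\widetilde u$.
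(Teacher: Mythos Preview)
Your proposal is correct and is precisely the approach the paper takes: the paper states only that the proof is ``direct calculations with the Bochner technique'' and omits the details, which are exactly the covariant commutator and Weitzenb\"ock computations you outline. One small bookkeeping point: the $\mathbf R(\partial_s\widetilde u,\partial_i\widetilde u)\partial_s\widetilde u$ term from the second $[\nabla_s,\nabla_i]$ commutator, after pairing, produces $|\partial_s\widetilde u|^2|d\widetilde u||\nabla\partial_s\widetilde u|$, which is absorbed into the fourth term $|\partial_s\widetilde u||\nabla d\widetilde u||d\widetilde u||\nabla\partial_s\widetilde u|$ via the pointwise bound $|\partial_s\widetilde u|=|\tau(\widetilde u)|\lesssim|\nabla d\widetilde u|$ rather than into the $|d\widetilde u|^3$ term.
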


\begin{lemma}\label{B1}
Let $\widetilde{u}$ be a solution to heat flow equation, then we have
\begin{align}
 &{\partial _s}{\left| {\nabla d\widetilde{u}} \right|^2} - {\Delta}{\left| {\nabla d\widetilde{u}} \right|^2} + 2{\left| {{\nabla ^2}d\widetilde{u}} \right|^2} \lesssim {\left| {\nabla d\widetilde{u}} \right|^2} + {\left| {d\widetilde{u}} \right|^2}{\left| {\nabla d\widetilde{u}} \right|^2} \nonumber\\
 &+ {\left| {d\widetilde{u}} \right|}^2 + {\left| {d\widetilde{u}} \right|^4}\left| {\nabla d\widetilde{u}} \right|.\label{ion1}
\end{align}
\end{lemma}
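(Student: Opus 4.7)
I will follow the same Bochner--Weitzenb\"ock template as in Lemma \ref{9tian}, now with $B := \nabla d\widetilde u$ playing the role that $\nabla\partial_s\widetilde u$ played there. The computation is a tensor-bundle calculation on $T^*M\otimes T^*M\otimes \widetilde u^{*}T\mathbb H^2$, and it exploits crucially that both the source and target have covariantly constant curvature of modulus $1$.

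First I will derive an evolution equation for $B$ along the heat flow. Starting from $B_{ij}=\nabla_i\partial_j\widetilde u$, the heat equation $\partial_s\widetilde u=\tau(\widetilde u)$, the coordinate commutator $[\partial_s,\partial_j]=0$, and torsion-freeness (giving $\nabla_s\partial_j\widetilde u=\nabla_j\tau$), combined with the Ricci identity for the pullback connection, yield
\begin{align*}
\nabla_s B_{ij}=\nabla_i\nabla_j\tau(\widetilde u)+R^N\big(\tau(\widetilde u),\partial_i\widetilde u\big)\partial_j\widetilde u.
\end{align*}

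Next I will rewrite $\tau=h^{kl}\nabla_k\partial_l\widetilde u$ and commute the pair $\nabla_i\nabla_j$ past $h^{kl}\nabla_k\nabla_l$ in order to isolate $\Delta B_{ij}=h^{kl}\nabla_k\nabla_l B_{ij}$. Each commutation introduces either (i) an $R^M$-contraction on an $M$-index, pointwise $O(1)$ on $\mathbb H^2$, or (ii) an $R^N(d\widetilde u,d\widetilde u)$-contraction on the vertical factor; all $\nabla R^M$ and $\nabla R^N$ contributions vanish because $\mathbb H^2$ is a symmetric space. Collecting and estimating pointwise will give the schematic bound
\begin{align*}
|(\nabla_s-\Delta)B|\lesssim |B|+|d\widetilde u|^2|B|+|d\widetilde u|^3.
\end{align*}
Combining this with the Kato-type identity $(\partial_s-\Delta)|B|^2+2|\nabla B|^2=2\langle B,(\nabla_s-\Delta)B\rangle$ and Young's inequality (splitting $|d\widetilde u|^3|B|\le|d\widetilde u|^2+|d\widetilde u|^4|B|$) will produce exactly the four schematic terms on the right of \eqref{ion1}.

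The main obstacle I foresee is the bookkeeping in the commutation step: one must repeatedly use the symmetry $B_{ij}=B_{ji}$ (coming from torsion-freeness of $d\widetilde u$) to convert objects like $h^{kl}\nabla_k\nabla_i B_{jl}$ into $\Delta B_{ij}$ modulo curvature corrections, and it is easy to lose a sign or to double-count a curvature piece. Beyond this step, everything reduces to pointwise bounds on the hyperbolic curvature tensors and Young's inequality, so no further analytic tools beyond what was already used in the proof of Lemma \ref{9tian} are required.
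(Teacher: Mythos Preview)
Your approach is exactly the Bochner--Weitzenb\"ock computation the paper invokes (the paper omits the details, stating only ``direct calculations with the Bochner technique''), so the strategy is right.

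There is one small slip in the final step. The pointwise splitting you propose,
\[
|d\widetilde u|^{3}\,|\nabla d\widetilde u|\ \le\ |d\widetilde u|^{2}+|d\widetilde u|^{4}\,|\nabla d\widetilde u|,
\]
is \emph{false} in general (take $|d\widetilde u|=1/2$ and $|\nabla d\widetilde u|$ large). Fortunately the step is unnecessary: if you track the commutators more carefully you will see that no standalone $|d\widetilde u|^{3}$ term survives in $(\nabla_s-\Delta)B$. Each correction of the form $R^{M}\!\ast d\widetilde u$ or $R^{N}(d\widetilde u,d\widetilde u)d\widetilde u$ appearing in an intermediate commutator (your $C_1$-type terms) is hit by one further covariant derivative before it contributes to the final expression; since $\nabla R^{M}=\nabla R^{N}=0$ on $\mathbb H^{2}$, that derivative lands on a $d\widetilde u$ factor and produces a $B$. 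Hence the sharp schematic is
\[
|(\nabla_s-\Delta)B|\ \lesssim\ |B|+|d\widetilde u|^{2}|B|,
\]
which via your Kato identity yields
\[
(\partial_s-\Delta)|\nabla d\widetilde u|^{2}+2|\nabla^{2}d\widetilde u|^{2}\ \lesssim\ |\nabla d\widetilde u|^{2}+|d\widetilde u|^{2}|\nabla d\widetilde u|^{2}.
\]
This is stronger than \eqref{ion1}, so the lemma follows. Your anticipated ``main obstacle'' (the index bookkeeping) is precisely where the spurious $|d\widetilde u|^{3}$ crept in; redo that step using $B_{jl}=B_{lj}$ systematically and the issue disappears.
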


\begin{lemma}\label{8zu}
Let $\widetilde{u}$ be a solution to heat flow equation. Then $|\nabla\partial_t\widetilde{u}|^2$ satisfies
\begin{align}
&(\partial_s-\Delta)|\partial_t \widetilde{u}|^2=-2|\nabla \partial_t \widetilde{u}|^2-\mathbf{R}(\widetilde{u})(\nabla \widetilde{u},\partial_t \widetilde{u},\nabla \widetilde{u}, \partial_t \widetilde{u})\le 0.\label{10.127}\\
 &{\partial _s}{\left| {\nabla {\partial _t}\widetilde{u}} \right|^2} - {\Delta}{\left| {\nabla {\partial _t}\widetilde{u}} \right|^2} + 2{\left| {{\nabla ^2}{\partial _t}\widetilde{u}} \right|^2} \lesssim {\left| {\nabla {\partial _t}\widetilde{u}} \right|^2} +  {\left| {{\partial _s}\widetilde{u}} \right|{{\left| {{d}\widetilde{u}} \right|}^2}\left| {\nabla {\partial _t}\widetilde{u}} \right|}\nonumber \\
 &+ {{{\left| {d\widetilde{u}} \right|}^3}\left| {{\partial _t}\widetilde{u}} \right|\left| {\nabla {\partial _t}\widetilde{u}} \right|}  +  {\left| {d\widetilde{u}} \right|\left| {{\partial _t}\widetilde{u}} \right|\left| {\nabla d\widetilde{u}} \right|\left| {\nabla {\partial _t}\widetilde{u}} \right|}+  {{{\left| {\nabla {\partial _t}\widetilde{u}} \right|}^2}{{\left| {d\widetilde{u}} \right|}^2}} .
 \end{align}
 \end{lemma}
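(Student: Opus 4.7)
My plan is to derive both identities by the standard Bochner (Weitzenb\"ock) technique. For any section $T$ of a bundle along $\widetilde u$ one has the scalar identity
\begin{align*}
(\partial_s-\Delta)|T|^2 = 2\langle \nabla_s T,\, T\rangle - 2\langle\Delta_{\mathrm{rough}} T,\,T\rangle - 2|\nabla T|^2,
\end{align*}
where $\Delta_{\mathrm{rough}}=h^{ij}\nabla_i\nabla_j$ is the connection Laplacian on the relevant bundle. The task then reduces to unpacking $(\nabla_s-\Delta_{\mathrm{rough}})T$ using the heat flow equation $\partial_s\widetilde u=\tau(\widetilde u)$, the torsion-free identities $\nabla_s\partial_t\widetilde u=\nabla_t\partial_s\widetilde u$ and $\nabla_i\partial_j\widetilde u=\nabla_j\partial_i\widetilde u$, the curvature formula (\ref{2.best}) together with its derivative (\ref{2.4best}), and the commutator identity $[\nabla_\alpha,\nabla_\beta]V=\mathbf{R}(\partial_\alpha\widetilde u,\partial_\beta\widetilde u)V$ on $\widetilde u^*T\mathbb{H}^2$.

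For (\ref{10.127}) I apply this template with $T=\partial_t\widetilde u$. Writing $\nabla_s\partial_t\widetilde u=\nabla_t\tau(\widetilde u)=h^{ij}\nabla_t\nabla_i\partial_j\widetilde u$ and commuting $\nabla_t$ through $\nabla_i\partial_j$ by the target curvature yields $\nabla_s\partial_t\widetilde u-\Delta_{\mathrm{rough}}\partial_t\widetilde u=h^{ij}\mathbf{R}(\partial_t\widetilde u,\partial_i\widetilde u)\partial_j\widetilde u$. Expanding via (\ref{2.best}) and pairing with $\partial_t\widetilde u$ collapses this to $|\langle d\widetilde u,\partial_t\widetilde u\rangle|^2-|d\widetilde u|^2|\partial_t\widetilde u|^2$, which is nonpositive by Cauchy--Schwarz; this settles both the identity and its sign. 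For the second estimate I apply the template to $T=\nabla\partial_t\widetilde u$, viewed as a section of $T^*\mathbb{H}^2\otimes\widetilde u^*T\mathbb{H}^2$. The key computation is $(\nabla_s-\Delta_{\mathrm{rough}})(\nabla_i\partial_t\widetilde u)$, which generates three groups of commutator terms. First, $[\nabla_s,\nabla_i]\partial_t\widetilde u=\mathbf{R}(\partial_s\widetilde u,\partial_i\widetilde u)\partial_t\widetilde u$ contributes the $|\partial_s\widetilde u||d\widetilde u|^2|\nabla\partial_t\widetilde u|$ term after invoking (\ref{2.best}). Second, differentiating the target curvature term already generated in the first identity via (\ref{2.4best}) yields both the $|d\widetilde u|^3|\partial_t\widetilde u||\nabla\partial_t\widetilde u|$ and the $|d\widetilde u||\partial_t\widetilde u||\nabla d\widetilde u||\nabla\partial_t\widetilde u|$ contributions. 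Third, the commutator $[\nabla_i,\nabla_j]$ acting on the $(1,1)$-tensor $\nabla_k\partial_t\widetilde u$ produces simultaneously a target curvature contribution, which via (\ref{2.best}) gives $|d\widetilde u|^2|\nabla\partial_t\widetilde u|^2$, and a domain Ricci contribution, which on $\mathbb{H}^2$ (constant sectional curvature $-1$) collapses to a multiple of $\nabla_j\partial_t\widetilde u$ and hence produces the $|\nabla\partial_t\widetilde u|^2$ term. The Weitzenb\"ock good term $-2|\nabla^2\partial_t\widetilde u|^2$ coming from the template is kept on the left and never absorbed.

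Once every curvature factor is expanded by (\ref{2.best}) into explicit inner products of $d\widetilde u$, $\partial_t\widetilde u$, $\partial_s\widetilde u$, $\nabla d\widetilde u$, $\nabla\partial_t\widetilde u$, a routine term-by-term Cauchy--Schwarz produces the claimed bound. The principal obstacle I expect is purely combinatorial: keeping track of the many commutator summands generated, not double-counting them, and not letting any of them consume the good $|\nabla^2\partial_t\widetilde u|^2$ term on the left. Since the authors explicitly group these four Bochner-type identities together as standard calculations, I will model my derivation on the analogous arguments carried out for $|\nabla\partial_s\widetilde u|^2$ in Lemma \ref{9tian} and for $|\nabla d\widetilde u|^2$ in Lemma \ref{B1}.
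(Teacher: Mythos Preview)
Your proposal is correct and follows exactly the approach the paper indicates: the authors explicitly state that this lemma (together with Lemmas \ref{9tian}, \ref{B1}, and the one for $|\nabla_t\partial_s\widetilde u|^2$) is proved by ``direct calculations with the Bochner technique'' and omit all details. Your outline---applying the Weitzenb\"ock template to $T=\partial_t\widetilde u$ and then to $T=\nabla\partial_t\widetilde u$, generating the commutator terms via $[\nabla_s,\nabla_i]$, (\ref{2.4best}), and the domain Ricci curvature, and bounding each via (\ref{2.best}) and Cauchy--Schwarz---is precisely that standard calculation, so there is nothing to compare.
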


\begin{lemma}
Let $\widetilde{u}$ be a solution to heat flow equation, then
\begin{align}
&{\partial _s}{\left| {{\nabla _t}{\partial _s}\widetilde{u}} \right|^2} - \Delta {\left| {{\nabla _t}{\partial _s}\widetilde{u}} \right|^2} + 2{\left| {\nabla {\nabla _t}{\partial _s}\widetilde{u}} \right|^2} \lesssim  {\left| {{\nabla _t}{\partial _s}\widetilde{u}} \right|^2}+\left| {\nabla d\widetilde{u}} \right|\left| {{\partial _t}\widetilde{u}} \right|\left|{\nabla _t}{\partial _s}\widetilde{u}\right|\left| {{\partial _s}\widetilde{u}} \right| \nonumber\\
&+ \left| {{\nabla _t}{\partial _s}\widetilde{u}} \right|\left| {\nabla {\partial _s}\widetilde{u}} \right|\left| {{\partial _t}\widetilde{u}} \right|\left| {d\widetilde{u}} \right| + \left| {\nabla {\partial _t}\widetilde{u}} \right|\left| {{\partial _s}\widetilde{u}} \right|\left|{\nabla _t}{\partial _s}\widetilde{u}\right|\left| {d\widetilde{u}} \right|+{\left| {{\nabla _t}{\partial _s}\widetilde{u}} \right|^2}{\left| {d\widetilde{u}} \right|^2} .\label{iconm}
\end{align}
\end{lemma}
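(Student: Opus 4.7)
The plan is to follow the same Bochner-type computation that produced the preceding lemmas, applied now to the mixed section $V \triangleq \nabla_t \partial_s \widetilde{u}$. I would first write
\begin{align*}
(\partial_s - \Delta)|V|^2 = 2\langle \nabla_s V - h^{ij}\nabla_i\nabla_j V, V\rangle - 2|\nabla V|^2,
\end{align*}
so that, after moving $2|\nabla V|^2$ to the left, it suffices to control the heat operator applied to $V$ itself, i.e.\ to show that $\nabla_s V - \Delta V$ is a sum of curvature expressions that match the claimed right-hand side of (\ref{iconm}) after pairing with $V$ and applying Cauchy--Schwarz.

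Next I would reduce $\nabla_s V$ using the heat flow equation $\partial_s\widetilde{u} = \tau(\widetilde{u})$ together with the commutator identity $\nabla_\alpha\nabla_\beta - \nabla_\beta\nabla_\alpha = \mathbf{R}(\partial_\alpha\widetilde{u},\partial_\beta\widetilde{u})$ from (\ref{commut1}). Explicitly,
\begin{align*}
\nabla_s\nabla_t\partial_s\widetilde{u} &= \nabla_t\nabla_s\partial_s\widetilde{u} + \mathbf{R}(\partial_s\widetilde{u},\partial_t\widetilde{u})\partial_s\widetilde{u},\\
\nabla_s\partial_s\widetilde{u} &= \nabla_s\tau(\widetilde{u}) = \Delta\partial_s\widetilde{u} + h^{ij}\mathbf{R}(\partial_s\widetilde{u},\partial_i\widetilde{u})\partial_j\widetilde{u},
\end{align*}
using torsion-freeness $\nabla_s\partial_j\widetilde{u}=\nabla_j\partial_s\widetilde{u}$. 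Similarly one commutes $\nabla_t$ past $\Delta$ to get $\nabla_t\Delta\partial_s\widetilde{u} = \Delta\nabla_t\partial_s\widetilde{u}$ plus curvature commutators of the form $\nabla^i[\mathbf{R}(\partial_t\widetilde{u},\partial_i\widetilde{u})\partial_s\widetilde{u}]$, which by (\ref{2.4best}) expand into terms involving $\nabla d\widetilde{u}$, $\nabla\partial_t\widetilde{u}$, $\nabla\partial_s\widetilde{u}$ each paired with lower order factors. Collecting, the principal terms cancel and one is left with
\begin{align*}
\nabla_s V - \Delta V &= O\!\left(|\partial_s\widetilde{u}||d\widetilde{u}||\nabla\partial_t\widetilde{u}|\right) + O\!\left(|\partial_t\widetilde{u}||d\widetilde{u}||\nabla\partial_s\widetilde{u}|\right) \\
&\quad + O\!\left(|\partial_s\widetilde{u}||\partial_t\widetilde{u}||\nabla d\widetilde{u}|\right) + O\!\left(|d\widetilde{u}|^2 V\right),
\end{align*}
where I use the explicit form $\mathbf{R}(X,Y)Z = \langle X,Z\rangle Y - \langle Y,Z\rangle X$ of (\ref{2.best}) to bound $|\mathbf{R}(X,Y)Z|\lesssim |X||Y||Z|$ and its covariant derivatives via (\ref{2.4best}).

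Finally, pairing with $V$ and applying Cauchy--Schwarz termwise converts these bounds into exactly the four products in (\ref{iconm}): $|\nabla d\widetilde{u}||\partial_t\widetilde{u}||V||\partial_s\widetilde{u}|$, $|V||\nabla\partial_s\widetilde{u}||\partial_t\widetilde{u}||d\widetilde{u}|$, $|\nabla\partial_t\widetilde{u}||\partial_s\widetilde{u}||V||d\widetilde{u}|$, and $|V|^2|d\widetilde{u}|^2$. The main obstacle is purely bookkeeping: one must commute $\nabla_s$, $\nabla_t$ and $\nabla_i$ in the correct order so that the leading order $\nabla_t \nabla^i\nabla_i\partial_s\widetilde{u}$ cancels against $\Delta \nabla_t \partial_s\widetilde{u}$, and one must organize the remaining curvature derivatives so that each factor $|\nabla d\widetilde{u}|$, $|\nabla\partial_t\widetilde{u}|$ or $|\nabla\partial_s\widetilde{u}|$ appears exactly once per term — otherwise the right-hand side would contain uncontrollable quadratic gradient products not appearing in (\ref{iconm}). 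Since the $N=\mathbb{H}^2$ curvature has the simple wedge form (\ref{nb890km}), no genuinely new analytic input beyond the prior three lemmas is needed.
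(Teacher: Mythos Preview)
Your proposal is correct and is precisely the approach indicated by the paper: the paper explicitly states that this lemma (together with the three preceding Bochner formulas) ``is direct calculations with the Bochner technique'' and gives no detailed proof, and your outline---writing $(\partial_s-\Delta)|V|^2+2|\nabla V|^2=2\langle\nabla_sV-\Delta V,V\rangle$, then commuting $\nabla_s,\nabla_t,\nabla_i$ via (\ref{commut1}) and expanding curvature derivatives via (\ref{2.4best})---is exactly that calculation. One small remark: the commutator $\mathbf{R}(\partial_s\widetilde{u},\partial_t\widetilde{u})\partial_s\widetilde{u}$ produces a contribution $|\partial_s\widetilde{u}|^2|\partial_t\widetilde{u}||V|$, which you should note is absorbed into the first term on the right of (\ref{iconm}) via $|\partial_s\widetilde{u}|=|\tau(\widetilde{u})|\lesssim|\nabla d\widetilde{u}|$.
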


We have previously seen that the bound of $\|d\widetilde{u}\|_{L^{\infty}}$ is useful for bounding $\|\nabla\partial_s\widetilde{u}\|_{L^{2}}$. In order to bound $\|\nabla\partial_s\widetilde{u}\|_{L^{\infty}}$, it is convenient if one has a bound for $\|\nabla d\widetilde{u}\|_{L^{\infty}}$ firstly.
\begin{lemma}
If $(u(t,x),\partial_tu(t,x))$ is a solution to (\ref{wmap1}) with $\|u(t,x)\|_{\mathcal{X}_T}\le M$. Then for $s\ge 2$
\begin{align}
\|\nabla d\widetilde{u}\|_{L^{\infty}_x}\lesssim MC(M).\label{chen2}
\end{align}
\end{lemma}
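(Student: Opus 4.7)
The plan is to combine the Bochner-type differential inequality (\ref{ion1}) with the already established pointwise bound on $|d\widetilde{u}|$ from (\ref{3.14a}) to reduce matters to a scalar heat-type inequality, and then apply the Moser-type mean value inequality in Remark \ref{ki78} to pass from an integrated bound to a pointwise bound.

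First I would use (\ref{3.14a}) which gives $\|d\widetilde{u}(s)\|_{L^\infty_x}\lesssim M$ uniformly for $s\ge 1$, $t\in[0,T]$. Plugging this into (\ref{ion1}) and dropping the nonnegative term $2|\nabla^2 d\widetilde{u}|^2$ on the left, together with Young's inequality applied to $|d\widetilde{u}|^4|\nabla d\widetilde{u}|\lesssim M^4(1+|\nabla d\widetilde{u}|^2)$, yields an inequality of the form
\begin{equation*}
(\partial_s-\Delta)|\nabla d\widetilde{u}|^2 \le K_1(M)\,|\nabla d\widetilde{u}|^2 + K_2(M),\qquad s\ge 1,
\end{equation*}
where $K_1(M),K_2(M)$ depend polynomially on $M$.

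Next I would introduce the auxiliary function $w(s,t,x)=e^{-K_1(M)s}\bigl(|\nabla d\widetilde{u}|^2+K_2(M)/K_1(M)\bigr)$; a direct calculation shows $(\partial_s-\Delta)w\le 0$ for $s\ge 1$. Therefore Remark \ref{ki78}, applied in the $s$-variable (treating $t$ as a frozen parameter, since the inequality is pointwise in $t$), gives for any $s\ge 2$
\begin{equation*}
w(s,t,x)\lesssim \int_{s-1}^{s}\int_{B(x,1)} w(\tau,t,y)\,{\rm{dvol}}_y\,d\tau.
\end{equation*}
Unwinding the exponential, this bounds $|\nabla d\widetilde{u}|^2(s,t,x)$ by a constant depending on $M$ times the spacetime $L^2$-integral of $|\nabla d\widetilde{u}|^2$ on $[s-1,s]\times B(x,1)$, plus a constant depending on $M$. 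Finally I would invoke (\ref{V5}), which furnishes $\|\nabla d\widetilde{u}(\tau)\|_{L^2_x}^2\lesssim M^2$ uniformly in $\tau\ge 0$, so the spacetime integral above is $\lesssim M^2$. This gives $|\nabla d\widetilde{u}|^2(s,t,x)\le C(M)$ for $s\ge 2$, which is (\ref{chen2}).

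The main technical point is ensuring that the nonlinear right-hand side of (\ref{ion1}) is genuinely tamed by the bound (\ref{3.14a}); once the $L^\infty$ estimate on $d\widetilde{u}$ is available, the worst-looking term $|d\widetilde{u}|^4|\nabla d\widetilde{u}|$ collapses to a linear-in-$|\nabla d\widetilde{u}|^2$ contribution via Young's inequality, and no high derivatives of $\widetilde{u}$ appear on the right. The restriction $s\ge 2$ (rather than $s\ge 1$) in (\ref{chen2}) reflects precisely the unit shift needed in Remark \ref{ki78} so that the mean-value integral is taken over the region $s\ge 1$ where (\ref{3.14a}) is available.
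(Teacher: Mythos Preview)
The proposal is correct and follows essentially the same approach as the paper: both absorb the $L^\infty$ bound on $d\widetilde{u}$ from (\ref{3.14a}) into the Bochner inequality (\ref{ion1}) via Young's inequality to obtain a linear parabolic inequality for $|\nabla d\widetilde{u}|^2$, multiply by an exponential to produce a subsolution of the heat equation, apply the Moser-type mean-value inequality of Remark \ref{ki78}, and finish with the uniform $L^2$ bound (\ref{V5}). The only cosmetic difference is that the paper adds the constant $M^2+M^8$ before multiplying by the exponential, whereas you add $K_2/K_1$; these are equivalent.
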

\begin{proof}
The proof of (\ref{chen2}) is also based on Remark \ref{ki78}.
One can rewrite (\ref{ion1}) by Young inequality in the following form
\begin{align*}
{\partial _s}{\left| {\nabla d\widetilde{u}} \right|^2} - {\Delta}{\left| {\nabla d\widetilde{u}} \right|^2} + 2{\left| {{\nabla ^2}d\widetilde{u}} \right|^2} \le C\left( {1 + {{\left| {d\widetilde{u}} \right|}^2}} \right){\left| {\nabla d\widetilde{u}} \right|^2} + {\left| {d\widetilde{u}} \right|^2}+ {\left| {d\widetilde{u}} \right|^8}.
\end{align*}
Since for $s\ge1$, $\|d\widetilde{u}\|_{L^{\infty}}\lesssim M$, $\|\partial_s\widetilde{u}\|_{L^{\infty}}\lesssim M$, let $r(s,t,x) = {\left| {\nabla d\widetilde{u}} \right|^2} + M^2 + {M^8}$, then we have
$${\partial _s}r - {\Delta}r \le C\left( {{M^2} + 1} \right)r.$$
Let $v = {e^{ - C\left( {{M^2} + 1} \right)s}}r$. For $s\ge2$, it is obvious that $v$ satisfies
$$
 {\partial _s}v - {\Delta}v \le 0.
$$
By Remark \ref{ki78}, we deduce for $d(x,y)\le 1$, $s\ge2$
\begin{align*}
v(s,t,x)\lesssim \int^{s}_{s-1}\int_{B(x,1)} v(\tau,t,y)d\tau{\rm{dvol_y}}.
\end{align*}
Thus by $\|u\|_{\mathfrak{H}^2}\le M$ and (\ref{V5}), we conclude
$${\left| {\nabla d\widetilde{u}} \right|^2}\left( {{s},t,x} \right)\lesssim MC(M).
$$
Hence (\ref{chen2}) follows.
\end{proof}

Now we prove the decay of $\|\nabla\partial_s\widetilde{u}\|_{L^{\infty}_x}$  as $s\to\infty$.
\begin{lemma}\label{decayingt}
If $(u,\partial_tu)$ is a solution to (\ref{wmap1}) in $\mathcal{X}_T$ with $\|u(t,x)\|_{\mathcal{X}_T}\le M$. Then for some universal constant $\delta>0$
\begin{align}
\|\nabla\partial_s\widetilde{u}\|_{L^{\infty}_x}\lesssim MC(M)e^{-\delta s}, \mbox{  }{\rm{for}} \mbox{  }s\ge1.\label{koo4}
\end{align}
\end{lemma}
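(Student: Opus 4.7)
My plan is to derive the pointwise decay (\ref{koo4}) from the Bochner identity in Lemma \ref{9tian}, combined with the pointwise decay $|\partial_s\widetilde u|\lesssim Ms^{-1}e^{-s/4}$ (coming from (\ref{VI4}) and (\ref{huhu899}), cf.\ (\ref{lao2})), the $L^2_x$ decay from Lemma \ref{chen2222}, and the parabolic mean value property of Remark \ref{ki78}. After applying all available $L^\infty_x$ bounds and Young's inequality to (\ref{9tian1}), one only reaches a subsolution inequality of the shape $(\partial_s-\Delta)|\nabla\partial_s\widetilde u|^2\le C(M)|\nabla\partial_s\widetilde u|^2+\text{decaying source}$. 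The main obstacle is the positive coefficient $C(M)$ in front of $|\nabla\partial_s\widetilde u|^2$, which prevents a direct parabolic comparison from producing any decay; the trick will be to insert an exponential weight $e^{2\delta s}$ and then convert the $L^2_x$ decay of Lemma \ref{chen2222} into $L^\infty_x$ decay via Remark \ref{ki78}.

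Concretely, using $\|d\widetilde u\|_{L^\infty_x}\lesssim M$ (Lemma \ref{density}), $\|\nabla d\widetilde u\|_{L^\infty_x}\lesssim MC(M)$ (from (\ref{chen2})) and $|\partial_s\widetilde u|\lesssim Ms^{-1}e^{-s/4}$, Young's inequality converts (\ref{9tian1}) into
\begin{align*}
(\partial_s-\Delta)|\nabla\partial_s\widetilde u|^2 \le C(M)|\nabla\partial_s\widetilde u|^2 + C(M)M^2 e^{-s/2}, \qquad s\ge 2.
\end{align*}
Setting $v(s,t,x)=e^{2\delta s}|\nabla\partial_s\widetilde u|^2$ for some small $\delta\in(0,1/4)$, so that $e^{(2\delta-1/2)s}\le 1$, this becomes
\begin{align*}
(\partial_s-\Delta)v \le C'(M)v + C'(M)M^2, \qquad C'(M)=2\delta+C(M).
\end{align*}
Setting $w=v+M^2$ yields $(\partial_s-\Delta)w\le C'(M)w$, so $\widetilde w=e^{-C'(M)s}w$ is a nonnegative subsolution of the heat equation on $\Bbb H^2$.

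Applying Remark \ref{ki78} to $\widetilde w$ at $(s,t,x)$ with $s\ge 3$ and rewriting in terms of $v$ gives
\begin{align*}
e^{-C'(M)s}w(s,t,x)\lesssim e^{-C'(M)(s-1)}\left(\int_{s-1}^{s}e^{2\delta\tau}\|\nabla\partial_s\widetilde u(\tau)\|_{L^2_x}^2\, d\tau + M^2\right).
\end{align*}
This is precisely where the exponential twist pays off: by (\ref{f42}) the integrand $e^{2\delta\tau}\|\nabla\partial_s\widetilde u(\tau)\|_{L^2_x}^2$ is uniformly bounded by $(MC(M))^2$ for $\tau\ge 2$, so the whole integral is bounded by $(MC(M))^2$ independently of $s$. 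Cancelling $e^{-C'(M)s}$ on both sides then yields $w(s,t,x)\lesssim (MC(M))^2$, and since $v\le w$ and $|\nabla\partial_s\widetilde u|^2=e^{-2\delta s}v$, taking square roots establishes (\ref{koo4}) for all $s\ge 3$.

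For the remaining range $s\in[1,3]$ I would repeat the identical Moser step on $|\nabla\partial_s\widetilde u|^2$ without the exponential weight, feeding the total integral bound (\ref{f40}) into Remark \ref{ki78} in place of (\ref{f42}); this produces the uniform bound $\|\nabla\partial_s\widetilde u\|_{L^\infty_x}\lesssim MC(M)$, which already implies (\ref{koo4}) on the compact interval $[1,3]$ because $e^{-\delta s}$ has a strictly positive lower bound there.
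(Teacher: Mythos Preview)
Your proposal is correct and follows essentially the same route as the paper. Both arguments combine the Bochner inequality (\ref{9tian1}) with the $L^\infty$ bounds on $d\widetilde u$, $\nabla d\widetilde u$, $\partial_s\widetilde u$ to obtain a subsolution inequality, then apply Remark~\ref{ki78} and feed in the $L^2_x$ decay (\ref{f42}); the only difference is bookkeeping of the exponential weights. The paper absorbs the decaying source by adding $\frac{1}{\delta}C(M)Me^{-\delta s}$ to $|\nabla\partial_s\widetilde u|^2$ and then multiplying by $e^{-C(M)s}$, whereas you first multiply by $e^{2\delta s}$ (turning the decaying source into a constant), add $M^2$, and then multiply by $e^{-C'(M)s}$---these are algebraically equivalent manipulations leading to the same mean-value step. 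One small caveat shared by both arguments: the bound (\ref{chen2}) on $\|\nabla d\widetilde u\|_{L^\infty_x}$ is only stated for $s\ge 2$, so the subsolution inequality with uniform coefficients is strictly justified only from $s=2$ onward; your handling of the compact range $s\in[1,3]$ via (\ref{f40}) and the fact that $e^{-\delta s}$ is bounded below there is the natural fix.
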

\begin{proof}
By (\ref{lao2}), for $s\ge1$
\begin{align}\label{kongkong}
\|\partial_s\widetilde{u}\|_{L^{\infty}_x}\lesssim e^{-\delta s}M.
\end{align}
We can rewrite (\ref{9tian1}) by Young inequality as
\begin{align}
{\partial _s}{\left| {\nabla {\partial _s}\widetilde{u}} \right|^2} - {\Delta}{\left| {\nabla {\partial _s}\widetilde{u}} \right|^2} \le (1 + {\left| {d\widetilde{u}} \right|^2}){\left| {\nabla {\partial _s}\widetilde{u}} \right|^2} + {\left| {d\widetilde{u}} \right|^6}{\left| {{\partial _s}\widetilde{u}} \right|^2} + {\left| {d\widetilde{u}} \right|^2}{\left| {\nabla d\widetilde{u}} \right|^2}{\left| {{\partial _s}\widetilde{u}} \right|^2}.
\end{align}
Let $g(s,t,x) = {\left| {d\widetilde{u}} \right|^6}{\left| {{\partial _s}\widetilde{u}} \right|^2} + {\left| {d\widetilde{u}} \right|^2}{\left| {\nabla d\widetilde{u}} \right|^2}{\left| {{\partial _s}\widetilde{u}} \right|^2}$,
then by Lemma \ref{density}, Lemma \ref{chen2222} and (\ref{kongkong}),  $g(s,t,x) \le C(M)M{e^{ - \delta s}}$ for $s\ge1$. Let $f(s,t,x) = {\left| {\nabla \partial_s \widetilde{u}} \right|^2}\left( {s,t,x} \right) + \frac{1}{\delta }C(M)M{e^{ - \delta s}}$, then
$${\partial _s}f - {\Delta}f \le C\left( {{M^2} + 1} \right)f.$$
Then $\bar v=e^{-C( {{M^2} + 1})s}f$ satisfies
$$
 {\partial _s}\bar v - {\Delta}\bar v \le0.
$$
Applying Remark \ref{ki78} to $\bar{v}$ as before implies
$${\left| {\nabla {\partial _s}\widetilde{u}} \right|^2}\left( {{s},t,x} \right) + \frac{1}{\delta }C(M)M{e^{ - \delta {s}}} \le \int_{{s} - 1}^{{s}} {\int_{{\Bbb H^2}} {{{\left| {\nabla {\partial _s}\widetilde{u}\left( {\tau,t,y} \right)} \right|}^2}{\rm{dvol_hd\tau}}}} +C(M)M{e^{ - \delta {s}}}.
$$
Therefore, (\ref{koo4}) follows from
\begin{align}\label{ingjh}
\int_{{s} - 1}^{{s}} \int_{{\Bbb H^2}} {{{\left| {\nabla {\partial _s}\widetilde{u}\left( {\tau,t,y} \right)} \right|}^2}{\rm{dvol_h}d\tau}}\lesssim MC(M)e^{-\delta s},
\end{align}
which arises from (\ref{f42}).
\end{proof}

We move to the decay for $|\partial_t\widetilde{u}|$ with respect to $s$.
\begin{lemma}
If $(u,\partial_tu)$ is a solution to (\ref{wmap1}) in $\mathcal{X}_T$ with $\|u(t,x)\|_{\mathcal{X}_T}\le M$. Then
\begin{align}
\|\partial_t\widetilde{u}\|_{L^{2}_x}&\lesssim MC(M)e^{-\delta s},\mbox{  }{\rm{for}} \mbox{  }s>0\label{xiu1}\\
\|\partial_t\widetilde{u}\|_{L^{\infty}_x}&\lesssim MC(M)e^{-\delta s},\mbox{  }{\rm{for}} \mbox{  }s\ge1\label{xiu2}\\
\int^{\infty}_0\|\nabla\partial_t\widetilde{u}\|^2_{L^{2}_x}ds&\lesssim MC(M), \label{xiu3}\\
\|\nabla\partial_t\widetilde{u}\|_{L^{\infty}_x}&\lesssim MC(M)e^{-\delta s}, \mbox{  }{\rm{for}} \mbox{  }s\ge1.\label{xiu4}
\end{align}
\end{lemma}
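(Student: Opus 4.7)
The plan is to follow the template of Lemmas \ref{chen2222} and \ref{decayingt}, with $\partial_s$ replaced by $\partial_t$ and with the Bochner inequality (\ref{10.127}) of Lemma \ref{8zu} playing the role that (\ref{8.3}) and (\ref{VI4}) played before. The structural input that makes this possible is that the nonpositive sectional curvature of the $\Bbb H^2$ target already forces $(\partial_s-\Delta)|\partial_t\widetilde{u}|^2\le0$, and more precisely that the equality $\frac{d}{ds}\|\partial_t\widetilde{u}\|_{L^2_x}^2=-2\|\nabla\partial_t\widetilde{u}\|_{L^2_x}^2-\int_{\Bbb H^2}\mathbf{R}(\widetilde{u})(\nabla\widetilde{u},\partial_t\widetilde{u},\nabla\widetilde{u},\partial_t\widetilde{u})\,{\rm{dvol_h}}$ holds, whose curvature integral is nonnegative.

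For (\ref{xiu1}) and (\ref{xiu3}) I would work directly with that identity. Dropping the curvature term gives $\frac{d}{ds}\|\partial_t\widetilde{u}\|_{L^2_x}^2\le-2\|\nabla\partial_t\widetilde{u}\|_{L^2_x}^2$, and integrating in $s\in[0,\infty)$ against $\|\partial_t u\|_{L^2_x}\le M$ yields (\ref{xiu3}). For (\ref{xiu1}), I would combine this inequality with the scalar spectral gap $\|\nabla|\partial_t\widetilde{u}|\|_{L^2_x}^2\ge\tfrac{1}{4}\|\,|\partial_t\widetilde{u}|\,\|_{L^2_x}^2$ on $\Bbb H^2$ via the diamagnetic inequality (\ref{wusijue3}) to reach $\frac{d}{ds}\|\partial_t\widetilde{u}\|_{L^2_x}^2\le-\tfrac{1}{2}\|\partial_t\widetilde{u}\|_{L^2_x}^2$, giving the exponential decay (\ref{xiu1}) by Gronwall. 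For (\ref{xiu2}), the Bochner inequality makes $|\partial_t\widetilde{u}|^2$ a heat subsolution, so Remark \ref{ki78} yields for $s\ge1$
\begin{align*}
|\partial_t\widetilde{u}|^2(s,t,x)\lesssim\int_{s-1}^s\int_{B(x,1)}|\partial_t\widetilde{u}|^2(\tau,t,y)\,{\rm{dvol_y}}d\tau\lesssim\int_{s-1}^s\|\partial_t\widetilde{u}(\tau,t)\|_{L^2_x}^2\,d\tau,
\end{align*}
and inserting (\ref{xiu1}) gives the pointwise exponential decay.

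The main work, and the expected chief obstacle, is (\ref{xiu4}); this mirrors Lemma \ref{decayingt} almost verbatim. Starting from the last inequality in Lemma \ref{8zu} for $|\nabla\partial_t\widetilde{u}|^2$, I would apply Young's inequality to recast it as
\begin{align*}
(\partial_s-\Delta)|\nabla\partial_t\widetilde{u}|^2\le C(1+|d\widetilde{u}|^2)|\nabla\partial_t\widetilde{u}|^2+g(s,t,x),
\end{align*}
where $g$ collects the remaining products of quantities whose $L^\infty_x$ norms are already controlled: $|d\widetilde{u}|$ by Lemma \ref{density}, $|\nabla d\widetilde{u}|$ by (\ref{chen2}), $|\partial_s\widetilde{u}|$ by (\ref{lao2}), and $|\partial_t\widetilde{u}|$ by the freshly proved (\ref{xiu2}). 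For $s\ge1$ this forces $\|g(s,t,\cdot)\|_{L^\infty_x}\lesssim MC(M)e^{-\delta s}$. Setting $f=|\nabla\partial_t\widetilde{u}|^2+\delta^{-1}MC(M)e^{-\delta s}$ and $\bar v=e^{-C(M^2+1)s}f$ produces $(\partial_s-\Delta)\bar v\le0$, so Remark \ref{ki78} applied to $\bar v$ gives
\begin{align*}
|\nabla\partial_t\widetilde{u}|^2(s,t,x)\lesssim\int_{s-1}^s\int_{B(x,1)}|\nabla\partial_t\widetilde{u}|^2(\tau,t,y)\,{\rm{dvol_y}}d\tau+MC(M)e^{-\delta s}.
\end{align*}
The subtle step, exactly as in the passage from (\ref{f40}) to (\ref{ingjh}) for $\partial_s$, is to promote the merely $s$-integrable bound (\ref{xiu3}) into pointwise exponential decay of the space-time $L^2$ average on the right hand side; I would do this by running the same Gronwall argument as in (\ref{j089})-(\ref{7si}) with $\partial_t$ in place of $\partial_s$, which is legitimate because all the ingredients entering that argument have now been controlled.
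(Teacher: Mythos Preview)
Your proposal is correct and follows essentially the same template as the paper's proof. The minor differences are in execution rather than strategy: for (\ref{xiu1}) the paper uses the pointwise subsolution property $(\partial_s-\Delta)|\partial_t\widetilde{u}|\le 0$ together with the maximum principle and (\ref{m8}), whereas you obtain the same decay via the energy identity and the spectral gap; for (\ref{xiu2}) the paper uses the $L^1\to L^\infty$ heat kernel bound (\ref{huhu899}) on $|\partial_t\widetilde{u}|^2$ rather than Remark \ref{ki78}; and for (\ref{xiu4}) both arguments reduce to the Moser-type estimate of Remark \ref{ki78} applied to the Bochner inequality of Lemma \ref{8zu}, with the required exponential decay of $\int_{s-1}^{s}\|\nabla\partial_t\widetilde{u}\|_{L^2_x}^2\,d\tau$ following (even more directly than you suggest) by integrating your inequality $\frac{d}{ds}\|\partial_t\widetilde{u}\|_{L^2_x}^2\le -2\|\nabla\partial_t\widetilde{u}\|_{L^2_x}^2$ over $[s-1,s]$ and inserting (\ref{xiu1}).
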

\begin{proof}
The maximum principle and (\ref{huhu899}) imply
\begin{align}\label{sdf8uj}
\| {{\partial _t}\widetilde{u}(s,t,x)} \|^2_{L^{\infty}_x} \le {s^{- 2}}{e^{ - \delta s}} \left\| {{\partial _t}\widetilde{u}(0,t,x)} \right\|^2_{L^2_x}.
\end{align}
Moreover further calculations with (\ref{10.127}) show
\begin{align*}
(\partial_s-\Delta)|\partial_t \widetilde{u}|\le 0.
\end{align*}
Thus maximum principle and (\ref{m8}) give
\begin{align}\label{10.26}
\|{{\partial _t}\widetilde{u}(s,t,x)} \|_{L^2_x}\lesssim e^{-\frac{1}{4}s}\|\partial_tu\|_{L^2_x}\le M.
\end{align}
Therefore, (\ref{xiu1}) and (\ref{xiu2}) follow  from (\ref{sdf8uj}) and (\ref{10.26}) respectively.
Second, we prove (\ref{xiu3}) by energy arguments. Introduce the energy functionals
$$
\mathcal{E}_4(\widetilde{u})=\frac{1}{2}\int_{\Bbb H^2}|\partial_t\widetilde{u}|^2{\rm{dvol_h}},\mbox{  }\mathcal{E}_5(\widetilde{u})=\int_{\Bbb H^2}|\nabla\partial_t\widetilde{u}|^2{\rm{dvol_h}}.
$$
Then integration by parts gives
\begin{align}\label{muxc6zb8}
\frac{d}{{ds}}{\mathcal{E}_4}\left( \widetilde{u} \right) + {\mathcal{E}_5}\left( \widetilde{u} \right) \le \int_{\Bbb H^2}{\left| {d\widetilde{u}} \right|^2}{\left| {{\partial _t}\widetilde{u}} \right|^2}{\rm{dvol_h}}.
\end{align}
Integrating this formula with respect to $s$ in $[0.\kappa)$ with $\kappa>1$ shows
$$\int_0^{\kappa} {\left\| {\nabla \partial {}_t\widetilde{u}} \right\|_{L_x^2}^2ds}  \le \left\| {\partial_t\widetilde{u}(\kappa)} \right\|_{L_x^2}^2 + \int_0^1 {\left\| {{\partial _t}\widetilde{u}} \right\|_{{L^4}}^2\left\| {d\widetilde{u}} \right\|_{{L^4}}^2} ds + {\mathcal{E}_1}\left( \widetilde{u} \right)M\int_1^{\kappa} {{e^{ - 2\delta s}}ds},
$$
where we have used (\ref{xiu1}), (\ref{xiu2}) and H\"older. By Sobolev embedding and letting $\kappa\to\infty$, we obtain
\begin{align}\label{chuyunyi}
\int_0^\infty  {\left\| {\nabla \partial_t\widetilde{u}} \right\|_{L_x^2}^2ds}  \le {M^4} + {M^2}.
\end{align}
Finally, the proof of (\ref{xiu4}) follows by the same arguments as  (\ref{koo4}) illustrated in Lemma \ref{decayingt}.
\end{proof}

\begin{lemma}\label{zhangqiling}
Let $(u,\partial_tu)$ be a solution to (\ref{wmap1}) in $\mathcal{X}_T$ with $\|u(t,x)\|_{\mathcal{X}_T}\le M$. Then
\begin{align}
 {\left\| {{s^{\frac{1}{2}}}{\nabla _t}{\partial _s}\widetilde{u}} \right\|_{L_s^\infty L_x^2}} &\lesssim MC(M)\mbox{  }{\rm{for}}\mbox{  }s\in[0,1]\label{haorenhao9}.
\end{align}
Moreover, for $s\in[1,\infty)$ and some $0<\delta\ll1$
\begin{align}
{\left\| {{\nabla _t}{\partial _s}\widetilde{u}} \right\|_{L_s^\infty L_x^\infty }} &\lesssim e^{-\delta s}MC(M).\label{haorenhao10}
\end{align}
\end{lemma}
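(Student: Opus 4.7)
The two estimates are obtained by two different exploitations of the Bochner-type inequality (\ref{iconm}) for $|\nabla_t\partial_s\widetilde u|^2$: a weighted-in-$s$ energy estimate for the short-time part, and a Moser-iteration parabolic mean-value inequality for the long-time part.

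\emph{Weighted $L^2$ estimate on $[0,1]$.} The obstruction to a naked energy estimate is that at $s=0$ one has $\nabla_t\partial_s\widetilde u=\nabla_t\tau(u)$, which costs one more spatial derivative of $\partial_t u$ than $\|\cdot\|_{\mathcal{X}_T}$ controls. We therefore integrate (\ref{iconm}) in space against the weight $s$ and apply $(sG)'=G+sG'$ with $G(s)=\|\nabla_t\partial_s\widetilde u\|_{L^2_x}^2$ to get
\begin{align*}
\frac{d}{ds}\bigl(sG(s)\bigr)+2s\|\nabla\nabla_t\partial_s\widetilde u\|_{L^2_x}^2\lesssim G(s)+s\,\mathrm{RHS}(s),
\end{align*}
where $\mathrm{RHS}(s)$ is the spatial integral of the four quartic products in (\ref{iconm}). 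Integrating on $[0,s]\subset[0,1]$, the boundary term at $\tau=0$ vanishes and it remains to control $\int_0^1 G\,d\tau$ and $\int_0^1\tau\,\mathrm{RHS}\,d\tau$. For the former, use $\nabla_t\partial_s\widetilde u=\nabla_s\partial_t\widetilde u$ together with the evolution $\nabla_s\partial_t\widetilde u=\Delta\partial_t\widetilde u+\mathbf R(\partial_t\widetilde u,\partial_i\widetilde u)\partial^i\widetilde u$ derived by commuting $\nabla_t$ with $\tau(\widetilde u)$; pairing with $\nabla_s\partial_t\widetilde u$ in $L^2_x$ and integrating by parts (the commutator $[\nabla_i,\nabla_s]$ producing only curvature) gives
\begin{align*}
\|\nabla_s\partial_t\widetilde u\|_{L^2_x}^2=-\tfrac12\tfrac{d}{ds}\|\nabla\partial_t\widetilde u\|_{L^2_x}^2+(\text{curvature}),
\end{align*}
whose $s$-integration combined with $\|\nabla\partial_t u\|_{L^2_x}\le M$ and the bounds on $\partial_t\widetilde u$, $d\widetilde u$ established above yields $\int_0^\infty G(\tau)\,d\tau\lesssim MC(M)$. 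For $\int_0^1\tau\,\mathrm{RHS}\,d\tau$ each quartic product is estimated by H\"older and Gagliardo--Nirenberg (Lemma \ref{wusijue}) using the uniform controls $\|d\widetilde u\|_{L^\infty_s\mathfrak H^1_x}\lesssim M$, (\ref{f41}), (\ref{xiu1}) and the square-integrability of $\|\nabla\partial_s\widetilde u\|_{L^2_x}$, $\|\nabla\partial_t\widetilde u\|_{L^2_x}$ from (\ref{f40}), (\ref{xiu3}); the extra factor $\tau$ absorbs the $\tau^{-1/2}$-type singularities of $\|\partial_s\widetilde u\|_{L^\infty_x}$, $\|\partial_t\widetilde u\|_{L^\infty_x}$ near $\tau=0$. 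A Gronwall step then closes (\ref{haorenhao9}).

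\emph{Pointwise decay on $[1,\infty)$.} Proceeding exactly as in Lemma \ref{decayingt} and the proof of (\ref{chen2}), Young's inequality recasts (\ref{iconm}) as
\begin{align*}
\partial_s|\nabla_t\partial_s\widetilde u|^2-\Delta|\nabla_t\partial_s\widetilde u|^2\le C(1+|d\widetilde u|^2)|\nabla_t\partial_s\widetilde u|^2+g(s,t,x),
\end{align*}
with forcing
\begin{align*}
g\lesssim|\nabla d\widetilde u|^2|\partial_t\widetilde u|^2|\partial_s\widetilde u|^2+|\nabla\partial_s\widetilde u|^2|\partial_t\widetilde u|^2|d\widetilde u|^2+|\nabla\partial_t\widetilde u|^2|\partial_s\widetilde u|^2|d\widetilde u|^2.
\end{align*}
Combining (\ref{3.14a}), (\ref{chen2}), (\ref{f42}), (\ref{koo4}), (\ref{xiu2}) and (\ref{xiu4}) gives $\|g(s)\|_{L^\infty_x}\lesssim MC(M)e^{-\delta s}$ for $s\ge 1$. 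Setting $f=|\nabla_t\partial_s\widetilde u|^2+\delta^{-1}MC(M)e^{-\delta s}$ and $\bar v=e^{-C(M^2+1)s}f$ makes $\bar v$ a subsolution of the heat equation for $s\ge 1$, so Remark \ref{ki78} applied on $[s-1,s]\times B(x,1)$ yields
\begin{align*}
|\nabla_t\partial_s\widetilde u|^2(s,t,x)\lesssim\int_{s-1}^{s}\|\nabla_t\partial_s\widetilde u(\tau)\|_{L^2_y(B(x,1))}^2\,d\tau+MC(M)e^{-\delta s}.
\end{align*}
The remaining space-time $L^2$-integral decays like $e^{-\delta s}$ by rerunning the weighted energy argument of the first part on the interval $[s/2,s]$, now invoking the exponential smallness of $\|\partial_s\widetilde u\|_{L^2_x}$, $\|\partial_t\widetilde u\|_{L^2_x}$, $\|\nabla\partial_s\widetilde u\|_{L^2_x}$, $\|\nabla\partial_t\widetilde u\|_{L^2_x}$ on this interval from (\ref{f41}), (\ref{f42}), (\ref{xiu1}), (\ref{xiu2}). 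This delivers (\ref{haorenhao10}).

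\emph{Main obstacle.} The delicate point is (\ref{haorenhao9}): distributing derivatives in the quartic products on the right-hand side of (\ref{iconm}) so that every factor falls into an already-controlled space, and doing so using only the weak intrinsic bound $\|u\|_{\mathcal X_T}\le M$ rather than any stronger initial regularity, is where essentially all of the bookkeeping effort lies; the $[1,\infty)$ part is a direct re-run of the Moser scheme already used in the preceding lemmas.
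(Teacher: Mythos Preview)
Your approach is essentially correct and genuinely different from the paper's route for the short-time bound. The paper does \emph{not} run a weighted energy estimate directly on $\|\nabla_t\partial_s\widetilde u\|_{L^2}^2$ via (\ref{iconm}); instead it passes through the pointwise inequality
\[
|\nabla_t\partial_s\widetilde u|\le |\nabla^2\partial_t\widetilde u|+|\partial_t\widetilde u||d\widetilde u|^2+|\nabla\partial_t\widetilde u|,
\]
and then proves $s^{1/2}\|\nabla^2\partial_t\widetilde u\|_{L^2}\lesssim MC(M)$ by a weighted energy estimate on the higher-order quantity $s\|\nabla^2\partial_t\widetilde u\|_{L^2}^2$. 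Your scheme of working with $sG(s)$ directly and feeding in the identity $\|\nabla_s\partial_t\widetilde u\|_{L^2}^2=-\tfrac12\tfrac{d}{ds}\|\nabla\partial_t\widetilde u\|_{L^2}^2+(\text{curvature})$ to control $\int_0^1 G\,d\tau$ is more economical: it stays at the level of the target quantity and avoids one layer of commutation. The paper's route has the advantage that the pointwise inequality immediately reduces the long-time $L^\infty$ bound to quantities ($\nabla^2\partial_t\widetilde u$, $\nabla\partial_t\widetilde u$) whose decay is established in parallel.

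There is, however, a citation gap in your long-time argument. You invoke the exponential smallness of $\|\nabla\partial_t\widetilde u\|_{L^2_x}$ on $[s/2,s]$ ``from (\ref{f41}), (\ref{f42}), (\ref{xiu1}), (\ref{xiu2})'', but none of these give pointwise-in-$s$ decay of $\|\nabla\partial_t\widetilde u\|_{L^2}$; (\ref{xiu3}) only yields the integrated bound $\int_0^\infty\|\nabla\partial_t\widetilde u\|_{L^2}^2\,ds<\infty$. Your energy identity then only produces $\int_{s-1}^s G\,d\tau\lesssim\|\nabla\partial_t\widetilde u(s-1)\|_{L^2}^2+e^{-\delta s}$, and the first term need not be small. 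The paper closes this by proving $\|\nabla\partial_t\widetilde u\|_{L^2}^2\lesssim MC(M)e^{-\delta s}$ for $s\ge 2$ as an intermediate step (its inequality (\ref{chanjiang3})), via the integrated decay (\ref{chanjiang2}) on dyadic windows followed by Gronwall on the energy inequality (\ref{haoren68}). You should insert this step explicitly; once you have it, your Moser-iteration conclusion goes through exactly as you wrote.
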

\begin{proof}
It is easy to see $\left| {{\nabla _t}{\partial _s}\widetilde{u}} \right| \le \left| {\nabla {\partial _t}\widetilde{u}} \right|+ \left| {{h^{ii}}{\nabla _i}{\nabla _t}{\partial _i}\widetilde{u}} \right| + \left| {{\partial _t}\widetilde{u}} \right|{\left| {d\widetilde{u}} \right|^2}$,
then
\begin{align}\label{facik}
\left| {{\nabla _t}{\partial _s}\widetilde{u}} \right| \le \left| {{\nabla ^2}{\partial _t}\widetilde{u}} \right| + \left| {{\partial _t}\widetilde{u}} \right|{\left| {d\widetilde{u}} \right|^2} + \left| {\nabla {\partial _t}\widetilde{u}} \right|.
\end{align}
Integration by parts gives
\begin{align}
 &\frac{d}{{ds}}\left\| {\nabla {\partial _t}\widetilde{u}} \right\|_{{L^2}}^2\nonumber\\
 &\le  - {\int_{{\Bbb H^2}} {\left| {{\nabla ^2}{\partial _t}\widetilde{u}} \right|} ^2}{\rm{dvol_h}} + \int_{{\Bbb H^2}} {\left| {\nabla {\partial _t}\widetilde{u}} \right|\left| {{\partial _t}\widetilde{u}} \right|} \left| {d\widetilde{u}} \right|\left| {{\partial _s}\widetilde{u}} \right|{\rm{dvol_h}} \nonumber\\
 &+ \int_{{\Bbb H^2}} {\left| {{\nabla ^2}{\partial _t}\widetilde{u}} \right|} {\left| {d\widetilde{u}} \right|^2}\left| {{\partial _s}\widetilde{u}} \right|{\rm{dvol_h}} + {\int_{{\Bbb H^2}} {\left| {\nabla {\partial _t}\widetilde{u}} \right|} ^2}{\rm{dvol_h}}+ {\int_{{\Bbb H^2}} {\left| {\nabla {\partial _t}\widetilde{u}} \right|} ^2}{\left| {d\widetilde{u}} \right|^2}{\rm{dvol_h}}.\label{haoren6}
\end{align}
By Sobolev embedding, we obtain
$$\frac{d}{{ds}}\left\| {\nabla {\partial _t}\widetilde{u}} \right\|_{{L^2}}^2 \le C\left\| {\nabla {\partial _t}\widetilde{u}} \right\|_{{L^2}}^2\left( {1 + \left\| {{\partial _s}\widetilde{u}} \right\|_{{L^\infty }}^2 + \left\| {d\widetilde{u}} \right\|_{{L^\infty }}^2} \right) + \left\| {\nabla d\widetilde{u}} \right\|_{{L^2}}^4\left\| {{\partial _s}\widetilde{u}} \right\|_{{L^\infty }}^2.
$$
Thus we get
$$\left\| {\nabla {\partial _t}\widetilde{u}} \right\|_{{L^2}}^2 \le \left\| {\nabla {\partial _t}\widetilde{u}(0,t,x)} \right\|_{{L^2}}^2 + {e^{\int^s_0V(\tau)d\tau}}\int_0^s {{e^{ - \int_0^{\kappa} {V(\tau )} d\tau }}} \left\| {\nabla d\widetilde{u}} \right\|_{{L^2}}^4\left\| {{\partial _s}\widetilde{u}} \right\|_{{L^\infty }}^2d\kappa,
$$
where $V(s)=Cs+C\|d\widetilde{u}\|^2_{L^{\infty}}+C\|\partial_s\widetilde{u}\|^2_{L^{\infty}}$.
By Lemma \ref{ktao1} and Lemma \ref{8.44}
\begin{align}\label{haorenh7}
\int^{1}_0\|d\widetilde{u}\|^2_{L^{\infty}}ds+\int^1_0\|\partial_s\widetilde{u}\|^2_{L^{\infty}}ds\le M^2.
\end{align}
Hence we conclude for $s\in[0,1]$,
\begin{align}\label{jidujihao1}
\left\| {\nabla {\partial _t}\widetilde{u}} \right\|_{{L^2}}^2 \le \left\| {\nabla {\partial _t} \widetilde{u}(0,t,x)} \right\|_{{L^2}}^2 + {e^{MC(M)s}}MC(M).
\end{align}
With (\ref{haoren6}), we further deduce that
\begin{align}\label{jidujihao}
\int_0^1 {{{\left\| {{\nabla ^2}{\partial _t}\widetilde{u}} \right\|}_{{L^2}}}ds}  \lesssim MC(M).
\end{align}
Integration by parts shows,
\begin{align*}
 &\frac{d}{{ds}}\left( {\left\| {{\nabla ^2}{\partial _t}\widetilde{u}} \right\|_{L_x^2}^2s} \right) \\
 &\le  - s{\int_{{\Bbb H^2}} {\left| {{\nabla ^3}{\partial _t}\widetilde{u}} \right|} ^2}{\rm{dvol_hdt}} + \left\| {{\nabla ^2}{\partial _t}\widetilde{u}} \right\|_{L_x^2}^2 + s{\left\| {{\partial _s}\widetilde{u}} \right\|_{L_x^\infty }}{\left\| {\nabla {\partial _t}\widetilde{u}} \right\|_{L_x^4}}{\left\| {d\widetilde{u}} \right\|_{L_x^4}}{\left\| {{\nabla ^2}{\partial _t}\widetilde{u}} \right\|_{L_x^2}} \\
 &+ s{\left\| {{\partial _t}\widetilde{u}} \right\|_{L_x^\infty }}{\left\| {\nabla {\partial _s}\widetilde{u}} \right\|_{L_x^2}}{\left\| {d\widetilde{u}} \right\|_{L_x^\infty }}{\left\| {{\nabla ^2}{\partial _t}\widetilde{u}} \right\|_{L_x^2}} + s\left\| {d\widetilde{u}} \right\|_{{L^8}}^2{\left\| {\nabla {\partial _t}\widetilde{u}} \right\|_{L_x^4}}{\left\| {{\nabla ^3}{\partial _t}\widetilde{u}} \right\|_{L_x^2}}  \\
 &+ s{\left\| {d\widetilde{u}} \right\|_{L_x^\infty }}{\left\| {\nabla {\partial _t}\widetilde{u}} \right\|_{L_x^2}}{\left\| {{\partial _s}\widetilde{u}} \right\|_{L_x^\infty }}{\left\| {{\nabla ^2}{\partial _t}\widetilde{u}} \right\|_{L_x^2}} + s\left\| {d\widetilde{u}} \right\|_{{L^\infty }}^2{\left\| {\nabla {\partial _t}\widetilde{u}} \right\|_{L_x^2}}{\left\| {{\nabla ^2}{\partial _t}\widetilde{u}} \right\|_{L_x^2}} \\
 &+ s\left\| {d\widetilde{u}} \right\|_{{L^\infty }}^2{\left\| {\nabla d\widetilde{u}} \right\|_{L_x^2}}{\left\| {{\nabla ^2}{\partial _t}\widetilde{u}} \right\|_{L_x^2}} + s{\left\| {{\partial _t}\widetilde{u}} \right\|_{L_x^\infty }}{\left\| {\nabla d\widetilde{u}} \right\|_{L_x^2}}{\left\| {{\partial _s}\widetilde{u}} \right\|_{L_x^\infty }}{\left\| {{\nabla ^2}{\partial _t}\widetilde{u}} \right\|_{L_x^2}}\\
 &+ s{\left\| {\nabla d\widetilde{u}} \right\|_{L_x^2}}{\left\| {{\partial _t}\widetilde{u}} \right\|_{L_x^\infty }}{\left\| {{\nabla ^3}{\partial _t}\widetilde{u}} \right\|_{L_x^2}}{\left\| {d\widetilde{u}} \right\|_{L_x^\infty }} + s\left\| {d\widetilde{u}} \right\|_{{L^\infty }}^2\left\| {{\nabla ^2}{\partial _t}\widetilde{u}} \right\|_{{L^2}}^2.
 \end{align*}
Then Gronwall with (\ref{V5}), (\ref{haorenh7}) yields for all $s\in[0,1]$
\begin{align}\label{haoren8}
\left\| {{\nabla ^2}{\partial _t}\widetilde{u}} \right\|_{L_x^2}^2s + \int_0^s {{\int_{{\Bbb H^2}} {\left| {{\nabla ^3}{\partial _t}\widetilde{u}} \right|}^2}\tau} {\rm{dvol_h}}d\tau \le MC(M).
\end{align}
Thus by (\ref{haoren8}), (\ref{jidujihao}), (\ref{jidujihao1}), and (\ref{facik}), we conclude
\begin{align}
{\left\| {{s^{\frac{1}{2}}}{\nabla _t}{\partial _s}\widetilde{u}} \right\|_{L_s^\infty[0,1] L_x^2}} \le MC(M).
\end{align}
(\ref{haorenhao10}) follows by the same path as Lemma \ref{decayingt} with the help of (\ref{iconm}). The essential ingredient is to prove for $s_1\ge2$
\begin{align}\label{huaqian}
\int^{s_1+1}_{s_1}\|\nabla^2\partial_t\widetilde{u}\|^2_{L^2_x}ds\lesssim MC(M)e^{-\delta s_1}.
\end{align}
The remaining proof is devoted to verifying (\ref{huaqian}). By (\ref{{uv111}}) and  (\ref{haoren6}), we obtain for any $0<c\ll1$
\begin{align}
 &\frac{d}{{ds}}\left\| {\nabla {\partial _t}\widetilde{u}} \right\|_{{L^2}}^2 +c {\int_{{\Bbb H^2}} {\left| {{\nabla}{\partial _t}\widetilde{u}} \right|} ^2}{\rm{dvol_h}}+c {\int_{{\Bbb H^2}} {\left| {{\nabla^2}{\partial _t}\widetilde{u}} \right|} ^2}{\rm{dvol_h}}\nonumber\\
 &\lesssim \int_{{\Bbb H^2}} {\left| {\nabla {\partial _t}\widetilde{u}} \right|\left| {{\partial _t}\widetilde{u}} \right|} \left| {d\widetilde{u}} \right|\left| {{\partial _s}\widetilde{u}} \right|{\rm{dvol_h}}+ \int_{{\Bbb H^2}} {\left| {{\nabla ^2}{\partial _t}\widetilde{u}} \right|} {\left| {d\widetilde{u}} \right|^2}\left| {{\partial _s}\widetilde{u}} \right|{\rm{dvol_h}}\nonumber\\
 &+ {\int_{{\Bbb H^2}} {\left| {\nabla {\partial _t}\widetilde{u}} \right|} ^2}{\left| {d\widetilde{u}} \right|^2}{\rm{dvol_h}} +\frac{1}{c} {\int_{{\Bbb H^2}} {\left| {\nabla {\partial _t}\widetilde{u}} \right|} ^2}{\rm{dvol_h}}.\label{haoren68}
\end{align}
By Lemma 3.3 and (\ref{lao2}), we have for $s\ge1$
\begin{align}
&\left\| {d\widetilde{u}} \right\|_{L_x^{\infty}}\lesssim \|du\|_{L^2_x}\lesssim M\label{g8uiknlo}\\
&\left\| {{\partial _s}\widetilde{u}} \right\|_{L_x^{\infty}}\lesssim e^{-\delta s}\|\partial_s\widetilde{u}(0,t)\|_{L^2_x}\lesssim e^{-\delta s}M.\label{g8uiknlp}
\end{align}
Then by Sobolev embedding and Gronwall inequality, for $s\ge1$
\begin{align}
\left\| {\nabla {\partial _t}\widetilde{u}} \right\|_{L_x^2}^2 &\lesssim {e^{-cs}}\left\| {\nabla {\partial _t}\widetilde{u}(1,t,x)} \right\|_{L_x^2}^2
+ {e^{ - cs}}\int_1^s {{e^{c\kappa}}} \left\| {\nabla {\partial _t}\widetilde{u}(\kappa)} \right\|_{L_x^2}^2d\kappa\nonumber\\
&+ {e^{ - cs}}\int_1^s {{e^{c\kappa}}} \left\| {\nabla d\widetilde{u}(\kappa,t)} \right\|_{L_x^2}^4\left\| {{\partial _s}\widetilde{u}(\kappa,t)} \right\|_{L_x^\infty }^2d\kappa.\label{ua5vvz}
\end{align}
Hence (\ref{chuyunyi}), (\ref{V5}), (\ref{g8uiknlp}) and (\ref{ua5vvz}) give for $s\in[0,\infty)$
\begin{align}\label{fanlin}
\left\| {\nabla {\partial _t}\widetilde{u}} \right\|_{L_x^2}^2 \le MC(M),
\end{align}
where (\ref{fanlin}) when $s\in[0,1]$ follows by (\ref{jidujihao1}).
Integrating (\ref{muxc6zb8}) with respect to $s$ in $[s_1,s_2]$ for $1\le s_1\le s_2<\infty$ yields
\begin{align}\label{chanjiang1}
\int_{{s_1}}^{{s_2}} {\left\| {\nabla {\partial _t}\widetilde{u}} \right\|_{L_x^2}^2} ds \le \left\| {{\partial _t}\widetilde{u}} \right\|_{L_x^2}^2({s_2}) - \left\| {{\partial _t}\widetilde{u}} \right\|_{L_x^2}^2({s_1}) + \int_{{s_1}}^{{s_2}} {\left\| {{\partial _t}\widetilde{u}} \right\|_{L_x^2}^2} \left\| {d\widetilde{u}} \right\|_{L_x^\infty }^2ds.
\end{align}
By (\ref{xiu1}), Lemma \ref{density},
\begin{align}\label{chanjiang2}
\int_{{s_1}}^{{s_2}} {\left\| {\nabla {\partial _t}\widetilde{u}} \right\|_{L_x^2}^2} ds \lesssim MC(M){e^{ - \delta {s_1}}}.
\end{align}
Thus in any interval $[s_*,s_*+1]$ there exists $s^0_*\in[s_*,s_*+1]$ such that
\begin{align}\label{chanjiang21}
{\left\| {\nabla {\partial _t}\widetilde{u}}(s^0_*) \right\|_{L_x^2}^2} ds \lesssim MC(M){e^{ - \delta {s_*}}}.
\end{align}
Fix $s_*\ge 1$, applying Gronwall to (\ref{haoren68}) in $[s^0_*,a]$ with $a\in[s^0_*,s_*+2]$ gives
\begin{align}
\left\| {\nabla {\partial _t}\widetilde{u}} \right\|_{L_x^2}^2(a,t) &\le {e^{ - ca}}\left\| {\nabla {\partial _t}\widetilde{u}(s_*^0,t,x)} \right\|_{L_x^2}^2+ {e^{ - ca}}\int_{s^0_*}^a {{e^{cs}}}\left\| {\nabla {\partial _t}\widetilde{u}(s)} \right\|_{L_x^2}^2ds\nonumber\\
&+ {e^{ - ca}}\int_{s^0_*}^a {{e^{cs}}} \left\| {\nabla d\widetilde{u}(s)} \right\|_{L_x^2}^4\left\| {{\partial _s}\widetilde{u}(s)} \right\|_{L_x^\infty }^2ds.\label{ua6vvz}
\end{align}
Thus by  (\ref{xiu1}), Lemma \ref{density}, (\ref{chanjiang21}) and the fact that $a$ at leat ranges over all $[s_*+1,s_*+2]$, we have for $s\ge2$,
\begin{align}\label{chanjiang3}
\left\| {\nabla {\partial _t}\widetilde{u}} \right\|_{L_x^2}^2\le MC(M) e^{-\delta s}.
\end{align}
Integrating (\ref{haoren68}) with respect to $s$ again in $[s_1,s_1+1]$, we obtain (\ref{huaqian}) by (\ref{chanjiang3}) and (\ref{xiu1}), Lemma 3.3. Finally using maximum principle and Remark \ref{ki78} as Lemma  \ref{decayingt}, we get (\ref{haorenhao10}) from (\ref{ua5vvz}),
(\ref{chanjiang3}), (\ref{facik}) and Lemma \ref{decayingt}.
\end{proof}

In the remaining part of this subsection, we consider the short time behaviors of the differential fields under the heat flow.
Since the energy of the solution to the heat flow in our case will not decay to zero, we can not expect that it behaves as a solution to the linear heat equation in the large time scale. However, one can still expect that the solution to the heat flow is almost governed by the linear equation in the short time scale. We summarize these useful estimates in the following proposition.
\begin{proposition}\label{lize1}
Let $u:[0,T]\times\Bbb H^2\to \Bbb H^2$ be a solution to (\ref{wmap1}) satisfying
\begin{align*}
\|(\nabla du,\nabla\partial_t u)\|_{L^2\times L^2}+\|(du,\partial_t u)\|_{L^2\times L^2}\le M.
\end{align*}
If $\widetilde{u}:\Bbb R^+\times[0,T]\times\Bbb H^2\to \Bbb H^2$ is the solution to (\ref{8.29.2}) with initial data $u(t,x)$, then for any $\eta>0$, it holds uniformly for $(s,t)\in(0,1)\times[0,T]$ that
\begin{align*}
&s^{\frac{1}{2}}{\left\| {\nabla d\widetilde{u}} \right\|_{{L^\infty_x }}} + s^{\frac{1}{2}}{\left\| {\nabla {\partial _t}\widetilde{u}} \right\|_{{L^\infty_x }}} + {s}{\left\| {\nabla {\partial _s}\widetilde{u}} \right\|_{{L^\infty_x }}} + s^{\frac{1}{2}}{\left\| {{\partial _s}\widetilde{u}} \right\|_{{L^\infty_x }}}\\
&+ s^{\frac{1}{2}}{\left\| {\nabla {\partial _s}\widetilde{u}} \right\|_{{L^2_x}}} + {\left\| {s^{\frac{1}{2}}{\nabla ^2}{\partial _s}\widetilde{u}} \right\|_{L_{s,x}^2}}
+s{\left\| {\nabla_t {\partial _s}\widetilde{u}} \right\|_{{L^{\infty}_x}}}+s^{\eta}\|d\widetilde{u}\|_{L^{\infty}_x}\le MC(M).
\end{align*}
\end{proposition}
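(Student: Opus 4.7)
The plan is to derive every estimate from a Bochner-type differential inequality combined with either the maximum principle or the Duhamel representation for the linear heat semigroup, using the heat-kernel decay bounds of Lemma \ref{8.5}. Since the nonlinear sources on the right-hand sides of the Bochner inequalities couple the various quantities through products such as $|d\widetilde u|^{2}|\nabla\partial_s\widetilde u|$, the estimates must be proved in order of increasing regularity: first $d\widetilde u$ and $\partial_s\widetilde u$ in $L^{\infty}_x$, then the $L^{2}_x$ bounds on $\nabla\partial_s\widetilde u$ and $\nabla^{2}\partial_s\widetilde u$, and finally the $L^{\infty}_x$ bounds on the one-derivative quantities.

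To start, the bound $s^{\eta}\|d\widetilde u\|_{L^{\infty}_x}\le MC(M)$ follows from (\ref{uu}): the function $v=|d\widetilde u|e^{-Cs}$ is a subsolution of the linear heat equation, so the maximum principle gives $|d\widetilde u|(s,\cdot)\le e^{Cs}e^{s\Delta}|du(t,\cdot)|$. Because $\|\nabla du\|_{L^{2}_x}+\|du\|_{L^{2}_x}\le M$, Sobolev embedding (\ref{uv211}) places $du(t,\cdot)$ in $L^{1/\eta}$, and (\ref{huhu89}) with $p=\infty$, $r=1/\eta$ supplies the required $s^{-\eta}$ decay on $(0,1)$. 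For $\partial_s\widetilde u$, inequality (\ref{VI4}) shows that $|\partial_s\widetilde u|$ itself is a subsolution; applying (\ref{huhu89}) with $p=\infty$, $r=2$ together with $\|\tau(u)\|_{L^{2}_x}\lesssim\|\nabla du\|_{L^{2}_x}\le M$ yields $s^{1/2}\|\partial_s\widetilde u\|_{L^{\infty}_x}\le MC(M)$.

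With these two pointwise bounds in hand, I would integrate the pointwise inequality (\ref{9tian1}) over $\Bbb H^{2}$, control the cubic and quartic terms on the right by H\"older using the $L^{\infty}$ estimates just obtained together with Sobolev and (\ref{V5}) for the $L^{2}$ control of $\nabla d\widetilde u$, absorb $|\nabla^{2}\partial_s\widetilde u|^{2}$ into the left-hand side via Young, and finally multiply by the weight $s$. Integrating in $s$ over $(0,s^{\ast})\subset(0,1)$ produces
\begin{align*}
s^{\ast}\|\nabla\partial_s\widetilde u(s^{\ast})\|^{2}_{L^{2}_x}+\int_{0}^{s^{\ast}}\tau\|\nabla^{2}\partial_s\widetilde u(\tau)\|^{2}_{L^{2}_x}\,d\tau\le\int_{0}^{s^{\ast}}\|\nabla\partial_s\widetilde u\|^{2}_{L^{2}_x}\,d\tau+\text{lower order},
\end{align*}
while the unweighted $L^{2}_{s}L^{2}_x$ integral of $\nabla\partial_s\widetilde u$ is bounded by the plain energy identity (\ref{j089}) and (\ref{V5}). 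This yields simultaneously $s^{1/2}\|\nabla\partial_s\widetilde u\|_{L^{2}_x}\le MC(M)$ and $\|s^{1/2}\nabla^{2}\partial_s\widetilde u\|_{L^{2}_{s,x}}\le MC(M)$.

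The remaining $L^{\infty}_x$ estimates on $\nabla\partial_s\widetilde u$, $\nabla d\widetilde u$, $\nabla\partial_t\widetilde u$, and $\nabla_t\partial_s\widetilde u$ follow by the same Duhamel template applied to the Bochner inequalities (\ref{9tian1}), (\ref{ion1}), Lemma \ref{8zu}, and (\ref{iconm}). Each takes the form $(\partial_s-\Delta)f\le Cf+g(s)$, where $g$ is a polynomial in quantities already controlled, so Gronwall plus Duhamel supplies $f(s)\le e^{Cs}\bigl(e^{s\Delta}f(0)+\int_{0}^{s}e^{(s-\tau)\Delta}g(\tau)\,d\tau\bigr)$ and (\ref{huhu89}) with $p=\infty,r=2$ produces the stated $s^{-1/2}$ or $s^{-1}$ weights. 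The most delicate bound $s\|\nabla_t\partial_s\widetilde u\|_{L^{\infty}_x}$ is dispatched via the pointwise identity (\ref{facik}), which reduces matters to the Bochner inequality for $|\nabla\partial_t\widetilde u|^{2}$ from Lemma \ref{8zu}; an extra $L^{\infty}_x$-smoothing of the weighted calculation that produced (\ref{haoren8}) closes the loop. The main obstacle throughout is ensuring that every nonlinear source remains integrable at $s=0$: this forces the choice $\eta<1/2$ in the first step, after which every convolution behaves like $\int_{0}^{1}\tau^{-\alpha}\,d\tau$ with $\alpha<1$ and the bootstrap closes cleanly.
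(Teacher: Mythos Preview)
Your overall strategy coincides with the paper's: Bochner-type subsolution inequalities combined with the maximum principle, Duhamel representation, and the heat-kernel smoothing of Lemma~\ref{8.5}. The ordering you propose (pointwise bounds on $d\widetilde u,\partial_s\widetilde u$ first, then the $L^2$ bounds on $\nabla\partial_s\widetilde u$ and $\nabla^2\partial_s\widetilde u$, then the remaining $L^\infty$ bounds) differs slightly from the paper, which proves $s^{1/2}\|\nabla d\widetilde u\|_{L^\infty}$ \emph{before} the weighted $L^2$ energy estimate (\ref{ki6ll1}) so as to use $\|s\nabla d\widetilde u\|_{L^\infty_x}$ in that step; your alternative of invoking $\|\nabla d\widetilde u\|_{L^2}$ from (\ref{V5}) instead is legitimate and closes equally well.

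There is, however, one genuine gap in the template you write for the $L^\infty$ bounds. The Duhamel formula $f(s)\le e^{Cs}\bigl(e^{s\Delta}f(0)+\int_0^s e^{(s-\tau)\Delta}g(\tau)\,d\tau\bigr)$ started at $s=0$ cannot deliver the estimate $s\|\nabla\partial_s\widetilde u\|_{L^\infty_x}\le MC(M)$: the initial datum $f(0)=|\nabla\tau(u)|$ is of third order in $u$ and is \emph{not} placed in $L^2_x$ by the hypothesis $\|\nabla du\|_{L^2}+\|du\|_{L^2}\le M$, and even if it were, the $L^2\to L^\infty$ smoothing in (\ref{huhu89}) yields at best $s^{-1/2}$, never $s^{-1}$. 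The same obstruction applies to $\nabla_t\partial_s\widetilde u$. The paper circumvents this by restarting Duhamel at $s/2$ rather than $0$: one writes
\[
\|f(s)\|_{L^\infty_x}\lesssim s^{-1/2}\|f(\tfrac{s}{2})\|_{L^2_x}+\int_{s/2}^s\|g(\tau)\|_{L^\infty_x}\,d\tau,
\]
and then feeds in the bound $\|\nabla\partial_s\widetilde u(s/2)\|_{L^2_x}\lesssim (s/2)^{-1/2}MC(M)$ that you have already obtained, producing exactly the required $s^{-1}$. For $\nabla_t\partial_s\widetilde u$ the paper does the analogous restart using (\ref{iconm}) and the $L^2$ bound (\ref{haorenhao9}) from Lemma~\ref{zhangqiling}, which is more direct than your detour through (\ref{facik}) and an (unstated) Bochner inequality for $|\nabla^2\partial_t\widetilde u|$. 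All the ingredients for this fix are in your writeup; only the restart-at-$s/2$ device is missing.
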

\begin{proof}
Since $\|\nabla d\widetilde{u}\|_{L^2_x}\le M$ shown by (\ref{V5}), Sobolev embedding implies $\|d\widetilde{u}\|_{L^p_x}\le M$ for any $p\in(2,\infty)$. Then (\ref{uu}) and (\ref{huhu89}) yield $s^{\eta}\|d\widetilde{u}\|_{L^{\infty}_x}\le M$ for any $\eta>0$ and all $(t,s)\in[0,T]\times(0,1)$.
By (\ref{ion1}), one has
$${\partial _s}\left| {\nabla d\widetilde{u}} \right| - {\Delta}\left| {\nabla d\widetilde{u}} \right|{\rm{ }} \le K\left| {\nabla d\widetilde{u}} \right| + {\left| {d\widetilde{u}} \right|^2}\left| {\nabla d\widetilde{u}} \right| + {\left| {d\widetilde{u}} \right|} + {\left| {d\widetilde{u}} \right|^4}.
$$
Furthermore we obtain
$$\left( {\partial _s} - {\Delta} \right)\left( {{e^{ - sK}}{e^{ - \int_0^s {\left\| {d\widetilde{u}(\tau )} \right\|_{L_x^\infty }^2d\tau } }}\left| {\nabla d\widetilde{u}} \right|} \right) \le {e^{ - sK}}{e^{ - \int_0^s {\left\| {d\widetilde{u}(\tau )} \right\|_{L_x^\infty }^2d\tau } }}\left( {{{\left| {d\widetilde{u}} \right|}} + {{\left| {d\widetilde{u}} \right|}^4}} \right).
$$
Then maximum principle implies for $s\in[0,2]$
\begin{align*}
{\left\| { {\nabla d\widetilde{u}}(s)} \right\|_{L_x^\infty }} &\lesssim {\big\| {{e^{{\Delta}\frac{s}{2}}}}( {{e^{ - \frac{{sK}}{2}}}{e^{ - \int_0^{\frac{s}{2}} {\left\| {d\widetilde{u}(\tau )} \right\|_{L_x^\infty }^2d\tau } }}\left| {\nabla d\widetilde{u}} \right|(\frac{s}{2})})\big\|_{L_x^\infty }} \\
&+ {\big\| {\int_{\frac{s}{2}}^s {{e^{{\Delta}(s - \tau )}} {{e^{ - K\tau }}{e^{ - \int_0^\tau  {\| {d\widetilde{u}(\tau_1)} \|_{L_x^\infty }^2d\tau_1 } }}( {{{| {d\widetilde{u}}|}}+ {{| {d\widetilde{u}} |}^4}} )(\tau )} d\tau } }\big \|_{L_x^\infty }}.
\end{align*}
By the smoothing effect of the heat semigroup, we obtain for $s\in[0,1]$
\begin{align*}
{\left\| { {\nabla d\widetilde{u}}(s)} \right\|_{L_x^\infty }} \lesssim {s^{ - \frac{1}{2}}}{\big\| { {\nabla d\widetilde{u}} (\frac{s}{2})} \big\|_{L_x^2}} + {\int_{\frac{s}{2}}^s {{{\big\| {{{| {d\widetilde{u}}|}^4}(\tau )} \big\|}_{L_x^\infty }} + \big\| {{{| {d\widetilde{u}}|}}|(\tau )} \big\|} _{L_x^\infty }}d\tau.
\end{align*}
Then Lemma \ref{8.5} and Lemma \ref{8.44} show for $s\in(0,1)$
$${\left\| \nabla d\widetilde{u}(s) \right\|_{L_x^\infty }} \le {s^{ - \frac{1}{2}}}{\big\| { {\nabla d\widetilde{u}}(\frac{s}{2})} \big\|_{L_x^2}} + \int_{\frac{s}{2}}^s {{\tau ^{ - 3/2}}( \big\| {du} \big\|^4_{L_x^{\frac{8}{3}}} + \big\| {du} \big\|_{L_x^2})} d\tau.
$$
Therefore by Sobolev inequality we conclude
\begin{align*}
{\left\| {\left| {\nabla d\widetilde{u}} \right|(s)} \right\|_{L_x^\infty }} &\le  {s^{ -\frac{1}{2}}}\mathop {\sup }\limits_{t \in [0,T]} \left( \left\| {\nabla du}(t) \right\|_{L_x^2}^4 + \left\| {du}(t) \right\|_{L_x^2} \right)\nonumber\\
&+{s^{ - \frac{1}{2}}}\mathop {\sup }\limits_{s \in [0,1]} {\left\| {\left| {\nabla d\widetilde{u}} \right|(s)} \right\|_{L_x^2}}.
\end{align*}
Thus by (\ref{V5}), we obtain for $s\in[0,1]$
\begin{align}\label{qian1}
s^{\frac{1}{2}}{\left\| {\left| {\nabla d\widetilde{u}} \right|(s)} \right\|_{L_x^\infty }} \le MC(M).
\end{align}
By (\ref{9xian}) we have
\begin{align}
&\frac{d}{{ds}}\left( {s{\mathcal{E}_3}(\widetilde{u}(s))} \right) \nonumber\\
&\lesssim {\mathcal{E}_3}(\widetilde{u}(s)) - \int_{{\Bbb H^2}} {s{{\left| {{\nabla ^2}{\partial _s}\widetilde{u}} \right|}^2}{\rm{dvol_h}}}+ \int_{\Bbb H^2}s\left( \left| {d\widetilde{u}} \right|\left| {\nabla {\partial _s}\widetilde{u}} \right|{{\left| {{\partial _s}\widetilde{u}} \right|}^2}  \right) {\rm{dvol_h}}\nonumber \\
&+ \int_{{\Bbb H^2}} s\left( \left| {{\nabla d}\widetilde{u}} \right|\left| {d\widetilde{u}} \right|\left| {{\partial _s}\widetilde{u}} \right|\left| {\nabla {\partial _s}\widetilde{u}} \right| + {{\left| {{\partial _s}\widetilde{u}} \right|}^2}{{\left| {d\widetilde{u}} \right|}^4} + {{\left| {\nabla {\partial _s}\widetilde{u}} \right|}^2}{{\left| {d\widetilde{u}} \right|}^2}  \right){\rm{dvol_h}} \nonumber\\
&+\int_{{\Bbb H^2}}s\big( {{\left| {d\widetilde{u}} \right|}^3}\left| {{\partial _s}\widetilde{u}} \right|\left| {\nabla {\partial _s}\widetilde{u}} \right|+ {{\left| {d\widetilde{u}} \right|}^2}\left| {{\nabla ^2}{\partial _s}\widetilde{u}} \right|\left| {{\partial _s}\widetilde{u}} \right|\big){\rm{dvol_h}}.\label{hu897}
\end{align}
The terms in the right hand side can be bounded by Sobolev and H\"older as follows
\begin{align*}
\int_{{\Bbb H^2}} {s\left| {d\widetilde{u}} \right|} \left| {\nabla {\partial _s}\widetilde{u}} \right|{\left| {{\partial _s}\widetilde{u}} \right|^2}{\rm{dvol_h}}&\le {s^{\frac{1}{2}}}{\left\| {d\widetilde{u}} \right\|_{L_x^\infty }}{\left\| {\nabla {\partial _s}\widetilde{u}} \right\|_{L_x^2}}{s^{\frac{1}{2}}}\left\| {{\partial _s}\widetilde{u}} \right\|_{L_x^4}^2 \\
\int_{{\Bbb H^2}} {s{{\left| {d\widetilde{u}} \right|}^3}\left| {{\partial _s}\widetilde{u}} \right|\left| {\nabla {\partial _s}\widetilde{u}} \right|} {\rm{dvol_h}} &\le s\left\| {d\widetilde{u}} \right\|_{L_x^{12}}^3{\left\| {\nabla {\partial _s}\widetilde{u}} \right\|_{L_x^2}}{\left\| {{\partial _s}\widetilde{u}} \right\|_{L_x^4}} \\
\int_{{\Bbb H^2}} s\left| {{\nabla d}\widetilde{u}} \right|\left| {d\widetilde{u}} \right|\left| {{\partial _s}\widetilde{u}} \right|\left| {\nabla {\partial _s}\widetilde{u}} \right|{\rm{dvol_h}} &\le {{\left\| {s{\nabla d}\widetilde{u}} \right\|}_{L_x^\infty }}{{\left\| {\nabla {\partial _s}\widetilde{u}} \right\|}_{L_x^2}}{{\left\| {d\widetilde{u}} \right\|}_{L_x^4}}{{\left\| {{\partial _s}\widetilde{u}} \right\|}_{L_x^4}}  \\
\int_{{\Bbb H^2}} s{{\left| {{\partial _s}\widetilde{u}} \right|}^2}{{\left| {d\widetilde{u}} \right|}^4}{\rm{dvol_h}} &\le \left\| {{\partial _s}\widetilde{u}} \right\|_{L_x^2}^2s\left\| {d\widetilde{u}} \right\|_{L_x^{\infty}}^4  \\
\int_{{\Bbb H^2}} s{{\left| {\nabla {\partial _s}\widetilde{u}} \right|}^2}{{\left| {d\widetilde{u}} \right|}^2}{\rm{dvol_h}} &\le \left\| {\nabla {\partial _s}\widetilde{u}} \right\|_{L_x^2}^2s\left\| {d\widetilde{u}} \right\|_{L_x^\infty }^2.
\end{align*}
The highest order term can be absorbed by the negative term, indeed we have
\begin{align*}
 \int_{{\Bbb H^2}} s{{\left| {d\widetilde{u}} \right|}^2}\left| {{\nabla ^2}{\partial _s}\widetilde{u}} \right|\left| {{\partial _s}\widetilde{u}} \right|{\rm{dvol_h}} &\le  \frac{s}{{2C}}\int_{\Bbb H^2} {{{\left| {{\nabla ^2}{\partial _s}\widetilde{u}} \right|}^2}} {\rm{dvol_h} + C\int_{{\Bbb H^2}} {s{{\left| {{\partial _s}\widetilde{u}} \right|}^2}{{\left| {d\widetilde{u}} \right|}^4}{\rm{dvol_h}}}}  \\
 &\le \frac{s}{{2C}}\int_{{\Bbb H^2}} {{{\left| {{\nabla ^2}{\partial _s}\widetilde{u}} \right|}^2}} {\rm{dvol_h}} + C\left\| {{\partial _s}\widetilde{u}} \right\|_{L_x^2}^2s\left\| {d\widetilde{u}} \right\|_{L_x^{\infty}}^4.
 \end{align*}
Recall the fact $\left| {d\widetilde{u}} \right|(s) \le e^{{\Delta s}}\left| {d{u}} \right|$ when $s\in[0,1]$, $\left| {{\partial _s}\widetilde{u}} \right|(s) \le {e^{{\Delta s}}}\left| {\tau({u})} \right|$,
the terms involved above are bounded by smoothing effect
\begin{align}
 s\left\| {d\widetilde{u}} \right\|_{L_x^\infty }^2 + {s^{\frac{1}{4}}}{\left\| {{\partial _s}\widetilde{u}} \right\|_{L_x^4}} \le \left\| {d{u}} \right\|_{L_x^2}^2 + {\left\| {\tau({u})} \right\|_{L_x^2}}.
\end{align}
Thus integrating (\ref{hu897}) with respect to $s$ in $[0,s]$ with (\ref{qian1}) gives for $s\in[0,1]$
\begin{align}\label{ki6ll1}
s{\mathcal{E}_3}(\widetilde{u}(s)) + \int_0^s {\int_{\Bbb H^2}} {s{{\left| {{\nabla ^2}{\partial _s}\widetilde{u}} \right|}^2}} {\rm{dvol_h}} ds \lesssim \int_0^s {\left\| {\nabla {\partial _s}\widetilde{u}} \right\|_{L_x^2}^2ds'}.
\end{align}
Therefore by (\ref{f40}), we conclude for $s\in[0,1]$
\begin{align}\label{qian2}
\int_{{\Bbb H^2}} s{{\left| {\nabla {\partial _s}\widetilde{u}} \right|}^2}{\rm{dvo}}{{\rm{l}}_{\rm{h}}} \le MC(M).
\end{align}
By (\ref{9tian1}), we deduce
$$
{\partial _s}\left| {\nabla {\partial _s}\widetilde{u}} \right| - {\Delta}\left| {\nabla {\partial _s}\widetilde{u}} \right|\le \left| {\nabla {\partial _s}\widetilde{u}} \right|{\left| {d\widetilde{u}} \right|^2} + \left| {{\partial _s}\widetilde{u}} \right|{\left| {d\widetilde{u}} \right|^3} + \left| {{\partial _s}\widetilde{u}} \right|\left| {{\nabla d}\widetilde{u}} \right|\left| {d \widetilde{u}} \right|.
$$
Then as above considering the equation of ${e^{ - \int_0^s {\| {d\widetilde{u}(\tau )}\|_{L_x^\infty }^2d\tau } }}\left| {\nabla {\partial _s}\widetilde{u}} \right|$, we obtain by maximum principle that
\begin{align*}
&{\| { {\nabla {\partial _s}\widetilde{u}}(s)} \|_{L_x^\infty }}\\
&\le {s^{ - \frac{1}{2}}}{\| {{\nabla {\partial _s}\widetilde{u}}(\frac{s}{2})} \|_{L_x^2}} + {\int_{\frac{s}{2}}^s {{{\| {\left| {{\partial _s}\widetilde{u}} \right|{{\left| {d\widetilde{u}} \right|}^3}(\tau )} \|}_{L_x^\infty }} + \|{\left| {{\partial _s}\widetilde{u}} \right|\left| {{\nabla d}\widetilde{u}} \right|\left| {d\widetilde{u}} \right|(\tau )} \|} _{L_x^\infty }}d\tau.
\end{align*}
Hence (\ref{qian1}) and (\ref{qian2}) give
\begin{align*}
{\left\| {\left| {\nabla {\partial _s}\widetilde{u}} \right|(s)} \right\|_{L_x^\infty }} \le& {s^{ - 1}}M + \left( {\mathop {\sup }\limits_{s \in [0,1]} s{{\left\| {{\partial _s}\widetilde{u}} \right\|}_{L_x^\infty }}{{\left\| {d\widetilde{u}} \right\|}_{L_x^\infty }}} \right)\int_{\frac{s}{2}}^s {{\tau ^{ - 1}}\left\| {d\widetilde{u}} \right\|_{L_x^\infty }^2} d\tau\\
&+ \left( {\mathop {\sup }\limits_{s \in [0,1]} s{{\left\| {\nabla d\widetilde{u}} \right\|}_{L_x^\infty }}{{\left\| {d\widetilde{u}} \right\|}_{L_x^\infty }}} \right)\int_{\frac{s}{2}}^s {{\tau ^{ - 1}}{{\left\| {d\widetilde{u}} \right\|}_{L_x^\infty }}} d\tau\\
&\le {s^{ - 1}}M + {s^{ - 1}}M^2\int_{\frac{s}{2}}^s {\left( {\left\| {d\widetilde{u}} \right\|_{L_x^\infty }^2 + {{\left\| {d\widetilde{u}} \right\|}_{L_x^\infty }}} \right)} d\tau.
\end{align*}
Consequently, we have by Lemma \ref{ktao1},
\begin{align}\label{qian3}
{\left\| {\left| {\nabla \partial_s \widetilde{u}} \right|(s)} \right\|_{L_x^\infty }} \le MC(M)s^{- 1}.
\end{align}
By Lemma \ref{8zu}, one deduces
\begin{align*}
{\partial _s}\left| {\nabla {\partial _t}\widetilde{u}} \right| - {\Delta}\left| {\nabla {\partial _t}\widetilde{u}} \right| &\le K\left| {\nabla {\partial _t}\widetilde{u}} \right|+\left| {\nabla {\partial _t}\widetilde{u}} \right||d\widetilde{u}|^2+|\partial_s\widetilde{u}||d\widetilde{u}|^2\\
&+ {\left| {d\widetilde{u}} \right|^3}\left| {{\partial _t}\widetilde{u}} \right| + \left| {d\widetilde{u}} \right|\left| {{\partial _t}\widetilde{u}} \right|\left| {\nabla d\widetilde{u}} \right|.
\end{align*}
Considering the equation of ${e^{ - \int_0^s \big({\left\| {d\widetilde{u}(\tau )} \right\|_{L_x^\infty }^2-K\big)d\tau } }}\left| {\nabla {\partial _t}\widetilde{u}} \right|$, we have by maximum principle that
\begin{align*}
\left\| {\nabla {\partial _t}\widetilde{u}} \right\|_{L^{\infty}_x} &\le {s^{ - \frac{1}{2}}}{\| {\nabla {\partial _t}\widetilde{u}} \|_{L_x^2}} +\int_{\frac{s}{2}}^s {{{(s - \tau )}^{ - \frac{1}{2}}}} \| {{\left| {d\widetilde{u}} \right|}^3}\left| {{\partial _t}\widetilde{u}} \right|(\tau )\|_{{L^2_x}}d\tau\\
&+ \int_{\frac{s}{2}}^s\||d\widetilde{u}|\left| {{\partial _t}\widetilde{u}} \right|\left| {\nabla d\widetilde{u}} \right|(\tau )\|_{L^{\infty}_x}+\||\partial_s\widetilde{u}||d\widetilde{u}|^2\|_{L^{\infty}_x}d\tau  \\
&\le {s^{ - \frac{1}{2}}}{\left\| {\nabla \partial_t\widetilde{u}} \right\|_{L_x^2}} + \mathop {\sup }\limits_{s \in [0,1]} \left( {{{\left\| {{\partial _t}\widetilde{u}} \right\|}_{L_x^4}}\left\| {d\widetilde{u}} \right\|_{L_x^{12}}^3} \right)\int_{\frac{s}{2}}^s {{{(s - \tau )}^{ - \frac{1}{2}}}d\tau }  \\
&+ \mathop {\sup }\limits_{s \in [0,1]} \left( {{s^{\frac{1}{2}}}{{\left\| {\nabla d\widetilde{u}} \right\|}_{L_x^\infty }}} \right)\int_{\frac{s}{2}}^s {{\tau ^{ - \frac{1}{2}}}{{\left\| {{\partial _t}\widetilde{u}} \right\|}_{L_x^\infty }}{{\left\| {d\widetilde{u}} \right\|}_{L_x^\infty }}d\tau }\\
&+ \mathop {\sup }\limits_{s \in [0,1]} \left( s\left\| d\widetilde{u}\right\|^2_{L_x^{\infty} } s^{\frac{1}{2}}\left\| \partial_s\widetilde{u} \right\|_{L_x^{\infty} }  \right)\int_{\frac{s}{2}}^s \tau ^{ - \frac{3}{2}}d\tau.
\end{align*}
Hence we deduce by Lemma \ref{ktao1}
\begin{align}\label{qian4}
{\left\| {\left| {\nabla \partial_t u} \right|(s)} \right\|_{L_x^\infty }} \le MC(M){s^{ - \frac{1}{2}}}.
\end{align}
The bounds for $|\nabla_t\partial_s\widetilde{u}|$ follows by the same arguments as (\ref{qian1}) with help of Lemma \ref{zhangqiling} and  (\ref{iconm}).
\end{proof}

We summarize the long time and short time behaviors as a proposition.
\begin{proposition}\label{sl}
Let $u:[0,T]\times\Bbb H^2\to \Bbb H^2$ be a solution to (\ref{wmap1}) satisfying
\begin{align*}
\|(\nabla du,\nabla\partial_t u)\|_{L^2\times L^2}+\|(du,\partial_t u)\|_{L^2\times L^2}\le M,
\end{align*}
If $\widetilde{u}:\Bbb R^+\times[0,T]\times\Bbb H^2\to \Bbb H^2$ is the solution to (\ref{8.29.2}) with initial data $u(t,x)$, then for any $\eta>0$, it holds uniformly for $t\in[0,T]$ that
\begin{align*}
&\left\| {d\widetilde{u}} \right\|_{L_s^\infty[1,\infty) L_x^{\infty}}+\left\| {\nabla d\widetilde{u}} \right\|_{L_s^\infty[1,\infty) L_x^{\infty}}+\left\| {\nabla d\widetilde{u}} \right\|_{L_s^\infty L_x^2} + {\left\| {{{\nabla\partial _t}\widetilde{u}}} \right\|_{L_s^\infty L_x^2 }}\\
&+{\| {{s^{\frac{1}{2}}}\left| {\nabla d\widetilde{u}} \right|}\|_{L_s^\infty[0,1] L_x^\infty }}
+ {\| {e^{\delta s}\left| {{\partial _s}\widetilde{u}} \right|}\|_{L_s^\infty L_x^2 }}+
{\left\| {{s}\left| {{\nabla_t}{\partial _s}\widetilde{u}} \right|} \right\|_{L^{\infty}_s[0,1]L_x^{\infty}}}\\
&+ {\| {{s^{\frac{1}{2}}}\left| {\nabla {\partial _t}\widetilde{u}} \right|} \|_{L_s^\infty[0,1] L_x^\infty }}+ {\left\| {s\left| {\nabla {\partial _s}\widetilde{u}} \right|} \right\|_{L_s^\infty[0,1] L_x^\infty }}
+ {\| {{s^{\frac{1}{2}}}e^{\delta s}\left| {{\partial _s}\widetilde{u}} \right|}\|_{L_s^\infty L_x^\infty }}\\
&+{\| {{s^{\frac{1}{2}}}\left| {\nabla {\partial _s}\widetilde{u}} \right|}\|_{L_s^\infty[0,1] L_x^2}}
+{\| {{s^{\frac{1}{2}}}\left| {\nabla_t {\partial _s}\widetilde{u}} \right|}\|_{L_s^\infty[0,1] L_x^2}}+\left\|s^{\eta} {d\widetilde{u}} \right\|_{L_s^\infty(0,1) L_x^{\infty}}\\
&{\left\| {{s^{\frac{1}{2}}}{e^{\delta s}}\left| {\nabla {\partial _t}\widetilde{u}} \right|} \right\|_{L_s^\infty L_x^\infty }} + {\left\| {s{e^{\delta s}}\left| {\nabla {\partial _s}\widetilde{u}} \right|} \right\|_{L_s^\infty L_x^\infty }}+ {\left\| se^{\delta s}{\left| {{\nabla_t\partial _s}\widetilde{u}} \right|} \right\|_{L_s^\infty L_x^\infty }}\\
&{\left\| {{e^{\delta s}}\left| {\nabla {\partial _t}\widetilde{u}} \right|} \right\|_{L_s^\infty L_x^2 }} + {\left\| {s^{\frac{1}{2}}{e^{\delta s}}\left| {\nabla {\partial _s}\widetilde{u}} \right|}\right\|_{L_s^\infty L_x^2}}+ {\left\|s^{\frac{1}{2}}e^{\delta s} { {{\nabla_t\partial _s}\widetilde{u}}} \right\|_{L_s^\infty L_x^2 }}\le MC(M).
\end{align*}
\end{proposition}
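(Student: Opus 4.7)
My plan is to recognize Proposition \ref{sl} as a consolidation statement: each individual norm that appears in the bound has already been controlled in the preceding lemmas, either on the short-time interval $s\in(0,1]$ or on the long-time interval $s\in[1,\infty)$. Therefore the proof reduces to citing the appropriate earlier estimate for every piece, checking that the weights $s^{\alpha}$ and $e^{\delta s}$ match at $s=1$, and gluing the two regimes.

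For the long-time regime I would collect the ingredients as follows: the $L^\infty_x$ bound on $d\widetilde{u}$ comes from Lemma \ref{density}, the $L^\infty_x$ bound on $\nabla d\widetilde{u}$ is the content of \eqref{chen2}, and the $L^\infty_s L^2_x$ bound on $\nabla d\widetilde{u}$ is part of \eqref{V5}. The exponentially decaying $L^2_x$- and $L^\infty_x$-estimates for $\partial_s\widetilde{u}$ and $\nabla\partial_s\widetilde{u}$ are Lemma \ref{chen2222} and Lemma \ref{decayingt}. The corresponding estimates for $\partial_t\widetilde{u}$ and $\nabla\partial_t\widetilde{u}$ are \eqref{xiu1}--\eqref{xiu4}. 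Finally, the bounds on $\nabla_t\partial_s\widetilde{u}$ are supplied by Lemma \ref{zhangqiling} via \eqref{haorenhao10}, \eqref{chanjiang3} and the pointwise comparison \eqref{facik}, which converts information on $\nabla^2\partial_t\widetilde{u}$, $\nabla\partial_t\widetilde{u}$ and $|d\widetilde{u}|^2|\partial_t\widetilde{u}|$ into control of $\nabla_t\partial_s\widetilde{u}$.

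For the short-time regime $(0,1]$, essentially everything is already packaged in Proposition \ref{lize1}: the parabolic-scaling bounds $s^{1/2}\|\nabla d\widetilde{u}\|_{L^\infty_x}$, $s^{1/2}\|\nabla\partial_t\widetilde{u}\|_{L^\infty_x}$, $s\|\nabla\partial_s\widetilde{u}\|_{L^\infty_x}$, $s^{1/2}\|\partial_s\widetilde{u}\|_{L^\infty_x}$, $s^{1/2}\|\nabla\partial_s\widetilde{u}\|_{L^2_x}$, $s\|\nabla_t\partial_s\widetilde{u}\|_{L^\infty_x}$, $s^\eta\|d\widetilde{u}\|_{L^\infty_x}$, and the space-time $L^2$ bound on $s^{1/2}\nabla^2\partial_s\widetilde{u}$ are all there. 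The remaining short-time item $s^{1/2}\|\nabla_t\partial_s\widetilde{u}\|_{L^2_x}$ is \eqref{haorenhao9} of Lemma \ref{zhangqiling}.

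The only nontrivial bookkeeping — and the place I expect to spend any thought — concerns the hybrid weights in norms such as $\|s^{1/2}e^{\delta s}\nabla\partial_s\widetilde{u}\|_{L^\infty_s L^2_x}$ and $\|se^{\delta s}\nabla\partial_s\widetilde{u}\|_{L^\infty_s L^\infty_x}$, where one needs a parabolic-smoothing factor near $s=0$ and an exponential decay factor near $s=\infty$. The plan is to verify that at the matching point $s=1$ both the short-time bound from Proposition \ref{lize1} and the long-time bound from Lemma \ref{chen2222}/Lemma \ref{decayingt} evaluate to $\lesssim MC(M)$; since $s^{1/2}$, $s$ and $e^{\delta s}$ are all of order one there, the splicing is automatic. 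Once this check is made for each entry, no new analysis is required and the proposition is complete.
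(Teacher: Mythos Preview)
Your proposal is correct and matches the paper's approach exactly: the text immediately preceding Proposition \ref{sl} says ``We summarize the long time and short time behaviors as a proposition,'' and no separate proof is given, so the proposition is intended precisely as the consolidation statement you describe. Your itemized attribution of each norm to the corresponding earlier lemma (Lemma \ref{density}, \eqref{chen2}, \eqref{V5}, Lemma \ref{chen2222}, Lemma \ref{decayingt}, \eqref{xiu1}--\eqref{xiu4}, Lemma \ref{zhangqiling}, and Proposition \ref{lize1}) is accurate, and the gluing observation at $s=1$ is the only bookkeeping needed.
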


\begin{lemma}\label{fotuo1}
If $(u,\partial_tu)$ solves (\ref{wmap1}) and $\|u(t,x)\|_{\mathcal{X}_T}\le M$, then we have
\begin{align}
\left\| {{\nabla}^2d\widetilde{u}} \right\|_{L^2_x}&\le \max(s^{-\frac{1}{2}},1)MC(M)\label{fotuo2}\\
\left\| {{\nabla}^2d\widetilde{u}} \right\|_{L^{\infty}_x}&\le \max(s^{-1},1)MC(M)\label{fotuo3}\\
se^{\delta' s}\|\nabla ^2\partial _s\widetilde{u}\|_{L^2_x}&\lesssim MC(M)\label{fotuo4}\\
s^{\frac{3}{2}}e^{\delta' s}\|\nabla ^2\partial _s\widetilde{u}\|_{L^\infty_x}&\lesssim MC(M)\label{fotuo5}
\end{align}
\end{lemma}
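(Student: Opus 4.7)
The plan is to follow the same template that produces Proposition 3.1 and Lemmas 3.2--3.7: derive a Bochner-type parabolic inequality for each quantity, then combine Duhamel's formula with the heat semigroup smoothing estimates (Lemma 2.4) on the short-time interval $(0,1)$ and the maximum principle / Moser iteration (Remark 3.1) on $[1,\infty)$, treating the two regimes separately.

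For (\ref{fotuo2}) and (\ref{fotuo3}), the first step is to compute, exactly as in Lemma 3.4, a Bochner inequality of the schematic form
\begin{align*}
(\partial_s-\Delta)|\nabla^2 d\widetilde u|^2+2|\nabla^3 d\widetilde u|^2\lesssim |\nabla^2 d\widetilde u|^2+|d\widetilde u|^2|\nabla^2 d\widetilde u|^2+|d\widetilde u||\nabla d\widetilde u|^2|\nabla^2 d\widetilde u|+|d\widetilde u|^3|\nabla d\widetilde u||\nabla^2d\widetilde u|+|d\widetilde u|^2,
\end{align*}
the extra growth term on the right coming from both $R^M$ and $R^N$ acting on $\nabla^2 d\widetilde u$ plus repeated use of (\ref{2.4best}). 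On $s\in(0,1)$ I apply Duhamel to $e^{-Ks}|\nabla^2 d\widetilde u|^2$: by the smoothing effect (\ref{mm8}) with $\alpha=1/2$ the linear part contributes $s^{-1/2}\|\nabla d\widetilde u(s/2)\|_{L^2_x}$, while the nonlinear terms are absorbed using the short-time bounds on $\|d\widetilde u\|_{L^\infty_x}$ and $\|\nabla d\widetilde u\|_{L^\infty_x}$ already established in Proposition 3.1. On $s\ge 1$, since Lemma 3.2 and (\ref{chen2}) give $\|d\widetilde u\|_{L^\infty_x}+\|\nabla d\widetilde u\|_{L^\infty_x}\le MC(M)$, the Bochner inequality reduces to $(\partial_s-\Delta)|\nabla^2 d\widetilde u|^2\le C(M)|\nabla^2 d\widetilde u|^2+C(M)$, to which Moser iteration in the form of Remark 3.1 (applied to the gauge-transformed quantity $e^{-C(M)s}(|\nabla^2 d\widetilde u|^2+1)$) furnishes the $L^\infty_x$ bound by integrating the $L^2_x$ norm, which in turn follows from integrating the Bochner identity in $s$ and using (\ref{V5}) together with the bounds on $\|\nabla\partial_s\widetilde u\|_{L^2_x}$ from Lemma 3.3 (via $\Delta d\widetilde u=\nabla\tau(\widetilde u)+\text{curvature}$).

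For (\ref{fotuo4}) and (\ref{fotuo5}), I would derive a companion Bochner-type inequality for $|\nabla^2\partial_s\widetilde u|^2$, whose structure mirrors (\ref{9tian1}) but carries an extra derivative, so that typical right-hand terms read $|d\widetilde u|^a|\nabla d\widetilde u|^b|\partial_s\widetilde u|^c|\nabla\partial_s\widetilde u|^d|\nabla^2\partial_s\widetilde u|^e$ with total differential weight matching the left-hand side. On $s\ge 1$, I use the exponential decay of $\|\partial_s\widetilde u\|_{L^\infty_x}$, $\|\nabla\partial_s\widetilde u\|_{L^\infty_x}$ and $\|\nabla_t\partial_s\widetilde u\|_{L^\infty_x}$ supplied by Lemma 3.4, Lemma 3.5 and Lemma 3.7, together with the uniform bound on $\|\nabla d\widetilde u\|_{L^\infty_x}$ from (\ref{chen2}), which lets me transfer the $e^{-\delta s}$ factor onto the nonlinear source. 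For the $L^2_x$ bound (\ref{fotuo4}) I integrate the Bochner identity in $s$ on $[s_1,\infty)$ after choosing $s_1\in[1,2]$ with $\|\nabla^2\partial_s\widetilde u(s_1)\|_{L^2_x}\le MC(M)$, which exists by (\ref{ki6ll1}); Gronwall with the exponentially decaying forcing then yields the factor $s^{-1}e^{-\delta' s}$ for any $0<\delta'<\delta$. The $L^\infty_x$ bound (\ref{fotuo5}) follows by Moser iteration on the gauge-transformed quantity as in Lemma 3.5, picking up an additional $s^{-1/2}$ from passing to $L^\infty_x$ via Remark 3.1 applied on a ball of radius $\sqrt s$ (or, equivalently, via smoothing from $L^2_x$ to $L^\infty_x$ using (\ref{mm8})).

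The main obstacle, as is typical once one moves to second covariant derivatives, is bookkeeping: the Bochner identities for $\nabla^2 d\widetilde u$ and $\nabla^2\partial_s\widetilde u$ produce long lists of nonlinear terms, and to reach the sharp singular powers $s^{-1/2},s^{-1},s^{-1},s^{-3/2}$ at $s=0$ one has to pair each term with the right short-time estimate from Proposition 3.1 so that no extra power of $s^{-1}$ is lost. The remaining issue, specific to (\ref{fotuo5}), is that the naive heat-kernel propagation from $L^2_x$ to $L^\infty_x$ costs $s^{-1/2}$ on top of the already singular $s^{-1}$ in the $L^2_x$ bound; this is absorbed provided one gauges the forcing by $e^{-\int_0^s\|d\widetilde u\|_{L^\infty_x}^2 d\tau}$ and uses Lemma 2.6 to keep this weight integrable, precisely as done in the proof of (\ref{qian3}).
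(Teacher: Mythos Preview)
Your overall template (Bochner inequality, Duhamel plus smoothing for small $s$, Moser iteration for large $s$) matches the paper, and the treatment of (\ref{fotuo3}) and (\ref{fotuo5}) is essentially correct. There is, however, a genuine gap in your argument for the $L^2_x$ bound (\ref{fotuo2}) at small $s$, and a related omission for the short-time part of (\ref{fotuo4}).

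For (\ref{fotuo2}) on $s\in(0,1)$ you write that ``the linear part contributes $s^{-1/2}\|\nabla d\widetilde u(s/2)\|_{L^2_x}$'' via (\ref{mm8}) with $\alpha=\tfrac12$. This does not follow. The parabolic inequality you derive is for the \emph{scalar} $|\nabla^2 d\widetilde u|$ (or its square); Duhamel from $s/2$ gives you $e^{(s/2)\Delta}|\nabla^2 d\widetilde u|(s/2)$ as the linear part, and the heat semigroup acting on this scalar in $L^2$ gains nothing: you still need $\|\nabla^2 d\widetilde u(s/2)\|_{L^2_x}$ on the right, not $\|\nabla d\widetilde u(s/2)\|_{L^2_x}$. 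The smoothing estimate (\ref{mm8}) trades an honest power of $(-\Delta)$ for $s^{-\alpha}$, but $|\nabla^2 d\widetilde u|$ is not $(-\Delta)^{1/2}$ applied to $|\nabla d\widetilde u|$, so you cannot cash in a derivative this way. Iterating the resulting inequality $\|\nabla^2 d\widetilde u(s)\|_{L^2}\le C\|\nabla^2 d\widetilde u(s/2)\|_{L^2}+Cs^{1/2}$ down to $s=0$ would force the $\mathfrak H^3$ norm of the data into the bound, whereas the lemma depends only on $M=\|u\|_{\mathcal X_T}$.

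The paper bypasses this entirely: it uses the integration-by-parts identity
\[
\|\nabla^2 d\widetilde u\|_{L^2_x}^2\lesssim \|\nabla\partial_s\widetilde u\|_{L^2_x}^2+\|\nabla d\widetilde u\|_{L^2_x}^3+\|\nabla d\widetilde u\|_{L^2_x}^2\|d\widetilde u\|_{L^\infty_x}^2,
\]
which is exactly the relation $\Delta d\widetilde u=\nabla\tau(\widetilde u)+\text{curvature}$ you mention (but only for large $s$). Since $\|\nabla\partial_s\widetilde u\|_{L^2_x}\lesssim s^{-1/2}$ on $(0,1)$ is already in Proposition~\ref{sl}, (\ref{fotuo2}) follows immediately for \emph{all} $s$, with no Duhamel step needed. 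You should deploy this identity as the primary tool for (\ref{fotuo2}), not as an afterthought.

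For (\ref{fotuo4}) on $s\in(0,1)$, knowing from (\ref{ki6ll1}) that $\int_0^1 s\|\nabla^2\partial_s\widetilde u\|_{L^2_x}^2\,ds<\infty$ and picking one good $s_1$ does not give the pointwise-in-$s$ bound $s\|\nabla^2\partial_s\widetilde u(s)\|_{L^2_x}\le MC(M)$. The paper instead runs a weighted energy argument: it differentiates $s^2\|\nabla^2\partial_s\widetilde u\|_{L^2_x}^2$, integrates from $0$ to $\tau$, and uses the short-time estimates of Proposition~\ref{sl} together with (\ref{ki6ll1}) to close; the factor $s^2$ is what absorbs the boundary term at $s=0$. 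Your Gronwall step from $s_1\in[1,2]$ handles only the long-time decay.
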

\begin{proof}
The Bochner formula for $\left| {{\nabla}^2d\widetilde{u}} \right|^2$ is as follows
\begin{align}
&{\partial _s}{| {{\nabla}^2d\widetilde{u}}|^2} - \Delta {| {{\nabla}^2d\widetilde{u}}|^2} + 2{| { {\nabla^3 }d\widetilde{u}}|^2} \lesssim |\nabla^2d\widetilde{u}|^2(|d\widetilde{u}|^2+1)+|\nabla d\widetilde{u}|^2|\nabla^2d\widetilde{u}||d\widetilde{u}|\nonumber\\
&+|d\widetilde{u}|^3|\nabla d\widetilde{u}||\nabla^2d\widetilde{u}|+|\nabla d\widetilde{u}||\nabla^2 d\widetilde{u}|^2.\label{piiguuuu7}
\end{align}
Interpolation by parts and $\tau(\widetilde{u})=\partial_s\widetilde{u}$ give
\begin{align}\label{hupke3}
\left\| {{\nabla}^2d\widetilde{u}} \right\|^2_{L^2_x}\lesssim \left\| \nabla\partial_s\widetilde{u} \right\|^2_{L^2_x}+\left\| \nabla d\widetilde{u} \right\|^3_{L^2_x}
+\left\| \nabla d\widetilde{u} \right\|^2_{L^2_x}\|du\|^2_{L^{\infty}_x}
\end{align}
Then Proposition \ref{sl} yields (\ref{fotuo2}). (\ref{piiguuuu7}) shows $| {{\nabla}^2d\widetilde{u}}|$ satisfies
\begin{align}
&{\partial _s}{| {{\nabla}^2d\widetilde{u}}|} - \Delta {| {{\nabla}^2d\widetilde{u}}|}\lesssim |\nabla^2d\widetilde{u}|(|d\widetilde{u}|^2+1)+|\nabla d\widetilde{u}|^2|d\widetilde{u}|+|d\widetilde{u}|^3|\nabla d\widetilde{u}|+|\nabla d\widetilde{u}||\nabla^2 d\widetilde{u}|.
\end{align}
Let $f={| {{\nabla}^2d\widetilde{u}}|}e^{-\int^{s}_0(\|d\widetilde{u}\|^2_{L^{\infty}}+\|\nabla d\widetilde{u}\|_{L^{\infty}}+1)d\kappa}$. Then for $s\in[0,1]$, by Duhamel principle and smoothing effect, Lemma \ref{ktao1},
\begin{align}
&\|f(s,x)\|_{L^{\infty}_x}\lesssim s^{-\frac{1}{2}}\|f(\frac{s}{2},x)\|_{L^{2}_x}+\int^s_{\frac{s}{2}}(s-\tau)^{-\frac{1}{2}}\||\nabla d\widetilde{u}|^2|d\widetilde{u}|+|d\widetilde{u}|^3|\nabla d\widetilde{u}|\|_{L^2_x}d\tau.
\end{align}
Then (\ref{fotuo3}) when $s\in[0,1]$ follows by Lemma \ref{ktao1} and Proposition \ref{sl}. (\ref{fotuo2}) gives $\| {{\nabla}^2d\widetilde{u}}\|_{L^2_x}\le MC(M)$ for all $s\ge1$. Meanwhile Proposition \ref{sl} shows $\|\nabla d\widetilde{u}\|_{L^{\infty}}+\| d\widetilde{u}\|_{L^{\infty}}\le MC(M)$ when $s\ge1$. Then if let $Z\triangleq e^{-C_1(M)s}(e^{-C_1(M) s}|\nabla^2 d\widetilde{u}|+C_1(M))$, then $(\partial_s-\Delta)Z\le0$.
Applying Remark \ref{ki78} to $Z$ gives
\begin{align}
\|\nabla^2 d\widetilde{u}(s,x)\|^2_{L^{\infty}_x}\lesssim \int^{s}_{s-1}\|\nabla^2 d\widetilde{u}(\tau,x)\|^2_{L^{2}_x}d\tau+MC(M).
\end{align}
Then (\ref{fotuo3}) when $s\ge1$ follows by (\ref{hupke3}), (\ref{ingjh}) and Proposition \ref{sl}. The Bochner formula for $\left| {{\nabla}^2\partial_s{\widetilde{u}}} \right|^2$ is as follows
\begin{align}
&{\partial _s}{\left| {{\nabla}^2{\partial _s}\widetilde{u}} \right|^2} - \Delta {\left| {{\nabla}^2{\partial _s}\widetilde{u}} \right|^2} + 2{\left| { {\nabla^3 }{\partial _s}\widetilde{u}} \right|^2} \lesssim |\nabla^2\partial_s\widetilde{u}|^2(|d\widetilde{u}|^2+1)+|\partial_s\widetilde{u}|^2|\nabla^2\partial_s\widetilde{u}||\nabla d\widetilde{u}|\nonumber\\
&+|\partial_s\widetilde{u}||d\widetilde{u}||\nabla \partial_s\widetilde{u}||\nabla^2\partial_s\widetilde{u}|+|\nabla^2\partial_s\widetilde{u}|^2|d\widetilde{u}||\partial_s\widetilde{u}|+
|\nabla^2d\widetilde{u}||d\widetilde{u}||\partial_s\widetilde{u}||\nabla^2\partial_s\widetilde{u}|\nonumber\\
&+|\nabla\partial_s\widetilde{u}||\nabla^2\partial_s\widetilde{u}||d\widetilde{u}||\nabla d\widetilde{u}|
+|\nabla\partial_s\widetilde{u}||\nabla^2\partial_s\widetilde{u}||d\widetilde{u}||\nabla d\widetilde{u}|+|\nabla d\widetilde{u}||\nabla^2 \partial_s\widetilde{u}|^2.\label{ftuo4}
\end{align}
Then one has
\begin{align*}
&\frac{d}{{ds}}\int_{{\Bbb H^2}} {{s^2}{{\left| {{\nabla ^2}{\partial _s}\widetilde{u}} \right|}^2}{\rm{dvol_h}}}  \\
&\le \int_{{\Bbb H^2}} 2{s{{\left| {{\nabla ^2}{\partial _s}\widetilde{u}} \right|}^2}}  - 2{s^2}{\left| {{\nabla ^3}{\partial _s}\widetilde{u}} \right|^2} + {s^2}{\left| {{\nabla ^2}{\partial _s}\widetilde{u}} \right|^2}|d\widetilde{u}{|^2}{\rm{dvol_h}} \\
&+ \int_{{\Bbb H^2}} {{s^2}} |{\nabla ^2}d\widetilde{u}||d\widetilde{u}||{\partial _s}\widetilde{u}||{\nabla ^2}{\partial _s}\widetilde{u}| + {s^2}|\nabla {\partial _s}\widetilde{u}||{\nabla ^2}{\partial _s}\widetilde{u}||d\widetilde{u}||\nabla d\widetilde{u}|{\rm{dvol_h}} \\
&+ \int_{{\Bbb H^2}} {{s^2}|{\partial _s}\widetilde{u}||d\widetilde{u}||\nabla {\partial _s}\widetilde{u}||{\nabla ^2}{\partial _s}\widetilde{u}|}  + {s^2}|{\nabla ^2}{\partial _s}u{|^2}|d\widetilde{u}||{\partial _s}\widetilde{u}|{\rm{dvol_h}} \\
&+ \int_{{\Bbb H^2}} {{s^2}|{\partial _s}\widetilde{u}{|^2}|{\nabla ^2}{\partial _s}\widetilde{u}||\nabla d\widetilde{u}| + } {s^2}|{\nabla ^2}{\partial _s}\widetilde{u}|^2|\nabla d\widetilde{u}|{\rm{dvol_h}} \\
&+ \int_{{\Bbb H^2}} {{s^2}} |\nabla {\partial _s}\widetilde{u}||{\nabla ^2}{\partial _s}\widetilde{u}||d\widetilde{u}||\nabla d\widetilde{u}|{\rm{dvol_h}}
+\int_{{\Bbb H^2}} {{s^2}{{\left| {{\nabla ^2}{\partial _s}\widetilde{u}} \right|}^2}{\rm{dvol_h}}}.
\end{align*}
Integrating the above formula in $s\in[s_1,\tau]$ with any $0<s_1<\tau<2$, by Sobolev embedding, Gagliardo-Nirenberg  and Young inequality, we obtain
\begin{align*}
 &\int_{{\Bbb H^2}} {{\tau ^2}{{\left| {{\nabla ^2}{\partial _s}\widetilde{u}} \right|}^2}(\tau ,t){\rm{dvol_h}}}  - \int_{{\Bbb H^2}} {s_1^2{{\left| {{\nabla ^2}{\partial _s}\widetilde{u}} \right|}^2}({s_1},t){\rm{dvol_h}}}  \\
 &\lesssim \int_{{s_1}}^\tau  {\int_{{\Bbb H^2}} {s{{\left| {{\nabla ^2}{\partial _s}\widetilde{u}} \right|}^2}} }  - {s^2}{\left| {{\nabla ^3}{\partial _s}\widetilde{u}} \right|^2} + \left\| {d\widetilde{u}} \right\|_{L_s^\infty L_x^4}^2{s^2}{\left| {{\nabla ^2}{\partial _s}\widetilde{u}} \right|^2}{\rm{dvol_h}}ds \\
 &+ \int_{{s_1}}^\tau  {\int_{{\Bbb H^2}} {{s^3}|d\widetilde{u}|^2|{\partial _s}\widetilde{u}|^2} } {\left| {{\nabla ^2}d\widetilde{u}} \right|^2} + {s^3}|\nabla {\partial _s}\widetilde{u}|^2|d\widetilde{u}|^2|\nabla d\widetilde{u}|^2{\rm{dvol_h}}ds \\
 &+ \int_{{s_1}}^\tau  {\int_{{\Bbb H^2}} {{s^3}|{\partial _s}\widetilde{u}|^2|d\widetilde{u}|^2|\nabla {\partial _s}\widetilde{u}|^2} }  + {s^2}|d\widetilde{u}|^2|{\partial _s}\widetilde{u}|^2{\rm{dvol_h}}ds \\
 &+ \int_{{s_1}}^\tau  {\int_{{\Bbb H^2}} {{s^3}|\nabla d\widetilde{u}|^2|{\partial _s}\widetilde{u}|^2} }  + {s^2}|\nabla d\widetilde{u}|^2{\rm{dvol_h}}ds \\
 &+ \int_{{s_1}}^\tau  {\int_{{\Bbb H^2}} {{s^3}} |\nabla {\partial _s}\widetilde{u}|^2|d\widetilde{u}|^2|\nabla d\widetilde{u}|^2{\rm{dvol_h}}} ds.
 \end{align*}
Thus letting $s_1\to0$, for $\tau\in(0,2)$, we deduce from (\ref{ki6ll1}), (\ref{fotuo3}) and Proposition \ref{sl} that
\begin{align*}
\|s\nabla ^2\partial _s\widetilde{u}\|_{L^2_x}\lesssim MC(M),
\end{align*}
from which (\ref{fotuo4}) when $s\in(0,1)$ follows.
Integrating (\ref{ftuo4}) with respect to $x$ in $\Bbb H^2$, one obtains by (\ref{{uv111}}) and Proposition \ref{sl} especially the $L^{\infty}_x$ bounds for $|d\widetilde{u}|+|\nabla d\widetilde{u}|$ that for $s\ge1$ and any $0<c\ll 1$
\begin{align}
&\frac{d}{{ds}}\left\| {{\nabla ^2}{\partial _s}\widetilde{u}} \right\|_{L_x^2}^2+c\left\| {{\nabla ^2}{\partial _s}\widetilde{u}} \right\|_{L_x^2}^2\lesssim {\left\| {{\partial _s}\widetilde{u}} \right\|_{L_x^\infty }}{\left\| {\nabla {\partial _s}\widetilde{u}} \right\|_{L_x^2}}{\left\| {{\nabla ^2}{\partial _s}\widetilde{u}} \right\|_{L_x^2}}+\frac{1}{c}\left\| {{\nabla ^2}{\partial _s}\widetilde{u}} \right\|_{L_x^2}^2\nonumber\\
&+ {\left\| {{\partial _s}\widetilde{u}} \right\|_{L_x^2}}{\left\| {{\nabla ^2}{\partial _s}\widetilde{u}} \right\|_{L_x^2}} + {\left\| {\nabla {\partial _s}\widetilde{u}} \right\|_{L_x^2}}{\left\| {{\nabla ^2}{\partial _s}\widetilde{u}} \right\|_{L_x^2}}+{\left\| {{\nabla ^2}{\partial _s}\widetilde{u}} \right\|_{L_x^2}}\left\| {{\partial _s}\widetilde{u}} \right\|_{L_x^4}^2.\label{kioplmmnn}
\end{align}
Meanwhile integrating (\ref{hu897}) with respect to $s$ in $(s',\infty)$, we obtain from the exponential decay of $|\partial_s\widetilde{u}|+|\nabla\partial_s\widetilde{u}|$ in Proposition \ref{sl} that for $s'\ge1$
\begin{align}\label{oi98nhbgfg}
\int^{\infty}_{s'}\|\nabla ^2\partial _s\widetilde{u}\|_{L^2_x}d\tau\lesssim e^{-\delta s'}MC(M).
\end{align}
Hence Gronwall inequality gives if choosing $0<c<\delta$ then for $s\ge1$ one has
\begin{align}
\left\| {{\nabla ^2}{\partial _s}\widetilde{u}} (s)\right\|_{L_x^2}^2&\le e^{-cs}MC(M)+e^{-cs}\int^{s}_{1}e^{c\tau}\|\nabla ^2\partial _s\widetilde{u}\|^2_{L^2_x}d\tau
+e^{-cs}\int^{s}_{1}e^{c\tau}e^{-\delta \tau}d\tau\nonumber\\
&\lesssim MC(M).\label{po67gvg}
\end{align}
Applying Gronwall inequality to (\ref{kioplmmnn}) again in $(\frac{s}{2},s)$, we deduce from (\ref{po67gvg}) that
\begin{align}
\left\| {{\nabla ^2}{\partial _s}\widetilde{u}} (s)\right\|_{L_x^2}^2\le e^{-\frac{c}{2}s}MC(M)+e^{-cs}\int^{s}_{\frac{s}{2}}e^{c\tau}\|\nabla ^2\partial _s\widetilde{u}\|^2_{L^2_x}d\tau
+e^{-cs}\int^{s}_{\frac{s}{2}}e^{c\tau}e^{-\delta \tau}d\tau.
\end{align}
Thus (\ref{fotuo4}) follows by (\ref{oi98nhbgfg}).
Finally (\ref{fotuo5}) follows by (\ref{fotuo4}) and applying Remark \ref{ki78} to (\ref{ftuo4}) as before.
\end{proof}

\subsection{The existence of caloric gauge}
As a preparation for the existence of the caloric gauge, we prove that the heat flows initiated from $u(t,x)$ with different $t$ converge to the same harmonic map as $u_0$.
\begin{lemma}\label{lhu880}
If $(u,\partial_tu)$ is a solution to (\ref{wmap1}) in $\mathcal{X}_T$, then there exists a harmonic map $\widetilde{Q}$ such that as $s\to\infty$,
$$
\mathop {\lim }\limits_{s \to \infty } \mathop {\sup }\limits_{(x,t) \in {\mathbb{H}^2} \times [0,T]}
dist_{\Bbb H^2}(\widetilde{u}(s,x,t),\widetilde{Q}(x))=0.
$$
\end{lemma}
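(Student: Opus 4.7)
The plan has three parts: (i) show that for each $(t,x)$ the flow $s\mapsto\widetilde{u}(s,t,x)$ converges pointwise to a limit $\widetilde{Q}_t(x)$ in $\mathbb{H}^2$; (ii) identify $\widetilde{Q}_t$ as a harmonic map; (iii) show that $\widetilde{Q}_t$ does not depend on $t\in[0,T]$, so a single harmonic map $\widetilde{Q}$ captures all limits. Parts (i) and (ii) are in the spirit of the Li--Tam theory already recorded in Lemma \ref{8.44}, whereas part (iii) is where the wave-map structure has to enter through the decay of $\partial_t u$ along the heat flow.

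For (i), I would bound the intrinsic length of the curve $\sigma\mapsto\widetilde{u}(\sigma,t,x)$ in $\mathbb{H}^2$ and invoke Proposition \ref{sl}:
\begin{align*}
d_{\mathbb{H}^2}\bigl(\widetilde{u}(s_1,t,x),\widetilde{u}(s_2,t,x)\bigr)\le\int_{s_1}^{s_2}|\partial_s\widetilde{u}(\sigma,t,x)|\,d\sigma\le MC(M)\int_{s_1}^{\infty}\sigma^{-1/2}e^{-\delta\sigma}\,d\sigma,
\end{align*}
using $\|s^{1/2}e^{\delta s}\partial_s\widetilde{u}\|_{L^\infty_s L^\infty_x}\le MC(M)$ uniformly in $t$. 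The right-hand side tends to $0$ as $s_1\to\infty$ uniformly in $(t,x)\in[0,T]\times\mathbb{H}^2$, so $\widetilde{Q}_t(x):=\lim_{s\to\infty}\widetilde{u}(s,t,x)$ exists with uniform convergence. For (ii), I would combine the uniform-in-$s\ge 1$ bounds on $\|d\widetilde{u}\|_{L^\infty_x}$, $\|\nabla d\widetilde{u}\|_{L^\infty_x}$, $\|\nabla^2 d\widetilde{u}\|_{L^\infty_x}$ from Proposition \ref{sl} and Lemma \ref{fotuo1} with the exponential decay of $\tau(\widetilde{u})=\partial_s\widetilde{u}$ in $L^\infty_x$. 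An Arzel\`a--Ascoli argument in a local coordinate patch then lets one pass to the limit in $\tau(\widetilde{u})=\partial_s\widetilde{u}\to 0$ and obtain $\tau(\widetilde{Q}_t)=0$ classically.

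For (iii), which is the crux, I would treat $t\mapsto\widetilde{u}(s,t,x)$ as a smooth curve in $\mathbb{H}^2$ and estimate
\begin{align*}
d_{\mathbb{H}^2}\bigl(\widetilde{u}(s,t_1,x),\widetilde{u}(s,t_2,x)\bigr)\le\int_{t_1}^{t_2}|\partial_t\widetilde{u}(s,\tau,x)|\,d\tau\le T\,\|\partial_t\widetilde{u}(s,\cdot,\cdot)\|_{L^\infty_{t,x}},
\end{align*}
then invoke (\ref{xiu2}) to get $\|\partial_t\widetilde{u}(s,\cdot,\cdot)\|_{L^\infty_{t,x}}\lesssim MC(M)e^{-\delta s}$ for $s\ge 1$. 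Letting $s\to\infty$ and combining with (i) yields $d_{\mathbb{H}^2}(\widetilde{Q}_{t_1}(x),\widetilde{Q}_{t_2}(x))=0$ for every $x$, hence $\widetilde{Q}_t\equiv\widetilde{Q}$. The main obstacle to the whole plan is precisely this uniform $L^\infty_{t,x}$ decay of $\partial_t\widetilde{u}$; it is supplied by the pointwise subheat inequality $(\partial_s-\Delta)|\partial_t\widetilde{u}|\le 0$ coming from Lemma \ref{8zu} together with the maximum principle and the $L^\infty\leftarrow L^1$ smoothing (\ref{huhu899}) applied to $|\partial_t u|=|\partial_t\widetilde{u}|_{s=0}$, so the estimate feeds back from the wave-map structure of the initial data without any circularity and the argument closes.
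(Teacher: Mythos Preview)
Your proposal is correct and follows essentially the same three-step structure as the paper's proof: exponential $L^\infty_x$ decay of $\partial_s\widetilde{u}$ gives the uniform Cauchy property in $s$; the limit is harmonic; and exponential $L^\infty_x$ decay of $\partial_t\widetilde{u}$ forces $t$-independence. The only differences are cosmetic: the paper re-derives the pointwise decay of $|\partial_s\widetilde{u}|$ and $|\partial_t\widetilde{u}|$ directly from the sub-heat inequalities \eqref{8.3}, \eqref{10.127} together with the maximum principle and the $L^1\to L^\infty$ smoothing \eqref{huhu899} (rather than citing the summary Proposition~\ref{sl} and \eqref{xiu2}), and for part~(ii) it simply invokes [Theorem 5.2, Li--Tam] instead of running an Arzel\`a--Ascoli argument.
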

\begin{proof}
The global existence of $\widetilde{u}$ is due to Lemma \ref{8.44}, the embedding ${\bf{H}^2}\hookrightarrow L^{\infty}$, ${\bf{H}^1}\hookrightarrow L^{p}$ for $p\in[2,\infty)$ and diamagnetic inequality.
Then (\ref{8.3}), maximum principle and (\ref{huhu899}) show
\begin{align}\label{10.112}
\left\| {{\partial _s}\widetilde{u}(s,t,x)} \right\|^2_{L^{\infty}_x} \le {s^{ - 1}}{e^{ - \frac{1}{4}s}}\int_{{\mathbb{H}^2}} {{{\left| {{\partial _s}\widetilde{u}(0,t,x)} \right|}^2}{\rm{dvol_h}}}.
\end{align}
Thus (\ref{8.29.2}) yields
$$
\mathop {\sup }\limits_{(x,t) \in {\mathbb{H}^2}\times[0,T]} \left| {{\partial _s}\widetilde{u}(s,t,x)} \right| \le {s^{ -\frac{1}{2}}}{e^{ - \frac{1}{8}s}}\int_{{\mathbb{H}^2}} {{{\left| {{\partial _t}u(t,x)} \right|}^2}{\rm{dvol_h}}}\le C{s^{ - 1}}{e^{ - \frac{1}{8}s}}.
$$
Therefore for any $1<s_0<s_1<\infty$ it holds
$${d_{{\mathbb{H}^2}}}(\widetilde{u}({s_0},t,x),\widetilde{u}({s_1},t,x)) \lesssim \int_{{s_0}}^{{s_1}} {{e^{ - \frac{1}{8}s}}ds},$$
which implies $\widetilde{u}(s,t,x)$ converges to some map $\widetilde{Q}(t,x)$ uniformly on $(t,x)\in[0,T]\times \mathbb{H}^2$. By [Theorem 5.2,\cite{LT}], for any fixed $t$, $\widetilde{Q}(t,x)$ is a harmonic map form $\Bbb H^2\to\Bbb H^2$. It suffices to verify $\widetilde{Q}(t,x)$ is indeed independent of $t$.
By (\ref{10.127}), maximum principle and (\ref{huhu899}),
\begin{align}\label{sdf}
\mathop {\sup }\limits_{x \in {\mathbb{H}^2}} {\left| {{\partial _t}\widetilde{u}(s,t,x)} \right|^2} \le {s^{ - 1}}{e^{ - \frac{1}{4}s}}\int_{{\Bbb H^2}} {{{\left| {{\partial _t}\widetilde{u}(0,t,x)} \right|}^2}{\rm{dvol_h}}}.
\end{align}
As a consequence, for $0\le t_1<t_2\le T$ one has
$${d_{{\Bbb H^2}}}(\widetilde{u}(s,{t_1},x),\widetilde{u}(s,{t_2},x)) \le \int_{{t_1}}^{{t_2}} {\left| {{\partial _t}\widetilde{u}(s,t,x)} \right|} dt \le C{s^{ - \frac{1}{2}}}{e^{ - s/8}}({t_2} - {t_1}).
$$
Let $s\to\infty$, we get ${d_{{\mathbb{H}^2}}}(\widetilde{Q}({t_1},x),\widetilde{Q}({t_2},x)) = 0$, thus finishing the proof.
\end{proof}

\begin{lemma}\label{z8vcxzvb}
Let $Q$ be an admissible harmonic map in Definition 1.1, and $\mu_1,\mu_2$ be sufficiently small.
If $(u,\partial_tu)$ is a solution to (\ref{wmap1}) in $\mathcal{X}_T$, then $\widetilde{u}(s,t,x)$ uniformly converges to $Q$ as $s\to\infty$.
\end{lemma}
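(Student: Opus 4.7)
The plan is to combine Lemma~\ref{lhu880} with a direct comparison between the heat flow starting from $u_0$ and the stationary solution $Q$. Lemma~\ref{lhu880} already produces \emph{some} harmonic map $\widetilde Q(x)$, independent of $t$, with $\widetilde u(s,t,x)\to\widetilde Q(x)$ uniformly on $[0,T]\times\Bbb H^2$. Since the limit is independent of $t$, it suffices to identify it on the single slice $t=0$; that is, to prove $\widetilde u(s,0,x)\to Q(x)$ uniformly in $x$, which forces $\widetilde Q=Q$ and yields the lemma.

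Set $v(s,x)=\widetilde u(s,0,x)$, so $v$ is the harmonic map heat flow from $u_0$, while $Q$ is a stationary solution of the same equation. Put $\rho(s,x)=d_{\Bbb H^2}(v(s,x),Q(x))$. Because the target $N=\Bbb H^2$ has negative sectional curvature, the classical Hartman convexity (cf.\ \cite{Hh}, and the parallel Landau--Lifshitz argument carried out by the authors in \cite{LZ}) yields the subsolution identity
\begin{equation*}
(\partial_s-\Delta)\rho^2(s,x)\le 0
\end{equation*}
in the distributional sense; I work with $\rho^2$ rather than $\rho$ itself to avoid regularity issues at the zero set of $\rho$.

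To apply the maximum principle I need $\rho^2(0,\cdot)\in L^1_x\cap L^\infty_x$. By Definition~\ref{2as}, $Q(D)$ is contained in a geodesic ball $B(0,R_0)$, and (\ref{as3}) together with the Sobolev embedding ${\bf H}^2\hookrightarrow L^\infty_x$ (a consequence of Lemma~\ref{wusijue}) places the image of $u_0$ inside a slightly larger geodesic ball $B(0,R_0+C(R_0)\mu_2)$; on this fixed compact region the hyperbolic distance is comparable to the Euclidean distance in the coordinates~(\ref{vg}) with constants depending only on $R_0$, so
\begin{equation*}
\|\rho(0,\cdot)\|_{L^2_x\cap L^\infty_x}\le C(R_0)\sum_{j=1}^2\|u_0^j-Q^j\|_{H^2}\le C(R_0)\mu_2,
\end{equation*}
where the Poincar\'e inequality stemming from the $1/4$ spectral gap of $-\Delta_{\Bbb H^2}$ is used to control $\|u_0^j-Q^j\|_{L^2_x}$ by $\|\nabla(u_0^j-Q^j)\|_{L^2_x}$. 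The parabolic maximum principle then gives $0\le\rho^2(s,x)\le (e^{s\Delta}\rho^2(0,\cdot))(x)$, and combining the spectral gap decay~(\ref{m8}) with the smoothing bound~(\ref{huhu89}) applied to $e^{\Delta}$ produces, for $s\ge 1$,
\begin{equation*}
\|e^{s\Delta}\rho^2(0,\cdot)\|_{L^\infty_x}\lesssim\|e^{(s-1)\Delta}\rho^2(0,\cdot)\|_{L^2_x}\lesssim e^{-(s-1)/4}C(R_0)^2\mu_2^2\longrightarrow 0.
\end{equation*}
Therefore $\|\rho(s,\cdot)\|_{L^\infty_x}\to 0$, i.e.\ $\widetilde u(s,0,x)\to Q(x)$ uniformly in $x$, which combined with Lemma~\ref{lhu880} gives $\widetilde Q=Q$ and the lemma. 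The only genuinely delicate point is justifying the Hartman subsolution identity rigorously on the noncompact domain $\Bbb H^2$ while keeping the image of the flow inside a fixed compact subset of the target; both follow from the diameter-nonincreasing property of heat flow into NPC targets together with the $L^\infty_x$-smallness of $u_0-Q$ guaranteed by $\mu_2\ll 1$.
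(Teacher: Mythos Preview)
Your argument is correct and takes a genuinely different route from the paper. The paper proceeds by an \emph{elliptic uniqueness} argument at the limit: it first passes to $s\to\infty$ along the heat flow from $u_0$ to obtain a priori $L^2$ and $L^\infty$ bounds on $\widetilde Q^k-Q^k$ in the coordinates~(\ref{vg}), then subtracts the two harmonic-map equations (\ref{cxvgn})--(\ref{cxvgn2}), multiplies by $\widetilde Q^l-Q^l$, and integrates by parts; the smallness of $\|dQ\|_{L^2}+\|d\widetilde Q\|_{L^2}$ (hence of $\mu_1$ and $\mu_2$) is then used to absorb the right-hand side and force $\widetilde Q=Q$. You instead run a \emph{parabolic comparison} directly along the flow via Hartman's convexity $(\partial_s-\Delta)d_N^2(v,Q)\le 0$, which is exactly the mechanism the paper already exploits for quantities like $|\partial_s\widetilde u|^2$ and $|\partial_t\widetilde u|^2$ (cf.\ (\ref{8.3}), (\ref{10.127}) and the proof of Lemma~\ref{lhu880}). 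Your route is shorter and more geometric, and notably it does not actually consume the smallness of $\mu_1$: all you need is $\rho^2(0,\cdot)\in L^2_x\cap L^\infty_x$, which follows from $u_0\in{\bf H}^2_Q$ and the compact image of $Q$, together with boundedness of $\rho^2$ along the flow so that the noncompact maximum principle applies. The paper's approach, by contrast, makes the uniqueness of small-energy harmonic maps explicit, which is conceptually useful elsewhere (e.g.\ Remark~1.1). The caveat you flag---justifying the comparison principle $\rho^2(s,\cdot)\le e^{s\Delta}\rho^2(0,\cdot)$ on the noncompact domain $\Bbb H^2$---is handled by the boundedness of $\rho^2$ (since both $v(s,\cdot)$ and $Q$ take values in a fixed compact subset of the target, by the Li--Tam theory invoked in Lemma~\ref{8.44}), and this is the same justification the paper implicitly uses when deducing (\ref{10.112}) and (\ref{sdf}) from (\ref{8.3}) and (\ref{10.127}).
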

\begin{proof}
By Lemma \ref{lhu880}, it suffices to prove $Q=\widetilde{Q}$. In the coordinate (\ref{vg}), the harmonic map equation can be written as
\begin{align}
 \Delta {\widetilde{Q}^l} + {h^{ij}}\overline \Gamma  _{pq}^l\frac{{\partial {\widetilde{Q}^p}}}{{\partial {x_i}}}\frac{{\partial {\widetilde{Q}^q}}}{{\partial {x_j}}} &= 0\label{cxvgn} \\
 \Delta {Q^l} + {h^{ij}}\overline \Gamma  _{pq}^l\frac{{\partial {Q^p}}}{{\partial {x_i}}}\frac{{\partial {Q^q}}}{{\partial {x_j}}} &= 0. \label{cxvgn2}
\end{align}
Denote the heat flow initiated from $u_0$ by $U(s,x)$, then by (\ref{{uv111}}),
\begin{align*}
&\|U^1(s,x)-Q^1(x)\|_{L^2}+\|U^2(s,x)-Q^2(x)\|_{L^2}\\
&\lesssim \|\nabla(U^1-Q^1)\|_{L^2} +\|\nabla(U^2-Q^2)\|_{L^2}.
\end{align*}
[Lemma 2.3,\cite{LZ}] shows that for $k=1,2,l=1,2,$
\begin{align*}
\|\nabla^l U^k\|_{L^2}\lesssim C(\|U\|_{\mathfrak{H}^2},R_0,\|Q\|_{\mathfrak{H}^2})\|\nabla^{l-1}dU\|_{L^2}.
\end{align*}
By energy arguments, one obtains $\|\nabla dU\|_{L^2}\le C(\|\nabla du_0\|_{L^2},\|du_0\|_{L^2})$ and the energy decreases along the heat flow, see (\ref{V5}).
Thus we have by Sobolev embedding and Corollary \ref{new2} that
\begin{align}
&\|U^1(s,x)-Q^1(x)\|_{L^2}+\|U^2(s,x)-Q^2(x)\|_{L^2}+ \|dU\|_{L^2}\nonumber\\
&\le C(R_0)\mu_2+C(R_0)\mu_1\label{hbvcjin}\\
&\|U^1(s,x)\|_{L^{\infty}}+\|U^2(s,x)\|_{L^{\infty}}\le C(R_0).\label{pokeryu}
\end{align}
Hence letting $s\to\infty$, we have for some constant $C(R_0)$
\begin{align}
\|\widetilde{Q}^1\|_{L^{\infty}}+\|\widetilde{Q}^2\|_{L^{\infty}}\le C,\mbox{  }\|\nabla\widetilde{Q}^1\|_{L^{2}}+\|\nabla\widetilde{Q}^2\|_{L^{2}}\le \mu_1C(R_0)\label{ktu8n2}
\end{align}
Multiplying the difference between (\ref{cxvgn}) and (\ref{cxvgn2}) with $-{Q^l} + {\widetilde{Q}^l}$, we have by integration by parts that
\begin{align*}
 &{\left\| {\nabla \left( {{Q^l} - {\widetilde{Q}^l}} \right)} \right\|_{{L^2}}} \le \left\langle {{h^{ij}}\left( {\overline \Gamma  _{pq}^l(Q) - \overline \Gamma  _{pq}^l(\widetilde Q)} \right)\frac{{\partial {Q^p}}}{{\partial {x_i}}}\frac{{\partial {Q^q}}}{{\partial {x_j}}}, - {Q^l} + {{\widetilde Q}^l}} \right\rangle \\
 &+ \left\langle {h^{ij}}\overline \Gamma  _{pq}^l(\widetilde Q)\left( \frac{\partial {Q^p}}{\partial {x_i}} - \frac{\partial {\widetilde Q}^p}{\partial {x_i}} \right)\frac{{\partial {Q^q}}}{{\partial {x_j}}}, - {Q^l} + {\widetilde{Q}^l}\right\rangle  \\
 &+ \left\langle {{h^{ij}}\overline \Gamma  _{pq}^l(\widetilde{Q})\frac{{\partial {\widetilde{Q}^p}}}{{\partial {x_i}}}\left( {\frac{{\partial {Q^q}}}{{\partial {x_i}}} - \frac{{\partial {\widetilde{Q}^q}}}{{\partial {x_j}}}} \right), - {Q^l} + {\widetilde{Q}^l}} \right\rangle.
 \end{align*}
Thus using the explicit formula for ${\overline \Gamma _{pq}^l}$, by (\ref{hbvcjin}), (\ref{pokeryu}), (\ref{ktu8n2}) we get
\begin{align*}
&\left\| {\nabla \left( {{Q^l} - {\widetilde{Q}^l}} \right)} \right\|_{{L^2}}^2\\
&\lesssim \left( {\left\| {{Q^l} - {\widetilde{Q}^l}} \right\|_{{L^2}}^2 + \left\| {\nabla \left( {{Q^l} - {\widetilde{Q}^l}} \right)} \right\|_{{L^2}}^2} \right)\left( {\sum\limits_{k = 1}^2 {\left\| {\nabla {\widetilde{Q}^k}} \right\|_{{L^2}}^2 + \left\| {\nabla {Q^k}} \right\|_{{L^2}}^2} } \right).
\end{align*}
Therefore, we conclude for some constant $C(R_0)$ which is independent of $\mu_1,\mu_2$ provided $0\le \mu_1,\mu_2\le1$
\begin{align*}
&\sum\limits_{l = 1}^2 {\left\| {\nabla \left( {{Q^l} - {\widetilde{Q}^l}} \right)} \right\|_{{L^2}}^2}\\
&\le C(R_0)\left( {\left\| {dQ} \right\|_{{L^2}}^2 + \left\| {d\widetilde{Q}} \right\|_{{L^2}}^2} \right)\left( {\sum\limits_{l = 1}^2 {\left\| {\nabla \left( {{Q^l} - {\widetilde{Q}^l}} \right)} \right\|_{{L^2}}^2 + \left\| {{Q^l} - {\widetilde{Q}^l}} \right\|_{{L^2}}^2} } \right).
\end{align*}
Let $\mu_1$, $\mu_2$ be sufficiently small, (\ref{{uv111}}) gives
\begin{align*}
&\sum\limits_{l = 1}^2 {\left\| {\nabla \left( {{Q^l} - {\widetilde{Q}^l}} \right)} \right\|_{{L^2}}^2}  + \left\| {{Q^l} - {\widetilde{Q}^l}} \right\|_{{L^2}}^2\\
&\le \left( {{\mu _1} + {\mu _2}} \right)\left( {\sum\limits_{l = 1}^2 {\left\| {\nabla \left( {{Q^l} - {\widetilde{Q}^l}} \right)} \right\|_{{L^2}}^2 + \left\| {{Q^l} - {\widetilde{Q}^l}} \right\|_{{L^2}}^2} } \right).
\end{align*}
Hence $\widetilde{Q}=Q$.
\end{proof}

Now we are ready to prove the existence of the caloric gauge in Definition \ref{pp}.
\begin{proposition}\label{3.3}
Given any solution $(u,\partial_tu)$ of (\ref{wmap1}) in $\mathcal{X}_T$ with $(u_0,u_1)\in \bf H_Q^3\times\bf H_Q^2$. For any fixed frame $\Xi\triangleq\{\Xi_1(Q(x)),\Xi_2(Q(x))\}$, there exists a unique corresponding caloric gauge defined in Definition \ref{pp}.
\end{proposition}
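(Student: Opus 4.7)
The plan is to construct the desired frame $\{\Omega_j\}_{j=1,2}$ by solving the parallel transport ODE $\nabla_s \Omega_j = 0$ \emph{backwards} from $s=\infty$ along each heat flow curve $s\mapsto \widetilde{u}(s,t,x)$, using the boundary frame $\Xi$ at $Q(x)$ as the terminal data. Fix any $(t,x)\in[0,T]\times\mathbb{H}^2$, and write $\Omega_j(s,t,x) = \Omega_j^k(s,t,x)\,\Theta_k(\widetilde{u}(s,t,x))$ in the global coordinate frame (\ref{frame}). Then $\nabla_s\Omega_j=0$ is equivalent to the linear ODE in $s$
$$
\partial_s \Omega_j^k(s,t,x) + \overline{\Gamma}^k_{lm}\bigl(\widetilde{u}(s,t,x)\bigr)\,\partial_s\widetilde{u}^l(s,t,x)\,\Omega_j^m(s,t,x)=0,
$$
with prescribed asymptotic value $\lim_{s\to\infty}\Omega_j^k(s,t,x)=\bigl\langle \Xi_j(Q(x)),\Theta_k(Q(x))\bigr\rangle$.

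The key analytic observation that makes the backward formulation well-posed is that the coefficient $A^k_m(s,t,x):=\overline{\Gamma}^k_{lm}(\widetilde{u})\,\partial_s\widetilde{u}^l$ lies in $L^1_s([1,\infty))$, uniformly in $(t,x)$. Indeed, by Lemma \ref{z8vcxzvb} the map $\widetilde{u}(s,t,x)$ stays inside a fixed compact geodesic ball around $Q(x)$, so $|\overline{\Gamma}(\widetilde{u})|$ is uniformly bounded; meanwhile Proposition \ref{sl} gives the pointwise exponential decay $\|\partial_s\widetilde{u}(s,t,\cdot)\|_{L^\infty_x}\lesssim s^{-\frac12}e^{-\delta s}MC(M)$ for $s\ge 1$. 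Hence $A(s,t,x)$ is integrable on $[1,\infty)$. A standard Picard iteration on the integral equation $\Omega_j^k(s,t,x)=\bigl\langle \Xi_j(Q),\Theta_k(Q)\bigr\rangle+\int_s^{\infty} A^k_m(s',t,x)\,\Omega_j^m(s',t,x)\,ds'$ then provides a unique solution on $[s_0,\infty)$ for any $s_0\ge 1$, and the solution is extended uniquely to $[0,s_0]$ by ordinary forward integration of the ODE from $s_0$. Smooth dependence of $\Omega_j$ on $(t,x)$ follows from the smoothness of $\widetilde{u}$ established in Proposition \ref{sl} together with the smooth-parameter dependence of ODE solutions.

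Orthonormality is automatic: because $\widetilde{\nabla}$ is a metric connection on $(\mathbb{H}^2,g)$, one has $\partial_s\langle \Omega_i,\Omega_j\rangle_{\widetilde{u}^*g}=\langle\nabla_s\Omega_i,\Omega_j\rangle+\langle\Omega_i,\nabla_s\Omega_j\rangle=0$, so $\langle\Omega_i(s,t,x),\Omega_j(s,t,x)\rangle$ is constant in $s$ and equal to its limit $\langle\Xi_i(Q),\Xi_j(Q)\rangle=\delta_{ij}$. The convergence conditions (\ref{convergence}) then hold by construction: the first line is exactly Lemma \ref{z8vcxzvb}, and the second is the asymptotic condition imposed on $\Omega_j^k$ together with the continuous dependence $\Theta_k(\widetilde{u}(s,t,x))\to\Theta_k(Q(x))$ as $s\to\infty$. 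Uniqueness reduces to linear ODE uniqueness on $[0,\infty)$ with fixed asymptotic data, handled by the same $L^1_s$ integrability.

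The main obstacle, and the point where the heat flow estimates of Section 3.1 are crucial, is turning the terminal-value problem at $s=\infty$ into a genuine contraction; this is exactly where the exponential decay of $|\partial_s\widetilde{u}|$ from Proposition \ref{sl} combines with the qualitative convergence $\widetilde{u}\to Q$ from Lemma \ref{z8vcxzvb} to give an integrable coefficient. Once this step is made rigorous, the remaining statements are routine consequences of ODE theory and the metric-compatibility of $\widetilde{\nabla}$.
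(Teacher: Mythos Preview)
Your argument is correct and takes a genuinely different route from the paper. The paper follows Tao's original construction: it picks an \emph{arbitrary} orthonormal frame $E_0(t,x)$ at $s=0$, evolves it \emph{forward} by $\nabla_s\Omega_j=0$, shows (using the same decay $\|\partial_s\widetilde u\|_{L^\infty_x}\lesssim e^{-\delta s}$ that you invoke) that the evolved frame converges to \emph{some} limit frame $E_\infty$ at $Q(x)$, and then applies a constant-in-$s$ gauge rotation $U(t,x)\in SO(2)$ to force $U\,E_\infty=\Xi$; uniqueness is obtained from $\frac{d}{ds}|\Phi_1-\Phi_2|^2=0$. Your backward-integration approach is more direct and eliminates the gauge-rotation step entirely; the price is having to set up the terminal-value Picard iteration, which you handle via the $L^1_s$-integrability of the connection coefficient. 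Both methods rest on exactly the same analytic input (the exponential pointwise decay of $\partial_s\widetilde u$), so neither is deeper than the other. The paper's version has the advantage of making the $SO(2)$ gauge freedom explicit, which is conceptually useful later when one reads off $A_i^\infty$; yours is slightly cleaner as a pure existence--uniqueness statement.

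One minor slip: the ODE you wrote, with $\overline\Gamma^k_{lm}(\widetilde u)\,\partial_s\widetilde u^l$, is the parallel-transport equation for components in the \emph{coordinate} basis $\partial_{y_k}$, not in the orthonormal frame $\Theta_k$ you said you were expanding in (the two differ by the factor $e^{\widetilde u^2}$). Either basis works for the argument, but be consistent; the $L^1_s$ bound and the asymptotic matching go through in both.
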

\begin{proof}
We first show the existence part. Choose an arbitrary orthonormal frame $E_0(t,x)\triangleq\{\texttt{e}_i(t,x)\}^2_{i=1}$ such that $E_0(t,x)$ spans the tangent space $T_{u(t,x)}{\mathbb{H}^2}$ for each $(t,x)\in [0,T]\times \mathbb{H}^2$. The desired frame does exist, in fact we have a global orthonormal frame for $\mathbb{H}^2$ defined by (\ref{frame}). Then evolving (\ref{8.29.2}) with initial data $u(t,x)$, we have from Lemma \ref{z8vcxzvb} that $\widetilde{u}(s,t,x)$ converges to $Q$ uniformly for $(t,x)\in[0,T]\times \mathbb{H}^2$ as $s\to\infty$. Meanwhile, we evolve $E_0$ in $s$ according to
\begin{align}\label{11.2}
\left\{ \begin{array}{l}
{\nabla _s}{\Omega _i}(s,t,x) = 0 \\
{\Omega _i}(s,t,x)\upharpoonright_{s=0} = {\texttt{e}_i}(t,x) \\
\end{array} \right.
\end{align}
Denote the evolved frame as $E_s\triangleq \{\Omega_i(s,t,x)\}^2_{i=1}$.  We claim that there exists some orthonormal frame $E_{\infty}\triangleq\{\texttt{e}_i(\infty,t,x)\}^2_{i=1}$ which spans $T_{Q(x)}\Bbb H^2$ for each $(t,x)\in [0,T]\times\Bbb H^2$ such that
\begin{align}\label{pl}
\mathop {\lim }\limits_{s \to \infty }{\Omega_i(s,t,x)}  = \texttt{e}_i(\infty,t,x).
\end{align}
Indeed, by the definition of the convergence of frames given in (\ref{convergence}) and the fact $\widetilde{u}(s,t,x)$ converges to $Q(x)$, it suffices to show for some scalar function $c_i:[0,T]\times \mathbb{H}^2\to \Bbb R$
\begin{align}\label{aw}
\mathop {\lim }\limits_{s \to \infty } \left\langle {{\Omega_i}(s,t,x),{\Theta _i}(\widetilde{u}(s,t,x))} \right\rangle = c_i(t,x).
\end{align}
By direct calculations,
$$
\left| \nabla _s \Theta_i(\widetilde{u}(s,t,x))\right| \lesssim \left| {{\partial _s}\widetilde{u}} \right|.
$$
then (\ref{10.112}) and $\nabla_s\Omega=0$ imply that for $s>1$
$$\big|{\partial _s}\left\langle {{\Omega_i}(s,t,x),{\Theta _i}(\widetilde{u}(s,t,x))} \right\rangle \big|\lesssim Me^{-\delta s}.
$$
Hence $(\ref{aw})$ holds for some $c_i(t,x)$, thus verifying (\ref{pl}). It remains to adjust the initial frame $E_0$ to make the limit frame $E_{\infty}$ coincide with the given frame $\Xi$. This can be achieved by the gauge transform invariance illustrated in Section 2.1. Indeed, since for any $U:[0,T]\times\mathbb{H}^2\to SO(2)$, and the solution $\widetilde{u}(s,t,x)$ to (\ref{8.29.2}), one has $\nabla_s U(t,x)\Omega(s,t,x)=U(t,x)\nabla_s\Omega(s,t,x)$, then the following gauge symmetry holds
\begin{align*}
 {E_0}\triangleq\left\{ {\texttt{e}_i(t,x)} \right\}^2_{i=1} &\mapsto {{E'}_0}\triangleq\left\{ {U(t,x){\texttt{e}_i}(t,x)} \right\}^2_{i=1} \\
 {E_s}\triangleq\left\{ {{\Omega_i}(s,t,x)} \right\}^2_{i=1} &\mapsto {{E'}_s}\triangleq\left\{ {U(t,x){\Omega_i}(s,t,x)} \right\}^2_{i=1}.
\end{align*}
Therefore choosing $U(t,x)$ such that $U(t,x)E_{\infty}=\Xi$, where $E_{\infty}$ is the limit frame obtained by (\ref{pl}), suffices for our purpose. The uniqueness of the gauge follows from the identity
$$
\frac{d}{{ds}}\left\langle {{\Phi _1} - {\Phi_2},{\Phi_1} - {\Phi_2}} \right\rangle  = 0,
$$
where $(\Phi _1)$ and $(\Phi _2)$ are two caloric gauges satisfying (\ref{muqi}).
\end{proof}

\subsection{Expressions for the connection coefficients}
The following lemma gives the expressions for the connection coefficients matrix $A_{x,t}$ by differential fields. The proof of Lemma \ref{po87bg} is almost the same as
[Lemma 3.6,\cite{LZ}], thus we omit it.
\begin{lemma}\label{po87bg}
Suppose that $\Omega(s,t,x)$ is the caloric gauge constructed in Proposition \ref{3.3}, then we have for $i=1,2$
\begin{align}
&\mathop {\lim }\limits_{s \to \infty } [{A_i}]^j_k(s,t,x) =\left\langle {{\nabla _i}{\Xi_k }(x),{\Xi_j}(x)} \right\rangle\label{kji}\\
&\mathop {\lim }\limits_{s \to \infty } {A_t}(s,t,x) = 0\label{kji22}
\end{align}
Particularly let $\Xi(x)=\Theta(Q(x))$ in Proposition \ref{3.3}, denote $A^{\infty}_i$ the limit coefficient matrix, i.e.,  $[A^{\infty}_i]^k_j=\left\langle {{\nabla _i}{\Xi_k }(Q(x)),{\Xi_j}(Q(x))} \right\rangle$,
then we have for $i=1,2$, $s>0$,
\begin{align}
&{A_i}(s,t,x)\sqrt{h^{ii}(x)} = \int_s^\infty \sqrt{ h^{ii}(x)}{\mathbf{R}(\widetilde{u}(\kappa))\left( {{\partial _s}\widetilde{u}(\kappa),{\partial _i}\widetilde{u}(\kappa)} \right)} d\kappa + { \sqrt{h^{ii}(x)}}A^{\infty}_i.\label{edf}\\
&{A_t}(s,t,x)=\int^{\infty}_s\phi_s\wedge\phi_td\kappa,\label{edf22}
\end{align}
\end{lemma}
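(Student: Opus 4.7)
The plan is to treat $[A_\alpha]^k_j(s,t,x)=\langle\nabla_\alpha\Omega_j,\Omega_k\rangle$ as a function of the heat parameter $s$ and to recover it by integrating its $s$-derivative from $s$ to $\infty$. Using the caloric gauge condition $\nabla_s\Omega_j=0$ together with the commutator identity inherited from (\ref{commut1}), namely $[\nabla_s,\nabla_\alpha]=\mathbf{R}(\partial_s\widetilde{u},\partial_\alpha\widetilde{u})$, I expect
\[
\partial_s [A_\alpha]^k_j=\langle\nabla_s\nabla_\alpha\Omega_j,\Omega_k\rangle=\langle\mathbf{R}(\partial_s\widetilde{u},\partial_\alpha\widetilde{u})\Omega_j,\Omega_k\rangle,
\]
and expanding via (\ref{2.best}) this is precisely the $(k,j)$ entry of $\phi_s\wedge\phi_\alpha$ in the matrix sense of (\ref{nb890km}), matching the integrand appearing in (\ref{edf}) and (\ref{edf22}).

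Next I integrate from $s$ to $\infty$ and identify the boundary value at $s=\infty$. By Lemma \ref{z8vcxzvb} the heat flow $\widetilde{u}(s,\cdot,\cdot)$ converges uniformly to $Q$, and by construction in Proposition \ref{3.3} the frame $\Omega_j(s,\cdot,\cdot)$ converges to $\Xi_j$ at each $(t,x)$. For $\alpha=i$ the boundary value is $\langle\nabla_i\Xi_j(x),\Xi_k(x)\rangle$, which proves (\ref{kji}); specializing to $\Xi=\Theta(Q)$ identifies it with $A_i^\infty$. For $\alpha=t$, both the limit map $Q$ and the limit frame $\Xi(Q(\cdot))$ are independent of $t$, so the boundary value vanishes and (\ref{kji22}) follows. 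Rearranging the integrated identity then produces (\ref{edf}) and (\ref{edf22}); the $\sqrt{h^{ii}(x)}$ factor in (\ref{edf}) simply records the normalization against the orthonormal coframe and cancels on both sides.

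The one point that needs real work is justifying the limits as $s\to\infty$. Absolute convergence of the $s$-integrals is straightforward from Proposition \ref{sl}: $|\partial_s\widetilde{u}|$ carries exponential decay on $[1,\infty)$ and is integrable near $s=0$ via the $s^{1/2}$-type bounds, while $|\partial_i\widetilde{u}|$ is uniformly bounded and $|\partial_t\widetilde{u}|$ decays exponentially, so the integrals defining $A_i$ and $A_t$ are finite. More delicate is the pointwise convergence $\nabla_i\Omega_j(s,\cdot,\cdot)\to\nabla_i\Xi_j$, since one must differentiate the frame convergence in $x$; this reduces to controlling $\int_s^\infty|\nabla_i(\mathbf{R}(\partial_s\widetilde{u},\partial_i\widetilde{u})\Omega_j)|\,d\kappa$, and this is where the second-order decay bounds for $\nabla^2\partial_s\widetilde{u}$ and $\nabla^2 d\widetilde{u}$ from Lemma \ref{fotuo1} become essential. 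Once these convergences are secured the remainder is a direct transcription of [Lemma 3.6, \cite{LZ}].
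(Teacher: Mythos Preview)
Your approach is correct and coincides with the standard argument (the paper omits its own proof, deferring to [Lemma 3.6, \cite{LZ}]): use $A_s=0$ and the curvature commutator to get $\partial_s A_\alpha=\phi_s\wedge\phi_\alpha$, integrate in $s$, and identify the boundary value at $s=\infty$ via the frame convergence built into Proposition~\ref{3.3}.

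One small overstatement: the identification $\lim_{s\to\infty}[A_i]^j_k=\langle\nabla_i\Xi_k,\Xi_j\rangle$ does \emph{not} require the second-order bounds of Lemma~\ref{fotuo1}. Expanding $\nabla_i\bigl(\mathbf{R}(\partial_s\widetilde u,\partial_i\widetilde u)\Omega_j\bigr)$ produces only $\nabla\partial_s\widetilde u$, $\nabla d\widetilde u$, $d\widetilde u$, $\partial_s\widetilde u$, and $A_i\Omega_j$; all of these are controlled in $L^\infty_x$ with the required $s$-integrability already by Proposition~\ref{sl} (specifically the bounds $\|s e^{\delta s}\nabla\partial_s\widetilde u\|_{L^\infty_sL^\infty_x}+\|\nabla d\widetilde u\|_{L^\infty_s[1,\infty)L^\infty_x}\le MC(M)$). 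So first-order decay suffices, and invoking Lemma~\ref{fotuo1} here is unnecessary.
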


\begin{remark}\label{3sect}
For convenience, we rewrite (\ref{edf}) as $A_i(s,t,x)=A^{\infty}_i(s,t,x)+A^{con}_i(s,t,x),$
where $A^{\infty}_i$ denotes the limit part,
and $A^{con}_i$ denotes the controllable part, i.e.,
\begin{align*}
A^{con}_i=\int^{\infty}_s\phi_s\wedge\phi_id\kappa.
\end{align*}
Similarly, we split $\phi_i$ into $\phi_i=\phi^{\infty}_i+\phi^{con}_i$, where $\phi^{con}_i=\int^{\infty}_s\partial_s\phi_id\kappa,$
and
$$\phi^{\infty}_i={\left( {\left\langle {{\partial _i}Q(x),{\Xi _1}(Q(x))} \right\rangle
 ,\left\langle {{\partial _i}Q(x),{\Xi _2}(Q(x))} \right\rangle } \right)^t}.$$
\end{remark}

\section{Derivation of the master equation for the heat tension field}
Recall that the heat tension filed $\phi_s$ satisfies
\begin{align}\label{heat}
\phi_s=h^{ij}D_i\phi_j-h^{ij}\Gamma^k_{ij}\phi_k.
\end{align}
And we define the wave tension filed as Tao by
\begin{align}\label{wm}
\mathfrak{W} = {D_t}{\phi _t} - {h^{ij}}{D_i}{\phi _j} + {h^{ij}}\Gamma _{ij}^k{\phi _k}.
\end{align}
In fact (\ref{heat}) is the gauged equation for the heat flow equation, and (\ref{wm}) is the gauged equation for the wave map (\ref{wmap1}), see Lemma 2.7.
The evolution of $\phi_s$ with respect to $t$ is given by the following lemma.

\begin{lemma}\label{asdf}
The heat tension field $\phi_s$ satisfies
\begin{align}
{D_t}{D_t}{\phi _s} - {h^{ij}}{D_i}{D_j}{\phi _s} + {h^{ij}}\Gamma _{ij}^k{D_k}{\phi _s} &= {\partial _s}\mathfrak{W} + {h^{ij}}\mathbf{R}({\partial _s}\widetilde{u},{\partial _i}\widetilde{u})\left( {\partial_j\widetilde{u}} \right) \nonumber\\
&+ \mathbf{R}({\partial _t}\widetilde{u},{\partial _s}\widetilde{u})\left( {\partial_t\widetilde{u}} \right).\label{heating}
\end{align}
\end{lemma}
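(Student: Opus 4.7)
The plan is to start from the definition of the wave tension field $\mathfrak{W} = D_t\phi_t - h^{ij}D_i\phi_j + h^{ij}\Gamma^k_{ij}\phi_k$ and apply $\partial_s$ (which, thanks to the caloric condition $\nabla_s \Omega_j = 0$, coincides with $D_s$ when acting on sections expressed in the parallel frame, since the connection matrix $[A_s]$ vanishes). Thus
\begin{align*}
\partial_s \mathfrak{W} = D_s D_t \phi_t - h^{ij} D_s D_i \phi_j + h^{ij}\Gamma^k_{ij}\, D_s \phi_k,
\end{align*}
where $h^{ij}$ and $\Gamma^k_{ij}$ pass through $D_s$ because they depend only on $x$.

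Next I would commute $D_s$ past $D_t$ and $D_i$ using the commutator identity (\ref{commut1}), which yields
\begin{align*}
D_s D_t \phi_t &= D_t D_s \phi_t + \mathbf{R}(\partial_s\widetilde u,\partial_t\widetilde u)\phi_t,\\
D_s D_i \phi_j &= D_i D_s \phi_j + \mathbf{R}(\partial_s\widetilde u,\partial_i\widetilde u)\phi_j,
\end{align*}
and then apply the torsion-free identity (\ref{pknb}) to rewrite $D_s\phi_t = D_t\phi_s$, $D_s\phi_j = D_j\phi_s$, $D_s\phi_k = D_k\phi_s$. Substituting these into the expression for $\partial_s\mathfrak{W}$ produces exactly $D_tD_t\phi_s - h^{ij}D_iD_j\phi_s + h^{ij}\Gamma^k_{ij}D_k\phi_s$ on one side, with two curvature remainders on the other.

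Finally, I would match the curvature terms to the stated right-hand side via the identification $\phi_\alpha \leftrightarrow \partial_\alpha \widetilde u$ (Remark 2.1) together with the antisymmetry $\mathbf{R}(X,Y) = -\mathbf{R}(Y,X)$ from (\ref{2.best}): the term $-\mathbf{R}(\partial_s\widetilde u,\partial_t\widetilde u)\partial_t\widetilde u$ produced on the right becomes $+\mathbf{R}(\partial_t\widetilde u,\partial_s\widetilde u)\partial_t\widetilde u$, while $+h^{ij}\mathbf{R}(\partial_s\widetilde u,\partial_i\widetilde u)\partial_j\widetilde u$ appears directly. Rearranging yields (\ref{heating}).

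Essentially no analytical obstacle is expected — the proof is a bookkeeping exercise in the structural identities (\ref{pknb}) and (\ref{commut1}) of the caloric gauge. The only point that requires mild care is the treatment of $\partial_s$ versus $D_s$: in a general frame these would differ by $[A_s]$, but the caloric condition $\nabla_s \Omega_j = 0$ forces $[A_s]\equiv 0$, so the two operations agree on the component expressions, which is what makes the direct identification $\partial_s\mathfrak{W} = D_s\mathfrak{W}$ legitimate and the rest of the derivation clean.
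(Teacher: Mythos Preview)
Your proposal is correct and follows essentially the same approach as the paper: both arguments rest entirely on the torsion-free identity (\ref{pknb}), the commutator identity (\ref{commut1}), and the caloric condition $A_s=0$. The only cosmetic difference is the direction of the computation---the paper starts from $D_tD_t\phi_s$, writes $D_tD_t\phi_s=D_tD_s\phi_t=D_sD_t\phi_t+\mathbf{R}(\partial_t\widetilde u,\partial_s\widetilde u)\partial_t\widetilde u$, and then expands $D_t\phi_t$ via the definition of $\mathfrak{W}$---whereas you begin with $\partial_s\mathfrak{W}$ and work toward $D_tD_t\phi_s$; the intermediate identities and the resulting curvature terms are identical.
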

\begin{proof}
By the torsion free identity and the commutator identity, we have
\begin{align*}
&{D_t}{D_t}{\phi _s} = {D_t}{D_s}{\phi _t} = {D_s}{D_t}{\phi _t} + \mathbf{R}({\partial _t}\widetilde{u},{\partial _s}\widetilde{u})\left( { \partial_t\widetilde{u}} \right) \\
&= {D_s}\left( {\mathfrak{W} + {h^{ij}}{D_i}{\phi _j} - {h^{ij}}\Gamma _{ij}^k{\phi _k}} \right) +\mathbf{R}({\partial _t}\widetilde{u},{\partial _s}\widetilde{u})\left( {\partial_t\widetilde{u}} \right) \\
&= {\partial _s}\mathfrak{W} + {h^{ij}}{D_s}{D_i}{\phi _j} - {h^{ij}}\Gamma _{ij}^k{D_s}{\phi _k} + \mathbf{R}({\partial _t}\widetilde{u},{\partial _s}\widetilde{u})\left( {\partial_t\widetilde{u}} \right) \\
&= {\partial _s}\mathfrak{W} + {h^{ij}}{D_i}{D_j}{\phi _s} - {h^{ij}}\Gamma _{ij}^k{D_k}{\phi _s} + {h^{ij}}\mathbf{R}({\partial _s}\widetilde{u},{\partial _i}\widetilde{u})\left( {{\partial_j\widetilde{u}}} \right) + \mathbf{R}({\partial _t}\widetilde{u},{\partial _s}\widetilde{u})\left( {\partial_t\widetilde{u}} \right).
\end{align*}
Thus (\ref{heating}) is verified.
\end{proof}

The evolution of $\mathfrak{W}$ with respect to $s$ is given by the following lemma.
\begin{lemma}\label{ab1}
Under orthogonal coordinates, the wave tension field $\mathfrak{W}$ satisfies
\begin{align*}
{\partial _s}\mathfrak{W} =& \Delta \mathfrak{W} + 2h^{ii}{A_i}{\partial _i}\mathfrak{W} +h^{ii} {A_i}{A_i}\mathfrak{W} + h^{ii}{\partial _i}{A_i}\mathfrak{W} - {h^{ii}}\Gamma _{ii}^k{A_k}\mathfrak{W} + {h^{ii}}\left( {\mathfrak{W} \wedge {\phi _i}} \right){\phi _i}\\
& + 3{h^{ii}}({\partial _t}\widetilde{u} \wedge {\partial _i}\widetilde{u}){\nabla _t}{\partial _i}\widetilde{u}.
\end{align*}
\end{lemma}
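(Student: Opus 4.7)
The plan is to compute $\partial_s\mathfrak{W}$ by differentiating \eqref{wm} term by term in $s$, and then recast the result as a heat-type equation for $\mathfrak{W}$ with an explicit forcing. Two structural facts of the caloric gauge drive every manipulation: because $A_s=0$, on frame-valued quantities one has $D_s=\partial_s$, torsion-free \eqref{pknb} forces $\partial_s\phi_\alpha=D_\alpha\phi_s$, and the commutator identity \eqref{nb90km} reads $[D_s,D_\alpha]\,\cdot=(\phi_s\wedge\phi_\alpha)\,\cdot$ on any frame-valued argument.

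Applying these to each of the three pieces of \eqref{wm} and recognizing the outcome as Lemma \ref{asdf} in disguise, I would first arrive at
\begin{equation*}
\partial_s\mathfrak{W}=D_tD_t\phi_s-\Delta_A\phi_s+(\phi_s\wedge\phi_t)\phi_t-h^{ii}(\phi_s\wedge\phi_i)\phi_i,
\end{equation*}
where $\Delta_A:=h^{ii}D_iD_i-h^{ii}\Gamma^{k}_{ii}D_k$. Unfolding $D_i=\partial_i+A_i$ expands $\Delta_A$ into $\Delta+2h^{ii}A_i\partial_i+h^{ii}(\partial_iA_i+A_iA_i)-h^{ii}\Gamma^k_{ii}A_k$, which precisely matches the linear part of the claim once $\phi_s$ is replaced by $\mathfrak{W}$. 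Since comparing \eqref{heat} with \eqref{wm} yields $\mathfrak{W}=D_t\phi_t-\phi_s$, it remains to manipulate $D_tD_t\phi_s-\Delta_A\phi_s$ so that it becomes $\Delta_A\mathfrak{W}=\Delta_A(D_t\phi_t)-\Delta_A\phi_s$ plus the asserted nonlinear remainders. Equivalently, the core step is to expand $\Delta_A(D_t\phi_t)-D_tD_t\phi_s$ algebraically.

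To do this I would unfold $\Delta_A(D_t\phi_t)=h^{ii}D_iD_iD_t\phi_t-h^{ii}\Gamma^k_{ii}D_kD_t\phi_t$ by successively commuting each $D_i$ past $D_t$, using $[D_i,D_t]\,\cdot=(\phi_i\wedge\phi_t)\,\cdot$ and torsion-free $D_i\phi_t=D_t\phi_i=\nabla_t\partial_i\widetilde{u}$. After the nested commutations the highest-order piece recombines via \eqref{heat} into $D_tD_t\phi_s$, while the remainders naturally collect into $(\phi_s\wedge\phi_t)\phi_t$, the term $h^{ii}(D_t\phi_t\wedge\phi_i)\phi_i$, several copies of $(\phi_t\wedge\phi_i)\nabla_t\partial_i\widetilde{u}$, and two cross-terms of the form $(D_t\phi_i\wedge\phi_t)\phi_i$ and $(\phi_i\wedge D_t\phi_i)\phi_t$. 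The algebraic Bianchi identity
\begin{equation*}
(X\wedge Y)Z+(Y\wedge Z)X+(Z\wedge X)Y=0,
\end{equation*}
immediate from \eqref{nb890km}, then collapses those two cross-terms into a further copy of $(\phi_t\wedge\phi_i)\nabla_t\partial_i\widetilde{u}$, fixing its final coefficient. Finally, $h^{ii}(D_t\phi_t\wedge\phi_i)\phi_i-h^{ii}(\phi_s\wedge\phi_i)\phi_i$ reassembles as $h^{ii}(\mathfrak{W}\wedge\phi_i)\phi_i$ using $\mathfrak{W}=D_t\phi_t-\phi_s$, while the two $(\phi_s\wedge\phi_t)\phi_t$ pieces cancel.

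The main obstacle is purely combinatorial bookkeeping: each of the nested commutations produces a wedge remainder with a specific ordering of arguments, and the Bianchi collapse must be applied exactly once so that the count of surviving $(\phi_t\wedge\phi_i)\nabla_t\partial_i\widetilde{u}$ terms is correct. No analytic estimate is used; the constant sectional curvature $-1$ of $\mathbb{H}^2$, which realizes $\mathbf{R}(X,Y)Z=(X\wedge Y)Z$ via \eqref{2.best} and \eqref{nb890km}, is what makes the $\wedge$-algebra close up into the form stated.
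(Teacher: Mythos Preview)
Your proposal is correct and follows essentially the same route as the paper: both start from $\partial_s\mathfrak{W}=D_tD_t\phi_s-\Delta_A\phi_s+(\phi_s\wedge\phi_t)\phi_t-h^{ii}(\phi_s\wedge\phi_i)\phi_i$ (the content of Lemma~\ref{asdf}), extract $\Delta_A\mathfrak{W}$ via $\mathfrak{W}=D_t\phi_t-\phi_s$, and track the wedge remainders produced by the nested $[D_i,D_t]$ commutations, with the Bianchi identity for $\wedge$ collapsing the cross terms and the heat relation $\phi_s=h^{ii}D_i\phi_i-h^{ii}\Gamma^k_{ii}\phi_k$ producing the $(\phi_s\wedge\phi_t)\phi_t$ cancellation. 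The only cosmetic difference is that the paper commutes $D_t$ outward through $D_tD_t\phi_s$ after substituting the heat identity, whereas you commute $D_i$ inward through $\Delta_A(D_t\phi_t)$; these are mirror images of the same computation.
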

\begin{proof}
In the following calculations, we always use the convention in Remark 2.1.
By $\mathfrak{W}=D_t\phi_t-\phi_s$, we have from commutator equality that
$${\partial _s}\mathfrak{W}= {D_s}({D_t}{\phi _t} - {\phi _s}) = {D_t}{D_t}{\phi _s} - {D_s}{\phi _s} + {\bf R}({\partial _s}\widetilde{u},{\partial _t}\widetilde{u})\left( {{\partial_t\widetilde{u}}} \right).
$$
Further applications of the torsion free identity and commutator identity show
\begin{align*}
&{D_t}{D_t}{\phi _s} - {D_s}{\phi _s} \\
&= {D_t}{D_t}\left( {{h^{ij}}{D_i}{\phi _j} - {h^{ij}}\Gamma _{ij}^k{\phi _k}} \right) - {D_s}\left( {{h^{ij}}{D_i}{\phi _j} - {h^{ij}}\Gamma _{ij}^k{\phi _k}} \right) \\
&= {h^{ij}}{D_t}{D_t}{D_i}{\phi _j} - {h^{ij}}\Gamma _{ij}^k{D_t}{D_t}{\phi _k} - \left( {{h^{ij}}{D_s}{D_i}{\phi _j} - {h^{ij}}\Gamma _{ij}^k{D_s}{\phi _k}} \right) \\
&= {h^{ij}}{D_t}\left( {{D_i}{D_j}{\phi _t} + \mathbf{R}({\partial _t}\widetilde{u},{\partial _i}\widetilde{u})({\partial _j}\widetilde{u})} \right) - {h^{ij}}\Gamma _{ij}^k\left( {{D_k}{D_t}{\phi _t} + \mathbf{R}({\partial _t}\widetilde{u},{\partial _k}\widetilde{u})({\partial _t}\widetilde{u})} \right) \\
&- \left( {{h^{ij}}{D_i}{D_j}{\phi _s} - {h^{ij}}\Gamma _{ij}^k{D_k}{\phi _s} + {h^{ij}}\mathbf{R}({\partial _s}\widetilde{u},{\partial _i}\widetilde{u})({\partial _j}\widetilde{u})} \right) \\
&= {h^{ij}}{D_t}{D_i}{D_j}{\phi _t} - {h^{ij}}\Gamma _{ij}^k{D_k}{D_t}{\phi _t} - {h^{ij}}{D_i}{D_j}{\phi _s} + {h^{ij}}\Gamma _{ij}^k{D_k}{\phi _s} - {h^{ij}}\mathbf{R}({\partial _s}\widetilde{u},{\partial _i}\widetilde{u})(\partial_j\widetilde{u}) \\
&+ {h^{ij}}{\nabla _t}\left( {\mathbf{R}({\partial _t}\widetilde{u},{\partial _i}\widetilde{u})({\partial _j}\widetilde{u})} \right) - {h^{ij}}\Gamma _{ij}^k\mathbf{R}({\partial _t}\widetilde{u},{\partial _k}\widetilde{u})\left( {{\partial _t}\widetilde{u}} \right).
\end{align*}
The leading term can be written as
\begin{align*}
{h^{ij}}{D_t}{D_i}{D_j}{\phi _t}& = {h^{ij}}{D_i}{D_t}{D_j}{\phi _t} + {h^{ij}}\left( {\mathbf{R}({\partial _t}\widetilde{u},{\partial _i}\widetilde{u})e\left( {{D_j}{\phi _t}} \right)} \right) \\
&= {h^{ij}}{D_i}{D_j}{D_t}{\phi _t} + {h^{ij}}\left( {\mathbf{R}({\partial _t}\widetilde{u},{\partial _i}\widetilde{u}){\nabla _j}{\partial _t}\widetilde{u}
} \right) + {h^{ij}}{\nabla _i}\left( {\mathbf{R}({\partial _t}\widetilde{u},{\partial _j}\widetilde{u}){\partial _t}\widetilde{u}} \right).
\end{align*}
Thus we conclude as
\begin{align*}
 {\partial _s}\mathfrak{W} &={h^{ij}}{D_i}{D_j}({D_t}{\phi _t} - {\phi _s}) - {h^{ij}}\Gamma _{ij}^k{D_k}({D_t}{\phi _t} - {\phi _s}) + {h^{ij}}\left( {\mathbf{R}({\partial _t}\widetilde{u},{\partial _i}\widetilde{u}){\nabla _j}{\partial _t}\widetilde{u}} \right) \\
 &+ {h^{ij}}{\nabla _i}\left( {\mathbf{R}({\partial _t}\widetilde{u},{\partial _j}\widetilde{u}){\partial _t}\widetilde{u}} \right)
 - {h^{ij}}\mathbf{R}({\partial _s}\widetilde{u},{\partial _i}\widetilde{u})(\partial_j\widetilde{u}) + {h^{ij}}{\nabla _t}\left( {\mathbf{R}({\partial _t}\widetilde{u},{\partial _i}\widetilde{u})({\partial _j}\widetilde{u})} \right) \\
 &- {h^{ij}}\Gamma _{ij}^k\mathbf{R}({\partial _t}\widetilde{u},{\partial _k}\widetilde{u})\left( {{\partial _t}\widetilde{u}} \right) + \mathbf{R}({\partial _s}\widetilde{u},{\partial _t}\widetilde{u}){\partial _t}\widetilde{u}.
\end{align*}
Using $\mathfrak{W}=D_t\phi_t-\phi_s$ and (\ref{2.4best}) yields
\begin{align}
 {\partial _s}\mathfrak{W}& = \Delta \mathfrak{W} + 2h^{ii}{A_i}{\partial _i}\mathfrak{W}+ h^{ii}{A_i}{A_i}\mathfrak{W} + h^{ii}{\partial _i}{A_i}\mathfrak{W} - {h^{ij}}\Gamma _{ij}^k{A_k}\mathfrak{W}\nonumber\\
 &+ \left\{ { - {h^{ii}}\left( {{\partial _s}\widetilde{u} \wedge {\partial _i}\widetilde{u}} \right){\partial _i}\widetilde{u} + {h^{ii}}({\nabla _t}{\partial _t}\widetilde{u } \wedge {\partial _i}\widetilde{u}){\partial _i}\widetilde{u}} \right\}\nonumber \\
 &+ {h^{ii}}({\partial _t}\widetilde{u} \wedge {\nabla _t}{\partial _i}\widetilde{u}){\partial _i}\widetilde{u} + {h^{ii}}({\partial _t}\widetilde{u }\wedge {\partial _i}\widetilde{u}){\nabla _t}{\partial _i}\widetilde{u}\nonumber \\
 &+ {h^{ii}}({\nabla _i}{\partial _t}\widetilde{u} \wedge {\partial _i}\widetilde{u}){\partial _t}\widetilde{u} + {h^{ii}}({\partial _t}\widetilde{u} \wedge {\partial _i}\widetilde{u}){\nabla _i}{\partial _t}\widetilde{u}\nonumber\\
 &+ \left\{ {{h^{ii}}({\partial _t}\widetilde{u} \wedge {\nabla _i}{\partial _i}\widetilde{u}){\partial _t}\widetilde{u} - {h^{ii}}\Gamma _{ii}^k({\partial _t}\widetilde{u} \wedge {\partial _k}\widetilde{u}){\partial _t}\widetilde{u} + ({\partial _s}\widetilde{u} \wedge {\partial _t}\widetilde{u}){\partial _t}\widetilde{u}} \right\}.\label{wanxiao4}
\end{align}
Recalling the facts that $\mathfrak{W}$ is the gauged field for $\nabla_t\partial_t \widetilde{u}-\tau(\widetilde{u})$ and $\partial_s\widetilde{u}=\tau(\widetilde{u})$, we have
\begin{align}
- {h^{ii}}\left( {{\partial _s}\widetilde{u} \wedge {\partial _i}\widetilde{u}} \right){\partial _i}\widetilde{u} + {h^{ii}}({\nabla _t}{\partial _t}\widetilde{u} \wedge {\partial _i}\widetilde{u}){\partial _i}\widetilde{u} &= {h^{ii}}\left( {({\nabla _t}{\partial _t}\widetilde{u} - {\partial _s}\widetilde{u}) \wedge {\partial _i}\widetilde{u}} \right){\partial _i}\widetilde{u}\nonumber\\
&= {h^{ii}}\left( {\mathfrak{W} \wedge {\phi _i}} \right){\phi _i}.\label{wanxiao1}
\end{align}
Meanwhile, $\partial_s\widetilde{u}=\tau(\widetilde{u})$ also implies
\begin{align}
 &{h^{ii}}({\partial _t}\widetilde{u} \wedge {\nabla _i}{\partial _i}\widetilde{u}){\partial _t}\widetilde{u} - {h^{ii}}\Gamma _{ii}^k({\partial _t}\widetilde{u} \wedge {\partial _k}\widetilde{u}){\partial _t}\widetilde{u} + ({\partial _s}\widetilde{u} \wedge {\partial _t}\widetilde{u}){\partial _t}\widetilde{u }\nonumber\\
 &= {h^{ii}}\left( {{\partial _t}\widetilde{u} \wedge \left( {\tau (\widetilde{u}) - {\partial _s}\widetilde{u}} \right)} \right){\partial _t}\widetilde{u} = 0.\label{wanxiao2}
\end{align}
Bianchi identity gives
\begin{align}\label{wanxiao3}
{h^{ii}}({\partial _t}\widetilde{u} \wedge {\nabla _t}{\partial _i}\widetilde{u}){\partial _i}\widetilde{u} + {h^{ii}}({\nabla _i}{\partial _t}\widetilde{u} \wedge {\partial _i}\widetilde{u}){\partial _t}\widetilde{u} &=  - {h^{ii}}({\partial _i}\widetilde{u }\wedge {\partial _t}\widetilde{u}){\nabla _t}{\partial _i}\widetilde{u}\nonumber\\
&= {h^{ii}}({\partial _t}\widetilde{u} \wedge {\partial _i}\widetilde{u}){\nabla _t}{\partial _i}\widetilde{u}.
\end{align}
By (\ref{wanxiao1}), (\ref{wanxiao2}) and (\ref{wanxiao3}), (\ref{wanxiao4}) can be further simplified as
\begin{align*}
{\partial _s}\mathfrak{W} =& \Delta \mathfrak{W} + 2h^{ii}{A_i}{\partial _i}\mathfrak{W} + h^{ii}{A_i}{A_i}\mathfrak{W} +h^{ii} {\partial _i}{A_i}\mathfrak{W} - {h^{ii}}\Gamma _{ii}^k{A_k}\mathfrak{W}\\
& + {h^{ii}}\left( {\mathfrak{W} \wedge {\phi _i}} \right){\phi _i}+ 3{h^{ii}}({\partial _t}\widetilde{u} \wedge {\partial _i}\widetilde{u}){\nabla _t}{\partial _i}\widetilde{u}.
\end{align*}
\end{proof}

\begin{lemma}\label{xuejin}
Let $Q$ be an admissible harmonic map in Definition 1.1. Fix the frame $\Xi$ in Remark \ref{3sect} by taking $\Xi(Q(x))=\Theta(Q(x))$ given by (\ref{vg}).
Recall the definitions of $A^{\infty}_i$ in Lemma \ref{po87bg}. Then
\begin{align}
&|A_i^{\infty}|\lesssim|dQ|, |\sqrt{h^{ii}}\phi_i^{\infty}|\lesssim |dQ|\label{kulun1}\\
&|{h^{ii}}\left( {{\partial _i}{A^{\infty}_i} - \Gamma _{ii}^k{A^{\infty}_k}} \right)|\lesssim  |dQ|^2.\label{kulun}
\end{align}
\end{lemma}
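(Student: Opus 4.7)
The plan is to compute $A^{\infty}_i$ and $\phi^{\infty}_i$ explicitly in the coordinate system (\ref{vg}) using the chosen frame $\Xi = \Theta \circ Q$, and then exploit the harmonic map equation for $Q$ to eliminate the second-order derivatives appearing in the divergence-type expression $h^{ii}(\partial_i A^{\infty}_i - \Gamma^k_{ii}A^{\infty}_k)$.

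First I would compute $\nabla_i \Xi_j$ directly. Writing $\Xi_1 = e^{Q^2}\partial_{y_1}$ and $\Xi_2 = \partial_{y_2}$ at the base point $Q(x)$, applying the Leibniz rule for the connection on $Q^*(TN)$, and plugging in the target Christoffel symbols (which have the same form as (\ref{christ})), a short calculation gives $\nabla_i \Xi_1 = e^{-Q^2}\partial_i Q^1 \, \Xi_2$ and $\nabla_i \Xi_2 = -e^{-Q^2}\partial_i Q^1 \, \Xi_1$. Hence $A^{\infty}_i$ is $\mathfrak{so}(2)$-valued with the single nontrivial entry $e^{-Q^2}\partial_i Q^1$, while $\phi^{\infty}_i = (e^{-Q^2}\partial_i Q^1, \partial_i Q^2)^T$. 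The pointwise estimates (\ref{kulun1}) then follow by inspection, since each component is dominated by a component of $\partial_i Q$ measured in the target metric, which in turn is controlled by $|dQ|$ via the definition $|dQ|^2 = h^{ij}g_{kl}(Q)\partial_i Q^k \partial_j Q^l$ after accounting for the metric factors $\sqrt{h^{ii}}$.

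For (\ref{kulun}), substituting the explicit formula into $h^{ii}(\partial_i A^{\infty}_i - \Gamma^k_{ii}A^{\infty}_k)$ and using the coordinate forms of $h^{ij}$ and $\Gamma^k_{ij}$ from (\ref{christ}), the nontrivial matrix entry collapses to $e^{-Q^2}\bigl[\Delta_M Q^1 - h^{ii}\partial_i Q^1 \partial_i Q^2\bigr]$, because the combination $h^{ii}\partial_i^2 Q^1 - h^{11}\Gamma^2_{11}\partial_2 Q^1$ assembles precisely into $\Delta_M Q^1$. The harmonic map equation for $Q$ (the stationary version of (\ref{XV12})), together with the target Christoffel values $\overline{\Gamma}^1_{12}=\overline{\Gamma}^1_{21}=-1$, yields $\Delta_M Q^1 = -h^{ij}\overline{\Gamma}^1_{mn}(Q)\partial_i Q^m \partial_j Q^n = 2 h^{ii}\partial_i Q^1 \partial_i Q^2$. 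Substituting, the expression reduces to $e^{-Q^2} h^{ii}\partial_i Q^1 \partial_i Q^2$, which is bounded by $|dQ|^2$ via Cauchy-Schwarz and the expression of $|dQ|^2$ in coordinates.

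The entire argument is a coordinate computation and presents no analytic difficulty. The only conceptual point, which is the hinge of the lemma, is that the second-order derivatives of $Q$ appearing in $\partial_i A^{\infty}_i$ must cancel through the harmonic map equation to produce a bound quadratic rather than only linear in $|dQ|$; without invoking $\tau(Q)=0$, the estimate (\ref{kulun}) would fail, and the later analysis treating the limit connection coefficients as part of the magnetic potential in the master equation would be out of reach.
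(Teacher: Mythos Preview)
Your proposal is correct and follows essentially the same approach as the paper: an explicit coordinate computation of $A_i^\infty$ and $\phi_i^\infty$ in the frame $\Theta\circ Q$, followed by substitution of the harmonic map equation $\Delta_M Q^1 = 2h^{ii}\partial_iQ^1\partial_iQ^2$ to cancel the second-order terms in $h^{ii}(\partial_iA_i^\infty-\Gamma_{ii}^kA_k^\infty)$, leaving the single first-order expression $e^{-Q^2}h^{ii}\partial_iQ^1\partial_iQ^2$, which is then bounded by $|dQ|^2$ via Young's inequality. The paper carries out these same steps with slightly more explicit intermediate formulas, but the argument and its hinge (the use of $\tau(Q)=0$ to kill the Hessian terms) are identical.
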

\begin{proof}
Recall the definition
$$[A_i^\infty ]_k^j = \left\langle {{\nabla _i}{\Theta _k},{\Theta _j}} \right\rangle ,{\Theta _1} = {e^{{Q^2}(x)}}\frac{\partial }{{\partial {y_1}}},{\Theta _2} = \frac{\partial }{{\partial {y_2}}}.
$$
Since $A_i$ is skew-symmetric, it suffices to consider the $[A_i]^1_{2}$ terms. Direct calculation gives
\begin{align*}
[A_1^\infty ]_2^1 = \left\langle {{\nabla _1}{\Theta _2},{\Theta _1}} \right\rangle&  = {e^{{Q^2}(x)}}\frac{{\partial {Q^{_k}}}}{{\partial {x_1}}}\left\langle {{\nabla _{\frac{\partial }{{\partial {y_k}}}}}\frac{\partial }{{\partial {y_2}}},\frac{\partial }{{\partial {y_1}}}} \right\rangle  = {e^{ - {Q^2}(x)}}\frac{{\partial {Q^{_k}}}}{{\partial {x_1}}}\overline{\Gamma}_{k2}^1 \\
&=  - {e^{ - {Q^2}(x)}}\frac{{\partial {Q^1}}}{{\partial {x_1}}},
\end{align*}
and similarly we obtain
\begin{align*}
[A_1^\infty ]_1^2 ={e^{ - {Q^2}(x)}}\frac{{\partial {Q^1}}}{{\partial {x_1}}};\mbox{  }
[A_2^\infty ]_1^2 =  - [A_2^\infty ]_2^1 = {e^{ - {Q^2}(x)}}\frac{{\partial {Q^1}}}{{\partial {x_2}}}.
\end{align*}
Thus one has
\begin{align}
 &{h^{ii}}\left( {{\partial _i}[A_i^\infty ]_2^1 - \Gamma _{ii}^k[A_k^\infty ]_2^1} \right)\nonumber \\
 &=  - \left( {\frac{{{\partial ^2}{Q^1}}}{{\partial {x_2}^2}}{e^{2{x_2}}} + \frac{{{\partial ^2}{Q^1}}}{{\partial {x_1}^2}} - \frac{{\partial {Q^1}}}{{\partial {x_1}}}\frac{{\partial {Q^2}}}{{\partial {x_1}}}{e^{2{x_2}}} - \frac{{\partial {Q^1}}}{{\partial {x_2}}}\frac{{\partial {Q^2}}}{{\partial {x_2}}} - \frac{{\partial {Q^1}}}{{\partial {x_2}}}} \right){e^{ - {Q^2}(x)}} \label{chumen}
\end{align}
Writing the harmonic map equation for $Q$ in the coordinate (\ref{vg}) shows for $l=1,2$
$${h^{ii}}\frac{{{\partial ^2}{Q^l}}}{{\partial {x_i}^2}} - {h^{ii}}\Gamma _{ii}^k{\partial _k}{Q^l} + {h^{ii}}\bar \Gamma _{pq}^l\frac{{\partial {Q^p}}}{{\partial {x_i}}}\frac{{\partial {Q^q}}}{{\partial {x_i}}} = 0.
$$
Let $l=1$ in the above equation, we have
$${e^{2{x_2}}}\frac{{{\partial ^2}{Q^1}}}{{\partial {x_1}^2}} + \frac{{{\partial ^2}{Q^1}}}{{\partial {x_2}^2}} - \frac{{\partial {Q^1}}}{{\partial {x_2}}} - 2{e^{2{x_2}}}\frac{{\partial {Q^1}}}{{\partial {x_1}}}\frac{{\partial {Q^2}}}{{\partial {x_1}}} - 2\frac{{\partial {Q^1}}}{{\partial {x_2}}}\frac{{\partial {Q^2}}}{{\partial {x_2}}} = 0,
$$
which combined with (\ref{chumen}) yields
\begin{align}\label{chunmen2}
{h^{ii}}\left( {{\partial _i}A_i^\infty  - \Gamma _{ii}^kA_k^\infty } \right) = \left( {{e^{2{x_2}}}\frac{{\partial {Q^1}}}{{\partial {x_1}}}\frac{{\partial {Q^2}}}{{\partial {x_1}}} + \frac{{\partial {Q^1}}}{{\partial {x_2}}}\frac{{\partial {Q^2}}}{{\partial {x_2}}}} \right){e^{ - {Q^2}(x)}}.
\end{align}
Writing the energy density in coordinates (\ref{vg}), we obtain
\begin{align*}
{\left| {dQ} \right|^2} &= {h^{ij}}\left\langle {\frac{{\partial {Q^k}}}{{\partial {x_i}}}\frac{\partial }{{\partial {y_k}}},\frac{{\partial {Q^k}}}{{\partial {x_j}}}\frac{\partial }{{\partial {y_k}}}} \right\rangle \\
&= {e^{2{x_2}}}{\left| {\frac{{\partial {Q^1}}}{{\partial {x_1}}}} \right|^2}{e^{ - 2{Q_2}}} + {e^{2{x_2}}}{\left| {\frac{{\partial {Q^2}}}{{\partial {x_1}}}} \right|^2} + {\left| {\frac{{\partial {Q^1}}}{{\partial {x_2}}}} \right|^2}{e^{ - 2{Q_2}}} + {\left| {\frac{{\partial {Q^2}}}{{\partial {x_2}}}} \right|^2}.
\end{align*}
Thus (\ref{kulun}) follows by (\ref{chunmen2}) and Young inequality. (\ref{kulun1}) is much easier and follows immediately by the same arguments.
\end{proof}

Now we separate the main term in the equation of $\phi_s$. Recall the limit of $A_{s,t,x}$ given in (\ref{kji}), (\ref{kji22}), one can easily see the main term of (\ref{heating}) is a magnetic wave equation. Precisely, we have the following lemma.
\begin{lemma}\label{hushuo}
Fix the frame $\Xi$ in Proposition 3.3 by letting $\Xi_i(x)=\Theta_i(Q(x))$, $i=1,2$. Then
the heat tension filed $\phi_s$ satisfies
\begin{align*}
&(\partial^2_t-\Delta){\phi _s} + W{\phi _s}\\
&=  - 2{A_t}{\partial _t}{\phi _s} - {A_t}{A_t}{\phi _s} - {\partial _t}{A_t}{\phi _s} + {\partial _s}w + \mathbf{R}({\partial _t}\widetilde{u},{\partial _s}\widetilde{u})({\partial _t}\widetilde{u}) + 2{h^{ii}}A_i^{con}{\partial _i}{\phi _s} \\
&+ {h^{ii}}A_i^{con}A_i^\infty {\phi _s} + {h^{ii}}A_i^\infty A_i^{con}{\phi _s} + {h^{ii}}A_i^{con}A_i^{con}{\phi _s} + {h^{ii}}\left( {{\partial _i}A_i^{con} - \Gamma _{ii}^kA_k^{con}} \right){\phi _s} \\
&+ {h^{ii}}\left( {{\phi _s} \wedge \phi _i^\infty } \right)\phi _i^{con} + {h^{ii}}\left( {{\phi _s} \wedge \phi _i^{con}} \right)\phi _i^\infty  + {h^{ii}}\left( {{\phi _s} \wedge \phi _i^{con}} \right)\phi _i^{con},
\end{align*}
where $A_{x}^{\infty}$, $A_{x}^{con}$ are defined in Remark \ref{3sect}, and $W$ is given by
\begin{align}\label{iuo9}
W\varphi  =  -2 {h^{ii}}A_i^\infty {\partial _i}\varphi  -{h^{ii}}A_i^\infty A_i^\infty \varphi  - {h^{ii}}\left( {\varphi  \wedge \phi _i^\infty } \right)\phi _i^\infty-h^{ii}(\partial_iA^{\infty}_{i}-\Gamma^k_{ii}A^{\infty}_k).
\end{align}
Furthermore, $-\Delta+W$ is a self-adjoint operator in $L^2(\Bbb H^2;\Bbb C^2)$. And it is strictly positive if $0<\mu_1\ll1$.
\end{lemma}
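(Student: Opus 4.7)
The first claim -- the master equation -- is proved by direct algebraic manipulation of Lemma \ref{asdf}. The plan is to expand
\begin{equation*}
D_t D_t \phi_s = \partial_t^2 \phi_s + 2A_t\partial_t\phi_s + (\partial_tA_t + A_tA_t)\phi_s
\end{equation*}
and
\begin{equation*}
h^{ii}(D_iD_i - \Gamma^k_{ii}D_k)\phi_s = \Delta\phi_s + 2h^{ii}A_i\partial_i\phi_s + h^{ii}A_iA_i\phi_s + h^{ii}(\partial_iA_i-\Gamma^k_{ii}A_k)\phi_s,
\end{equation*}
translate the curvature source $h^{ii}\mathbf{R}(\partial_s\widetilde u,\partial_i\widetilde u)\partial_i\widetilde u$ in Lemma \ref{asdf} into the wedge form $h^{ii}(\phi_s\wedge\phi_i)\phi_i$ via Remark 2.1 and (\ref{nb890km}), and then substitute the splittings $A_i=A_i^\infty+A_i^{con}$, $\phi_i=\phi_i^\infty+\phi_i^{con}$ from Remark \ref{3sect}. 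All contributions that are purely in the limit pieces $A_i^\infty,\phi_i^\infty$ assemble into the stated $W\phi_s$ and are kept on the left-hand side; every remaining contribution contains at least one controllable factor $A_i^{con},\phi_i^{con}$, or one of $A_t,\partial_tA_t,A_tA_t$ (which decays as $s\to\infty$ by (\ref{edf22})), and forms the right-hand side. Matching term by term against the statement is then routine.

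For self-adjointness, the main observation is the gauge-theoretic rewrite
\begin{equation*}
-\Delta + W \;=\; -\Delta_{A^\infty} \;-\; h^{ii}(\cdot\wedge\phi_i^\infty)\phi_i^\infty,
\end{equation*}
where $\Delta_{A^\infty}\varphi := h^{ii}(D_i^\infty D_i^\infty - \Gamma^k_{ii}D_k^\infty)\varphi$ with $D_i^\infty := \partial_i + A_i^\infty$; expanding $\Delta_{A^\infty}$ exactly as above confirms that the two descriptions of $-\Delta+W$ agree. Because the caloric frame is orthonormal, differentiating $\langle\Omega_j,\Omega_k\rangle=\delta_{jk}$ forces $[A_i^\infty]^k_j = -[A_i^\infty]^j_k$, so $-\Delta_{A^\infty}$ is a standard real magnetic Laplacian, self-adjoint on $L^2(\mathbb H^2;\mathbb R^2)$. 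The multiplicative part $-h^{ii}(\cdot\wedge\phi_i^\infty)\phi_i^\infty$ is pointwise a symmetric $2\times 2$ matrix operator (by (\ref{nb890km})) bounded by $C|dQ|^2$ through Lemma \ref{xuejin}. Together with $|A_i^\infty|\lesssim|dQ|$ and $|h^{ii}(\partial_iA_i^\infty-\Gamma^k_{ii}A_k^\infty)|\lesssim|dQ|^2$ from Lemma \ref{xuejin}, plus the admissibility decay $|dQ|\lesssim \mu_1 e^{-\varrho r}$, the operator $W$ is a small $H^1$-form-bounded perturbation of $-\Delta$ when $\mu_1\ll 1$, and the KLMN theorem delivers self-adjointness of $-\Delta+W$.

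Strict positivity is then read off the same rewrite. Integration by parts gives $\langle -\Delta_{A^\infty}\varphi,\varphi\rangle = \int_{\mathbb H^2} h^{ii}|D_i^\infty\varphi|^2\,{\rm dvol}_h \geq 0$, while expanding the wedge via (\ref{nb890km}) yields
\begin{equation*}
-\langle h^{ii}(\varphi\wedge\phi_i^\infty)\phi_i^\infty,\varphi\rangle \;=\; \int_{\mathbb H^2} h^{ii}\bigl(|\phi_i^\infty|^2|\varphi|^2 - |\langle\varphi,\phi_i^\infty\rangle|^2\bigr){\rm dvol}_h \;\geq\; 0
\end{equation*}
by Cauchy-Schwarz -- this non-negativity is precisely the nonpositive sectional curvature of $\mathbb H^2$ manifested in the gauge. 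Hence $-\Delta+W\geq 0$. To upgrade to a strict positive lower bound I would combine Kato's diamagnetic inequality $|D_i^\infty\varphi|\geq|\partial_i|\varphi||$ (valid because $A_i^\infty$ is skew-symmetric) with the spectral gap $-\Delta\geq\tfrac14$ on $\mathbb H^2$ to conclude $\langle(-\Delta+W)\varphi,\varphi\rangle \geq \|\nabla|\varphi|\|_{L^2}^2 \geq \tfrac14\|\varphi\|_{L^2}^2$. Alternatively, for $\mu_1\ll 1$ one can absorb $\langle W\varphi,\varphi\rangle$ directly into $\langle -\Delta\varphi,\varphi\rangle$ using the smallness estimates above together with the spectral gap.

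The main obstacle is the bookkeeping in the first step: with all the commutator, Christoffel, and wedge contributions, and with the curvature convention of Remark 2.1 differing in sign from some standard conventions, it is easy to misplace a sign when equating $\mathbf{R}(\partial_s\widetilde u,\partial_i\widetilde u)\partial_i\widetilde u$ with $(\phi_s\wedge\phi_i)\phi_i$, or when verifying that $-\Delta_{A^\infty}$ reproduces exactly the first-order piece $-2h^{ii}A_i^\infty\partial_i$ and cross terms of $W$. The operator-theoretic half of the lemma is comparatively standard once the gauge-theoretic identity is in hand.
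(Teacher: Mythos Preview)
Your proposal is correct. The derivation of the master equation proceeds exactly as in the paper: expand $D_tD_t\phi_s$ and $h^{ij}(D_iD_j-\Gamma^k_{ij}D_k)\phi_s$ in Lemma~\ref{asdf}, convert the spatial curvature term into wedge form, and split $A_i,\phi_i$ into limit plus controllable parts via Remark~\ref{3sect}.

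For self-adjointness and positivity, your approach differs from the paper's. The paper does not use the connection-Laplacian rewrite $-\Delta+W=-\Delta_{A^\infty}-h^{ii}(\cdot\wedge\phi_i^\infty)\phi_i^\infty$; instead it writes $W=V_1+V_2\nabla$ with $V_1,V_2$ exponentially decaying (by Lemma~\ref{xuejin} and Definition~\ref{2as}), verifies that $W$ is $\Delta$-bounded with arbitrarily small relative bound via a compactness-of-Sobolev-embedding argument on balls, and invokes Kato--Rellich rather than KLMN. For strict positivity (deferred to Section~7), the paper checks by hand that the wedge term and the $A_i^\infty A_i^\infty$ term are separately non-negative symmetric, shows the first-order piece $2h^{ii}A_i^\infty\partial_i+h^{ii}(\partial_iA_i^\infty-\Gamma^k_{ii}A_k^\infty)$ is symmetric, and then uses the smallness $\mu_1\ll1$ together with the spectral gap to absorb $2h^{ii}A_i^\infty\partial_i$ into $-\Delta$ --- i.e.\ exactly your Method~2. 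Your Method~1 (diamagnetic inequality for $-\Delta_{A^\infty}$) is cleaner and actually yields the lower bound $\tfrac14$ without any smallness assumption on $\mu_1$; the paper does not pursue this. The advantage of your route is conceptual transparency and a sharper positivity statement; the advantage of the paper's route is that it stays close to elementary perturbation theory without appealing to the structure of connection Laplacians.
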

\begin{proof}
By (\ref{heating}), expanding $D_{x,t}$ as $\partial_{t,x}+A_{t,x}$ implies
\begin{align}
&\partial _t^2{\phi _s} - {\Delta}{\phi _s}\nonumber\\
& =  - 2{A_t}{\partial _t}{\phi _s} - {A_t}{A_t}{\phi _s} - {\partial _t}{A_t}{\phi _s} + {h^{ii}}{A_i}{A_i}{\phi _s} + {h^{ii}}\left( {{\partial _i}{A_i} - \Gamma _{ii}^k{A_k}} \right){\phi _s}  \nonumber\\
&+2 {h^{ii}}{A_i}{\partial _i}{\phi _s}+ {\partial _s}\mathfrak{W} + {h^{ii}}\mathbf{R}({\partial _s}\widetilde{u},{\partial _i}\widetilde{u})({\partial _i}\widetilde{u}) + \mathbf{R}({\partial _t}\widetilde{u},{\partial _s}\widetilde{u})({\partial _t}\widetilde{u}).\label{huta}
\end{align}
By Remark \ref{3sect}, $A_i=A^{\infty}_i+A^{con}_i$, $\phi_i=\phi^{\infty}_{i}+\phi^{con}_i$. Then fixing $\Xi$ to be $(\Theta_1(Q),\Theta_2(Q))$, we have
(\ref{huta}) reduces to
\begin{align*}
 &\partial _t^2{\phi _s} - {\Delta}{\phi _s} - 2{h^{ii}}A_i^\infty {\partial _i}{\phi _s} - {h^{ii}}A_i^\infty A_i^\infty {\phi _s} - {h^{ii}}\left( {{\phi _s} \wedge \phi _i^\infty } \right)\phi _i^\infty-h^{ii}(\partial_iA^{\infty}_i-\Gamma^k_{ii}A^{\infty}_k)\phi_s \\
 &=  - 2{A_t}{\partial _t}{\phi _s} - {A_t}{A_t}{\phi _s} - {\partial _t}{A_t}{\phi _s} + {\partial _s}w +(\phi _t\wedge\phi_s)\phi_t + {h^{ii}}A_i^{con}{\partial _i}{\phi _s} + {h^{ii}}A_i^{con}A_i^\infty {\phi _s}\\
 &+ {h^{ii}}A_i^\infty A_i^{con}{\phi _s}+ {h^{ii}}A_i^{con}A_i^{con}{\phi _s} + {h^{ii}}\left( {{\partial _i}A_i^{con} - \Gamma _{ii}^kA_k^{con}} \right){\phi _s} + {h^{ii}}\left( {{\phi _s} \wedge \phi _i^\infty } \right)\phi _i^{con}\\
 &+ {h^{ii}}\left( {{\phi _s} \wedge \phi _i^{con}} \right)\phi _i^\infty  + {h^{ii}}\left( {{\phi _s} \wedge \phi _i^{con}} \right)\phi _i^{con}.
\end{align*}
Then from the non-negativeness of the sectional curvature for the target $N=\Bbb H^2$ and the skew-symmetry of the connection matrix $A^{\infty}_i$, we have $W$ is a nonnegative symmetric operator in  $L^2(\Bbb H^2;\Bbb C^2)$ by direct calculations, see Lemma \ref{symm} in Section 7.
The self-adjointness of $W$ follows from Kato's perturbation theorem.  In fact, there exists a self-adjoint realization denote by $((\Delta_{col}),D(\Delta_{col}))$ of $(\Delta,C^{\infty}_c(\Bbb H^2,\Bbb C^2))$. It is known that $D(\Delta_{col})$ consists of functions $f\in L^2$ whose Laplacian $\Delta f$ in distribution sense belong to $L^2$, see for instance \cite{Stri}. Write $W$ as $W=V_1+V_2\nabla$, then $V_1$ and $V_2$ are of exponential decay as $d(x,0)\to\infty$ by Lemma  \ref{xuejin} and Definition 1.1. For any fixed $\varepsilon>0$, take $R>0$ sufficiently large such that
$${\left\| {{V_1}(x)} \right\|_{L_{d(x,0) \ge R}^\infty }} \le \varepsilon ,\mbox{  }{\left\| {{V_2}(x)} \right\|_{L_{d(x,0) \ge R}^\infty }} \le \varepsilon,
$$
then for any $f\in C^{\infty}_c(\Bbb H^2,\Bbb C^2)$,
\begin{align}\label{huli}
{\left\| {{V_1}(x)f + {V_2}\nabla f} \right\|_{L_{d(x,0) \ge R}^2}} \le \varepsilon {\left\| f \right\|_{{L^2}}} + \varepsilon {\left\| {\nabla f} \right\|_{{L^2}}}.
\end{align}
For this $R$, the compactness of Sobolev embedding in bounded domains implies there exists $C(\varepsilon, R)$ such that
\begin{align}\label{huli2}
{\left\| {{V_1}(x)f + {V_2}\nabla f} \right\|_{L_{d(x,0) \le R}^2}} \le C(\varepsilon ,R){\left\| f \right\|_{{L^2}}} + \varepsilon {\left\| {\Delta f} \right\|_{{L^2}}}.
\end{align}
Hence by (\ref{huli}) and (\ref{huli2}), one has for any $\varepsilon>0$ there exists $C(\varepsilon)$ such that
\begin{align}
{\left\| {{V_1}(x)f + {V_2}\nabla f} \right\|_{{L^2}}} \le C(\varepsilon){\left\| f \right\|_{{L^2}}} + \varepsilon {\left\| {\Delta f} \right\|_{{L^2}}}.
\end{align}
Since $C^{\infty}_c(\Bbb H^2,\Bbb C^2)$ is a core of $\Delta_{col}$, Kato's compact perturbation theorem shows $-\Delta+W$ is self-adjoint in  $L^2$ with domain $D(\Delta_{col})$.
\end{proof}

\section{Bootstrap for the heat tension filed}

\subsection{Strichartz estimates for wave equation with magnetic potential}
Theorem 5.2 and Remark 5.5 of Anker, Pierfelice \cite{AP} obtained the Strichartz estimates for linear wave/Klein-Gordon equation: Let $((p,q),(\widetilde{p},\widetilde{q}))$ be a $(\sigma,\tilde{\sigma})$ admissible couple, i.e.,
\begin{align*}
&\left\{ {({p^{ - 1}},{q^{ - 1}}) \in (0,\frac{1}{2}] \times (0,\frac{1}{2}):\frac{1}{p} > \frac{1}{2}(\frac{1}{2} - \frac{1}{q})} \right\} \cup \left\{ {\left( {0,\frac{1}{2}} \right)} \right\}\\
&\sigma  \ge \frac{3}{2}\left( {\frac{1}{2} - \frac{1}{q}} \right),\tilde \sigma  \ge \frac{3}{2}\left( {\frac{1}{2} - \frac{1}{{\tilde q}}} \right).
\end{align*}
If $u$ solves $\partial_t^2u-\Delta u=g$ with initial data $(f_0,f_1)$, then
$$ {\left\| {\widetilde D_x^{ - \sigma  + \frac{1}{2}}u} \right\|_{L_t^pL_x^q}} + {\left\| {\widetilde D_x^{ - \sigma  - \frac{1}{2}}{\partial _t}u} \right\|_{L_t^pL_x^q}} \lesssim {\left\| {\widetilde D_x^{\frac{1}{2}}f_0} \right\|_{{L^2}}} + {\left\| {\widetilde D_x^{ - \frac{1}{2}}f_1} \right\|_{{L^2}}} + {\left\| {\widetilde D_x^{\tilde \sigma  - \frac{1}{2}}g} \right\|_{L_t^{\tilde p'}L_x^{\tilde q'}}}.
$$
where $\widetilde D=(-\Delta-\frac{1}{4}+\kappa^2)$ for some $\kappa>\frac{1}{2}$.

Let $\rho(x)=e^{-d(x,0)}$. The endpoint and non-endpoint Strichartz estimates for magnetic wave equations in the small potential case were obtained in the first author's work [Corollary. 1.1. Proposition 3.1 \cite{Lize1}]. We recall this for reader's convenience.
Consider the magnetic wave equation on $\Bbb H^2$,
\begin{align}\label{wavem}
\left\{ \begin{array}{l}
 \partial _t^2f - \Delta f + {B_0}(x)f + \sum^2_{i=1}{h^{ii}}{B_i}(x){\partial _i}f = F \\
 f(0,x) = {f_0}(x),{\partial _t}f(0,x) = {f_1}(x) \\
 \end{array} \right.
\end{align}

\begin{lemma}[\cite{Lize1}]\label{poi9nmb}
Assume that $B_0,B_1,B_2$ in (\ref{wavem}) satisfy for some $\varrho>0$
\begin{align}
\|B_0\|_{L^2\cap e^{-r\varrho}L^{\infty}}+\sum^2_{i=1}\|\sqrt{h^{ii}}B_i\|_{L^2\cap e^{-r\varrho}L^{\infty}}\le \mu_1.
\end{align}
And assume that the Schr\"odinger operator $H=-\Delta+B_0+h^{ii}B_i\partial_i$ is symmetric. If $0<\mu_1\ll 1$, $u$ solves (\ref{wavem}), then for any $0<\sigma\ll\varrho$, $p\in(2,6)$
\begin{align*}
&{\left\| {{\rho ^{\sigma}}\nabla f} \right\|_{L_t^2L_x^2}}+{\left\| (-\Delta)^{\frac{1}{4}}f \right\|_{L_t^2L_x^{p}}} +
{\left\| {{\partial _t}f} \right\|_{L_t^\infty L_x^2}} + {\left\| {\nabla f} \right\|_{L_t^\infty L_x^2}}\\
&\lesssim {\left\| {\nabla {f_0}} \right\|_{{L^2}}} + {\left\| {{f_1}} \right\|_{{L^2}}} + {\left\| F \right\|_{L_t^1L_x^2}}.
\end{align*}
\end{lemma}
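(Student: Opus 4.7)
\medskip

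\noindent\textbf{Proof proposal.} Since this is a perturbative result, the plan is to treat the lower–order magnetic/drift terms $B_0 f + h^{ii}B_i \partial_i f$ as an inhomogeneity and close a Duhamel fixed point using the free wave estimates on $\mathbb{H}^2$. First I would establish, for the free wave equation $\partial_t^2 v -\Delta v = g$ with data $(v_0,v_1)$, the combined estimate
\begin{align*}
\|\rho^\sigma \nabla v\|_{L^2_tL^2_x} + \|(-\Delta)^{1/4}v\|_{L^2_tL^p_x} + \|(v,\partial_t v)\|_{L^\infty_t(\dot H^1\times L^2)}
\lesssim \|\nabla v_0\|_{L^2} + \|v_1\|_{L^2} + \|g\|_{L^1_tL^2_x}.
\end{align*}
The Strichartz pieces follow from Anker--Pierfelice in the admissible range indicated before the lemma. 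The weighted local-smoothing piece $\|\rho^\sigma\nabla v\|_{L^2_tL^2_x}$ can be obtained via a uniform resolvent bound $\|\rho^\sigma(-\Delta-\tfrac14-\lambda^2-i0)^{-1}\rho^\sigma\|_{L^2\to L^2}\lesssim \langle\lambda\rangle^{-1}$ and a $TT^*$ argument; the spectral gap $\tfrac14$ of $-\Delta_{\mathbb{H}^2}$ is what makes the exponential weight $\rho^\sigma$ with $\sigma$ slightly smaller than the decay rate of the potential admissible, in the spirit of Kato smoothing.

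Next I would apply these free estimates to the Duhamel formula
\begin{align*}
f(t) = \cos(t\sqrt{-\Delta})f_0+\frac{\sin(t\sqrt{-\Delta})}{\sqrt{-\Delta}}f_1 - \int_0^t \frac{\sin((t-s)\sqrt{-\Delta})}{\sqrt{-\Delta}}\bigl(B_0f+h^{ii}B_i\partial_i f\bigr)(s)\,ds,
\end{align*}
and bound the inhomogeneous term. Using the hypothesis $|B_0|+\sum_i|\sqrt{h^{ii}}B_i|\lesssim \mu_1 e^{-\varrho r}$ and choosing $\sigma\in(0,\varrho)$ so that $e^{-\varrho r}\le \rho^{2\sigma}$, Cauchy--Schwarz in space gives
\begin{align*}
\|B_0 f + h^{ii}B_i\partial_i f\|_{L^2_x}\lesssim \mu_1\bigl(\|\rho^{\sigma} f\|_{L^2_x}+\|\rho^\sigma \nabla f\|_{L^2_x}\bigr),
\end{align*}
after controlling $\|\rho^\sigma f\|_{L^2}$ by $\|\rho^\sigma \nabla f\|_{L^2}$ via Poincar\'e/Hardy on $\mathbb{H}^2$ (the spectral gap again). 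Plugging this into the free estimate and integrating in time, the right-hand side is $\lesssim \mu_1 \|\rho^\sigma \nabla f\|_{L^2_tL^2_x}$, which for $\mu_1$ sufficiently small can be absorbed into the corresponding term on the left. This closes the bootstrap and gives the stated estimate.

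The main obstacle is the drift term $h^{ii}B_i\partial_i f$, whose $L^1_tL^2_x$ norm a priori loses a full derivative on $f$; neither the energy estimate nor the Strichartz pieces can recover this loss on their own. The weighted local smoothing norm $\|\rho^\sigma\nabla f\|_{L^2_tL^2_x}$ is precisely tailored to absorb it: the exponential decay of $B_i$ matches the weight $\rho^\sigma$, and the uniform resolvent bound supplies the square-integrability in $t$. Finally, the symmetry (and hence self-adjointness on the natural domain, as in Lemma~4.3) of $H=-\Delta+B_0+h^{ii}B_i\partial_i$, together with its strict positivity for $0<\mu_1\ll 1$, legitimates the $L^\infty_t L^2_x$ control of $\nabla f$ and $\partial_t f$ via the conserved energy $\|\partial_t f\|_{L^2}^2+\langle Hf,f\rangle$, which is equivalent to the free energy under the same smallness assumption.
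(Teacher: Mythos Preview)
The paper does not supply a proof of this lemma: it is quoted verbatim from the companion paper \cite{Lize1} (see the sentence just before the statement), so there is no in-paper argument to compare against. Your overall strategy---treat the potential terms as a source, use the free Strichartz estimates of Anker--Pierfelice together with a weighted Kato-smoothing norm $\|\rho^\sigma\nabla f\|_{L^2_tL^2_x}$ to absorb the first-order drift $h^{ii}B_i\partial_i f$---is the right blueprint and is exactly the philosophy behind the result in \cite{Lize1}.

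There is, however, a genuine gap in the absorption step as written. From
\[
\|B_0 f + h^{ii}B_i\partial_i f\|_{L^2_x}\lesssim \mu_1\,\|\rho^\sigma\nabla f\|_{L^2_x}
\]
you only deduce $\|B_0 f + h^{ii}B_i\partial_i f\|_{L^1_tL^2_x}\lesssim \mu_1\,\|\rho^\sigma\nabla f\|_{L^{1}_tL^2_x}$, not $\mu_1\,\|\rho^\sigma\nabla f\|_{L^{2}_tL^2_x}$. On the global-in-time interval needed here (the lemma is applied later with $T=\infty$) there is no H\"older inequality that converts $L^1_t$ into $L^2_t$, so the bootstrap does not close with the free inhomogeneous estimate in $L^1_tL^2_x$ alone. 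What is missing is the \emph{dual} local-smoothing estimate for the free flow: from the same uniform resolvent bound (and Kato's theory, or equivalently a $TT^*$/Christ--Kiselev argument) one also has
\[
\|\rho^\sigma\nabla v\|_{L^2_tL^2_x}+\|(-\Delta)^{1/4}v\|_{L^2_tL^p_x}+\|(v,\partial_t v)\|_{L^\infty_t(\dot H^1\times L^2)}
\ \lesssim\ \|\rho^{-\sigma} g\|_{L^2_tL^2_x},
\]
which is the companion to the homogeneous smoothing bound you invoke. With this in hand, writing $e^{-\varrho r}\le \rho^{2\sigma}$ gives
\[
\|\rho^{-\sigma}(B_0 f + h^{ii}B_i\partial_i f)\|_{L^2_tL^2_x}\lesssim \mu_1\,\|\rho^{\sigma}\nabla f\|_{L^2_tL^2_x},
\]
and now the perturbation is absorbed for $\mu_1$ small. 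Once you add this dual estimate to your toolbox (it comes for free from the resolvent bound you already postulated), the rest of your outline is correct.
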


Hence by Lemma \ref{xuejin}, Lemma \ref{hushuo} and Lemma \ref{poi9nmb}, we have:
\begin{proposition}\label{tianxia}
Let $W$ be defined above and $0<\mu_1\ll 1$, $0<\sigma\ll \varrho\ll 1$,  then we have the weighted and endpoint Strichartz estimates for the magnetic wave equation: If $f$ solves the equation
\begin{align*}
\left\{ \begin{array}{l}
\partial _t^2f - {\Delta}f + Wf = F \\
f(0,x) = {f_0},{\partial _t}f(0,x) = {f_1} \\
\end{array} \right.
\end{align*}
then it holds for any $p\in(2,6)$, $0<\sigma\ll \varrho$
\begin{align}
&{\left\| {{{\left| D \right|}^{\frac{1}{2}}}f} \right\|_{L_t^2L_x^{p}}} +{\left\| {{\rho ^{\sigma}}\nabla f} \right\|_{L_t^2L_x^2}}+
{\left\| {{\partial _t}f} \right\|_{L_t^\infty L_x^2}} + {\left\| {\nabla f} \right\|_{L_t^\infty L_x^2}}+{\left\| {{\rho ^{\sigma }}\nabla f} \right\|_{L_t^2L_x^2}}\nonumber\\
&\lesssim {\left\| {\nabla {f_0}} \right\|_{{L^2}}} + {\left\| {{f_1}} \right\|_{{L^2}}} + {\left\| F \right\|_{L_t^1L_x^2}}.\label{gutu4}
\end{align}
\end{proposition}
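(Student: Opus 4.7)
The plan is to deduce Proposition 5.1 directly from Lemma 5.2 (the first author's magnetic wave estimates in \cite{Lize1}) by casting the operator $-\Delta+W$ in the form $-\Delta+B_0+h^{ii}B_i\partial_i$ and verifying the smallness and decay hypotheses on the coefficients $B_0,B_i$ using the admissibility of the harmonic map $Q$ and the structural identities of Lemmas 4.2 and 4.3.

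First I would read off from the explicit formula \eqref{iuo9} the identification
\begin{align*}
B_i &= -2A_i^\infty, \qquad i=1,2,\\
B_0\varphi &= -h^{ii}A_i^\infty A_i^\infty\varphi - h^{ii}(\varphi\wedge\phi_i^\infty)\phi_i^\infty - h^{ii}\bigl(\partial_iA_i^\infty-\Gamma_{ii}^kA_k^\infty\bigr)\varphi,
\end{align*}
so that $W$ agrees with the magnetic/electric operator appearing in \eqref{wavem}. Next, I would verify the pointwise bounds needed to plug into Lemma 5.2. By Lemma 4.2 one has $|A_i^\infty|\lesssim|dQ|$, $|\sqrt{h^{ii}}\phi_i^\infty|\lesssim|dQ|$ and $|h^{ii}(\partial_iA_i^\infty-\Gamma_{ii}^kA_k^\infty)|\lesssim|dQ|^2$; hence
\[
|B_0(x)|+\sum_{i=1}^2|\sqrt{h^{ii}}B_i(x)|\lesssim |dQ(x)|+|dQ(x)|^2.
\]
The admissibility hypothesis in Definition 1.1 together with \eqref{as4} then gives $\|dQ\|_{L^2}<\mu_1$ and $e^{\varrho r}|dQ|\in L^\infty$ with $\|e^{\varrho r}|dQ|\|_{L^\infty}<\mu_1$, so that
\[
\|B_0\|_{L^2\cap e^{-\varrho r}L^\infty}+\sum_{i=1}^2\|\sqrt{h^{ii}}B_i\|_{L^2\cap e^{-\varrho r}L^\infty}\lesssim \mu_1+\mu_1^2,
\]
which is $\ll 1$ for $\mu_1$ small, exactly the smallness required by Lemma 5.2.

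The remaining structural hypothesis of Lemma 5.2 is the symmetry (really self-adjointness) of $H=-\Delta+B_0+h^{ii}B_i\partial_i$, and this is precisely the content of the last statement of Lemma 4.3: positivity of $-\Delta+W$ in $L^2(\mathbb H^2;\mathbb C^2)$ for $0<\mu_1\ll 1$, together with self-adjointness established there via Kato's perturbation theorem. Invoking Lemma 5.2 at this point yields every norm on the left-hand side of \eqref{gutu4} — the energy norms $\|\partial_tf\|_{L_t^\infty L_x^2}$ and $\|\nabla f\|_{L_t^\infty L_x^2}$, the weighted Morawetz-type term $\|\rho^\sigma\nabla f\|_{L_t^2L_x^2}$ (valid for any $0<\sigma\ll\varrho$, which matches our exponential decay rate of the coefficients), and the endpoint Strichartz term $\||D|^{1/2}f\|_{L_t^2L_x^p}$ for $p\in(2,6)$ — bounded by $\|\nabla f_0\|_{L^2}+\|f_1\|_{L^2}+\|g\|_{L_t^1L_x^2}$.

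I do not expect any serious obstacle here: the proposition is essentially a packaging of the already-available Strichartz estimates of \cite{Lize1} for a specific small, exponentially decaying magnetic potential. The only points that require any care are (i) matching the decay rate $\sigma\ll\varrho$ between the admissible harmonic map and the weighted estimate, which is already built into the hypothesis, and (ii) confirming that the zeroth-order piece of $W$ (which is matrix-valued since it encodes the curvature coupling $(\cdot\wedge\phi_i^\infty)\phi_i^\infty$) still falls under the scalar framework of Lemma 5.2 — this is harmless because Lemma 5.2 applies componentwise and the matrix bound is controlled by $|dQ|^2$ in the same norm.
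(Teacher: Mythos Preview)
Your proposal is correct and follows essentially the same route as the paper: the paper's entire proof is the one-line remark ``Hence by Lemma \ref{xuejin}, Lemma \ref{hushuo} and Lemma \ref{poi9nmb}, we have,'' and your write-up simply unpacks that sentence by identifying $B_0,B_i$ from $W$, invoking the pointwise bounds of Lemma \ref{xuejin} and the admissibility assumption \eqref{as4} for smallness and exponential decay, citing Lemma \ref{hushuo} for self-adjointness, and then applying Lemma \ref{poi9nmb}. (Minor note: your internal numbering is off by one --- the lemma with the $|dQ|$ bounds is Lemma 4.3 and the self-adjointness lemma is Lemma 4.4 --- and the matrix-valued nature of $W$ is already accommodated by the symmetry hypothesis in Lemma \ref{poi9nmb} rather than requiring a componentwise reduction.)
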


\begin{remark}\label{tataru0}
For all $\sigma\in \Bbb R$, $p\in(1,\infty)$, $\|\widetilde D^{\sigma} f\|_{p}$ is equivalent to $\|(-\Delta)^{\sigma/2}f\|_{p}$.
Tataru \cite{Tataru4} shows for all $p\in(1,\infty)$, $\|\Delta f\|_{p}$ is equivalent to $\|\nabla^2f\|_{p}+\|\nabla f\|_{p}+\|f\|_{p}$.
\end{remark}

\subsection{Setting of Bootstrap}

We fix the constants $\mu_1,\varepsilon_1, \varrho, \sigma$ to be
\begin{align}\label{dozuoki}
0<\mu_2<\mu_1\ll\varepsilon_1\ll 1,\mbox{ }0<\sigma\ll\varrho\ll 1.
\end{align}
Let $L>0$ be sufficiently large say $L=100$.
Define $\omega:\Bbb R^+\to \Bbb R^+$ and $a:\Bbb R^+\to \Bbb R^+$ by
$$\omega(s) = \left\{ \begin{array}{l}
 {s^{\frac{1}{2}}}\mbox{  }{\rm{when}}\mbox{  }0 \le s \le 1 \\
 {s^L}\mbox{  }\mbox{  }{\rm{when}}\mbox{  }s \ge 1 \\
 \end{array} \right.,a(s) = \left\{ \begin{array}{l}
 s^{\frac{3}{4}}\mbox{  }\mbox{  }{\rm{when}}\mbox{  }0 \le s \le 1 \\
 {s^L}\mbox{  }\mbox{  }{\rm{when}}\mbox{  }s \ge 1 \\
 \end{array} \right.
$$

\begin{proposition}\label{oures}
Assume that $\mathcal{A}$ is the set of $T\in[0,T_*)$ such that for any $2<q<6+2\gamma$, $p\in(2,6)$ with some fixed $0<\gamma\ll1$,
\begin{align}
{\left\| {(du,\partial_tu)} \right\|_{L_t^\infty L_x^2([0,T] \times {\Bbb H^2})}}
+ {\left\| (\nabla{\partial _t}u,\nabla du) \right\|_{L_t^\infty L_x^2([0,T] \times {\Bbb H^2})}}\nonumber\\
+{\left\| {{\partial _t}u} \right\|_{L_t^2L^q_x([0,T] \times {\Bbb H^2})}} &\le {\varepsilon _1}.\label{boot2}\\
{\left\| {\omega (s)|D{|^{ - \frac{1}{2}}}{\partial _t}{\phi _s}} \right\|_{L_s^\infty L_t^2L_x^p}} + {\left\| {\omega (s){\partial _t}{\phi _s}} \right\|_{L_s^\infty L_t^\infty L_x^2}}\nonumber\\
+{\left\| {\omega (s)\nabla {\phi _s}} \right\|_{L_s^\infty L_t^\infty L_x^2}} + {\left\| {\omega (s){{\left| D \right|}^{\frac{1}{2}}}{\phi _s}} \right\|_{L_s^\infty L_t^2L_x^p}} &\le {\varepsilon _1}.\label{boot5}
\end{align}
Then for all $T\in \mathcal{A}$ we have
\begin{align}
&{\left\| {\omega (s){{\left| D \right|}^{ - \frac{1}{2}}}{\partial _t}{\phi _s}} \right\|_{L_s^\infty L_t^2L_x^p([0,T] \times {\Bbb H^2})}} + {\left\| {\omega (s){{\left| D \right|}^{\frac{1}{2}}}{\phi _s}} \right\|_{L^{\infty}_sL_t^2L_x^p([0,T] \times {\Bbb H^2})}}\nonumber\\
&+ {\left\| {\omega (s){\partial _t}{\phi _s}} \right\|_{L_s^\infty L_t^\infty L_x^2([0,T] \times {\Bbb H^2})}}
+ {\left\| {\omega (s)\nabla {\phi _s}} \right\|_{L_s^\infty L_t^\infty L_x^2([0,T] \times {H^2})}} \le \varepsilon _1^2. \label{huojiq}
\end{align}
and for any $r\in(2,6+2\gamma]$ it holds that
\begin{align}
{\left\| {(du,\partial_tu)} \right\|_{L_t^\infty L_x^2([0,T] \times {\Bbb H^2})}} + {\left\| {(\nabla{\partial _t}u,\nabla du)} \right\|_{L_t^\infty L_x^2([0,T] \times {\Bbb H^2})}}&\le {\varepsilon^2 _1}\label{boot8q}\\
{\left\| {{\partial _t}u} \right\|_{L_t^2L_x^r([0,T] \times {\Bbb H^2})}} &\le {\varepsilon^2_1}.\label{boot9q}
\end{align}
Moreover we have
\begin{align}
 {\left\| {du} \right\|_{L_t^\infty L_x^2([0,{T}] \times {\Bbb H^2})}}& + {\left\| {{\partial _t}u} \right\|_{L_t^\infty L_x^2([ 0,{T}] \times {\Bbb H^2})}} + {\left\| {\nabla du} \right\|_{L_t^\infty L_x^2([0,{T}] \times {\Bbb H^2})}} \nonumber\\
 &+ {\left\| {\nabla {\partial _t}u} \right\|_{L_t^\infty L_x^2([0,{T}] \times {\Bbb H^2})}} + {\left\| {{\partial _t}u} \right\|_{L_t^2L_x^6([ 0,{T}] \times {\Bbb H^2})}} \le \varepsilon _1^2.\label{hua12xde}
\end{align}
\end{proposition}
The proof of Proposition \ref{oures} will be divided into several lemmas below. (\ref{huojiq}) is proved in Proposition 5.11. (\ref{boot8q}), (\ref{boot9q}) and (\ref{hua12xde}) are proved in Proposition 5.13 and Corollary 5.15 respectively.

The bootstrap programm we apply here is based on the design of \cite{LOS,Tao7}. The essential refinement is we add a spacetime bound $\|\partial_tu\|_{L^2_tL^p_x}$ to the primitive bootstrap assumption.  The most important original ingredient in this part is we use the weighted Strichartz estimates in Section 5.1 to control the one order derivative terms of $\phi_s$.

\begin{proposition}\label{aaop}
Assume (\ref{boot2}) holds, then we have for any $\eta>0$
\begin{align}
{\big\| {{A_t}} \big\|_{L_t^\infty L_x^\infty }} &\le \varepsilon_1\label{aaaw811}\\
{\big\| {{h^{ii}} {\partial _i}{A_i}(s)} \big\|_{L_t^\infty L_x^\infty }} &\le \varepsilon_1 \max(1,{s^{ -\eta}})\label{butterfly}\\
{\big\| {\sqrt {{h^{ii}}} {\partial _t}{A_i}(s)} \big\|_{L_t^\infty L_x^\infty }} &\le \varepsilon_1 {s^{ - \frac{1}{2}}}\label{u82}\\
{\big\| {\sqrt {{h^{ii}}} {A_i}(s)} \big\|_{L_t^\infty L_x^\infty }} &\le \varepsilon_1 \label{u81}.
\end{align}
\end{proposition}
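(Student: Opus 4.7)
The plan is to exploit the integral representations from Lemma~\ref{po87bg},
\[
\sqrt{h^{ii}}\,A_i=\sqrt{h^{ii}}\,A_i^\infty+\int_s^\infty\sqrt{h^{ii}}\,\phi_s\wedge\phi_i\,d\kappa,\qquad A_t=\int_s^\infty\phi_s\wedge\phi_t\,d\kappa,
\]
thereby reducing every assertion to a pointwise $\kappa$-integral of heat-flow data controlled by Proposition~\ref{sl}. The ``limit'' pieces are immediate via Lemma~\ref{xuejin} and the admissibility $\|dQ\|_{L^\infty}\le\mu_1\le\varepsilon_1$ (Definition~\ref{2as}): $\sqrt{h^{ii}}|A_i^\infty|\lesssim\mu_1$, $|h^{ii}(\partial_iA_i^\infty-\Gamma_{ii}^kA_k^\infty)|\lesssim\mu_1^2$, and $\partial_tA_i^\infty=0$. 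I would prove (\ref{u81}), (\ref{aaaw811}), (\ref{u82}), (\ref{butterfly}) in this order, since each later step may reuse the earlier ones.

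For (\ref{u81}) and (\ref{aaaw811}), the integrands are bounded pointwise by $|\partial_s\widetilde u|\,|d\widetilde u|$ and $|\partial_s\widetilde u|\,|\partial_t\widetilde u|$, respectively. Proposition~\ref{sl} supplies $\|s^{1/2}e^{\delta s}\partial_s\widetilde u\|_{L^\infty_x}\lesssim\varepsilon_1$ as well as $\|s^\eta d\widetilde u\|_{L_s^\infty(0,1)L_x^\infty}+\|d\widetilde u\|_{L_s^\infty[1,\infty)L_x^\infty}\lesssim\varepsilon_1$ for any $\eta>0$. The missing $L^\infty$ bound on $\partial_t\widetilde u$ follows from $(\partial_s-\Delta)|\partial_t\widetilde u|\le 0$ (a Kato-type consequence of (\ref{10.127})) and the maximum principle, giving $|\partial_t\widetilde u|(s)\le e^{s\Delta}|\partial_tu|$; combining Lemma~\ref{8.5}'s smoothing $\|e^{s\Delta}f\|_{L^\infty}\lesssim s^{-1/p}\|f\|_{L^p}$ with the Sobolev embedding $\|\partial_tu\|_{L^p}\lesssim\|\partial_t u\|_{H^1}\lesssim\varepsilon_1$ (valid for any $p<\infty$ by Lemma~\ref{wusijue}) yields $\|\partial_t\widetilde u\|_{L^\infty}\lesssim\varepsilon_1 s^{-\eta}$ on $(0,1]$, complemented by $\lesssim\varepsilon_1 e^{-\delta s}$ for $s\ge 1$ as in (\ref{xiu2}). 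Each resulting $\kappa$-integral is $O(\varepsilon_1^2)$.

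For (\ref{u82}), differentiate under the integral. Using $\partial_t=D_t-A_t$ together with the torsion-free identities $D_t\phi_s=D_s\phi_t$ and $D_t\phi_i=D_i\phi_t$, I obtain
\[
\sqrt{h^{ii}}\partial_tA_i=\int_s^\infty\sqrt{h^{ii}}\bigl[(D_s\phi_t)\wedge\phi_i+\phi_s\wedge(D_i\phi_t)-(A_t\phi_s)\wedge\phi_i-\phi_s\wedge(A_t\phi_i)\bigr]d\kappa,
\]
where intrinsically $D_s\phi_t=\nabla_t\partial_s\widetilde u$ and $D_i\phi_t=\nabla_i\partial_t\widetilde u$. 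The bounds $\|s^{1/2}\nabla_t\partial_s\widetilde u\|_{L^\infty}$ and $\|s^{1/2}e^{\delta s}\nabla\partial_t\widetilde u\|_{L^\infty}\lesssim\varepsilon_1$ from Proposition~\ref{sl}, combined with the already-proven (\ref{aaaw811}) and (\ref{u81}) for the $A_t$ pieces, produce a worst integrand $\sim\kappa^{-1}$ near zero, whose integral gives $\log(1/s)\lesssim s^{-1/2}$. For (\ref{butterfly}), I commute $\partial_i$ with the integral and expand $h^{ii}\partial_i(\phi_s\wedge\phi_i)$ via $D_i=\partial_i+A_i$; the key cancellation is the gauged heat equation~(\ref{heat}), $h^{ii}D_i\phi_i=\phi_s+h^{ii}\Gamma^k_{ii}\phi_k$, which converts the would-be dangerous $\phi_s\wedge(h^{ii}D_i\phi_i)$ into $\phi_s\wedge\phi_s=0$ plus a harmless lower order piece. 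The surviving leading term $h^{ii}(D_i\phi_s)\wedge\phi_i$ is controlled by $|\nabla\partial_s\widetilde u|\,|d\widetilde u|$; using $\|s\nabla\partial_s\widetilde u\|_{L^\infty}\lesssim\varepsilon_1$ and $\|s^\eta d\widetilde u\|_{L^\infty}\lesssim\varepsilon_1$, its $\kappa$-integral is $\varepsilon_1^2\int_s^1\kappa^{-1-\eta}d\kappa+\varepsilon_1^2\lesssim\varepsilon_1\max(1,s^{-\eta})$.

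The main obstacle is the short-time bookkeeping in (\ref{butterfly}) and (\ref{u82}): the combinations $\|\nabla\partial_s\widetilde u\|_{L^\infty}\cdot\|d\widetilde u\|_{L^\infty}\sim s^{-1-\eta}$ and $\|\partial_s\widetilde u\|_{L^\infty}\cdot\|\nabla\partial_t\widetilde u\|_{L^\infty}\sim s^{-1}$ are only barely integrable in $\kappa$, and a bare $\nabla^2\widetilde u$ term (which would arise without the heat-equation cancellation in (\ref{butterfly})) would not integrate at all near $\kappa=0$.
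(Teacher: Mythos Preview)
Your proposal is correct and follows essentially the same strategy as the paper: both exploit the integral representations of Lemma~\ref{po87bg} and bound the $\kappa$-integrands via the heat-flow estimates collected in Proposition~\ref{sl}, treating the limit pieces through Lemma~\ref{xuejin}. The differences are only stylistic. For (\ref{aaaw811}) the paper uses Cauchy--Schwarz in $\kappa$ together with Lemma~\ref{ktao1} (the smoothing estimate $\int_0^\infty\|e^{\kappa\Delta}f\|_{L^\infty}^2\,d\kappa\lesssim\|f\|_{L^2}^2$), which cleanly avoids the borderline $\kappa^{-1/2-\eta}$ integrand you handle directly; for (\ref{u82}) the paper pairs $\|d\widetilde u\|_{L^\infty}\lesssim\kappa^{-1/2}$ with $\|\nabla_t\partial_s\widetilde u\|_{L^\infty}\lesssim\kappa^{-1}$ to get a $\kappa^{-3/2}$ integrand (your $\kappa^{-1}$ comes from citing the alternate entry $\|s^{1/2}\nabla_t\partial_s\widetilde u\|_{L_s^\infty L_x^\infty}$ in Proposition~\ref{sl}), but either way the integral is $\lesssim s^{-1/2}$; and for (\ref{butterfly}) you make explicit the cancellation $\phi_s\wedge(h^{ii}D_i\phi_i)=\phi_s\wedge\phi_s+\text{l.o.t.}=\text{l.o.t.}$ via the gauged heat equation, whereas the paper only writes ``the same route as (\ref{u82})''. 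One small correction: for the limit piece of (\ref{butterfly}) you cite $|h^{ii}(\partial_iA_i^\infty-\Gamma_{ii}^kA_k^\infty)|\lesssim\mu_1^2$, but the statement asks for $h^{ii}\partial_iA_i$ without the Christoffel correction; the paper's (\ref{ipu83}) gives $|h^{ii}\partial_iA_i^\infty|\lesssim|\nabla dQ|+|dQ|\lesssim\mu_1$, which is still $\le\varepsilon_1$ and thus harmless.
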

\begin{proof}
By the commutator identity and the facts $|\partial_t\widetilde{u}|\le e^{s\Delta}|\partial_tu|$, $|\partial_s\widetilde{u}|\le e^{s\Delta}|\partial_su|$, (\ref{aaaw811}) is bounded by Lemma \ref{ktao1},
\begin{align*}
{\left\| {{A_t}} \right\|_{L_t^\infty L_x^\infty }} &\le {\big\| {\int_s^\infty  {{{\big\| {{\phi _t}} \big\|}_{L_x^\infty }}{{\big\| {{\phi _s}} \big\|}_{L_x^\infty }}} d\kappa} \big\|_{L_t^\infty }} \le \mathop {\sup }\limits_{t \in [0,T]} {\big\| {{\phi _t}} \big\|_{L_s^2L_x^\infty }}{\big\| {{\phi _s}} \big\|_{L_s^2L_x^\infty }} \\
&\le \mathop {\sup }\limits_{t \in [0,T]} {\big\| {{\partial _t}u} \big\|_{L_x^2}}{\big\| {{\partial _s}u} \big\|_{L_x^2}} \le {\varepsilon _1}.
\end{align*}
By the commutator identity,
\begin{align*}
 &{\big\| {\sqrt {{h^{ii}}} {\partial _t}{A_i}} \big\|_{L_t^\infty L_x^\infty }} \le \int_s^\infty  {{{\big\| {\sqrt {{h^{ii}}} {\partial _t}\left( {{\phi _i} \wedge {\phi _s}} \right)} \big\|}_{L_t^\infty L_x^\infty }}} d\kappa\\
 & \le \int_s^\infty  {{{\big\| {\sqrt {{h^{ii}}} {\partial _t}{\phi _i}} \big\|}_{L_t^\infty L_x^\infty }}\big\| {{\phi _s}} \big\|_{L_t^\infty L^{\infty}_x }}  d\kappa+ \int_s^\infty  {{{\big\| {\sqrt {{h^{ii}}} {\phi _i}} \big\|}_{L_t^\infty L_x^\infty }}\big\| {{\partial _t}{\phi _s}} \big\|_{L_t^\infty L^{\infty}_x}}  d\kappa.
\end{align*}
Using the relation between the induced derivative $D_{i,t}$ and the covariant derivative on $u^*(TN)$, one obtains
$| {\sqrt {{h^{ii}}} {\partial _t}{\phi _i}}| \le | {\nabla {\partial _t}\widetilde{u}}| + | {\sqrt {{h^{ii}}} {A_t}{\phi _i}}|+ | {\sqrt {{h^{ii}}} {A_i}{\phi _t}}|$ and
similarly
$| {{\partial _t}{\phi _s}}| \le | {{\nabla _t}{\partial _s}\widetilde{u}}| + | {A_t}{\phi _s}|$.
Hence it suffices to prove
\begin{align}
{\int_s^\infty  {\left\| {\left| {d\widetilde{u}} \right|\left| {{\nabla _t}{\partial _s}\widetilde{u}} \right|} \right\|} _{L_t^\infty L_x^\infty }}d\kappa + {\int_s^\infty  {\left\| {\left| {{\partial _s}\widetilde{u}} \right|\left| {\nabla {\partial _t}\widetilde{u}} \right|} \right\|} _{L_t^\infty L_x^\infty }}d\kappa&\le \varepsilon_1s^{-\frac{1}{2}}\label{po987}\\
\int_s^\infty  {{{\| {\sqrt {{h^{ii}}} {A_i}{\phi _t}{\phi _s}}\|}_{L_t^\infty L_x^\infty }}}d\kappa+\int_s^\infty  {{{\| {\sqrt {{h^{ii}}} {A_t}{\phi _i}{\phi _s}}\|}_{L_t^\infty L_x^\infty }}}d\kappa&\le \varepsilon_1s^{-\frac{1}{2}} \label{pojn89}
\end{align}
For $s\in(0,1]$, Proposition \ref{sl} and $|d\widetilde{u}|\le e^{s\Delta}|du|$ give
\begin{align}\label{0918}
{\left\| {\left| {d\widetilde{u}} \right|\left| {{\nabla _t}{\partial _s}\widetilde{u}} \right|} \right\|_{L_t^\infty L_x^\infty }} + {\left\| {\left| {{\partial _s}\widetilde{u}} \right|\left| {\nabla {\partial _t}\widetilde{u}} \right|} \right\|_{L_t^\infty L_x^\infty }} \le \varepsilon_1 {s^{ - \frac{1}{2}}}{s^{ -1}} + \varepsilon_1 {s^{ -\frac{1}{2}}}{s^{ - 1}}.
\end{align}
For $s\ge1$, we have by Proposition \ref{sl}
\begin{align}\label{01918}
{\left\| {\left| {d\widetilde{u}} \right|\left| {{\nabla _t}{\partial _s}\widetilde{u}} \right|} \right\|_{L_t^\infty L_x^\infty }} + {\left\| {\left| {{\partial _s}\widetilde{u}} \right|\left| {\nabla {\partial _t}\widetilde{u}} \right|} \right\|_{L_t^\infty L_x^\infty }} \le \varepsilon_1 {e^{ - \delta s}}.
\end{align}
Therefore (\ref{01918}) and (\ref{0918}) yield for all $s\in(0,\infty)$
$${\left\| {\left| {d\widetilde{u}} \right|\left| {{\nabla _t}{\partial _s}\widetilde{u}} \right|} \right\|_{L_t^\infty L_x^\infty }} + {\left\| {\left| {{\partial _s}\widetilde{u}} \right|\left| {\nabla {\partial _t}\widetilde{u}} \right|} \right\|_{L_t^\infty L_x^\infty }} \le \varepsilon_1 {s^{ -3/2}}.
$$
Hence we obtain (\ref{po987}). (\ref{pojn89}) and (\ref{u81}) can be proved similarly. By (\ref{christ}) and direct calculations similar to Lemma \ref{xuejin},
\begin{align}\label{ipu83}
|h^{ii}\partial_iA^{\infty}_i|\lesssim |\nabla dQ|+|dQ|.
\end{align}
And the same route as (\ref{u82}) shows for any $\eta>0$
\begin{align}\label{ipu82}
|h^{ii}\partial_iA^{con}_i|\le \varepsilon_1 s^{-\eta}.
\end{align}
Thus (\ref{butterfly}) follows by (\ref{ipu82}), (\ref{ipu83})
\end{proof}

\begin{lemma}\label{aoao1}
Assume (\ref{boot2}) and (\ref{boot5}) hold, then we have
\begin{align}
\left\| \sqrt{h^{pp}}|\partial_p(h^{ii} {\partial _i}{A_i}(s))| \right\|_{L_t^\infty L_x^{\infty}} &\le \varepsilon_1 \max(s^{ -1},1)\label{1q2}
\end{align}
\end{lemma}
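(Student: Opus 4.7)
The approach splits $A_i = A_i^\infty + A_i^{con}$ via Remark 3.3 and estimates the two contributions separately. For the time-independent piece $A_i^\infty$, I would use the explicit coordinate formulas from the proof of Lemma 4.2 to express $\sqrt{h^{pp}}\,|\partial_p(h^{ii}\partial_i A_i^\infty)|$ as a polynomial in $dQ$, $\nabla dQ$, $\nabla^2 dQ$ with bounded coefficients (the factor $e^{-Q^2}$ being bounded because $Q(\mathbb{H}^2)$ is contained in a compact geodesic ball of radius $R_0$). The admissibility assumptions on $Q$ in Definition 1.1 -- notably $\|\nabla^2 dQ\|_{L_x^\infty} + \|\nabla dQ\|_{L_x^\infty} + \|dQ\|_{L_x^\infty} \lesssim \mu_1$ and $e^{\varrho r}|dQ|\in L^\infty$ -- then produce a pointwise bound $\lesssim \mu_1+\mu_1^2 \le \varepsilon_1$, uniformly in $(s,t,x)$.

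For $A_i^{con}(s) = \int_s^\infty (\phi_s \wedge \phi_i)(\kappa)\,d\kappa$, the plan is to differentiate twice under the integral, writing $\partial_p = D_p - A_p$ throughout so that all coordinate derivatives on the differential fields become intrinsic covariant derivatives of $\widetilde u$ plus connection-coefficient corrections controlled by (5.3)--(5.5) in Proposition 5.1. The resulting integrand is a sum of bilinear expressions in $\{\partial_s\widetilde u, \nabla\partial_s\widetilde u, \nabla^2\partial_s\widetilde u, d\widetilde u, \nabla d\widetilde u, \nabla^2 d\widetilde u\}$. Each factor is estimated in $L_x^\infty$ using Proposition 3.10 and Lemma 3.14: the worst short-time behaviour is $\kappa^{-3/2}$, realised e.g.\ in
\[
\|\nabla^2\partial_s\widetilde u\|_{L_x^\infty}\|d\widetilde u\|_{L_x^\infty},\qquad \|\nabla\partial_s\widetilde u\|_{L_x^\infty}\|\nabla d\widetilde u\|_{L_x^\infty},\qquad \|\partial_s\widetilde u\|_{L_x^\infty}\|\nabla^2 d\widetilde u\|_{L_x^\infty},
\]
while for $\kappa \ge 1$ all factors carry exponential decay $e^{-\delta\kappa}$. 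Integrating then yields $\int_s^1 \kappa^{-3/2}\,d\kappa + \int_1^\infty e^{-\delta\kappa}\,d\kappa \lesssim s^{-1/2} + 1$ for $s \in (0,1]$ and $\lesssim e^{-\delta s}$ for $s \ge 1$; combining with the smallness coming from (5.1) produces the claimed bound $\varepsilon_1\max(s^{-1},1)$.

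The main obstacle is the bookkeeping at $\kappa\to 0^+$: the $\kappa^{-3/2}$ singularity of $\nabla^2\partial_s\widetilde u$ is only marginally integrable, so one must verify that after expanding $\partial_p\partial_i(\phi_s \wedge \phi_i)$ no term of strength $\kappa^{-2}$ or worse survives. In particular one has to use the torsion-free identity $D_i\phi_s = D_s\phi_i$ to rewrite potentially problematic combinations in terms of already-controlled quantities, and to absorb the factors of $e^{2x_2}$ produced by $\partial_p h^{ii}$ into the outer weight $\sqrt{h^{pp}}$ by working with intrinsic scalars throughout -- exactly the same device used in the proofs of (5.2)--(5.5) in Proposition 5.1. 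Once these manipulations are in place the estimate closes exactly as in the previous lemma.
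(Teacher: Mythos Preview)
Your proposal is correct and follows essentially the same route as the paper: split $A_i=A_i^\infty+A_i^{con}$, handle the $A^\infty$ piece by the explicit coordinate computation of Lemma~4.3 (the paper records the bound $\sqrt{h^{pp}}|\partial_p(h^{ii}\partial_iA_i^\infty)|\lesssim |\nabla^2 dQ|+|\nabla dQ|+|dQ|$ and invokes (1.3)), and handle $A^{con}$ by differentiating under the $\kappa$-integral and bounding the resulting bilinear integrand pointwise via Proposition~3.2 and Lemma~3.11.

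One small correction: the ``main obstacle'' you flag does not actually arise. No torsion-free cancellation is needed to avoid a $\kappa^{-2}$ singularity. The raw $L_x^\infty$ bounds already give at worst $\kappa^{-3/2-\eta}$ for the integrand (the extra $\kappa^{-\eta}$ coming only from $\|d\widetilde u\|_{L_x^\infty}\lesssim \kappa^{-\eta}$ in Proposition~3.2), and $\int_s^1\kappa^{-3/2-\eta}\,d\kappa\lesssim s^{-1/2-\eta}$ is well inside the target $s^{-1}$. The paper simply writes out the pointwise expansion of $\sqrt{h^{pp}}|\partial_p(h^{ii}\partial_i(\phi_i\wedge\phi_s))|$ in terms of $\nabla^k\partial_s\widetilde u$, $\nabla^k d\widetilde u$, and $A$-coefficients, then closes in one line by citing Lemma~3.11 and interpolation; no rewriting via $D_i\phi_s=D_s\phi_i$ is performed or required.
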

\begin{proof}
By Remark \ref{3sect}, it suffices to bound $A^{\infty}$ and $A^{con}$ part separately. Direct calculations as Lemma \ref{xuejin} and (\ref{christ}) yield the bound for the $A^{\infty}$ part is
\begin{align*}
\sqrt{h^{pp}}|\partial_p(h^{ii} {\partial _i}{A^{\infty}_i}(s))| \le |\nabla^2 dQ|+ |\nabla dQ|+|dQ|.
\end{align*}
Thus (\ref{as4}) shows the $A^{\infty}$ part is bounded by
\begin{align*}
\|\sqrt{h^{pp}}|\partial_p(h^{ii} {\partial _i}{A^{\infty}_i}(s))|\|_{L^{\infty}_x}\le \varepsilon_1.
\end{align*}
By (\ref{christ}) and direct calculations,
\begin{align*}
&\sqrt{h^{pp}}|\partial_p(h^{ii} {\partial _i}({\phi_i\wedge\phi_s})(s))|\\
&\le |\nabla^2\partial_s\widetilde{u}| |d\widetilde{u}|+\sqrt{h^{ii}h^{pp}}|A_iA_p||\partial_s\widetilde{u}| |d\widetilde{u}|
+|\partial_s\widetilde{u}| |\nabla^2d\widetilde{u}|+
|\nabla\partial_s\widetilde{u}||\nabla d\widetilde{u}|\\
&+\sqrt{h^{ii}}|A_i||\nabla\partial_s\widetilde{u}||d\widetilde{u}|+
\sqrt{h^{ii}}|A_i||\partial_s\widetilde{u}||\nabla d\widetilde{u}|+
\sqrt{h^{ii}}|A_i\|\nabla\partial_s\widetilde{u}| |d\widetilde{u}|\\
&+\sqrt{h^{pp}h^{ii}}|\partial_pA_i||\nabla\partial_s\widetilde{u}| |d\widetilde{u}|+\sqrt{h^{pp}h^{ii}}|\partial_pA_i||\partial_s\widetilde{u}| |\nabla d\widetilde{u}|.
\end{align*}
Thus the $A^{con}$ part follows by Lemma \ref{fotuo1} and interpolation.
\end{proof}

\begin{proposition}\label{bootstrap}
Suppose that (\ref{boot2}), (\ref{boot5}) hold.
Then we have for $p\in(2,6)$
\begin{align}
{\big\| {a(s){{\left\| {{\partial _t}{\phi _s}} \right\|}_{L_t^2L_x^p}}} \big\|_{L_s^\infty}}&\le {\varepsilon _1}\label{huojikn1} \\
 {\big\| {a(s){{\left\| {\nabla {\phi _s}} \right\|}_{L_t^2L_x^p}}} \big\|_{L_s^\infty}} &\le {\varepsilon _1}\label{huojikn}.
\end{align}
Generally we have for $\theta\in[0,2]$
\begin{align}
{\big\| {\omega_\theta (s){{\left( { - \Delta } \right)}^\theta }{\phi _s}} \big\|_{L_s^\infty L_t^2L_x^p}}&\le {\varepsilon _1}\label{uojiknmpx3}\\
\big\| \omega_1(s)|D|{\partial _t}{\phi _s} \big\|_{L_s^\infty L_t^2L_x^p}  &\le {\varepsilon _1},\label{9o0o}
\end{align}
where $\omega_\theta (s)=s^{\theta+\frac{1}{4}}$ when $s\in[0,1]$ and $\omega_\theta (s)=s^{L}$ when $s\ge1$.
\end{proposition}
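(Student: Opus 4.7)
The plan is to apply the magnetic-wave Strichartz and weighted estimates of Proposition \ref{tianxia} to the master equation for $\phi_s$ derived in Lemma \ref{hushuo}. For each fixed $s>0$ that equation reads schematically
\begin{align*}
(\partial_t^2-\Delta)\phi_s+W\phi_s=F(s,t,x),
\end{align*}
where $F$ collects the nonlinear terms on the right-hand side of Lemma \ref{hushuo}. Proposition \ref{tianxia} simultaneously controls $\||D|^{\frac{1}{2}}\phi_s\|_{L^2_tL^p_x}$, $\|\rho^\sigma\nabla\phi_s\|_{L^2_tL^2_x}$, $\|\nabla\phi_s\|_{L^\infty_tL^2_x}$ and $\|\partial_t\phi_s\|_{L^\infty_tL^2_x}$ by $\|\nabla\phi_s(s,0)\|_{L^2}+\|\partial_t\phi_s(s,0)\|_{L^2}+\|F\|_{L^1_tL^2_x}$. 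To reach the higher-regularity pairs in \eqref{huojikn1}--\eqref{9o0o} I apply Proposition \ref{tianxia} to $(-\Delta)^{\theta/2}\phi_s$ (resp.\ $|D|\partial_t\phi_s$), treating the commutators with $W$ through the pointwise and one-derivative bounds on the connection of Lemma \ref{xuejin} and Lemma \ref{aoao1}. Multiplying the resulting inequalities by $a(s)$ or $\omega_\theta(s)$ and taking $L_s^\infty$ yields the desired bounds after showing each piece is of size $C(\mu_1+\varepsilon_1^{3/2})\ll\varepsilon_1$.

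The initial data side is immediate: $\phi_s(s,0,\cdot)$ is the frame representation of $\partial_s\widetilde u(s,0,\cdot)$, while $\partial_t\phi_s(s,0,\cdot)$ equals $\nabla_t\partial_s\widetilde u(s,0,\cdot)$ modulo bounded products of $A_t$ with $\phi_s$; both are controlled by the short- and long-time heat-flow bounds of Proposition \ref{sl} and Lemma \ref{fotuo1} with exactly the weights $a(s),\omega_\theta(s)$ and a factor $\varepsilon_1$ to spare. On the forcing side, terms involving $A_t,\partial_tA_t,A_tA_t$ are tamed by the pointwise smallness of Proposition \ref{aaop} paired with the bootstrap assumptions \eqref{boot2}--\eqref{boot5}; the curvature term $\mathbf R(\partial_t\widetilde u,\partial_s\widetilde u)(\partial_t\widetilde u)$ is handled by H\"older with the exponential $s$-decay of $\partial_s\widetilde u$ from Proposition \ref{sl}; the $\partial_s\mathfrak W$ contribution is first eliminated by substituting the evolution equation for $\mathfrak W$ from Lemma \ref{ab1}, after which the residual expressions fall in the same categories; and the zeroth-order $A_i^{con}$ pieces exploit the representation $A_i^{con}=\int_s^\infty\phi_s\wedge\phi_i\,d\kappa$ to trade $s$-integrability for $L^\infty_x$-size via Lemma \ref{8.5} and Lemma \ref{ktao1}.

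The main obstacle is the first-derivative term $2h^{ii}A_i^{con}\partial_i\phi_s$, which carries a full spatial derivative on $\phi_s$ and cannot be absorbed into $L^1_tL^2_x$ by any plain Strichartz scheme even in the subcritical regularity. This is exactly where the weighted Strichartz norm $\|\rho^\sigma\nabla\phi_s\|_{L^2_tL^2_x}$ of Proposition \ref{tianxia} is indispensable: because $\phi_s$ decays pointwise like $e^{s\Delta}\tau(u)$ while $\phi_i^\infty=dQ$ inherits exponential spatial decay from the admissibility of $Q$ and $\phi_i^{con}$ inherits the corresponding decay through $\int_s^\infty\partial_s\phi_i\,d\kappa$, one verifies that $\rho^{-\sigma}A_i^{con}\in L^\infty_{t,x}$ with $\varepsilon_1$-small norm for $0<\sigma\ll\varrho$, and a H\"older pairing $\|\rho^{-\sigma}A_i^{con}\|_{L_t^\infty L_x^\infty}\|\rho^\sigma\nabla\phi_s\|_{L^2_tL^2_x}$ closes this term after absorption into the Strichartz left-hand side. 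This is precisely the step where the hyperbolic background and the weighted estimates of \cite{Lize1} enter essentially, in contrast to the flat case, and it constitutes the heart of the closure of the bootstrap.
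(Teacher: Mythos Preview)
You have described the proof of Proposition~\ref{cuihua}, not of Proposition~\ref{bootstrap}. The two play very different roles and are proved by entirely different mechanisms.

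In the paper, Proposition~\ref{bootstrap} is a \emph{parabolic} regularity gain. One does not touch the wave equation or Proposition~\ref{tianxia} at all. Instead one uses the heat-flow equation \eqref{991} for $\phi_s$ in the variable $s$, writes Duhamel from $s/2$ to $s$, and lets the heat semigroup trade $s^{1/4}$ for half a spatial derivative. That is precisely how the bootstrap input $\omega(s)\big\||D|^{1/2}\phi_s\big\|_{L^2_tL^p_x}\le\varepsilon_1$ (weight $s^{1/2}$ near $s=0$) is upgraded to $a(s)\big\|\nabla\phi_s\big\|_{L^2_tL^p_x}\le\varepsilon_1$ (weight $s^{3/4}$), and similarly for the $\partial_t\phi_s$ and higher-$\theta$ statements via \eqref{9923} and iterated smoothing. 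The extra $s^{1/4}$ in the weights $a(s),\omega_\theta(s)$ versus $\omega(s)$ is the signature of heat smoothing and cannot come from wave Strichartz.

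Your approach has concrete gaps for this proposition. First, Proposition~\ref{tianxia} outputs $\||D|^{1/2}f\|_{L^2_tL^p_x}$ on the left, not $\|\nabla f\|_{L^2_tL^p_x}$ or $\|\partial_t f\|_{L^2_tL^p_x}$; reaching \eqref{huojikn}--\eqref{huojikn1} would force you to apply Strichartz to $(-\Delta)^{1/4}\phi_s$ and control the nonlocal commutator $[(-\Delta)^{1/4},W]$, which your ``pointwise and one-derivative bounds'' do not address. Second, substituting Lemma~\ref{ab1} does not eliminate $\partial_s\mathfrak W$: it produces $\Delta\mathfrak W$ and lower-order terms in $\mathfrak W$, which still require the $L^1_tL^2_x$ bounds \eqref{time5}--\eqref{time8}; those bounds are themselves proved by heat Duhamel for $\mathfrak W$, not by substitution. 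Third, and most tellingly, the weighted-Strichartz absorption of $h^{ii}A_i^{con}\partial_i\phi_s$ via $\|\rho^{-\sigma}A_i^{con}\|_{L^\infty}\|\rho^\sigma\nabla\phi_s\|_{L^2_tL^2_x}$ that you highlight is exactly the manoeuvre carried out in the paper in the proof of Proposition~\ref{cuihua} (the continuation lemma handling \eqref{cuihua1}); it simply does not appear in the proof of Proposition~\ref{bootstrap}. In short, your sketch is a reasonable outline of the closing step Proposition~\ref{cuihua}, but it is not a proof of the proposition stated.
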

\begin{proof}
By (\ref{991}) and Duhamel principle we have
\begin{align}
{\| {{{\left( { - \Delta } \right)}^{\frac{1}{2}}}{\phi _s}(s)}\|_{L_t^2L_x^p}} &\le {\| {{{\left( { - \Delta } \right)}^{\frac{1}{2}}}{e^{\frac{s}{2}\Delta }}{\phi _s}(\frac{s}{2})} \|_{L_t^2L_x^p}}\nonumber\\
&+ {\big\| {\int_{\frac{s}{2}}^s {{{\left( { - \Delta } \right)}^{\frac{1}{2}}}{e^{(s - \tau )\Delta }}{h^{ii}}{A_i}{\partial _i}{\phi _s}(\tau )} d\tau }\big\|_{L_t^2L_x^p}} \label{wulaso}\\
&+ {\big\| {\int_{\frac{s}{2}}^s {{{\left( { - \Delta } \right)}^{\frac{1}{2}}}{e^{(s - \tau )\Delta }}G(\tau)}d\tau } \big\|_{L_t^2L_x^p}}.\label{wulasoo}
 \end{align}
 where $G(\tau)={{h^{ii}}\left( {{\partial _i}{A_i}} \right){\phi _s} - {h^{ii}}\Gamma _{ii}^k{A_k}{\phi _s} + {h^{ii}}{A_i}{A_i}{\phi _s} + {h^{ii}}\left( {{\phi _s} \wedge {\phi _i}} \right){\phi _i}}.$
For (\ref{wulaso}), the smoothing effect and (\ref{u81}) show
\begin{align*}
 &s^{\frac{3}{4}}{\big\| {\int_{\frac{s}{2}}^s {{{\left( { - \Delta } \right)}^{\frac{1}{2}}}{e^{(s - \tau )\Delta }}{h^{ii}}{A_i}{\partial _i}{\phi _s}(\tau )} d\tau } \big\|_{L_t^2L_x^p}} \\
 &\lesssim s^{\frac{3}{4}}\int_{\frac{s}{2}}^s {{{{{(s - \tau )}^{-\frac{1}{2}}}}}{{\left\| {{h^{ii}}{A_i}{\partial _i}{\phi _s}(\tau )} \right\|}_{L_t^2L_x^p}}} d\tau  \\
 &\lesssim s^{\frac{3}{4}}\int_{\frac{s}{2}}^s {{{{{(s - \tau )}^{-\frac{1}{2}}}}}{{\left\| {\nabla {\phi _s}(\tau )} \right\|}_{L_t^2L_x^p}}{{\big\| {\sqrt {{h^{ii}}} {A_i}} \big\|}_{L_t^\infty L_x^\infty }}d} \tau \\
 &\lesssim s^{\frac{3}{4}}\varepsilon_1\int_{\frac{s}{2}}^s {{{{{(s - \tau )}^{-\frac{1}{2}}}}}{{\left\| {\nabla {\phi _s}(\tau )} \right\|}_{L_t^2L_x^p}}d} \tau.
 \end{align*}
Thus we conclude when $s\in[0,1]$
\begin{align}\label{w2}
&s^{\frac{3}{4}}{\big\| {\int_{\frac{s}{2}}^s {{{\left( { - \Delta } \right)}^{\frac{1}{2}}}{e^{(s - \tau )\Delta }}{h^{ii}}{A_i}{\partial _i}{\phi _s}(\tau )} d\tau } \big\|_{L_t^2L_x^p}}\nonumber\\
&\le \varepsilon_1{\left\| {s^{\frac{3}{4}}{{\left\| {\nabla {\phi _s}(s)} \right\|}_{L_t^2L_x^p}}} \right\|_{L_s^\infty }}.
\end{align}
Similarly we have for (\ref{wulasoo}) that
\begin{align*}
&s^{\frac{3}{4}}\int_{\frac{s}{2}}^s (s - \tau )^{-\frac{1}{2}}\|G(\tau)\|_{L_t^2L_x^p} d\tau  \\
&\le s^{\frac{3}{4}}\int_{\frac{s}{2}}^s {{{{{(s - \tau )}^{-\frac{1}{2}}}}}{{\left\| {{h^{ii}}{\partial _i}{A_i}} \right\|}_{L_t^\infty L_x^\infty }}{{\left\| {{\phi _s}} \right\|}_{L_t^2L_x^p}}} d\tau  + s^{\frac{3}{4}}\int_{\frac{s}{2}}^s {{{\left\| {{A_2}} \right\|}_{L_t^\infty L_x^\infty }}{{\left\| {{\phi _s}} \right\|}_{L_t^2L_x^p}}} d\tau  \\
&+ s^{\frac{3}{4}}\int_{\frac{s}{2}}^s {{{{{(s - \tau )}^{-\frac{1}{2}}}}}\left( {{{\left\| {{h^{ii}}{A_i}{A_i}} \right\|}_{L_t^\infty L_x^\infty }} + {{\left\| {{h^{ii}}{\phi _i}{\phi _i}} \right\|}_{L_t^\infty L_x^\infty }}} \right)} {\left\| {{\phi _s}} \right\|_{L_t^2L_x^p}}d\tau.
\end{align*}
Thus by Proposition \ref{aaop} and Proposition \ref{sl}, we have for all $s\in[0,1]$
\begin{align}\label{wulaso2}
(\ref{wulasoo})\lesssim  {\left\| {{s^{\frac{1}{2}}}{{\left\| {{\phi _s}(s)} \right\|}_{L_t^2L_x^p}}} \right\|_{L_s^\infty }}.
\end{align}
For $s\ge1$, we also have by Duhamel principle
\begin{align*}
&{s^L}{\big\| {{{\left( { - \Delta } \right)}^{\frac{1}{2}}}{\phi _s}(s)} \big\|_{L_t^2L_x^p}}\\
&\le {s^L}{\big\| {{{\left( { - \Delta } \right)}^{\frac{1}{2}}}{e^{\frac{s}{2}\Delta }}{\phi _s}(\frac{s}{2})} \big\|_{L_t^2L_x^6}} + {s^L}{\big\| {\int_{\frac{s}{2}}^s {{{\left( { - \Delta } \right)}^{\frac{1}{2}}}{e^{(s - \tau )\Delta }}G_1(\tau )} d\tau } \big\|_{L_t^2L_x^p}},
\end{align*}
where $G_1$ is the inhomogeneous term. The linear term is bounded by
\begin{align*}
 {s^L}{\big\| {{{\left( { - \Delta } \right)}^{\frac{1}{2}}}{e^{\frac{s}{2}\Delta }}{\phi _s}(\frac{s}{2})} \big\|_{L_t^2L_x^p}} \le {s^L}{e^{ - \frac{1}{16} s}}{\big\| {{\phi _s}(\frac{s}{2})} \big\|_{L_t^2L_x^p}}.
\end{align*}
By Proposition \ref{aaop} and smoothing effect, the first term in $G_1$ is bounded as
\begin{align*}
&{s^L}{\big\| {\int_{\frac{s}{2}}^s {{{\left( { - \Delta } \right)}^{\frac{1}{2}}}{e^{(s - \tau )\Delta }}{h^{ii}}{A_i}{\partial _i}{\phi _s}(\tau )} d\tau } \big\|_{L_t^2L_x^p}}\\
&\le {s^L}\int_{\frac{s}{2}}^s {{{{{\left( {s - \tau } \right)}^{-\frac{1}{2}}}}}{e^{ -\delta (s - \tau )}}{{\big\| {\nabla {\phi _s}(\tau )} \big\|}_{L_t^2L_x^p}}{{\big\| {\sqrt {{h^{ii}}} {A_i}} \big\|}_{L_t^\infty L_x^\infty }}} d\tau  \\
&\le \varepsilon_1{s^L}\int_{\frac{s}{2}}^s {{e^{ -\delta (s - \tau )}}{\tau ^{ - L}}{{{{\left( {s - \tau } \right)}^{-\frac{1}{2}}}}}{{\big\| {{\tau ^L}\nabla {\phi _s}(\tau )} \big\|}_{L_t^2L_x^p}}d} \tau.
\end{align*}
The other terms in $G_1$ can be estimated similarly, thus we obtain for $s\ge1$
\begin{align}\label{wulaso3}
{s^L}{\left\| {{{\left( { - \Delta } \right)}^{\frac{1}{2}}}{\phi _s}(s)} \right\|_{L_t^2L_x^p}} \le \varepsilon_1{\left\| {{s^L}{{\left\| {\nabla {\phi _s}(s)} \right\|}_{L_t^2L_x^p}}} \right\|_{L_s^\infty (s \ge 1)}} + {\left\| {{s^L}{{\left\| {{\phi _s}(\tau )} \right\|}_{L_t^2L_x^p}}} \right\|_{L_s^\infty (s \ge 1)}}.
\end{align}
Combing (\ref{wulaso}), (\ref{wulasoo}), with (\ref{wulaso3}) gives corresponding estimates in (\ref{huojikn}) for $\nabla\phi_s$. It suffices to prove the remaining estimates in (\ref{huojikn}) for $\partial_t\phi_s$. Denote the inhomogeneous term in (\ref{9923}) by $G_3$, then Duhamel principle gives
\begin{align*}
s^{\frac{3}{4}}{\left\| {{\partial _t}{\phi _s}(s)} \right\|_{L_t^2L_x^p}} \le s^{\frac{3}{4}}{\big\| {{e^{\Delta \frac{s}{2}}}{\partial _t}{\phi _s}(\frac{s}{2})} \big\|_{L_t^2L_x^p}} + s^{\frac{3}{4}}{\big\| {\int_{\frac{s}{2}}^s {{e^{\Delta (s - \tau )}}G_3(\tau )d\tau } } \big\|_{L_t^2L_x^p}}.
\end{align*}
The first term of $G_3$ is bounded by
\begin{align*}
 s^{\frac{3}{4}}{\int_{\frac{s}{2}}^s {\big\| {{e^{\Delta (s - \tau )}}{h^{ii}}\left( {{\partial _t}{A_i}} \right){\partial _i}{\phi _s}(\tau )} \big\|} _{L_t^2L_x^p}}d\tau\le s^{\frac{3}{4}}\int_{\frac{s}{2}}^s {\big\| {\sqrt {{h^{ii}}}{\partial _t}{A_i}} \big\|_{L_t^\infty L_x^\infty }}\big\|{\nabla}{\phi_s} \big\|_{L_t^2 L_x^p } d\tau.
\end{align*}
This is acceptable by Proposition \ref{aaop}. The second term in $G_3$ is bounded as
\begin{align}
&s^{\frac{3}{4}}{\int_{\frac{s}{2}}^s {\big\| {{e^{\Delta (s - \tau )}}2{h^{ii}}{A_i}{\partial _i}{\partial _t}{\phi _s}(\tau )} \big\|} _{L_t^2L_x^p}}d\tau\nonumber\\
&\le s^{\frac{3}{4}}{\int_{\frac{s}{2}}^s {\big\| {{e^{\Delta (s - \tau )}}\sqrt {{h^{ii}}} {\partial _i}\left( {\sqrt {{h^{ii}}} {A_i}{\partial _t}{\phi _s}} \right)} \big\|} _{L_t^2L_x^p}}d\tau+ s^{\frac{3}{4}}{\int_{\frac{s}{2}}^s {\big\| {{e^{\Delta (s - \tau )}}{h^{ii}}{\partial _i}{A_i}{\partial _t}{\phi _s}} \big\|} _{L_t^2L_x^p}}d\tau\nonumber\\
&\triangleq I+II.
\end{align}
$I$ is bounded by the smoothing effect, boundedness of Riesz transform and Proposition \ref{aaop}
\begin{align*}
I &\le {s^{\frac{3}{4}}}\int_{\frac{s}{2}}^s {{{\big\| {{{\left( { - \Delta } \right)}^{\frac{1}{2}}}{e^{\Delta (s - \tau )}}\big( {\sqrt {{h^{ii}}} {A_i}{\partial _t}{\phi _s}} \big)} \big\|}_{L_t^2L_x^p}}} d\tau  \\
&\le {s^{\frac{3}{4}}}\int_{\frac{s}{2}}^s {{{{{\big( {s - \tau } \big)}^{-\frac{1}{2}}}}}{{\big\| {\sqrt {{h^{ii}}} {A_i}} \big\|}_{L_t^\infty L_x^\infty }}{{\big\| {{\partial _t}{\phi _s}} \big\|}_{L_t^2L_x^p}}} d\tau\\
&\le {s^{\frac{3}{4}}}\int_{\frac{s}{2}}^s {{{{{\left( {s - \tau } \right)}^{-\frac{1}{2}}}}}} {\varepsilon _1}{\big\| {{\partial _t}{\phi _s}} \big\|_{L_t^2L_x^p}}d\tau.
\end{align*}
$II$ is estimated as the first term of $G_3$ above. The third term of $G_3$ is bounded as
\begin{align}
&{s^{\frac{3}{4}}}\int_{\frac{s}{2}}^s {{{\big\| {{e^{\Delta (s - \tau )}}{h^{ii}}\left( {{\partial _i}{\partial _t}{A_i}} \right){\phi _s}} \big\|}_{L_t^2L_x^p}}} d\tau \nonumber \\
&\le {s^{\frac{3}{4}}}{\int_{\frac{s}{2}}^s {\big\| {{e^{\Delta (s - \tau )}}\sqrt {{h^{ii}}} {\partial _i}\big( {\sqrt {{h^{ii}}} {\partial _t}{A_i}{\phi _s}} \big)} \big\|} _{L_t^2L_x^p}}d\tau\nonumber \\
&+ {s^{\frac{3}{4}}}\int_{\frac{s}{2}}^s {{{\big\| {{e^{\Delta (s - \tau )}}{h^{ii}}{\partial _t}{A_i}{\partial _i}{\phi _s}} \big\|}_{L_t^2L_x^p}}} d\tau.\label{woshi}
\end{align}
The remaining arguments are almost the same as $I$ and $II$. And the rest nine terms in $G_3$ can be estimated as above as well. Hence the desired estimates in (\ref{huojikn1}) for $\partial_t\phi_s$ when $s\in(0,1]$ is verified. It suffices to prove (\ref{huojikn1}) for $\partial_t\phi_s$ when $s\ge1$. The proof for this part is exactly close to the estimates of $\nabla\phi_s$ when $s\ge1$ and that of $I,II$.  (\ref{9o0o}) follows by the same arguments as (\ref{huojikn1}) by applying smoothing effect of the heat semigroup.
By interpolation, in order to verify (\ref{uojiknmpx3}), it suffices to prove
\begin{align}
\left\| \omega_{1}(s)  (-\Delta){\phi _s} \right\|_{L_s^\infty L_t^2L_x^p}\le {\varepsilon _1}.
\end{align}
By (\ref{991}), Duhamel principle and the smoothing effect we have
\begin{align*}
 &\|(- \Delta){\phi _s}(s)\|_{L_t^2L_x^p} \le s^{-1}e^{-\frac{\delta}{2}s}\|{\phi _s}(\frac{s}{2})\|_{L_t^2L_x^p}\\
 &+ \int^{s}_{\frac{s}{2}} (s-\tau)^{-\frac{1}{2}}e^{-\delta(s-\tau)}\big(\|\nabla (h^{ii}A_i\partial_i\phi_s)\|_{{L_t^2L_x^p}}+\|\nabla G\|_{{L_t^2L_x^p}}\big)d\tau.
\end{align*}
Then by Lemma \ref{aoao1}, Proposition \ref{aaop}, (\ref{huojikn}), (\ref{boot2}), (\ref{boot5}), one obtains
\begin{align*}
\left\| \omega_{1}(s)  (-\Delta){\phi _s} \right\|_{L_s^\infty L_t^2L_x^p}\le {\varepsilon _1}\left\| \omega_{1} (s) \nabla^2{\phi _s} \right\|_{L_s^\infty L_t^2L_x^p}+\varepsilon_1.
\end{align*}
Thus (\ref{uojiknmpx3}) follows by Remark \ref{tataru0}.
\end{proof}

\begin{lemma}
Assume that  (\ref{boot2}), (\ref{boot5}) hold, then for $q\in(2,6+2\gamma]$
\begin{align}
{\left\| {{\phi _t}(s)} \right\|_{L_s^\infty L_t^2L_x^q}} &\le {\varepsilon _1}\label{time2}\\
{\left\| {{A_t}} \right\|_{L_t^1L_x^\infty }} &\le \varepsilon _1^2\label{aaop11}
\end{align}
\end{lemma}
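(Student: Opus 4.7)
The plan is to treat the two inequalities separately, both exploiting the identity $|\phi_t(s,t,x)| = |\partial_t\widetilde u(s,t,x)|$ (since $\phi_t$ is given in an orthonormal frame) combined with the Bochner inequality $(\partial_s-\Delta)|\partial_t\widetilde u|\le 0$ from Lemma \ref{8zu}. By the maximum principle this gives the pointwise domination $|\partial_t\widetilde u(s)|\le e^{s\Delta}|\partial_t u|$, which is the transfer mechanism between the bootstrap hypothesis (\ref{boot2}) on $\partial_tu$ and the corresponding bounds at positive heat time.

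For (\ref{time2}), apply the $L^q\to L^q$ exponential-decay estimate of Lemma \ref{8.5} (namely (\ref{mm8}) with $\alpha=0$, which holds for $1<q<\infty$) to deduce
\[
\|\phi_t(s,t)\|_{L^q_x}\lesssim e^{-\delta s}\|\partial_tu(t)\|_{L^q_x},
\]
then take $L^2_t$ and $L^\infty_s$ to reduce to $\|\partial_tu\|_{L^2_tL^q_x}\le\varepsilon_1$, which is exactly part of (\ref{boot2}) for $q\in(2,6+2\gamma]$.

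For (\ref{aaop11}), start from the representation (\ref{edf22}) of $A_t$ as a $\kappa$-integral of $\phi_s\wedge\phi_t$, apply Minkowski to move the $L^1_tL^\infty_x$ norm inside, and then Cauchy--Schwarz in $t$:
\[
\|A_t\|_{L^1_tL^\infty_x}\le\int_0^{\infty}\|\phi_s(\kappa)\|_{L^2_tL^\infty_x}\|\phi_t(\kappa)\|_{L^2_tL^\infty_x}\,d\kappa.
\]
For the $\phi_t$-factor, use again $|\phi_t(\kappa)|\le e^{\kappa\Delta}|\partial_tu|$ together with the $L^q_x\to L^\infty_x$ smoothing (\ref{huhu89})/(\ref{mm8}), giving $\|\phi_t(\kappa)\|_{L^2_tL^\infty_x}\lesssim\kappa^{-1/q_0}e^{-\delta\kappa}\varepsilon_1$ for some $q_0\in(2,6+2\gamma]$. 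For the $\phi_s$-factor, fix $p\in(4,6)$ in the bootstrap window, so Sobolev embedding on $\Bbb H^2$ gives $\|f\|_{L^\infty_x}\lesssim \||D|^{1/2}f\|_{L^p_x}+\|f\|_{L^p_x}$; combining (\ref{boot5}) with (\ref{uojiknmpx3}) (using $\theta=0$) then yields $\|\phi_s(\kappa)\|_{L^2_tL^\infty_x}\lesssim \omega(\kappa)^{-1}\varepsilon_1$, i.e.\ $\kappa^{-1/2}\varepsilon_1$ for $\kappa\in(0,1]$ and $\kappa^{-L}\varepsilon_1$ for $\kappa\ge 1$. The $\kappa$-integral then reduces to $\int_0^1\kappa^{-1/2-1/q_0}\,d\kappa+\int_1^\infty\kappa^{-L-1/q_0}e^{-\delta\kappa}d\kappa$, which is finite since $q_0>2$ forces $-1/2-1/q_0>-1$, and $L$ is large.

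The main obstacle is the short-time endpoint $\kappa\to 0^+$: the weight $\omega(\kappa)=\kappa^{1/2}$ on $\phi_s$ must be compatible with the $L^q\to L^\infty$ loss $\kappa^{-1/q_0}$ incurred by the heat semigroup acting on $\partial_tu$. This forces the choice of Sobolev exponent $p>4$ in the bootstrap window so that $|D|^{1/2}L^p\hookrightarrow L^\infty$ on $\Bbb H^2$; any weaker choice (e.g.\ controlling $\phi_s$ only by $\nabla\phi_s$ in $L^p$, which carries the stronger weight $a(\kappa)=\kappa^{3/4}$) would fail to produce an integrable singularity at $\kappa=0$. Once $p>4$ is fixed, everything else is a direct book-keeping of the bootstrap norms.
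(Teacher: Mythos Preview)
Your proof is correct and follows essentially the same approach as the paper. For (\ref{time2}) the paper uses the slightly simpler pointwise comparison $|\phi_t(s,t,x)|\le e^{s\Delta}|\partial_tu|(t,x)$ together with the $L^q$-contraction of the heat semigroup (equivalent to your use of (\ref{mm8})); for (\ref{aaop11}) the paper's argument is identical to yours---same Cauchy--Schwarz in $t$, same Sobolev embedding $\|\phi_s\|_{L^\infty_x}\lesssim\||D|^{1/2}\phi_s\|_{L^{p_*}_x}$ for $p_*\in(4,6)$, same $L^{q_0}\to L^\infty$ heat smoothing on $\phi_t$, and the same integrability check $\int_0^1\kappa^{-1/2-1/q_0}\,d\kappa<\infty$ for $q_0>2$. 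Your appeal to (\ref{uojiknmpx3}) with $\theta=0$ for the lower-order $\|\phi_s\|_{L^p_x}$ term is harmless but unnecessary, since on $\mathbb{H}^2$ the one-term embedding $\|f\|_{L^\infty_x}\lesssim\||D|^{1/2}f\|_{L^p_x}$ already holds for $p>4$ (as the paper uses directly).
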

\begin{proof}
First notice that $\phi_t$ satisfies $(\partial_s-\Delta)|\phi_t|\le 0$, thus for any fixed $(t,s,x)$ one has the pointwise estimate
\begin{align*}
|\phi_t(s,t,x)|\le |\phi_t(0,t,x)|=|\partial_t u(t,x)|.
\end{align*}
Hence (\ref{time2}) follows by (\ref{boot2}).
From commutator identity we have
\begin{align}\label{qx1}
{\left\| {{A_t}} \right\|_{L_t^1L_x^\infty }} \le \int_0^\infty  {{{\left\| {{\partial _t}u} \right\|}_{L_t^2L_x^\infty }}{{\left\| {{\partial _s}u} \right\|}_{L_t^2L_x^\infty }}} ds.
\end{align}
Sobolev inequality implies for $p_*$ slightly less than 6
\begin{align}\label{yu0cv}
\|\phi_s\|_{L^{\infty}_x}\le \||D|^{\frac{1}{2}}\phi_s\|_{L^{p_*}_x}.
\end{align}
And since $|\partial_t \widetilde{u}|$ satisfies $(\partial_s-\Delta)|\partial_t \widetilde{u}|\le 0$, then
\begin{align}\label{0yu0cv}
\|\phi_t(s)\|_{L^{\infty}_x}\lesssim s^{-1/{p_*}}e^{-\delta s}\|\phi(\frac{s}{2})\|_{L^{p_*}_x}.
\end{align}
By (\ref{0yu0cv}), (\ref{yu0cv}) and (\ref{boot5}),
\begin{align}\label{0yu0cv9}
\int^{1}_{0}\|\phi_t(s)\|_{L^2_tL^{\infty}_x}\|\phi_s(s)\|_{L^2_tL^{\infty}_x}&\lesssim \int^1_0s^{-\frac{1}{2}-\frac{1}{p_*}}\|\phi_t(\frac{s}{2})\|_{L^2_tL^{p_*}_x}s^{\frac{1}{2}}\||D|^{\frac{1}{2}}\phi_s(s)\|_{L^2_tL^{p_*}_x} ds.\\
\int^{\infty}_{1}\|\phi_t(s)\|_{L^2_tL^{\infty}_x}\|\phi_s(s)\|_{L^2_tL^{\infty}_x}&\lesssim \int^{\infty}_1s^{-4L}\|\phi_t(\frac{s}{2})\|_{L^2_tL^{p_*}_x}\||D|^{\frac{1}{2}}\phi_s(s)\|_{L^2_tL^{p_*}_x}ds.
\end{align}
Thus (\ref{aaop11}) is obtained by (\ref{boot2}) and (\ref{boot5}).
\end{proof}

\begin{lemma}\label{gdie1}
Assume that (\ref{boot2}) and (\ref{boot5}) hold, then
for $p\in(2,6+2\gamma]$ with $0<\gamma\ll1$, $\phi_t$ satisfies
\begin{align}
{\left\| {\omega (s)|D|{\phi _t}(s)} \right\|_{L_s^\infty L_t^2L_x^p}} &\le {\varepsilon _1}\label{time}\\
{\left\| {\omega_{\frac{3}{4}} (s)\Delta{\phi _t}(s)} \right\|_{L_s^\infty L_t^2L_x^p}} &\le {\varepsilon _1}\label{timeling}
\end{align}
\end{lemma}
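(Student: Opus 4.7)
The plan is to derive a heat-type equation for $\phi_t$ with structure parallel to the one implicitly used for $\phi_s$ in Proposition \ref{bootstrap}, and then run the parallel Duhamel-plus-smoothing argument. Starting from the torsion-free identity $D_s\phi_t = D_t\phi_s$ (\ref{pknb}) with $A_s = 0$ in the caloric gauge, substituting the heat-flow identity (\ref{heat}) for $\phi_s$, and using the commutator $[D_t,D_i]\phi_j = (\phi_t\wedge\phi_i)\phi_j$ from (\ref{nb90km}), one gets $\partial_s\phi_t = h^{ij}D_iD_j\phi_t - h^{ij}\Gamma_{ij}^kD_k\phi_t + h^{ij}(\phi_t\wedge\phi_i)\phi_j$. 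Expanding $D_i = \partial_i + A_i$ in the orthogonal coordinates (\ref{vg}) produces
\begin{align*}
\partial_s\phi_t - \Delta\phi_t = 2h^{ii}A_i\partial_i\phi_t + h^{ii}(\partial_iA_i - \Gamma_{ii}^kA_k)\phi_t + h^{ii}A_iA_i\phi_t + h^{ii}(\phi_t\wedge\phi_i)\phi_i,
\end{align*}
which has exactly the same shape as the heat equation handled in the proof of (\ref{huojikn}).

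For (\ref{time}), I would apply the Duhamel representation $\phi_t(s) = e^{\frac{s}{2}\Delta}\phi_t(s/2) + \int_{s/2}^s e^{(s-\tau)\Delta}F(\tau)\,d\tau$ (where $F$ denotes the right-hand side above), then take $|D|$ and estimate the $L^2_tL^p_x$ norm. The linear term is bounded via $\||D|e^{s\Delta/2}f\|_{L^p_x} \lesssim s^{-1/2}e^{-\delta s}\|f\|_{L^p_x}$ from Lemma \ref{8.5} coupled with (\ref{time2}); this absorbs the weight $\omega(s)$ (as $s^{1/2}\cdot s^{-1/2}=1$ for $s\le 1$ and $s^Le^{-\delta s/2}$ is uniformly bounded for $s\ge 1$). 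For the inhomogeneous integral, the potentially dangerous piece is $2h^{ii}A_i\partial_i\phi_t$: smoothing gives the bound $(s-\tau)^{-1/2}\|\sqrt{h^{ii}}A_i\|_{L^\infty_{t,x}}\|\nabla\phi_t\|_{L^2_tL^p_x}$, and $\|\sqrt{h^{ii}}A_i\|_{L^\infty_{t,x}} \lesssim \varepsilon_1$ from Proposition \ref{aaop} allows the result to be absorbed into the left-hand side. The zero-order pieces $A_i^2\phi_t$, $(\partial_iA_i - \Gamma_{ii}^kA_k)\phi_t$, and $(\phi_t\wedge\phi_i)\phi_i$ are controlled through Proposition \ref{aaop}, Proposition \ref{sl}, (\ref{time2}) and (\ref{boot5}), exactly as in Proposition \ref{bootstrap}.

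For (\ref{timeling}) I would use the already-established (\ref{time}) as input. Applying $\Delta$ to the Duhamel formula and using $\|\Delta e^{s\Delta/2}f\|_{L^p_x}\lesssim s^{-1/2}e^{-\delta s}\||D|f\|_{L^p_x}$ (from Lemma \ref{8.5} together with Remark \ref{tataru0}), the linear contribution is $s^{-1/2}e^{-\delta s/2}\||D|\phi_t(s/2)\|_{L^2_tL^p_x}$, and this matches the weight $\omega_{3/4}(s) = s$ for $s\in(0,1]$ via (\ref{time}). For the inhomogeneous integral I would distribute $\Delta$ using Riesz-type smoothing so that at most a single spatial derivative lands on $F$; the resulting terms $\nabla A_i\cdot\partial_i\phi_t$, $A_i\nabla\partial_i\phi_t$, $\nabla(A_i^2)\phi_t$, $\nabla[(\partial_iA_i-\Gamma_{ii}^kA_k)\phi_t]$, and $\nabla[(\phi_t\wedge\phi_i)\phi_i]$ are bounded by combining Lemma \ref{aoao1}, Proposition \ref{aaop}, Proposition \ref{sl}, (\ref{time}) and (\ref{uojiknmpx3}). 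The top-order piece $A_i\partial_i\Delta\phi_t$ is once again absorbed by smallness of $A_i$.

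The main obstacle I anticipate is matching the weights near $s = 0$ in (\ref{timeling}): the weight $\omega_{3/4}(s)=s$ is too tight to bound $\Delta\phi_t$ from $\phi_t$ alone via two-derivative smoothing, so one must feed in (\ref{time}) -- this is precisely why (\ref{time}) is proved first. A secondary difficulty is the gradient of the cubic curvature term $(\phi_t\wedge\phi_i)\phi_i$: the factor $\nabla\phi_i$ has only intrinsic $L^\infty_{t,x}$ control (via $\|\nabla d\widetilde u\|_{L^\infty}\lesssim MC(M)$ from Proposition \ref{sl}), so the derivative must be placed on that factor with the remaining two factors carrying the spacetime Lebesgue norms from (\ref{time2}) and (\ref{time}), and one must verify that the resulting weight exponent really stays within the budget.
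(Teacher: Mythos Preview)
Your proposal is correct and follows essentially the same route as the paper. The heat equation you derive for $\phi_t$ is exactly (\ref{yfcvbn}), and the Duhamel-plus-smoothing scheme you outline---using (\ref{time2}) for the linear part of (\ref{time}), absorbing the top-order piece via $\|\sqrt{h^{ii}}A_i\|_{L^\infty_{t,x}}\lesssim\varepsilon_1$, then feeding (\ref{time}) into the linear part of (\ref{timeling}) and closing via Remark~\ref{tataru0}---matches the paper's argument step for step; your anticipated difficulties are precisely the ones the paper handles with Lemma~\ref{aoao1}, Proposition~\ref{aaop}, and Proposition~\ref{sl}.
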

\begin{proof}
By Duhamel principle and (\ref{yfcvbn})
\begin{align*}
{s^{\frac{1}{2}}}{\big\| {{{\left( { - \Delta } \right)}^{\frac{1}{2}}}{\phi _t}(s)} \big\|_{L_t^2L_x^p}} &\le {s^{\frac{1}{2}}}{\big\| {{{\left( { - \Delta } \right)}^{\frac{1}{2}}}{e^{\frac{s}{2}\Delta }}{\phi _t}(\frac{s}{2})} \big\|_{L_t^2L_x^p}} \\
&+ {s^{\frac{1}{2}}}\int_{\frac{s}{2}}^s {{{\big\| {{{\left( { - \Delta } \right)}^{\frac{1}{2}}}{e^{(s - \tau )\Delta }}\mathcal{G}(\tau )} \big\|}_{L_t^2L_x^p}}} d\tau,
\end{align*}
where $\mathcal{G}$ denotes the inhomogeneous terms. By smoothing effect and Proposition \ref{aaop}, the first term in $\mathcal{G}$ is bounded by
\begin{align*}
 &{s^{\frac{1}{2}}}\int_{\frac{s}{2}}^s {{{\big\| {{{\left( { - \Delta } \right)}^{\frac{1}{2}}}{e^{(s - \tau )\Delta }}{h^{ii}}{A_i}{\partial _i}{\phi _t}} \big\|}_{L_t^2L_x^p}}} d\tau \\
 &\le {s^{\frac{1}{2}}}\int_{\frac{s}{2}}^s {{{{{(s - \tau )}^{-\frac{1}{2}}}}}{{\big\| {\nabla {\phi _t}} \big\|}_{L_t^2L_x^p}}} {\big\| {\sqrt {{h^{ii}}} {A_i}} \big\|_{L_t^\infty L_x^\infty }}d\tau  \\
 &\le \varepsilon_1{s^{\frac{1}{2}}}\int_{\frac{s}{2}}^s {{{{{(s - \tau )}^{-\frac{1}{2}}}}}{{\big\| {\nabla {\phi _t}} \big\|}_{L_t^2L_x^p}}} d\tau.
\end{align*}
The large time estimates follow by the same route.
Similar estimates for the rest terms in $\mathcal{G}$ and (\ref{time2}) yield (\ref{time}). By Duhamel principle and smoothing effect, we have
\begin{align*}
\| \Delta\phi_t\|_{L^2_tL^p_x}
\lesssim s^{-\frac{1}{2}}e^{-\delta \frac{s}{2}}\|\nabla\phi_t\|_{L^2_tL^p_x}+\int_{\frac{s}{2}}^s (s-\tau)^{-\frac{1}{2}}e^{-\delta(s-\tau)}\|\nabla \mathcal{G} \|_{L_t^2L_x^p} d\tau.
\end{align*}
Then Lemma \ref{aoao1}, Proposition \ref{aaop}, (\ref{time}), (\ref{boot2}), (\ref{boot5}) give
\begin{align*}
\| \omega_{\frac{3}{4}}(s)\Delta\phi_t\|_{L^{\infty}_sL^2_tL^p_x}
\lesssim \epsilon_1+\epsilon_1\| \omega_{\frac{3}{4}}(s)\nabla^2\phi_t\|_{L^{\infty}_sL^2_tL^p_x}
\end{align*}
Thus (\ref{timeling}) follows by Remark \ref{tataru0}.
\end{proof}

\begin{lemma}
Suppose that (\ref{boot2}) and (\ref{boot5}) hold, then
the wave map tension field satisfies
\begin{align}
{\left\|s^{-\frac{1}{2}} \mathfrak{W}(s) \right\|_{L^\infty_s L_t^1L_x^2}}\le \varepsilon^2_1\label{time5}\\
{\left\| \nabla \mathfrak{W}(s) \right\|_{L^\infty_s L_t^1L_x^2}}\le \varepsilon^2_1\label{time6}\\
{\left\| s^{\frac{1}{2}}\Delta \mathfrak{W}(s) \right\|_{L^\infty_s L_t^1L_x^2}}\le \varepsilon^2_1\label{time7}\\
{\left\| \omega(s)\partial_s \mathfrak{W}(s) \right\|_{L^\infty_s L_t^1L_x^2}}\le \varepsilon^2_1.\label{time8}
\end{align}
\end{lemma}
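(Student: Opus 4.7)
The key structural input is that since $u$ solves the wave map equation (\ref{wmap1}), Lemma 2.7 gives $\mathfrak{W}(0,t,x)=0$. Combined with the parabolic evolution equation for $\mathfrak{W}$ derived in Lemma \ref{ab1}, this lets us express $\mathfrak{W}$ via Duhamel with zero initial data. Schematically, write
\begin{align*}
\partial_s \mathfrak{W}-\Delta\mathfrak{W} = \mathcal{L}(\mathfrak{W})+G, \qquad \mathcal{L}(\mathfrak{W}) := 2h^{ii}A_i\partial_i\mathfrak{W}+\bigl(h^{ii}A_iA_i+h^{ii}\partial_iA_i-h^{ii}\Gamma_{ii}^kA_k\bigr)\mathfrak{W}+h^{ii}(\mathfrak{W}\wedge\phi_i)\phi_i,
\end{align*}
and
\begin{align*}
G := 3h^{ii}(\partial_t\widetilde u\wedge\partial_i\widetilde u)\nabla_t\partial_i\widetilde u.
\end{align*}
The term $\mathcal{L}(\mathfrak{W})$ is linear in $\mathfrak{W}$ with coefficients that are small (or of the appropriate decay) by Proposition \ref{aaop} and Lemma \ref{aoao1}, while $G$ is the genuine forcing.

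The first step is to bound $\|G\|_{L^1_tL^2_x}$ uniformly in $s$ with the right $s$-weight. Rewriting $\nabla_t\partial_i\widetilde u$ as $D_t\phi_i=D_i\phi_t$ via torsion-freeness (\ref{pknb}), a pointwise estimate gives $|G|\lesssim \sqrt{h^{ii}}|\phi_t||\phi_i|(\sqrt{h^{ii}}|\partial_i\phi_t|+\sqrt{h^{ii}}|A_i||\phi_t|)$. H\"older's inequality in space with $(p,\infty,q)$, $\tfrac{1}{p}+\tfrac{1}{q}=\tfrac{1}{2}$, followed by H\"older in $t$, together with Proposition \ref{sl} to handle $\|d\widetilde u\|_{L^\infty_{t,x}}$, (\ref{time2})--(\ref{time}) for $\phi_t$ and $\nabla\phi_t$, (\ref{u81}) for $A_i$ and the bootstrap (\ref{boot2}), yield
\begin{align*}
\|G(s)\|_{L^1_tL^2_x}\le \varepsilon_1^2\,\omega_G(s),
\end{align*}
where $\omega_G$ matches the weight $s^{1/2}$ at $s=0$ and decays exponentially for $s\ge1$ using the exponential bounds on $|\partial_t\widetilde u|,|\nabla\partial_t\widetilde u|$ in Proposition \ref{sl}.

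Next, apply Duhamel with $\mathfrak{W}(0)=0$:
\begin{align*}
\mathfrak{W}(s)=\int_0^s e^{(s-\tau)\Delta}\bigl[\mathcal{L}(\mathfrak{W})+G\bigr](\tau)\,d\tau.
\end{align*}
Taking $L^1_tL^2_x$ norms and using Lemma \ref{8.5} together with the smallness of the coefficients of $\mathcal{L}$ (Proposition \ref{aaop}, Lemma \ref{aoao1}; and $\|\sqrt{h^{ii}}\phi_i\|_{L^\infty}$ from Proposition \ref{sl}) gives an inequality of the form
\begin{align*}
\|\mathfrak{W}(s)\|_{L^1_tL^2_x}\le C\varepsilon_1\!\!\int_0^s\!\!\bigl(1+(s-\tau)^{-\tfrac12}\bigr)e^{-\delta(s-\tau)}\|\nabla\mathfrak{W}(\tau)\|_{L^1_tL^2_x}d\tau+C\varepsilon_1\!\!\int_0^s\!\!\|\mathfrak{W}(\tau)\|_{L^1_tL^2_x}d\tau+\varepsilon_1^2\omega_G(s).
\end{align*}
Similarly, applying $\nabla$ and $\Delta$ to the Duhamel formula and using the smoothing bounds $\|\nabla e^{\tau\Delta}\|_{L^2\to L^2}\lesssim\tau^{-1/2}e^{-\delta\tau}$ and $\|\Delta e^{\tau\Delta}\|_{L^2\to L^2}\lesssim\tau^{-1}e^{-\delta\tau}$ (Lemma \ref{8.5}, with Remark \ref{tataru0} to convert $\Delta$ into two spatial derivatives) produces analogous inequalities for $\|\nabla\mathfrak{W}\|_{L^1_tL^2_x}$ and $\|\Delta\mathfrak{W}\|_{L^1_tL^2_x}$ with weights $1$ and $s^{1/2}$ respectively. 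Running a continuity argument on these three coupled inequalities, the $\varepsilon_1$-small linear contributions are absorbed into the left-hand sides, giving (\ref{time5})--(\ref{time7}). Finally (\ref{time8}) is immediate from the equation itself: $\partial_s\mathfrak{W}=\Delta\mathfrak{W}+\mathcal{L}(\mathfrak{W})+G$, so its $L^1_tL^2_x$ norm is dominated (with weight $\omega(s)$) by the three quantities already controlled.

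\textbf{Main obstacle.} The principal difficulty lies in extracting $\varepsilon_1^2$ smallness for $\|G\|_{L^1_tL^2_x}$ with the correct $s$-weight: the factor $\nabla_t\partial_i\widetilde u$ carries a full space-time derivative of $\phi_t$, and one must carefully match the short-time weight $s^{-1/2}$ on the left with the singular behaviour of $\|\nabla\phi_t\|_{L^2_tL^q_x}$ near $s=0$ supplied by Lemma \ref{gdie1}. A secondary point is to align the singular heat-kernel weights $(s-\tau)^{-1/2}$, $(s-\tau)^{-1}$ in the Duhamel integrals with the prescribed weights $s^{-1/2}$, $1$, $s^{1/2}$, $\omega(s)$, which is where the exponential-in-$s$ decay of $|\partial_s\widetilde u|,|\nabla\partial_s\widetilde u|$ from Proposition \ref{sl} becomes essential to control the large-$s$ regime of $G$.
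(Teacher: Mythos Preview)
Your overall strategy coincides with the paper's: exploit $\mathfrak{W}|_{s=0}=0$, use the parabolic equation for $\mathfrak{W}$ from Lemma \ref{ab1} together with Duhamel, bound the genuine source $G=3h^{ii}(\partial_t\widetilde u\wedge\partial_i\widetilde u)\nabla_t\partial_i\widetilde u$ by combining the space--time bounds on $\phi_t,\nabla\phi_t$ with the bootstrap, and absorb the linear terms via the smallness from Proposition \ref{aaop}.  A few points deserve sharpening.

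\emph{Weight on $G$ near $s=0$.}  Your statement that $\omega_G$ ``matches $s^{1/2}$'' is at best ambiguous: the pointwise bound is $\|G(s)\|_{L^1_tL^2_x}\lesssim \varepsilon_1^2 s^{-1/2}$ (the factor $s^{-1/2}$ comes from $\|\nabla\phi_t\|_{L^2_tL^q_x}$ via (\ref{time})), and it is only after integrating $\int_0^s$ that one recovers $s^{1/2}$ on the right of (\ref{time5}).

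\emph{Large-$s$ decay of $G$.}  The exponential bounds on $|\partial_t\widetilde u|,|\nabla\partial_t\widetilde u|$ in Proposition \ref{sl} are $L^\infty_t$ estimates and do not by themselves furnish $L^1_t$ control on $G$.  The usable decay in $L^2_tL^p_x$ is polynomial, namely $s^{-L}$, and comes from (\ref{time}) (resp.\ (\ref{time2})) with the weight $\omega(s)=s^L$ for $s\ge1$.  This is enough, but the citation should be corrected.

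\emph{Sequential vs.\ coupled closure, and the need for $s^{-L}$ bounds.}  The paper does not couple (\ref{time5})--(\ref{time7}): for (\ref{time5}) the first-order term $h^{ii}A_i\partial_i\mathfrak{W}$ is rewritten as $h^{ii}\partial_i(A_i\mathfrak{W})-h^{ii}(\partial_iA_i)\mathfrak{W}$ so that smoothing absorbs the derivative and only $\|\mathfrak{W}\|$ appears on the right, closing (\ref{time5}) on its own; then (\ref{time6}) and (\ref{time7}) are obtained sequentially by restarting Duhamel from $s/2$ and feeding in the already-proved lower-order bound.  Your simultaneous continuity argument also closes for $\varepsilon_1$ small, but note that (\ref{time8}) for $s\ge1$ carries the weight $\omega(s)=s^L$, so one needs the additional large-time estimates $\|\mathfrak{W}\|,\|\nabla\mathfrak{W}\|,\|\Delta\mathfrak{W}\|_{L^1_tL^2_x}\lesssim \varepsilon_1^2 s^{-L}$ (the paper's (\ref{1wuhusd})--(\ref{wuhusd})); these are not implied by (\ref{time5})--(\ref{time7}) as stated and must be derived separately (again by Duhamel from $s/2$ combined with the $e^{-\delta s}$ gain of the heat semigroup and the $s^{-L}$ decay of $G$).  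Once that is in place, your final sentence deducing (\ref{time8}) directly from the equation is correct.
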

\begin{proof}
Recall the equation for $\mathfrak{W}$ evolving along $s$:
\begin{align}
{\partial _s}\mathfrak{W} &= \Delta \mathfrak{W} + 2{h^{ii}}{A_i}{\partial _i}\mathfrak{W} + {h^{ii}}{A_i}{A_i}\mathfrak{W} + {h^{ii}}{\partial _i}{A_i}\mathfrak{W}  - {h^{ii}}\Gamma _{ii}^k{A_k}\mathfrak{W} + {h^{ii}}\left( {\mathfrak{W}  \wedge {\phi _i}} \right){\phi _i}\nonumber\\
&+ 3{h^{ii}}({\partial _t}\widetilde{u} \wedge {\partial _i}{\widetilde{u}}){\nabla _t}{\partial _i}\widetilde{u}.\label{gurenjim}
\end{align}
Since $\mathfrak{W}(0,s,x)$=0 for all $(s,x)\in\Bbb R^+\times \Bbb H^2$, Duhamel principle gives
$$(-\Delta)^k\mathfrak{W}(s,t,x)= \int_0^s {{e^{(s - \tau )\Delta }}{{\left( { - \Delta } \right)}^k}G_2(\tau )d\tau },
$$
where $G_2$ denotes the inhomogeneous term. \\
{\bf{Step One.}} In this step, we consider short time behavior, and all the integrand domain of $L^{\infty}_s$ is restricted in $s\in[0,1]$.
By (\ref{u81}), (\ref{butterfly}),
\begin{align*}
 \int_0^s {{{\left\| {h^{ii}{A_i}{A_i}\mathfrak{W}} \right\|}_{L_t^1L_x^2}}} d\kappa &\le \int_0^s {{{\left\| {h^{ii}{A_i}{A_i}} \right\|}_{L_t^\infty L_x^\infty }}} {\left\| \mathfrak{W} \right\|_{L_t^1L_x^2}}d\kappa \le {s^{\frac{3}{2}}}\varepsilon _1^2{\left\| {\mathfrak{W}{s^{ - \frac{1}{2}}}} \right\|_{L_s^\infty L_t^1L_x^2}} \\
 \int_0^s {{{\left\| {{h^{ii}}{\partial _i}{A_i}\mathfrak{W}} \right\|}_{L_t^1L_x^2}}} d\kappa &\le \int_0^s {{{\left\| {{h^{ii}}{\partial _i}{A_i}} \right\|}_{L_t^\infty L_x^\infty }}} {\left\| \mathfrak{W} \right\|_{L_t^1L_x^2}}d\kappa \le s^{\frac{1}{2}}\varepsilon _1^2{\left\| {\mathfrak{W}{s^{ - \frac{1}{2}}}} \right\|_{L_s^\infty L_t^1L_x^2}}.
\end{align*}
By Proposition \ref{sl},
\begin{align}
\int_0^s {{{\left\| {{h^{ii}}\left( {\mathfrak{W} \wedge {\phi _i}} \right){\phi _i}} \right\|}_{L_t^1L_x^2}}} d\kappa \le \int_0^s {{{\left\| {{h^{ii}}{\phi _i}{\phi _i}} \right\|}_{L_t^\infty L_x^\infty }}} {\left\|\mathfrak{W} \right\|_{L_t^1L_x^2}}d\kappa \le s\varepsilon _1^2{\left\| {\mathfrak{W}{s^{ - \frac{1}{2}}}} \right\|_{L_s^\infty L_t^1L_x^2}}.
\end{align}
By (\ref{time}), (\ref{u81}) and Proposition \ref{sl},
\begin{align*}
 &\int_0^s {{{\left\| {{h^{ii}}\left( {{\partial _t}\widetilde{u} \wedge {\partial _i}\widetilde{u}} \right){\nabla _i}{\partial _t}\widetilde{u}} \right\|}_{L_t^1L_x^2}}} d\kappa \\
 &\le \int_0^s {{{\left\| {d\widetilde{u}} \right\|}_{L_t^\infty L_x^6}}{{\left\| {\nabla {\partial _t}\widetilde{u}} \right\|}_{L_t^2L_x^6}}} {\left\| {{\partial _t}\widetilde{u}} \right\|_{L_t^2L_x^6}}d\kappa \\
 &\le \int_0^s {{{\left\| {d\widetilde{u}} \right\|}_{L_t^\infty L_x^6}}{{\left\| {\nabla {\phi _t}} \right\|}_{L_t^2L_x^6}}} {\left\| {{\partial _t}\widetilde{u}} \right\|_{L_t^2L_x^6}}d\kappa + \int_0^s {{{\left\| {d\widetilde{u}} \right\|}_{L_t^\infty L_x^6}}{{\left\| {\sqrt {{h^{ii}}} {A_i}{\phi _t}} \right\|}_{L_t^2L_x^6}}} {\left\| {{\partial _t}\widetilde{u}} \right\|_{L_t^2L_x^6}}d\kappa \\
 &\le {s^{\frac{1}{2}}}\varepsilon _1^2.
 \end{align*}
By the smoothing effect and the boundedness of Riesz transform, we have
\begin{align*}
 &\int_0^s {{{\left\| {{e^{(s - \kappa)\Delta }}{h^{ii}}{A_i}{\partial _i}\mathfrak{W}} \right\|}_{L_t^1L_x^2}}} d\kappa \\
 &\le \int_0^s {{{\left\| {{e^{(s - \kappa)\Delta }}{h^{ii}}{\partial _i}\left( {{A_i}\mathfrak{W}} \right)} \right\|}_{L_t^1L_x^2}}} d\kappa
 + \int_0^s {{{\left\| {{e^{(s - \kappa)\Delta }}{h^{ii}}{\partial _i}{A_i}\mathfrak{W}} \right\|}_{L_t^1L_x^2}}} d\kappa \\
 &\le \int_0^s {{{(s - \kappa)}^{ - \frac{1}{2}}}{{\left\| {\sqrt {{h^{ii}}} {A_i}\mathfrak{W}} \right\|}_{L_t^1L_x^2}}} d\kappa + \int_0^s {{{\left\| {{h^{ii}}{\partial _i}{A_i}\mathfrak{W}} \right\|}_{L_t^1L_x^2}}} d\kappa \\
 &\le {s^{\frac{1}{2}}}\varepsilon _1^2{\left\| {\mathfrak{W}{s^{ - \frac{1}{2}}}} \right\|_{L_s^\infty L_t^1L_x^2}}.
\end{align*}
Hence we conclude (\ref{time5}) for $s\in[0,1]$ by choosing $\varepsilon_1$ sufficiently small. In order to prove (\ref{time6}), we use the following Duhamel principle instead to apply (\ref{time5}),
$${\left( { - \Delta } \right)^{\frac{1}{2}}}\mathfrak{W}(s) = {\left( { - \Delta } \right)^{\frac{1}{2}}}{e^{\frac{s}{2}\Delta }}\mathfrak{W}(\frac{s}{2}) + \int_{\frac{s}{2}}^s {{{\left( { - \Delta } \right)}^{\frac{1}{2}}}{e^{(s - \tau )\Delta }}{G_2}(\tau )d\tau }.
$$
Then (\ref{time6}) follows by (\ref{time5}) and the smoothing effect. Again by Duhamel principle and the smoothing effect,
$$\|\left( { - \Delta } \right)\mathfrak{W}(s)\|_{L^2_x} \le \|\left( { - \Delta } \right){e^{\frac{s}{2}\Delta }}\mathfrak{W}(\frac{s}{2})\|_{L^2_x} + \int_{\frac{s}{2}}^s (s-\tau)^{-\frac{1}{2}}e^{-\delta (s-\tau)}\|\left( { - \Delta } \right)^{\frac{1}{2}}{G_2}(\tau )\|_{L^2_x}d\tau.
$$
Thus Lemma \ref{aoao1}, Proposition \ref{aaop}, (\ref{boot2}), (\ref{boot5}), Remark \ref{tataru0} and Lemma \ref{gdie1} give (\ref{time7}) for $s\in[0,1]$. For $s\in[0,1]$, (\ref{time8}) now arises from (\ref{time5})-(\ref{time7}).\\
{\bf{Step Two.}} We prove (\ref{time5})-(\ref{time7}) for $s\ge1$.
This can be easily obtained by the same arguments as above with the help of $s^{-L}$ decay in the long time case.\\
{\bf{Step Three.}} We prove the large time behavior. The Duhamel principle we use is also
$${\left( { - \Delta } \right)^k}\mathfrak{W}(s) = {\left( { - \Delta } \right)^k}{e^{\frac{s}{2}\Delta }}\mathfrak{W}(\frac{s}{2}) + \int_{\frac{s}{2}}^s {{{\left( { - \Delta } \right)}^k}{e^{(s - \tau )\Delta }}{G_2}(\tau )d\tau }.
$$
Let $s\ge1$, applying smoothing effect we obtain
$${s^L}{\left\| {\mathfrak{W}(s)} \right\|_{L_t^1L_x^2}} \le {s^L}{e^{ - s/8}}{\left\| {\mathfrak{W}(\frac{s}{2})} \right\|_{L_t^1L_x^2}} + {s^L}\int_{\frac{s}{2}}^s {{e^{ - (s - \tau )/8}}{{\left\| {{G_2}(\tau )} \right\|}_{L_t^1L_x^2}}d\tau }.
$$
Then by Hausdorff-Young and (\ref{time5})-(\ref{time7}), for $s\ge1$
\begin{align}\label{1wuhusd}
\|\mathfrak{W}\|_{L^1_tL^2_x}\le \varepsilon^2_1s^{-L}.
\end{align}
Similarly, we have for $s\in[1,\infty)$
\begin{align}\label{wuhusd}
\|\nabla \mathfrak{W}\|_{L^1_tL^2_x}+\|\Delta \mathfrak{W}\|_{L^1_tL^2_x}\le \varepsilon^2_1s^{-L}.
\end{align}
Thus the longtime part of (\ref{time8}) now results from (\ref{1wuhusd}), (\ref{wuhusd}).
\end{proof}

\begin{lemma}
Suppose that (\ref{boot2}) and (\ref{boot5}) hold, then for $0<\gamma\ll1$
\begin{align}
\left\| s^{-\frac{1}{2}}\mathfrak{W}(s)\right\|_{L_s^\infty L_t^2L_x^{3+\gamma}}+\left\| \omega(s)\mathfrak{W}(s)\right\|_{L_s^\infty L_t^2L_x^{3+\gamma}}& \le {\varepsilon _1}\label{time90}\\
{\left\|{\omega (s){\partial _t}{\phi _t}(s)} \right\|_{L_s^\infty L_t^2L_x^{3+\gamma}}} &\le \varepsilon_1.\label{time3}\\
{\left\|{{\partial _t}{A_t}(s)} \right\|_{L_s^\infty L_t^2L_x^{3+\gamma}}} &\le \varepsilon_1.\label{time37}
\end{align}
\end{lemma}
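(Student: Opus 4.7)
I would establish the three estimates in sequence, since (5.15) rests on (5.14) and (5.16) rests on (5.15).

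\emph{Estimate (5.14) for $\mathfrak{W}$.} The key observation is that $u$ solves the wave map equation, so by the definition of $\mathfrak{W}$ in (4.2) we have $\mathfrak{W}(0,t,x)=0$. Thus Lemma 4.2 together with Duhamel's formula gives
\begin{align*}
\mathfrak{W}(s)=\int_0^s e^{(s-\tau)\Delta} G_2(\tau)\,d\tau,
\end{align*}
where $G_2$ collects the right-hand side of (4.3). I would then apply Minkowski and the $L^{r}_{x}\to L^{3+\gamma}_{x}$ smoothing from Lemma 2.11, which costs $(s-\tau)^{-\alpha}$ with $\alpha=\tfrac{1}{r}-\tfrac{1}{3+\gamma}<\tfrac{1}{2}$; pulling the $L^{2}_{t}$ norm inside yields a Schur-type kernel integrable near $\tau=s$. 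Terms of $G_2$ carrying $\partial_i\mathfrak{W}$ would be handled by transferring the derivative onto the heat semigroup and invoking boundedness of the Riesz transform, as in the proof of Proposition 5.4 around (5.10)-(5.12). The coefficient factors are bounded through Proposition 5.2, Lemma 5.3, and Proposition 3.2, while the linear-in-$\mathfrak{W}$ pieces are absorbed by smallness of $\varepsilon_1$. The purely forcing term $3h^{ii}(\partial_t\widetilde u\wedge\partial_i\widetilde u)\nabla_t\partial_i\widetilde u$ is estimated with H\"older using the short-time bounds on $\|\nabla\partial_t\widetilde u\|_{L^\infty_x}\lesssim s^{-1/2}$ and $\|\partial_t\widetilde u\|_{L^2_tL^{3+\gamma}_x}\le\varepsilon_1$ from (5.11). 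Duhamel then produces $\mathfrak{W}(s)\sim s^{1-\alpha}$ in $L^2_tL^{3+\gamma}_x$, giving the $s^{-1/2}$-weighted bound on $(0,1]$; for $s\ge1$ the $s^L$ weight follows from the exponential $L^2_x$ decay of the heat semigroup combined with the $s^{-L}$ decay already established in (5.9)-(5.12).

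\emph{Estimate (5.15) for $\partial_t\phi_t$.} From (2.14)-(2.15) one has the identity $\mathfrak{W}=D_t\phi_t-\phi_s$, hence
\begin{align*}
\partial_t\phi_t=D_t\phi_t-A_t\phi_t=\mathfrak{W}+\phi_s-A_t\phi_t.
\end{align*}
The $\mathfrak{W}$ piece is controlled by (5.14). For $\phi_s$, invoke (5.13) with $\theta=0$ and $p=3+\gamma$: since $\omega(s)\le\omega_{0}(s)$ on $[0,1]$ (as $s^{1/2}\le s^{1/4}$ there) and the weights coincide for $s\ge 1$, one gets $\|\omega(s)\phi_s\|_{L^\infty_s L^2_t L^{3+\gamma}_x}\le\varepsilon_1$. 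For $A_t\phi_t$, combine $\|A_t\|_{L^\infty_{t,x}}\le\varepsilon_1$ from (5.1) with $\|\phi_t\|_{L^2_tL^{3+\gamma}_x}\le\varepsilon_1$ from (5.11); the latter comes from the pointwise comparison $|\phi_t(s)|\le|\partial_tu|$ coming from $(\partial_s-\Delta)|\phi_t|\le 0$ as in the proof of (5.11).

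\emph{Estimate (5.16) for $\partial_t A_t$.} Differentiating the representation $A_t(s)=\int_s^\infty \phi_s\wedge\phi_t\,d\kappa$ from Lemma 3.6 yields
\begin{align*}
\partial_t A_t(s)=\int_s^\infty\bigl((\partial_t\phi_s)\wedge\phi_t+\phi_s\wedge(\partial_t\phi_t)\bigr)d\kappa.
\end{align*}
I would apply Minkowski in $\kappa$, H\"older in $x$ splitting $L^{3+\gamma}_{x}=L^{2(3+\gamma)}_{x}\cdot L^{2(3+\gamma)}_{x}$ (using (a) in Lemma 2.2 to fit the exponents), then Cauchy-Schwarz in $t$. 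The $\partial_t\phi_s$ factor is controlled by the bootstrap (5.2) via Sobolev embedding from $\nabla\partial_t\phi_s$-type norms after weighting by $\omega(\kappa)^{-1}$, the $\partial_t\phi_t$ factor uses (5.15) from Step 2, and the $\phi_t,\phi_s$ factors are bounded by (5.11) and (5.13). The $\kappa$-integral converges uniformly in $s$ because the $\omega(\kappa)$ weights supply $\kappa^{-L}$ decay at infinity and an integrable singularity at $\kappa=0$.

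\emph{Main obstacle.} The delicate point is Step 1: matching the $s^{-1/2}$ weight in $L^2_tL^{3+\gamma}_x$ for the forcing term and for the $\partial_i\mathfrak{W}$ piece. The short-time estimate $\|\nabla\partial_t\widetilde u\|_{L^\infty_x}\lesssim s^{-1/2}$ from Proposition 3.2 only just closes the time integral, forcing the use of $3+\gamma$ with $\gamma$ small rather than a larger endpoint, and requires the Riesz-transform trick to avoid losing a further half derivative when distributing $\partial_i$ between the semigroup and the coefficients.
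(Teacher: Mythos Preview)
Your plan follows the same overall strategy as the paper: Duhamel from $\mathfrak{W}(0)=0$ for (5.14), the algebraic identity $\partial_t\phi_t=\mathfrak{W}+\phi_s-A_t\phi_t$ for (5.15), and differentiation of the integral formula for $A_t$ for (5.16). Your treatment of the forcing term in Step~1 differs only cosmetically from the paper---you place the $L^2_t$ integrability on $\partial_t\widetilde u$ via (5.11) and take $\|\nabla\partial_t\widetilde u\|_{L^\infty_x}\lesssim s^{-1/2}$, whereas the paper places it on $\nabla\partial_t\widetilde u$ via Lemma~5.6 and puts the remaining two factors in $L^\infty_t L^{12+4\gamma}_x$. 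Both close, though you should note that your split also requires the factor $\|d\widetilde u\|_{L^\infty_t L^p_x}$ with $p$ large rather than $p=\infty$, to avoid an extra $s^{-\eta}$ loss that would spoil the $s^{-1/2}$ weight.

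There is one genuine slip in Step~3. Your symmetric H\"older split $L^{3+\gamma}_x=L^{6+2\gamma}_x\cdot L^{6+2\gamma}_x$ cannot be fed by (5.15), since that estimate only controls $\partial_t\phi_t$ in $L^{3+\gamma}_x$, and Proposition~5.4 only places $\partial_t\phi_s$ in $L^p_x$ for $p<6$. The paper avoids this by not invoking (5.15) as a black box; instead it rewrites $\partial_t\phi_t=\mathfrak{W}+\phi_s-A_t\phi_t$ under the $\kappa$-integral and uses asymmetric splits term by term: $\|\mathfrak W\|_{L^2_t L^{3+\gamma}_x}\|\phi_s\|_{L^\infty_t L^\infty_x}$ for the first piece, $\|\phi_s\|_{L^\infty_t L^{6+2\gamma}_x}\|\phi_s\|_{L^2_t L^{6+2\gamma}_x}$ for the second, and $\|\partial_t\phi_s\|_{L^2_t L^4_x}\|\phi_t\|_{L^\infty_t L^m_x}$ with $\tfrac1m+\tfrac14=\tfrac1{3+\gamma}$ for the $(\partial_t\phi_s)\wedge\phi_t$ part. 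These choices exploit the $L^\infty_t L^q_x$ bounds from Proposition~3.2 (which carry powers strictly better than $\kappa^{-1/2}$ after interpolation) and yield $\kappa$-integrands behaving like $\kappa^{-3/4}$ or better near zero, which is what makes the integral converge uniformly in $s$. Your symmetric split, combined only with the $\omega(\kappa)^{-1}=\kappa^{-1/2}$ decay from (5.15), would produce a non-integrable $\kappa^{-1}$ singularity.
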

\begin{proof}
(\ref{time3}) is a direct corollary of (\ref{time90}). In fact, the definition of the wave map tension field gives
\begin{align*}
D_t\phi_t=\phi_s+\mathfrak{W}(s).
\end{align*}
Hence $\partial_t\phi_t$ is bounded by $|\phi_s|+|A_t\phi_t|+|\mathfrak{W}|$, then (\ref{time3}) follows by (\ref{boot5}), (\ref{time90}), (\ref{time2}) and (\ref{aaaw811}).
(\ref{time90}) follows by the same arguments as (\ref{time8}). The only difference is to use
\begin{align*}
{\left\| {{h^{ii}}\left( {{\partial _t}\widetilde{u} \wedge {\partial _i}\widetilde{u}} \right){\nabla _i}{\partial _t}\widetilde{u}} \right\|_{L_t^2L_x^{3+\gamma}}} \le {\left\| {\nabla {\partial _t}\widetilde{u}} \right\|_{L_t^2L_x^{6+2\gamma}}}{\left\| {{\partial _t}\widetilde{u}} \right\|_{L_t^\infty L_x^{12+4\gamma}}}{\left\| {d\widetilde{u}} \right\|_{L_t^\infty L_x^{12+4\gamma}}},
\end{align*}
where the term ${\left\| {{\partial _t}\widetilde{u}} \right\|_{L_t^\infty L_x^{12+4\gamma}}}{\left\| {d\widetilde{u}} \right\|_{L_t^\infty L_x^{12+4\gamma}}}$ is bounded by Soboelv embedding and Proposition \ref{sl}. It remains to prove (\ref{time37}).
By the definition of $D_t$ and $A_t$, we have
\begin{align*}
 \left| {{\partial _t}{A_t}(s)} \right| &\le \int_s^\infty  {\left| {{\partial _t}{\phi _t}} \right|\left| {{\phi _s}} \right|} d\kappa + \int_s^\infty  {\left| {{\partial _t}{\phi _s}} \right|\left| {{\phi _t}} \right|d\kappa}  \\
 &\le \int_s^\infty  {\left| {{D_t}{\phi _t}} \right|\left| {{\phi _s}} \right|} d\kappa + \int_s^\infty  {\left| {{A_t}} \right|\left| {{\phi _s}} \right|d\kappa}  + \int_s^\infty  {\left| {{\partial _t}{\phi _s}} \right|\left| {{\phi _t}} \right|d\kappa},
\end{align*}
By $\mathfrak{W}=D_t\phi_t-\phi_s$ and H\"older,
\begin{align}
&{\left\| {\int_s^\infty  {\left| {{D_t}{\phi _t}} \right|\left| {{\phi _s}} \right|} d\kappa} \right\|_{L_t^2L_x^{3+\gamma}}}\nonumber\\
&\le \int_s^\infty  {{{\left\| w \right\|}_{L_t^2L_x^{3+\gamma}}}{{\left\| {{\partial _s}\widetilde{u}} \right\|}_{L_t^\infty L_x^\infty }}} d\kappa + \int_s^\infty  {{{\left\| {{\phi _s}} \right\|}_{L_t^\infty L_x^{6+2\gamma}}}} {\left\| {{\phi _s}} \right\|_{L_t^2L_x^{6+2\gamma}}}d\kappa.\label{huqiancvb}
\end{align}
Since ${\| {{\phi _s}}\|_{L_x^{6+2\gamma}}} \le {\| {{{\left| D \right|}^{\frac{1}{2}}}{\phi _s}} \|_{L_x^p}}$ for $p\in(4,6)$, then
(\ref{huqiancvb}) is acceptable by Proposition \ref{sl} and Proposition \ref{bootstrap}.
Again by H\"older and Sobolev embedding, for $\frac{1}{m}+\frac{1}{4}=\frac{1}{3+\gamma}$
\begin{align*}
{\left\| {\int_s^\infty  {\left| {{\partial _t}{\phi _s}} \right|\left| {{\phi _t}} \right|d\kappa} } \right\|_{L_t^2L_x^{3+\gamma}}} &\le \int_s^\infty  {{{\left\| {{\partial _s}{\phi _t}} \right\|}_{L_t^2L_x^4}}} {\left\| {{\phi _t}} \right\|_{L_t^\infty L_x^m}}d\kappa \\
&\le \int_s^\infty  {{{\left\| {{\partial _s}{\phi _t}} \right\|}_{L_t^2L_x^4}}} {\left\| {\nabla {\partial _t}\widetilde{u}} \right\|_{L_x^2}}d\kappa.
\end{align*}
Since $|\partial_s\phi_t|\le|\partial_t\phi_s|+|A_t\phi_s|$, this is also acceptable by Proposition \ref{sl}, Proposition \ref{bootstrap}, (\ref{aaop11}), and (\ref{aaaw811}). Thus (\ref{time37}) follows.
\end{proof}

\begin{proposition}\label{cuihua}
Suppose that (\ref{boot2}) and (\ref{boot5}) hold.
Then we have for $p\in(2,6)$
\begin{align}
&{\left\| {\omega (s){{\left| D \right|}^{ - \frac{1}{2}}}{\partial _t}{\phi _s}} \right\|_{L_s^\infty L_t^2L_x^p([0,T] \times {\Bbb H^2})}} + {\left\| {\omega (s){{\left| D \right|}^{\frac{1}{2}}}{\phi _s}} \right\|_{L^{\infty}_sL_t^2L_x^p([0,T] \times {\Bbb H^2})}}\nonumber\\
&+ {\left\| {\omega (s){\partial _t}{\phi _s}} \right\|_{L_s^\infty L_t^\infty L_x^2([0,T] \times {\Bbb H^2})}}
+ {\left\| {\omega (s)\nabla {\phi _s}} \right\|_{L_s^\infty L_t^\infty L_x^2([0,T] \times {H^2})}} \le \varepsilon _1^2. \label{huoji}
\end{align}
\end{proposition}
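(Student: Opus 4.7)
The plan is to apply the endpoint and weighted Strichartz estimates of Proposition~\ref{tianxia} to the magnetic wave equation for $\phi_s$ derived in Lemma~\ref{hushuo}, written schematically as $(\partial_t^2-\Delta+W)\phi_s=\mathcal{N}$, where $\mathcal{N}$ collects all remaining terms on the right-hand side of that lemma. Since Proposition~\ref{tianxia} outputs precisely the four norms appearing in \eqref{huoji} (together with the crucial extra weighted norm $\|\rho^{\sigma}\nabla\phi_s\|_{L^2_tL^2_x}$), the proposition reduces, after multiplication by $\omega(s)$ and taking $L^\infty_s$, to establishing
\[
\|\omega(s)\nabla\phi_s|_{t=0}\|_{L^\infty_sL^2_x}+\|\omega(s)\partial_t\phi_s|_{t=0}\|_{L^\infty_sL^2_x}+\|\omega(s)\mathcal{N}\|_{L^\infty_sL^1_tL^2_x}\lesssim\varepsilon_1^2.
\]
The initial data bounds follow from Corollary~\ref{new2}, Proposition~\ref{sl} and the smallness $\mu_1,\mu_2\ll\varepsilon_1^2$: at $t=0$, both $\phi_s$ and $\partial_t\phi_s$ are determined by $(u_0,u_1)$ via the heat flow, and the short-time smoothing estimates together with the long-time exponential decay in Proposition~\ref{sl} yield the required $\omega(s)$-weighted control.

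For the source $\mathcal{N}$ I will estimate the individual pieces listed in Lemma~\ref{hushuo}. The $\partial_s\mathfrak{W}$ term is already controlled by \eqref{time8}. The curvature contribution $\mathbf{R}(\partial_t\widetilde u,\partial_s\widetilde u)\partial_t\widetilde u$ is dominated pointwise by $|\partial_t\widetilde u|^2|\partial_s\widetilde u|$ and handled by combining the bootstrap bound \eqref{boot2} on $\|\partial_t u\|_{L^2_tL^q_x}$ (which transfers to $\widetilde u$ via heat smoothing) with the exponential decay of $\|\partial_s\widetilde u\|_{L^\infty_x}$ from Proposition~\ref{sl}. The three $A_t$-terms $A_t\partial_t\phi_s$, $A_tA_t\phi_s$, $\partial_tA_t\phi_s$ are estimated by interpolating \eqref{aaaw811}, \eqref{aaop11}, \eqref{time37} against the bootstrap assumption \eqref{boot5} and the gains in Proposition~\ref{bootstrap}. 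The remaining terms are quadratic in the controllable data $A^{con}$ or $\phi^{con}$; each such factor is an $\int_s^\infty\cdots d\kappa$ tail whose $s$-integrability is supplied by Proposition~\ref{aaop}, Lemma~\ref{xuejin} and Proposition~\ref{sl}, producing an $O(\varepsilon_1^2)$ contribution.

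The main obstacle is the single source term $2h^{ii}A_i^{con}\partial_i\phi_s$, which carries a spatial derivative of $\phi_s$ that cannot be absorbed into an $L^1_tL^2_x$ source norm by crude H\"older estimates, since no $L^1_t$-control on $\nabla\phi_s$ is available at this stage. This is precisely the term that motivates including the weighted estimate $\|\rho^{\sigma}\nabla f\|_{L^2_tL^2_x}$ in Proposition~\ref{tianxia}. The plan is to estimate
\[
\|h^{ii}A_i^{con}\partial_i\phi_s\|_{L^1_tL^2_x}\le \|\rho^{-\sigma}\sqrt{h^{ii}}A_i^{con}\|_{L^2_tL^\infty_x}\,\|\rho^{\sigma}\sqrt{h^{ii}}\partial_i\phi_s\|_{L^2_tL^2_x},
\]
absorbing the last factor into the left-hand side of Proposition~\ref{tianxia} (admissible since the constant produced is $O(\mu_1)\ll 1$). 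For the first factor, writing $A_i^{con}(s)=\int_s^\infty(\phi_s\wedge\phi_i)\,d\kappa$ and decomposing $\phi_i=\phi_i^\infty+\phi_i^{con}$, the $\phi_i^\infty$-contribution inherits the exponential spatial decay $e^{-\varrho r}$ of $|dQ|$ from Definition~\ref{2as} (which safely absorbs $\rho^{-\sigma}$ for $\sigma\ll\varrho$), while the $\phi_i^{con}$-contribution is quadratic in the controllable data and yields a clean $O(\varepsilon_1^2)$ by Proposition~\ref{bootstrap}.

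Collecting all contributions gives a bound of the form $C(\mu_1+\mu_2+\varepsilon_1^3)\le\varepsilon_1^2$ on the left-hand side of \eqref{huoji}, provided $\mu_1,\mu_2$ are chosen sufficiently small relative to $\varepsilon_1$ as in \eqref{dozuoki}. This closes the bootstrap for the heat tension field.
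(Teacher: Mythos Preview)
Your overall strategy is exactly the one the paper uses: apply Proposition~\ref{tianxia} to the magnetic wave equation of Lemma~\ref{hushuo}, bound the data at $t=0$ via Proposition~\ref{sl} and the smallness $\mu_1,\mu_2\ll\varepsilon_1$, and then estimate each source term in $L^1_tL^2_x$ using \eqref{time8}, \eqref{aaaw811}, \eqref{aaop11}, \eqref{time37}, and the bootstrap inputs. Your identification of $2h^{ii}A_i^{con}\partial_i\phi_s$ as the only delicate term, and the use of the weighted norm $\|\rho^\sigma\nabla\phi_s\|_{L^2_tL^2_x}$ together with the exponential decay of $|dQ|$ to handle the $\phi_i^\infty$-part, is also exactly right.

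There is, however, one genuine slip in your treatment of the $\phi_i^{con}$-contribution to this term. You apply H\"older with the weight \emph{before} splitting $\phi_i=\phi_i^\infty+\phi_i^{con}$, so that the $\phi_i^{con}$-piece would have to be controlled in the norm $\|\rho^{-\sigma}\sqrt{h^{ii}}\int_s^\infty(\phi_s\wedge\phi_i^{con})\,d\kappa\|_{L^2_tL^\infty_x}$. But $\phi_i^{con}$ carries no spatial decay whatsoever, so the growing weight $\rho^{-\sigma}=e^{\sigma r}$ cannot be absorbed and this norm need not be finite. The paper fixes this by splitting \emph{first}: only the $\phi_i^\infty$-part (call it $B_1$) is paired with the weighted norm $\omega(s)\|\rho^\sigma\nabla\phi_s\|_{L^2_tL^2_x}$, yielding a coefficient $O(\mu_1\varepsilon_1)$ that is absorbed into the left-hand side. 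The $\phi_i^{con}$-part (call it $B_2$) is instead estimated without any weight by
\[
B_2\lesssim \omega(s)\|\nabla\phi_s\|_{L^\infty_tL^2_x}\int_s^\infty\|\phi_s(\kappa)\|_{L^2_tL^\infty_x}\Big(\int_\kappa^\infty\|\nabla\phi_s(\tau)\|_{L^2_tL^\infty_x}\,d\tau\Big)d\kappa,
\]
where the first factor is controlled by the bootstrap assumption \eqref{boot5} and the double integral is $O(\varepsilon_1^2)$ by Proposition~\ref{bootstrap} and Sobolev embedding. With this correction your argument is complete and coincides with the paper's.
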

\noindent{\bf{Proof}}
By Lemma \ref{hushuo} and Proposition \ref{tianxia}, we obtain for any $p\in(2,6)$
\begin{align}
&{\omega(s)}{\left\| {{\partial _t}{\phi _s}} \right\|_{L_t^\infty L_x^2}} + {\omega(s)}{\left\| {\nabla {\phi _s}} \right\|_{L_t^\infty L_x^2}} + {\omega(s)}{\left\| {{{\left| \nabla  \right|}^{\frac{1}{2}}}{\phi _s}} \right\|_{L_t^2L_x^p}}\nonumber\\
&+ {\omega(s)}{\left\| {{{\left| D  \right|}^{ - \frac{1}{2}}}{\partial _t}{\phi _s}} \right\|_{L_t^2L_x^p}} +{\omega(s)}{\left\| {{\rho ^\sigma }\nabla {\phi _s}} \right\|_{L_t^2L_x^2}} \nonumber\\
&\lesssim {\omega(s)}{\left\| {{\partial _t}{\phi _s}(0,s,x)} \right\|_{L_x^2}} + {\omega(s)}{\left\| {\nabla {\phi _s}(0,s,x)} \right\|_{L_x^2}} + {\omega(s)}{\left\| G_4 \right\|_{L_t^1L_x^2}}.\label{chunjie1}
\end{align}
where $G_4$ denotes the inhomogeneous term.
First, the $\phi_s(0,s,x)$ term is acceptable by Proposition \ref{sl}, $\mu_2\ll \varepsilon_1$ and
$$
|\nabla_{t,x}\phi_s(0,s,x)|\le |\nabla_{t,x}\partial_sU|+\sqrt{h^{\gamma\gamma}}|A_{\gamma}||\partial_sU|,
$$
where $U(s,x)$ is the heat flow initiated from $u_0$.
Second, the three terms involved with $A_t$ are bounded by
\begin{align*}
{\omega(s)}{\left\| {{A_t}{\partial _t}{\phi _s}} \right\|_{L_t^1L_x^2}} &\le {\left\| {{A_t}} \right\|_{L_t^1L_x^\infty }}{\omega(s)}{\left\| {{\partial _t}{\phi _s}} \right\|_{L_t^\infty L_x^2}} \\
{\omega(s)}{\left\| {{A_t}{A_t}{\phi _s}} \right\|_{L_t^1L_x^2}} &\le {\left\| {{A_t}} \right\|_{L_t^1L_x^\infty }}{\left\| {{A_t}} \right\|_{L_t^\infty L_x^\infty }}{\omega(s)}{\left\| {{\phi _s}} \right\|_{L_t^\infty L_x^2}} \\
{\omega(s)}{\left\| {{\partial _t}{A_t}{\phi _s}} \right\|_{L_t^1L_x^2}} &\le {\left\| {{\partial _t}{A_t}} \right\|_{L_t^2L_x^{3+\gamma}}}{\omega(s)}{\left\| {{\phi _s}} \right\|_{L_t^2L_x^k}},
\end{align*}
where $\frac{1}{k}+\frac{1}{3+\gamma}=\frac{1}{2},$ and $k\in(2,6)$.
They are admissible by (\ref{aaaw811}), (\ref{aaop11}) and (\ref{time37}).
The $\partial_t\widetilde{u}$ term is bounded by
\begin{align*}
{\omega(s)}{\left\| {\mathbf{R}({\partial _t}\widetilde{u},{\partial _s}\widetilde{u})({\partial _t}\widetilde{u})} \right\|_{L_t^1L_x^2}} \le {\left\| {{\partial _t}\widetilde{u}} \right\|_{L_t^2L_x^{6+2\gamma}}}{\left\| {{\partial _t}\widetilde{u}} \right\|_{L_t^\infty L_x^{6+2\gamma}}}{\omega(s)}{\left\| {{\phi _s}} \right\|_{L_t^2L_x^k}},
\end{align*}
where $\frac{1}{k}+\frac{1}{3+\gamma}=\frac{1}{2},$ and $k\in(2,6)$.
The $\partial_s\mathfrak{W}$ term is bounded by (\ref{time8}).
The $A^{con}_i$ terms should be dealt with separately. We present the estimates for these terms as a lemma.

\begin{lemma}[Continuation of Proof of Proposition \ref{cuihua}]
Under the assumption of Proposition \ref{cuihua}, we have
\begin{align}
{\omega(s)}{\left\| {{h^{ii}}A_i^{con}{\partial _i}{\phi _s}} \right\|_{L_t^1L_x^2}} &\le {\varepsilon _1}{\omega(s)}{\left\| {{\rho ^\sigma }\nabla {\phi _s}} \right\|_{L_t^2L_x^2}} + \varepsilon _1^2\label{cuihua1}\\
{\omega(s)}{\left\| {{h^{ii}}A_i^{con}A_i^\infty {\phi _s}} \right\|_{L_t^1L_x^2}}&\le \varepsilon _1^2\label{cuihua2}\\
{\omega(s)}{\left\| {{h^{ii}}A_i^{con}A_i^{con} {\phi _s}} \right\|_{L_t^1L_x^2}}&\le \varepsilon _1^2\label{cuihua3}\\
{\omega(s)}{\left\| {{h^{ii}}\partial_iA_i^{con}{\phi _s}} \right\|_{L_t^1L_x^2}}&\le \varepsilon _1^2\label{cuihua4}\\
{\omega(s)}{\left\| {{h^{ii}}\Gamma _{ii}^kA_k^{con}} \phi_s\right\|_{L_t^1L_x^2}}&\le \varepsilon _1^2\label{cuihua5}.
\end{align}
\end{lemma}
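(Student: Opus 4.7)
The plan is to exploit the integral representation from Remark 3.1,
\[
\sqrt{h^{ii}}A_i^{con}(s,t,x)=\int_s^\infty \mathbf{R}(\widetilde u)\bigl(\partial_s\widetilde u,\sqrt{h^{ii}}\partial_i\widetilde u\bigr)\,d\kappa,
\]
which exposes inside every $A_i^{con}$ the bootstrap smallness of $\phi_s$, in direct analogy with the derivation of (aaop11) for $\|A_t\|_{L^1_tL^\infty_x}\le\varepsilon_1^2$. The only new feature compared with the $A_t$ case is that the spatial factor $\sqrt{h^{ii}}\phi_i$ does not decay to zero along the heat flow the way $\phi_t$ does; this forces a further splitting $\phi_i=\phi_i^\infty+\phi_i^{con}$ inside the integrand, producing a decomposition $A_i^{con}=A_i^{con,\infty}+A_i^{con,con}$ whose first piece inherits the spatial exponential decay $\mu_1\rho^\varrho$ from $|dQ|$ via Definition 1.1, and whose second piece is genuinely quadratic in the small controllable quantities.

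For (cuihua2) and (cuihua4) I would extract the pointwise bound $\|\sqrt{h^{ii}}A_i^\infty\|_{L^\infty_x}\lesssim\mu_1$ from Lemma 4.3 by H\"older, leaving a product of the form $\sqrt{h^{ii}}A_i^{con}\phi_s$; I would then invoke $\|\sqrt{h^{ii}}A_i^{con}\|_{L^1_tL^\infty_x}\lesssim\varepsilon_1$ (proved via the integral representation above, with the $\phi_i^\infty$ piece providing $\mu_1\rho^\varrho$ spatial decay and the $\phi_i^{con}$ piece providing double Strichartz smallness) paired with $\omega(s)\|\phi_s\|_{L^\infty_tL^2_x}\le\varepsilon_1$ from (boot5). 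Estimate (cuihua3) gives two copies of $\sqrt{h^{ii}}A_i^{con}$ and yields $\varepsilon_1^2$ by the same route. For (cuihua5) a direct computation with the Christoffel symbols in (3.1.3) shows that $h^{ii}\Gamma^k_{ii}$ is a bounded constant (explicitly $\delta^k_2$), so (cuihua5) reduces to the $A_k^{con}\phi_s$ case already treated. The derivative in (cuihua4) is either pushed into the defining integral, producing $\nabla\partial_s\widetilde u$ and $\nabla d\widetilde u$ terms controlled by Proposition 3.3 and Proposition 4.1, or handled by an integration by parts against $\phi_s$ in the spirit of Lemma 5.2 (aoao1).

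The main obstacle will be (cuihua1), since the factor $\partial_i\phi_s$ is not directly controlled in any time-integrable norm by (boot5); this is precisely why the weighted Strichartz quantity $\|\rho^\sigma\nabla\phi_s\|_{L^2_tL^2_x}$ from Proposition 5.1 has to appear on the right-hand side. I would use the splitting $A_i^{con}=A_i^{con,\infty}+A_i^{con,con}$ and treat the two pieces differently. For $A_i^{con,\infty}$, I would write
\[
h^{ii}A_i^{con,\infty}\partial_i\phi_s=\bigl(\rho^{-\sigma}\sqrt{h^{ii}}A_i^{con,\infty}\bigr)\bigl(\rho^\sigma\sqrt{h^{ii}}\partial_i\phi_s\bigr),
\]
apply $\|fg\|_{L^1_tL^2_x}\le\|f\|_{L^2_tL^\infty_x}\|g\|_{L^2_tL^2_x}$, control the second factor by the weighted Strichartz norm of Proposition 5.1, and control the first factor by $\mu_1$ times $\int_s^\infty\|\phi_s\|_{L^2_tL^\infty_x}d\kappa\lesssim\varepsilon_1$, where I use (boot5) and the Sobolev embedding $|D|^{1/2}L^p\hookrightarrow L^\infty$ for $p>4$, together with $\rho^{\varrho-\sigma}\le 1$ (which holds since $\sigma\ll\varrho$ by (5.2.1)) to absorb the inverse weight. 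This yields the $\varepsilon_1\omega(s)\|\rho^\sigma\nabla\phi_s\|_{L^2_tL^2_x}$ term on the right of (cuihua1). For $A_i^{con,con}$, I would use the direct H\"older bound $\|\sqrt{h^{ii}}A_i^{con,con}\|_{L^1_tL^\infty_x}\,\|\nabla\phi_s\|_{L^\infty_tL^2_x}$, where the first factor is $\lesssim\varepsilon_1\omega(s)^{-1}$ by expanding $\phi_i^{con}$ via the torsion-free identity $\partial_s\phi_i=D_i\phi_s$ and applying Proposition 4.1 together with (boot5), and the second factor is $\le\varepsilon_1\omega(s)^{-1}$ from (boot5), producing the $\varepsilon_1^2$ residual on the right of (cuihua1).
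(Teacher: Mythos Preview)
Your treatment of (\ref{cuihua1}) is essentially the paper's argument: the splitting $A_i^{con}=A_i^{con,\infty}+A_i^{con,con}$ is exactly the paper's decomposition into $B_1+B_2$, and pairing the $\phi_i^\infty$ piece against the weighted Strichartz norm $\|\rho^\sigma\nabla\phi_s\|_{L^2_tL^2_x}$ is precisely how the paper uses the exponential decay of $|dQ|$. One arithmetic correction: for the $A_i^{con,con}$ piece, the double integral $\int_s^\infty\|\phi_s(\kappa)\|_{L^2_tL^\infty_x}\bigl(\int_\kappa^\infty\|\nabla\phi_s(\tau)\|_{L^2_tL^\infty_x}d\tau\bigr)d\kappa$ is bounded \emph{uniformly} in $s$ by $\varepsilon_1^2$ (via Proposition~\ref{bootstrap} and Sobolev), not by $\varepsilon_1\omega(s)^{-1}$; with your stated bound $\varepsilon_1\omega(s)^{-1}\cdot\varepsilon_1\omega(s)^{-1}$ the product would not close for $s<1$.

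For (\ref{cuihua2})--(\ref{cuihua5}) there is a genuine gap in your H\"older splitting. You propose controlling $\|\sqrt{h^{ii}}A_i^{con}\|_{L^1_tL^\infty_x}$ in analogy with $\|A_t\|_{L^1_tL^\infty_x}$, but the analogy fails: in $A_t=\int\phi_s\wedge\phi_t$ both factors carry $L^2_t$ integrability (one from (\ref{boot5}), one from (\ref{boot2})), whereas in $A_i^{con}=\int\phi_s\wedge\phi_i$ the spatial leg $\phi_i$ is only in $L^\infty_t$, so $A_i^{con}$ lands in $L^2_tL^p_x$ but not in $L^1_tL^\infty_x$. The paper therefore distributes the time integrability as $L^2_t\times L^2_t$ throughout: for (\ref{cuihua2}) it writes
\[
\omega(s)\bigl\|h^{ii}A_i^{con}A_i^\infty\phi_s\bigr\|_{L^1_tL^2_x}\le\bigl\|\sqrt{h^{ii}}A_i^{con}\bigr\|_{L^2_tL^{10/3}_x}\,\omega(s)\bigl\|\phi_s\bigr\|_{L^2_tL^5_x},
\]
and bounds the first factor by $\|d\widetilde u\|_{L^\infty_tL^{10}_x}\int_s^\infty\|\phi_s(\kappa)\|_{L^2_tL^5_x}d\kappa$ using Proposition~\ref{bootstrap}. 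The same $L^2_t\times L^2_t$ pattern (with suitable spatial exponents and, for (\ref{cuihua4}), pushing $\partial_i$ inside the integral as you suggest) handles (\ref{cuihua3})--(\ref{cuihua5}). Your underlying strategy is correct; only the choice of H\"older exponents needs to be changed from $L^1_t\times L^\infty_t$ to $L^2_t\times L^2_t$.
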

\begin{proof}
Expanding $\phi_i$ as $\phi^{\infty}_i+\int^{\infty}_s\partial_s \phi_id\kappa$ yields
\begin{align}
 A_i^{con} = \int_s^\infty  {{\phi _i} \wedge {\phi _s}} d\kappa = \int_s^\infty  {\left( {\int_{\kappa}^\infty  {{\partial _s}} {\phi _i}(\tau )d\tau  + \phi _i^\infty } \right) \wedge {\phi _s}} (\kappa)d\kappa.
\end{align}
Hence we get
\begin{align*}
&{\omega(s)}{\left\| {{h^{ii}}A_i^{con}{\partial _i}{\phi _s}} \right\|_{L_t^1L_x^2}} \\
&\le {\omega(s)}{\left\| {{h^{ii}}(\int_s^\infty \phi _i^\infty\wedge  {{\phi _s}(\kappa)d\kappa} }) {\partial _i}{\phi _s}\right\|_{L_t^1L_x^2}}\\
&+ {\omega(s)}{\left\| {{h^{ii}} \left( {\int_s^\infty  {{\phi _s}(\kappa) \wedge \left( {\int_{\kappa}^\infty  {{\partial _s}} {\phi _i}(\tau )d\tau } \right)d\kappa} } \right)} {\partial _i}{\phi _s}\right\|_{L_t^1L_x^2}} \\
&\buildrel \Delta \over = B_1 + B_2
\end{align*}
The $B_1$ term is bounded by
\begin{align}
 B_1 &\lesssim {\omega(s)}{\left\| {{\rho ^\sigma }\nabla {\phi _s}} \right\|_{L_t^2L_x^2}}{\left\| {\int_s^\infty  {{\rho ^{ - \sigma }}\phi _i^\infty\sqrt{h^{ii}} {\phi _s}(\kappa)d\kappa} } \right\|_{L_t^2L_x^{\infty}}}\nonumber \\
 &\le {\omega(s)}{\left\| {{\rho ^\sigma }\nabla {\phi _s}} \right\|_{L_t^2L_x^2}}{\left\| {{\rho ^{ - \sigma }}\phi _i^\infty }\sqrt{h^{ii}} \right\|_{L_x^{\infty}}}\int_s^\infty  {{{\left\| {{\phi _s}(\kappa)} \right\|}_{L_t^2L_x^{\infty}}}d\kappa} \nonumber \\
 &\lesssim {\omega(s)}{\left\| {{\rho ^\sigma }\nabla {\phi _s}} \right\|_{L_t^2L_x^2}}{\left\| {{\rho ^{ - \sigma }}\sqrt{h^{ii}}\phi _i^\infty } \right\|_{L_x^{\infty}}}{\left\| {a(s){{\left\| {\nabla{\phi _s}(s)} \right\|}_{L_t^2L_x^{4}}}} \right\|_{L_s^\infty }},\label{guihua}
\end{align}
where we have used the Sobolev embedding in the last step. Hence Proposition \ref{bootstrap} gives an acceptable bound,
\begin{align*}
{B_1} \le C\mu_1\varepsilon_1{\omega(s)}{\left\| {{\rho ^\sigma }\nabla {\phi _s}} \right\|_{L_t^2L_x^2}}.
\end{align*}
The $B_2$ term is bounded by
\begin{align*}
 {B_2} \lesssim {\omega(s)}{\left\| {\nabla {\phi _s}} \right\|_{L_t^\infty L_x^2}}\int_s^\infty  {{{\left\| {{\phi _s}(\kappa)} \right\|}_{L_t^2L_x^\infty }}\left( {\int_{\kappa}^\infty  {{{\left\| {\nabla {\phi _s}(\tau )} \right\|}_{L_t^2L_x^\infty }}} d\tau } \right)d\kappa}.
\end{align*}
Meanwhile, Sobolev embedding and Proposition \ref{bootstrap} give when $\tau  \in (0,1)$
\begin{align*}
 {\left\| {\nabla {\phi _s}(\tau )} \right\|_{L_t^2L_x^\infty }} &\le {\left( {{\tau ^{\frac{3}{4}}}{{\left\| {\nabla {\phi _s}(\tau )} \right\|}_{L_t^2L_x^5}}} \right)^{3/5}}{\left( {{\tau ^{5/4}}{{\left\| {{\nabla ^2}{\phi _s}(\tau )} \right\|}_{L_t^2L_x^5}}} \right)^{2/5}}{\tau ^{ - \frac{1}{2} -9/20}} \\
 &\le {\varepsilon _1}{\tau ^{ -19/20}},
 \end{align*}
 and when $\tau  \in [1,\infty)$
 \begin{align*}
 {\left\| {\nabla {\phi _s}(\tau )} \right\|_{L_t^2L_x^\infty }} &\le {\left( {{\tau ^L}{{\left\| {\nabla {\phi _s}(\tau )} \right\|}_{L_t^2L_x^5}}} \right)^{3/5}}{\left( {{\tau ^L}{{\left\| {{\nabla ^2}{\phi _s}(\tau )} \right\|}_{L_t^2L_x^5}}} \right)^{2/5}}{\tau ^{ - L}} \le {\varepsilon _1}{\tau ^{ - L}}.
\end{align*}
Similarly we deduce by Sobolev embedding $\|f\|_{L^{\infty}}\le \||D|^{\frac{1}{2}}f\|_{L^5}$ that
\begin{align*}
{\left\| {{\phi _s}(\tau )} \right\|_{L_t^2L_x^\infty }} \le {\varepsilon _1}{\tau ^{ - \frac{1}{2}}},\mbox{  }{\rm{when}}\mbox{  }\tau  \in (0,1);\mbox{  }{\left\| {{\phi _s}(\tau )} \right\|_{L_t^2L_x^\infty }} \le {\varepsilon _1}{\tau ^{ - L}},\mbox{  }{\rm{when}}\mbox{  }\tau  \in [1,\infty ).
\end{align*}
Therefore we conclude
\begin{align}
B_2 \le \varepsilon _1^2{\omega(s)}{\left\| {\nabla {\phi _s}} \right\|_{L_t^\infty L_x^2}}\label{guihua1}.
\end{align}
Proposition \ref{bootstrap} together with (\ref{guihua}), (\ref{guihua1}) yields (\ref{cuihua1}). Next we prove (\ref{cuihua2}).
H\"older yields
\begin{align*}
{\omega(s)}{\| {{h^{ii}}A_i^{con}A_i^\infty {\phi _s}}\|_{L_t^1L_x^2}} \le {\| {\sqrt {{h^{ii}}} A_i^{con}} \|_{L_t^2L_x^{\frac{10}{3}}}}{\omega(s)}{\| {{\phi _s}} \|_{L_t^2L_x^5}}.
\end{align*}
Using the expression $A_i^{con} = \int_s^\infty  {{\phi _i} \wedge {\phi _s}} d\kappa$, we obtain
\begin{align}
{\left\| {\sqrt {{h^{ii}}} A_i^{con}} \right\|_{L_t^2L_x^{\frac{10}{3}}}} &\lesssim {\left\| {\int_s^\infty  {\sqrt {{h^{ii}}} {\phi _i} \wedge {\phi _s}} d\kappa} \right\|_{L_t^2L_x^{\frac{10}{3}}}} \le {\| {d\widetilde{u}}\|_{L_t^\infty L_x^{10}}}\int_s^\infty  {{{\left\| {{\phi _s}} \right\|}_{L_t^2L_x^5}}} d\kappa \nonumber\\
&\lesssim {\| {\nabla d\widetilde{u}}\|_{L_t^\infty L_x^2}}{\| {\omega(s){{\| {{\phi _s}(s)} \|}_{L_t^2L_x^5}}}\|_{L_s^\infty }}.\label{guihua7}
\end{align}
Therefore Proposition \ref{bootstrap} gives (\ref{cuihua2}). Third, we verify (\ref{cuihua3}). H\"older yields
\begin{align*}
{\omega(s)}{\| {{h^{ii}}A_i^{con}A_i^{con}{\phi _s}} \|_{L_t^1L_x^2}} \le {\| {\sqrt {{h^{ii}}} A_i^{con}} \|_{L_t^2L_x^{\frac{10}{3}}}}{\| {\sqrt {{h^{ii}}} A_i^{con}} \|_{L_t^\infty L_x^\infty }}{\omega(s)}{\| {{\phi _s}} \|_{L_t^2L_x^5}}.
\end{align*}
The term ${\| {\sqrt {{h^{ii}}} A_i^{con}}\|_{L_t^2L_x^{\frac{10}{3}}}}$ has been estimated in (\ref{guihua7}). The ${\| {\sqrt {{h^{ii}}} A_i^{con}} \|_{L_t^{\infty}L_x^\infty}}$ term is bounded by
\begin{align*}
{\| {\sqrt {{h^{ii}}} A_i^{con}}\|_{L_t^{\infty}L_x^\infty }} \lesssim \left\| {\int_s^\infty  {{{\| {d\widetilde{u}} \|}_{L^\infty_{x} }}{{\| {{\phi _s}} \|}_{L^\infty _{x}}}} d\kappa} \right\|_{L^{\infty}_t}.
\end{align*}
This is acceptable by Proposition \ref{sl} and Lemma \ref{ktao1}. Forth, we prove (\ref{cuihua4}).
H\"older yields
\begin{align*}
{\omega(s)}{\left\| {{h^{ii}}\left( {{\partial _i}A_i^{con}} \right){\phi _s}} \right\|_{L_t^1L_x^2}} \le {\left\| {{h^{ii}}{\partial _i}A_i^{con}} \right\|_{L_t^2L_x^4}}{\omega(s)}{\left\| {{\phi _s}} \right\|_{L_t^2L_x^4}}.
\end{align*}
The $h^{ii}\partial_iA_i$ term is bounded by
\begin{align*}
& {\left\| {{h^{ii}}{\partial _i}A_i^{con}} \right\|_{L_t^2L_x^4}} = {\left\| {\int_s^\infty  {{h^{ii}}{\partial _i}} {\phi _i}{\phi _s}d\kappa + \int_s^\infty  {{h^{ii}}} {\phi _i}{\partial _i}{\phi _s}d\kappa} \right\|_{L_t^2L_x^4}} \\
 &\le \int_s^\infty  {{{\left\| {{h^{ii}}{\partial _i}{\phi _i}} \right\|}_{L_t^\infty L_x^{20}}}} {\left\| {{\phi _s}} \right\|_{L_t^2L_x^5}}d\kappa + {\int_s^\infty  {\left\| {d\widetilde{u}} \right\|} _{L_t^{\infty}L_x^\infty }}{\left\| {\nabla {\phi _s}} \right\|_{L_t^2L_x^4}}d\kappa \\
 &\le \int_s^\infty  {\left( {{{\left\| {\nabla d\widetilde{u}} \right\|}_{L_t^\infty L_x^{20}}} + {{\left\| {{h^{ii}}{A_i}{\phi _i}} \right\|}_{L_t^\infty L_x^{20}}}} \right)} {\left\| {{\phi _s}} \right\|_{L_t^2L_x^5}}d\kappa \\
 &+ {\int_s^\infty  {\left\| {d\widetilde{u}} \right\|} _{L_t^{\infty}L_x^\infty }}{\left\| {\nabla {\phi _s}} \right\|_{L_t^2L_x^4}}d\kappa.
\end{align*}
Thus this is acceptable by Proposition \ref{aaop} and interpolation between the $\|\nabla d\widetilde{u}\|_{L^{\infty}}$ bound and the $\|\nabla d\widetilde{u}\|_{L^{2}}$ bound in Proposition \ref{sl}. Finally we notice that
(\ref{cuihua5}) is a consequence of (\ref{guihua7}) and
$${\omega(s)}{\left\| {{h^{ii}}\Gamma _{ii}^kA_k^{con}} \phi_s\right\|_{L_t^1L_x^2}} \le {\left\| {A_2^{con}} \right\|_{L_t^2L_x^{\frac{10}{3}}}}{\omega(s)}{\left\| {{\phi _s}} \right\|_{L_t^2L_x^5}}.
$$
\end{proof}
$\blacksquare$

Proposition \ref{bootstrap} with Proposition \ref{cuihua} yields
\begin{proposition}\label{xiaozi}
Assume that the solution to (\ref{wmap1}) satisfies (\ref{boot5}) and (\ref{boot2}), then for any  $p\in(2,6)$, $\theta\in[0,2]$
\begin{align*}
{\left\| \omega(s){\nabla\phi_s} \right\|_{L^{\infty}_sL_t^\infty L_x^2}} + {\left\|\omega(s) |D|^{\frac{1}{2}}{\phi_s} \right\|_{L^{\infty}_sL_t^2 L_x^p}} &\le {\varepsilon^2 _1}\\
{\left\| \omega_1(s)|D|{\partial _t}\phi_s \right\|_{L^{\infty}_sL_t^2L_x^p}}
+\left\|\omega_{\theta}(s) (-\Delta)^{\theta}{\phi_s} \right\|_{L^{\infty}_sL_t^2L_x^p}&\le {\varepsilon^2 _1}.
\end{align*}
\end{proposition}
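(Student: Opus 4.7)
The plan is to read Proposition \ref{xiaozi} as the bootstrap-closure upgrade of the $\varepsilon_1$-bounds of Proposition \ref{bootstrap} to $\varepsilon_1^2$, which becomes available once Proposition \ref{cuihua} has been established. The two norms on the first line of the statement—the $L^\infty_sL^\infty_tL^2_x$ bound on $\omega(s)\nabla\phi_s$ and the $L^\infty_sL^2_tL^p_x$ bound on $\omega(s)|D|^{\frac12}\phi_s$—are literally two of the four estimates in (\ref{huoji}) of Proposition \ref{cuihua}, so the first line requires no additional argument.

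For the second line I plan to rerun the proofs of (\ref{uojiknmpx3}) and (\ref{9o0o}) in Proposition \ref{bootstrap} verbatim, substituting the bootstrap input (\ref{boot5}) by the stronger $\varepsilon_1^2$-bound supplied by Proposition \ref{cuihua} at every step where a base-level $\phi_s$ or $\partial_t\phi_s$ norm appears. Concretely, Duhamel in $s$ gives
\begin{align*}
(-\Delta)^\theta\phi_s(s)=(-\Delta)^\theta e^{\frac{s}{2}\Delta}\phi_s(s/2)+\int_{s/2}^s(-\Delta)^\theta e^{(s-\tau)\Delta}\bigl[2h^{ii}A_i\partial_i\phi_s+G\bigr](\tau)\,d\tau,
\end{align*}
with $G$ collecting the remaining semilinear terms from (\ref{991}). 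The smoothing effect converts the factor $(-\Delta)^\theta$ into a gain $(s-\tau)^{-\theta+\frac12}e^{-\delta(s-\tau)}$ after passing one derivative through a Riesz transform (Remark \ref{tataru0}). Each resulting integral splits into a small coefficient from Proposition \ref{aaop} and Lemma \ref{aoao1} (for $A_i$, $\partial_i A_i$, etc.) times a base quantity for $\phi_s$ that, via Proposition \ref{cuihua}, is now controlled by $\varepsilon_1^2$; the top-order piece $A_i\partial_i(-\Delta)^\theta\phi_s$ is absorbed into the left-hand side by the $\varepsilon_1$-smallness of $\sqrt{h^{ii}}A_i$ in $L^\infty_{t,x}$. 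This closes $\|\omega_\theta(s)(-\Delta)^\theta\phi_s\|_{L^\infty_sL^2_tL^p_x}\lesssim\varepsilon_1^2$ at the endpoints $\theta=0$ and $\theta=1$, and the intermediate $\theta\in(0,2)$ follows by interpolation (the range up to $\theta=2$ is reached by running the same template one order higher). The companion bound $\|\omega_1(s)|D|\partial_t\phi_s\|_{L^\infty_sL^2_tL^p_x}\le\varepsilon_1^2$ is obtained identically from the equation (\ref{9923}) for $\partial_t\phi_s$ along the heat flow, feeding in (\ref{aaaw811}), (\ref{u82}), (\ref{aaop11}) and (\ref{time37}) for the various $A_t$-derivatives.

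The main obstacle will be the highest exponent $\theta=2$, where $(-\Delta)^2$ extracts two derivatives through the heat kernel and one must expand $\Delta(h^{ii}A_i\partial_i\phi_s)$ and $\Delta G$ into sums involving $\nabla^2 A_i$ and $(\nabla A_i)(\nabla\phi_s)$. The requisite coefficient bound on $\nabla^2 A_i^{\mathrm{con}}$ is not stated in Lemma \ref{aoao1}; it will follow by differentiating the integral representation (\ref{edf}) once more and placing the extra derivative onto either $\phi_i$ or $\phi_s$ inside the integrand, using the pointwise $\nabla^2 d\widetilde{u}$ and $\nabla^2\partial_s\widetilde{u}$ bounds from Lemma \ref{fotuo1} together with the improved $\phi_s$ estimates from Proposition \ref{cuihua}. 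Once these higher-order coefficient estimates are in hand, the Duhamel-absorption argument of Proposition \ref{bootstrap} closes uniformly in $\theta\in[0,2]$, producing the claimed $\varepsilon_1^2$ bound.
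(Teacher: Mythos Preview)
Your proposal is correct and matches the paper's approach exactly: the paper's proof is the single line ``Proposition \ref{bootstrap} with Proposition \ref{cuihua} yields,'' which you have correctly unpacked as reading the first line directly from (\ref{huoji}) and obtaining the second by re-running the parabolic smoothing arguments of Proposition \ref{bootstrap} with the $\phi_s$-inputs from (\ref{boot5}) upgraded to $\varepsilon_1^2$ via Proposition \ref{cuihua}. Your discussion of the $\theta=2$ endpoint is in fact more careful than the paper itself, which dispatches the full range $\theta\in[0,2]$ via a terse ``by interpolation'' in the proof of (\ref{uojiknmpx3}).
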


\

\subsection{Close all the bootstrap}
\begin{lemma}
Assume that the solution to (\ref{wmap1}) satisfies (\ref{boot5}) and (\ref{boot2}), then for any $p\in(2,6+2\gamma]$
\begin{align}
{\left\| {(du,\partial_tu)} \right\|_{L_t^\infty L_x^2([0,T] \times {\Bbb H^2})}} + {\left\| {(\nabla{\partial _t}u,\nabla du)} \right\|_{L_t^\infty L_x^2([0,T] \times {\Bbb H^2})}}&\le {\varepsilon^2 _1}\label{boot8}\\
{\left\| {{\partial _t}u} \right\|_{L_t^2L_x^p([0,T] \times {\Bbb H^2})}} &\le {\varepsilon^2_1}.\label{boot9}
\end{align}
\end{lemma}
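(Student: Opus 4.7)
My approach reduces the improved bootstrap estimates to the $\varepsilon_1^2$ bounds on $\phi_s$ already established in Proposition \ref{xiaozi}, combined with energy conservation for the wave map and the caloric gauge integration identity along the heat flow.

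\textbf{Step 1: Low-order bounds by energy conservation.} Smooth solutions of \eqref{wmap1} preserve the energy
$E(u(t,\cdot)) = \tfrac{1}{2}\int_{\mathbb H^2}\bigl(|\partial_t u|^2 + |du|^2\bigr)\,\mathrm{dvol}_h = E(u_0,u_1)$.
By Corollary \ref{new2}, $\sqrt{E(u_0,u_1)} \le C(R_0)(\mu_1+\mu_2)$, which is $\le \varepsilon_1^2$ once $\mu_1,\mu_2$ are taken sufficiently small relative to $\varepsilon_1$ (consistent with \eqref{dozuoki}). This gives the $L^\infty_t L^2_x$ bound on $(du,\partial_t u)$.

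\textbf{Step 2: Caloric gauge integration.} Because $A_s\equiv 0$ in the caloric gauge, the torsion-free identity yields $\partial_s\phi_\alpha=D_\alpha\phi_s$ for $\alpha=0,1,2$. Recalling that $\phi_t(\infty)=0$ (since $\widetilde u(\infty,t,x)=Q(x)$) and $\phi_j(\infty)=\phi_j^\infty$, integration in $s$ gives
\begin{align*}
\phi_t(0)=-\int_0^\infty\bigl(\partial_t\phi_s+A_t\phi_s\bigr)\,ds,\qquad
\phi_j(0)=\phi_j^\infty-\int_0^\infty\bigl(\partial_j\phi_s+A_j\phi_s\bigr)\,ds.
\end{align*}
Taking $L^2_t L^p_x$ norms of the first identity, the $A_t\phi_s$ piece is handled by \eqref{aaop11} plus the bound on $\phi_s$ in $L^2_t L^p$ (via $|D|^{1/2}\phi_s$ and Sobolev), and the $\partial_t\phi_s$ piece is treated by invoking the $\|\omega(s)|D|^{-1/2}\partial_t\phi_s\|_{L^\infty_s L^2_t L^p_x}\le\varepsilon_1^2$ estimate from \eqref{huoji} followed by a Sobolev embedding of the type \eqref{uv311}. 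Integrability in $s$ is provided by $\int_0^\infty\omega(s)^{-1}\,ds<\infty$, yielding \eqref{boot9}.

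\textbf{Step 3: Higher-order bounds.} For $\nabla\partial_t u$ and $\nabla du$, I exploit $|\nabla_i\partial_\alpha u|=|D_i\phi_\alpha(0)|$. Differentiating the identity for $\phi_\alpha(0)$ and using $D_iD_\alpha\phi_s=D_\alpha D_i\phi_s+\mathbf R(\partial_i\widetilde u,\partial_\alpha\widetilde u)\phi_s$, I expand into terms of the form $\partial_i\partial_\alpha\phi_s$, $(\partial_i A_\alpha)\phi_s$, $A_\alpha\nabla\phi_s$, $A_iA_\alpha\phi_s$, and curvature remainders $|d\widetilde u||\partial_t\widetilde u||\phi_s|$. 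The $\phi_j^\infty$ contribution is dominated by $\|\nabla dQ\|_{L^2}\lesssim\mu_1\le\varepsilon_1^2$ by Definition \ref{2as}. The zero- and first-order-in-$\phi_s$ pieces are controlled by the $L^\infty_t L^2$ bounds $\|\omega(s)\nabla\phi_s\|_{L^\infty_s L^\infty_t L^2}+\|\omega(s)\partial_t\phi_s\|_{L^\infty_s L^\infty_t L^2}\le\varepsilon_1^2$ from Proposition \ref{xiaozi}, combined with the $L^\infty$ bounds on $A_{t,x}$ and $\partial_t A_i,\partial_i A_j$ from Proposition \ref{aaop} and Lemma \ref{aoao1}.

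\textbf{Main obstacle.} The delicate term is $\int_0^\infty\|\nabla^2\phi_s(s)\|_{L^\infty_t L^2_x}\,ds$, since Proposition \ref{xiaozi} only furnishes $\|\omega_1(s)(-\Delta)\phi_s\|_{L^\infty_s L^2_t L^p_x}\le\varepsilon_1^2$ with the wrong time norm and a weight $\omega_1(s)\sim s^{5/4}$ near $s=0$ whose inverse is not integrable. To overcome this I will use the heat-kernel representation $\phi_s(s)=e^{(s-s')\Delta}\phi_s(s')+\int_{s'}^{s}e^{(s-\tau)\Delta}G_4(\tau)\,d\tau$ with $s'=s/2$, and trade two spatial derivatives for a factor of $(s-s')^{-1}e^{-\delta(s-\tau)}$ via the smoothing effect on $\mathbb H^2$ (Lemma \ref{8.5}), reducing the required regularity of $\phi_s(s/2)$ to $L^\infty_t L^2$ where the weighted $\varepsilon_1^2$ bound on $\nabla\phi_s$ is available. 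The forcing $G_4$ is estimated as in the proof of Proposition \ref{cuihua} by splitting $A_i=A_i^\infty+A_i^{con}$ so that the weighted Strichartz weight $\rho^\sigma$ absorbs the slow-decay part of $A_i^\infty$. Integrability in $s$ is then recovered, and one obtains the $\varepsilon_1^2$ bound uniformly in $t\in[0,T]$, closing the remaining components of \eqref{boot8}.
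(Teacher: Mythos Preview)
Your Steps 1--2 are close in spirit to the paper, but Step 2 as written does not give \eqref{boot9} on the full range $p\in(2,6+2\gamma]$: controlling $|D|^{-1/2}\partial_t\phi_s$ in $L^{p_1}_x$ with $p_1<6$ only yields $\partial_t\phi_s\in L^q_x$ for $q$ roughly below $24/7$, by Lemma~2.2(b) with $\sigma_1=-\tfrac14,\ \sigma_2=0$ (the embedding goes the wrong way for $q>p_1$). The paper instead uses a \emph{positive} fractional derivative on $\partial_t\phi_s$, interpolating the $\omega_1(s)|D|\partial_t\phi_s$ bound of Proposition~\ref{xiaozi} with the $\omega(s)|D|^{-1/2}\partial_t\phi_s$ bound, so that Sobolev embeds $L^{6-\eta}$ into $L^{6+2\gamma}$.

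The essential gap is Step 3. Differentiating the $s$-integration identity forces you to control $\int_0^\infty\|\nabla^2\phi_s(s)\|_{L^\infty_tL^2_x}\,ds$, and your Duhamel-from-$s/2$ fix does not resolve the short-time singularity: the linear piece $\|(-\Delta)e^{(s/2)\Delta}\phi_s(s/2)\|_{L^2_x}\lesssim s^{-1/2}\|\nabla\phi_s(s/2)\|_{L^2_x}\lesssim\varepsilon_1^2\,s^{-1}$ is not $L^1$ near $s=0$, and the forcing term contains $A_i\partial_i\phi_s$ whose gradient reproduces $\nabla^2\phi_s$, so nothing is gained. (Invoking Proposition~\ref{cuihua} here does not help: those are wave-in-$t$ Strichartz bounds, not heat-in-$s$ bounds in $L^\infty_t$.)

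The paper avoids the second-derivative term altogether by a different mechanism. For $\nabla du$ it uses the parabolic energy identity
\[
\frac{d}{ds}\|\phi_s\|_{L^2_x}^2=-2\,h^{ii}\langle D_i\phi_s,D_i\phi_s\rangle+\langle h^{ij}(\phi_s\wedge\phi_i)\phi_j,\phi_s\rangle,
\]
integrated from $0$ to $\infty$, so that $\|\tau(u)\|_{L^2_x}^2$ is expressed via $\int_0^\infty\|\nabla\phi_s\|_{L^2_x}^2\,ds$ plus zeroth-order curvature and connection terms --- only \emph{one} spatial derivative of $\phi_s$, squared, appears. An analogous computation for $\frac{d}{ds}\|\nabla\partial_t\widetilde u\|_{L^2_x}^2$ using the $\phi_t$-heat equation \eqref{yfcvbn} reduces $\|\nabla\partial_tu\|_{L^2_x}^2$ to $\int_0^\infty\|D_t\phi_s\|_{L^2_x}^2\,ds$ plus lower-order pieces. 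This is the key idea you are missing: integrate by parts in $s$ against the parabolic structure to trade a $\nabla^2$ on $\phi_s$ for a single $\nabla$ squared, rather than trying to estimate $\nabla^2\phi_s$ pointwise in $s$.
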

\begin{proof}
First we prove (\ref{boot9}). By $D_s\phi_t=D_t\phi_s$, $A_s=0$, one has
\begin{align}
{\left\| {{\phi _t}(0,t,x)} \right\|_{L_t^2L_x^p}} \le {\left\| {\int_0^\infty  {\left| {{\partial _s}{\phi _t}} \right|} ds} \right\|_{L_t^2L_x^p}} \le {\left\| {{\partial _t}{\phi _s}} \right\|_{L_s^1L_t^2L_x^p}}+{\left\| {{A _t}{\phi _s}} \right\|_{L_s^1L_t^2L_x^p}}.
\end{align}
Sobolev embedding gives
$${\left\| {{\partial _t}{\phi _s}} \right\|_{L_x^{6 + 2\gamma }}} + {\left\| {{\phi _s}} \right\|_{L_x^{6 + 2\gamma }}} \le {\left\| {{{\left( { - \Delta } \right)}^\vartheta }{\partial _t}{\phi _s}} \right\|_{L_x^{6 - \eta }}} + {\left\| {{{\left( { - \Delta } \right)}^\vartheta }{\phi _s}} \right\|_{L_x^{6 - \eta }}},
$$
where $\frac{\vartheta }{2} = \frac{1}{{6 - \eta }} - \frac{1}{{6 + 2\gamma }}$, $0<\eta\ll1,0<\gamma\ll1$.
Thus (\ref{boot9}) follows by Proposition \ref{xiaozi}  and Proposition \ref{aaop}.
Second, we prove (\ref{boot8}). By Remark \ref{3sect}, ${\phi _i}(0,t,x) = \phi _i^\infty  + \int_0^\infty  {{\partial _s}{\phi _i}d\kappa}.$
Since $|d\widetilde{u}|\le \sqrt{h^{ii}}|\phi_i|$,  $\|\sqrt{h^{ii}}\phi _i^\infty\|_{L^2_x}\le \|dQ\|_{L^2}\le \mu_1$, it suffices to verify for any  $t,x\in[0,T]\times\Bbb H^2$
$$\int_0^\infty  \|\sqrt{h^{ii}}{\partial _s}{\phi _i}\|_{L^2_x}d\kappa\le \varepsilon^2_1.
$$
This is acceptable by Proposition \ref{aaop}, Proposition \ref{xiaozi} and $|\sqrt{h^{ii}}\partial_s\phi_i|\le |\nabla \phi_s|+\sqrt{h^{ii}}|A_i||\phi_s|$.
Recalling (\ref{poijn}) for the equation of $\phi_s$ evolving along the heat flow, we have by integration by parts,
\begin{align*}
 \frac{d}{{ds}}\left\| {\tau (\widetilde{u})} \right\|_{L_x^2}^2 &= \frac{d}{{ds}}\left\langle {{\phi _s},{\phi _s}} \right\rangle  = 2\left\langle {{D_s}{\phi _s},{\phi _s}} \right\rangle  \\
 &= 2{h^{ii}}\left\langle {{D_i}{D_i}{\phi _s} - \Gamma _{ii}^k{D_k}{\phi _s},{\phi _s}} \right\rangle +\left\langle h^{ij}(\phi_s\wedge\phi_i)\phi_j,\phi_s\right\rangle\\
 &=  - 2{h^{ii}}\left\langle {{D_i}{\phi _s},{D_i}{\phi _s}} \right\rangle+\left\langle h^{ij}(\phi_s\wedge\phi_i)\phi_j,\phi_s\right\rangle.
\end{align*}
Hence $\|\partial_s\widetilde{u}\|_{L^2_x}\le e^{-\delta s}$ shows
\begin{align}
 &\left\| {\tau (\widetilde{u}(0,t,x))} \right\|_{L_x^2}^2\lesssim \int_0^\infty  {{h^{ii}}\left\langle {{D_i}{\phi _s},{D_i}{\phi _s}} \right\rangle } ds \nonumber\\
 &\lesssim \int_0^\infty  {\left\langle {\nabla {\phi _s},\nabla {\phi _s}} \right\rangle } ds + \int_0^\infty  {{h^{ii}}\left\langle {{A_i}{\phi _s},{A_i}{\phi _s}} \right\rangle } ds+\int^{\infty}_0|d\widetilde{u}|^2|\phi_s|^2ds.\label{muijbnc}
\end{align}
The nonnegative sectional curvature property of $N=\Bbb H^2$ with integration by parts implies
$$\left\| {\nabla d\widetilde{u}} \right\|_{L_x^2}^2 \lesssim \left\| {\tau (\widetilde{u})} \right\|_{L_x^2}^2 + \left\| {d\widetilde{u}} \right\|_{L_x^2}^2.
$$
Hence (\ref{muijbnc}) gives
\begin{align}
&\left\| {\nabla d\widetilde{u}(0,t,x)} \right\|_{L_x^2}^2 \nonumber\\
&\lesssim \int_0^\infty  {\left\langle {\nabla {\phi _s},\nabla {\phi _s}} \right\rangle } ds
+\int^{\infty}_0|d\widetilde{u}|^2|\phi_s|^2ds+\int_0^\infty  {{h^{ii}}\left\langle {{A_i}{\phi _s},{A_i}{\phi _s}} \right\rangle } ds+\|d\widetilde{u}(0,t,x)\|^2_{L^2_x}.\label{hjbn7v89}
\end{align}
Since the $|d\widetilde{u}|$ term has been estimated, by Proposition \ref{xiaozi} , Proposition \ref{aaop} and (\ref{hjbn7v89}),
$$ {\left\| {\nabla d\widetilde{u}} \right\|_{L_t^\infty L_x^2([0,T] \times {\Bbb H^2})}}\le \varepsilon^2_1.
$$
Finally we prove the desired estimates for $|\nabla\partial_t\widetilde{u}|$. Integration by parts yields,
\begin{align*}
 &\frac{d}{{ds}}\left\| {\nabla {\partial _t}\widetilde{u}} \right\|_{{L^2}}^2 = \frac{d}{{ds}}{h^{ii}}\left\langle {{D_i}{\phi _t},{D_i}{\phi _t}} \right\rangle  = 2{h^{ii}}\left\langle {{D_s}{D_i}{\phi _t},{D_i}{\phi _t}} \right\rangle  \\
 &= 2{h^{ii}}\left\langle {{D_i}{D_t}{\phi _s},{D_i}{\phi _t}} \right\rangle  + 2{h^{ii}}\left\langle {({\phi _s} \wedge {\phi _i}){\phi _t},{D_i}{\phi _t}} \right\rangle  \\
 &=  - 2{h^{ii}}\left\langle {{D_t}{\phi _s},{D_i}{D_i}{\phi _t}} \right\rangle  + 2\left\langle {{D_t}{\phi _s},{D_2}{\phi _t}} \right\rangle  + 2{h^{ii}}\left\langle {({\phi _s} \wedge {\phi _i}){\phi _t},{D_i}{\phi _t}} \right\rangle  \\
 &=  - 2\left\langle {{D_t}{\phi _s},{h^{ii}}{D_i}{D_i}{\phi _t} - {h^{ii}}\Gamma _{ii}^k{D_k}{\phi _t}} \right\rangle  + 2{h^{ii}}\left\langle {({\phi _s} \wedge {\phi _i}){\phi _t},{D_i}{\phi _t}} \right\rangle.
\end{align*}
Recall (\ref{yfcvbn}), the parabolic equation of $\phi_t$ along heat flow, then
$$\frac{d}{{ds}}\left\| {\nabla {\partial _t}\widetilde{u}} \right\|_{{L^2}}^2 =  - 2\left\langle {{D_t}{\phi _s},{D_s}{\phi _t}} \right\rangle  + 2{h^{ii}}\left\langle {({\phi _s} \wedge {\phi _i}){\phi _t},{D_i}{\phi _t}} \right\rangle  + 2{h^{ii}}\left\langle {{D_t}{\phi _s},({\phi _t} \wedge {\phi _i}){\phi _i}} \right\rangle.
$$
Hence we conclude,
\begin{align*}
 &\left\| {\nabla {\partial _t}\widetilde{u}(0,t,x)} \right\|_{{L^2}}^2\\
 &\lesssim \int_0^\infty  {\left\langle {{\partial _t}{\phi _s},{\partial _t}{\phi _s}} \right\rangle d\kappa}  + \int_0^\infty  {\left\langle {{A_t}{\phi _s},{A_t}{\phi _s}} \right\rangle d\kappa}  \\
 &+ \int_0^\infty  {{{\left\| {{\phi _s}} \right\|}_{L_x^2}}{{\left\| {d\widetilde{u}} \right\|}_{L_x^\infty }}{{\left\| {{\partial _t}\widetilde{u}} \right\|}_{L_x^\infty }}{{\left\| {\nabla {\partial _t}\widetilde{u}} \right\|}_{L_x^2}}d\kappa}  + {\int_0^\infty  {\left\| {{\partial _t}\widetilde{u}} \right\|} _{L_x^2}}\left\| {d\widetilde{u}} \right\|_{L_x^\infty }^2{\left\| {{D_t}{\phi _s}} \right\|_{L_x^2}}d\kappa.
\end{align*}
Thus by Proposition \ref{sl}, Proposition \ref{xiaozi} and Proposition \ref{aaop}, we have
$$\left\| {\nabla {\partial _t}\widetilde{u}(0,t,x)} \right\|_{{L^2}}^2 \le \varepsilon _1^4.
$$
Therefore, we have proved all estimates in (\ref{boot8}) and (\ref{boot9}).
\end{proof}

We summarize what we have proved in the following corollary.
\begin{corollary}
Assume $(-T^*,T_*)$ is the lifespan of solution to (\ref{wmap1}). And let $\mu_1,\mu_2$ be sufficiently small, then we have
\begin{align*}
 {\left\| {du} \right\|_{L_t^\infty L_x^2([0,{T_*}] \times {\Bbb H^2})}}& + {\left\| {{\partial _t}u} \right\|_{L_t^\infty L_x^2([ 0,{T_*}] \times {\Bbb H^2})}} + {\left\| {\nabla du} \right\|_{L_t^\infty L_x^2([0,{T_*}] \times {\Bbb H^2})}} \\
 &+ {\left\| {\nabla {\partial _t}u} \right\|_{L_t^\infty L_x^2([0,{T_*}] \times {\Bbb H^2})}} + {\left\| {{\partial _t}u} \right\|_{L_t^2L_x^6([ 0,{T_*}] \times {\Bbb H^2})}} \le \varepsilon _1^2.
\end{align*}
Thus by Proposition \ref{global}, we have $(u,\partial_tu)$ is a global solution to (\ref{wmap1}).
\end{corollary}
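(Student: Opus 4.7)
The plan is a standard continuity/bootstrap argument built directly on the improved estimates proved in Propositions \ref{aaop}--\ref{xiaozi} and the final lemma yielding (\ref{boot8})--(\ref{boot9}). Define
\[
\mathcal{A}=\{\,T\in[0,T_*) : (u,\partial_tu) \text{ satisfies (\ref{boot2}) and (\ref{boot5}) on } [0,T]\,\}.
\]
Since we have arranged the hierarchy of smallness $\mu_2<\mu_1\ll \varepsilon_1\ll 1$, everything reduces to showing that $\mathcal{A}=[0,T_*)$, for then the displayed bound follows (with $\varepsilon_1^2$ in place of $\varepsilon_1$) and Proposition \ref{global} forces $T_*=\infty$.

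First I would verify $0\in\mathcal{A}$. For (\ref{boot2}) the spacetime pieces on $[0,0]\times\Bbb H^2$ are trivially zero, while the $L^\infty_tL^2_x$ pieces at $t=0$ are controlled by Corollary \ref{new2} and the assumption $\mu_1+\mu_2\ll\varepsilon_1$. For (\ref{boot5}) one expresses $\phi_s(0,\cdot,x)$ in terms of the heat flow $U(s,x)$ issuing from $u_0$; the analogue of Proposition \ref{sl} applied to $U$, together with the smallness of the initial data, gives the required pointwise-in-$t$ bounds (the $L^2_t$ norms on $[0,0]$ are again zero). By continuity of all these norms in the upper time endpoint $T$ (which follows from $(u,\partial_tu)\in C([0,T_*);\mathbf{H}^3\times\mathbf{H}^2)$ and Lemma \ref{new}), $\mathcal{A}$ is closed in $[0,T_*)$.

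The heart of the matter is openness. If $T\in\mathcal{A}$, the chain Proposition \ref{aaop} $\Rightarrow$ Proposition \ref{bootstrap} $\Rightarrow$ Proposition \ref{cuihua} $\Rightarrow$ Proposition \ref{xiaozi} upgrades (\ref{boot5}) from $\le\varepsilon_1$ to $\le\varepsilon_1^2$, and the final lemma of Section 5.3 upgrades (\ref{boot2}) from $\le\varepsilon_1$ to $\le\varepsilon_1^2$. Since $\varepsilon_1^2<\tfrac12\varepsilon_1$, continuity in $T$ of each of the norms appearing in (\ref{boot2}) and (\ref{boot5}) then shows that the strict bound $\le\varepsilon_1$ persists on some $[0,T+\delta]\subset[0,T_*)$, whence $\mathcal{A}$ is open. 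The anticipated subtlety is the $s$-continuity needed to transfer the upgraded bootstrap bounds to a slightly larger $T$ for the $L^\infty_s$ and $L^\infty_sL^2_tL^p_x$ norms in (\ref{boot5}); this is handled by the uniform-in-$s$ smoothing estimates of Lemmas \ref{8.5}--\ref{ktao1} and Proposition \ref{sl}, which guarantee continuity of $s\mapsto\phi_s$ in the relevant topologies.

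Combining non-emptiness, openness and closedness in the connected interval $[0,T_*)$ yields $\mathcal{A}=[0,T_*)$. The upgraded bounds (\ref{boot8})--(\ref{boot9}) then hold uniformly on $[0,T_*)$, which is precisely the displayed inequality. Finally, Proposition \ref{global} (conditional global well-posedness) applied with constant $C_1=\varepsilon_1^2$ contradicts the blow-up criterion (\ref{09ijn}) unless $T_*=\infty$, so $(u,\partial_tu)$ is global.
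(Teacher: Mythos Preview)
Your proposal is correct and matches the paper's approach. The paper itself gives no proof for this corollary---it is stated as a direct summary of the preceding bootstrap machinery (the set $\mathcal{A}$ is defined just before (\ref{boot2})--(\ref{boot5}), and Propositions \ref{aaop}--\ref{xiaozi} together with the final lemma (\ref{boot8})--(\ref{boot9}) supply the improvement $\varepsilon_1\mapsto\varepsilon_1^2$); your sketch simply spells out the standard nonempty/open/closed continuity argument that the paper leaves implicit.
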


\section{Proof of Theorem 1.1}
Finally, we prove Theorem 1.1 based on Proposition \ref{xiaozi}.
\begin{proposition}
Let $u$ be the solution to (\ref{wmap1}) in $\mathcal{X}_{[0,\infty)}$. Then as $t\to \infty$, $u(t,x)$ converges to a harmonic map, namely
$$
\mathop {\lim }\limits_{t \to \infty } \mathop {\lim }\limits_{x\in\Bbb H^2 }{\rm{dist}}_{\Bbb H^2}(u(t,x),Q(x))= 0,
$$
where $Q(x):\Bbb H^2\to \Bbb H^2$ is the unperturbed harmonic map.
\end{proposition}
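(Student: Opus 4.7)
The plan is to reduce the convergence $u(t,\cdot)\to Q$ as $t\to\infty$ to the time decay of the heat tension field $\phi_s$, exploiting Proposition \ref{xiaozi}. The starting point is that for each fixed $(t,x)$ the heat flow $s\mapsto \widetilde{u}(s,t,x)$ is a smooth path in $\Bbb H^2$ joining $u(t,x)$ at $s=0$ to $Q(x)$ at $s=\infty$ (by Lemma \ref{z8vcxzvb}). Comparing geodesic distance with arc length, one has
\begin{equation*}
d_{\Bbb H^2}(u(t,x),Q(x))\le\int_0^\infty|\partial_s\widetilde{u}(s,t,x)|\,ds=\int_0^\infty|\phi_s(s,t,x)|\,ds,
\end{equation*}
so taking suprema,
\begin{equation*}
\sup_{x\in\Bbb H^2}d_{\Bbb H^2}(u(t,x),Q(x))\le\int_0^\infty\|\phi_s(s,t,\cdot)\|_{L^\infty_x}\,ds\triangleq G(t).
\end{equation*}

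Next I would upgrade $L^p_x$ regularity on $\phi_s$ to $L^\infty_x$ via the Sobolev embedding in Lemma \ref{wusijue}: choose $p$ close to $6$ and $\theta>2/p$ small, so that $\|\phi_s(s,t,\cdot)\|_{L^\infty_x}\lesssim \|(-\Delta)^{\theta}\phi_s(s,t,\cdot)\|_{L^p_x}$. With $\theta$ taken just above $1/3$ (and $\theta<3/4$), the weight $\omega_\theta(s)=s^{\theta+1/4}$ on $[0,1]$ satisfies $\theta+1/4<1$, while on $[1,\infty)$ we have $\omega_\theta(s)=s^L$ with $L$ large; hence $1/\omega_\theta\in L^1_s(\Bbb R^+)$. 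Proposition \ref{xiaozi} gives $\|\omega_\theta(s)(-\Delta)^\theta\phi_s\|_{L^\infty_sL^2_tL^p_x}\lesssim\varepsilon_1^2$, and Minkowski's inequality then yields
\begin{equation*}
\|G\|_{L^2_t(\Bbb R^+)}\le\int_0^\infty\|(-\Delta)^\theta\phi_s(s,\cdot,\cdot)\|_{L^2_tL^p_x}\,ds\lesssim \varepsilon_1^2\int_0^\infty\frac{ds}{\omega_\theta(s)}\lesssim \varepsilon_1^2.
\end{equation*}

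To pass from $L^2_t$ integrability to the full limit $G(t)\to 0$ as $t\to\infty$, I would control $G'(t)$ and apply the fact that $H^1(\Bbb R^+)\hookrightarrow C_0(\Bbb R^+)$. Formally,
\begin{equation*}
|G'(t)|\le\int_0^\infty\|\partial_t\phi_s(s,t,\cdot)\|_{L^\infty_x}\,ds,
\end{equation*}
and the same Sobolev--Minkowski scheme, now invoking the bound $\|\omega_1(s)|D|\partial_t\phi_s\|_{L^\infty_sL^2_tL^p_x}\lesssim\varepsilon_1^2$ from Proposition \ref{xiaozi}, shows $G'\in L^2_t$. Thus $G\in H^1(\Bbb R^+)$, and in particular $G(t)\to 0$ as $t\to\infty$, which is exactly the claim (the inner $\mathop{\rm lim}\nolimits_{x\in\Bbb H^2}$ in the statement being the supremum over $x$).

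The main technical obstacle is the endpoint gain from $L^p_x$ to $L^\infty_x$ together with the integrability in $s$ near $s=0$: one must carefully select $\theta$ and $p$ so that the Sobolev exponent is strictly above $2/p$ while the weight $s^{\theta+1/4}$ remains $L^1$-reciprocal. A secondary subtlety is justifying differentiating $G$ under the integral sign, which follows from the uniform estimates on $\partial_t\phi_s$ provided by Proposition \ref{xiaozi} together with dominated convergence.
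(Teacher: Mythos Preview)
Your strategy---bounding the supremum distance by $G(t)=\int_0^\infty\|\phi_s(s,t,\cdot)\|_{L^\infty_x}\,ds$, showing $G\in L^2_t$ via Sobolev plus Minkowski, and then upgrading to $G(t)\to0$ by controlling $G'$---is sound in outline, and the $G$ part goes through exactly as you describe: with $\theta\in(1/p,3/4)$ the reciprocal of $\omega_\theta(s)=s^{\theta+1/4}$ is integrable on $(0,1)$.

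The gap is in the $G'$ step. The only $L^2_tL^p_x$ bound on $\partial_t\phi_s$ you cite from Proposition~\ref{xiaozi} is $\|\omega_1(s)|D|\partial_t\phi_s\|_{L^\infty_sL^2_tL^p_x}$, and here $\omega_1(s)=s^{5/4}$ on $(0,1)$. If you now apply the Sobolev step $\|\partial_t\phi_s\|_{L^\infty_x}\lesssim\||D|\partial_t\phi_s\|_{L^p_x}$ and Minkowski, you are left with $\int_0^1 s^{-5/4}\,ds$, which diverges. In your own bookkeeping this is the failure of the criterion $\theta+\tfrac14<1$ at $\theta=1$. So as written the $H^1_t$ argument does not close.

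The fix is an interpolation in $s$-weight: combine the bound above with, say, $\|a(s)\partial_t\phi_s\|_{L^\infty_sL^2_tL^p_x}\lesssim\varepsilon_1$ from Proposition~\ref{bootstrap} (where $a(s)=s^{3/4}$), or equivalently with $\|\omega(s)|D|^{-1/2}\partial_t\phi_s\|$ from Proposition~\ref{cuihua}. Gagliardo--Nirenberg gives $\|\partial_t\phi_s\|_{L^\infty_x}\lesssim\|\partial_t\phi_s\|_{L^p_x}^{1-\alpha}\||D|\partial_t\phi_s\|_{L^p_x}^{\alpha}$ for any $\alpha>2/p$; with $p$ close to $6$ and $\alpha\in(1/3,1/2)$ the effective weight becomes $s^{3/4+\alpha/2}$, whose reciprocal \emph{is} integrable near $0$. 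With this correction your global $H^1_t$ argument works.

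For comparison, the paper avoids the $s\to0$ issue altogether by splitting $\int_0^\infty=\int_0^\mu+\int_\mu^T+\int_T^\infty$. The two tail pieces are bounded \emph{uniformly in $t$} by $C\mu^{1/2}$ and $Ce^{-T/8}$ respectively, using only $|\partial_s\widetilde u|\le e^{s\Delta}|\tau(u)|$ and $\|\nabla du\|_{L^\infty_tL^2_x}<\infty$. The middle piece is controlled by $\|\phi_s(\mu/2,t,\cdot)\|_{L^4_x}$ via heat smoothing, and its decay as $t\to\infty$ follows from $\|\phi_s(\mu)\|_{L^2_tL^4_x}+\|\partial_t\phi_s(\mu)\|_{L^2_tL^4_x}<\infty$ at the \emph{fixed} level $s=\mu>0$, together with the fundamental theorem of calculus---so no weight near $s=0$ ever enters. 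Your route is more streamlined once the interpolation is inserted; the paper's route is more robust in that it needs only the crudest $L^2_tL^p_x$ control at a single $s$-level.
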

\begin{proof}
For $u(t,x)$, by Proposition \ref{3.3}, we have the corresponding heat flow converges to some harmonic map uniformly for $x\in\Bbb H^2$. Then by the definition of the distance on complete manifolds, we have
\begin{align}\label{ppo0}
{\rm{dist}_{{\Bbb H^2}}}(u(t,x),Q(x)) \le \int_0^\infty  {{{\left\| {{\partial _s}\widetilde{u}} \right\|}_{L_x^\infty }}ds}.
\end{align}
For any $T>0$, $\mu>0$, since $|\partial_s\widetilde{u}|$ satisfies $(\partial_s-\Delta)|\partial_s\widetilde{u}|\le 0$, one has
\begin{align}
 \int_T^\infty  {{{\left\| {{\partial _s}\widetilde{u}(s,t,x)} \right\|}_{L_x^\infty }}ds}  &\lesssim \int_T^\infty  {{e^{ - \frac{1}{8}s}}{{\left\| {\tau(u(t,x))} \right\|}_{L_x^2}}} ds \lesssim {e^{ - T/8}}{\left\| {\nabla du(t,x)} \right\|_{L_x^2}} \label{mu1}\\
 \int_0^\mu  {{{\left\| {{\partial _s}\widetilde{u}(s,t,x)} \right\|}_{L_x^\infty }}ds}  &\lesssim \int_0^\mu  {{{\left\| {{e^{s{\Delta _{{\Bbb H^2}}}}}\tau(u(t,x))} \right\|}_{L_x^\infty }}} ds \le \int_0^\mu  {{s^{ - \frac{1}{2}}}{{\left\| {\nabla du(t,x)} \right\|}_{L_x^2}}} ds \nonumber\\
 &\lesssim {\mu ^{\frac{1}{2}}}{\left\| {\nabla du(t,x)} \right\|_{L_x^2}} \label{mu2}
 \end{align}
Similarly, we have
\begin{align}
 \int_\mu ^T {{{\left\| {{\partial _s}\widetilde{u}(s,t,x)} \right\|}_{L_x^\infty }}ds}
 &\lesssim \int_\mu ^T {{{\left\| {{e^{(s - \frac{\mu }{2}){\Delta _{{\Bbb H^2}}}}}{\partial _s}\widetilde{u}(\frac{\mu}{2},t,x)} \right\|}_{L_x^\infty }}} ds \nonumber\\
 &\lesssim \int_\mu ^T {{{(s - \frac{\mu}{2})}^{ - \frac{1}{4}}}{{\left\| {{\partial _s}\widetilde{u}(\frac{\mu}{2},t,x)} \right\|}_{L_x^4}}} ds \nonumber\\
 &\lesssim {\mu ^{ - \frac{1}{4}}}\int_\mu ^T {{{\left\| {{\phi _s}(\frac{\mu}{2},t,x)} \right\|}_{L_x^4}}} ds.\label{mu3}
\end{align}
Therefore it suffices to prove for a fixed $\mu>0$
\begin{align}\label{8ding}
\mathop {\lim }\limits_{t \to \infty } {\left\| {{\phi _s}(\mu )} \right\|_{L_x^4}} = 0.
\end{align}
Proposition \ref{xiaozi} implies ${\mu ^{\frac{1}{2}}}{\left\| {{\phi _s}(\mu )} \right\|_{L_t^2L_x^4}}+{\mu ^{\frac{1}{2}}}{\left\| \partial_t{{\phi _s}(\mu )} \right\|_{L_t^2L_x^4}}< \infty $, thus for any $\epsilon>0$ there exists a $T_0$ such that
\begin{align}\label{7vgj}
{\left\| {{\phi _s}(\mu )} \right\|_{L_t^2L_x^4([{T_0},\infty ) \times {\Bbb H^2})}}+ {\left\|\partial_t {{\phi _s}(\mu )} \right\|_{L_t^2L_x^4([{T_0},\infty ) \times {\Bbb H^2})}}< \epsilon.
\end{align}
Particularly, for any interval $[a,a+1]$ of length one with $a\ge T_0$, there exists some $t_{a}\in[a,a+1]$ such that
\begin{align}\label{shipo}
{\left\| {{\phi _s}(\mu ,{t_{a}})} \right\|_{L_x^4}} \le \epsilon/2.
\end{align}
Then by fundamental theorem of calculus for any $t'\in[a,a+1]$
\begin{align}\label{gouq9}
\left| {{{\left\| {{\phi _s}(\mu ,t')} \right\|}_{L_x^4}} - {{\left\| {{\phi _s}(\mu ,{t_a})} \right\|}_{L_x^4}}} \right| \le \int_{{t_a}}^{t'} {\left| {{\partial _t}{{\left\| {{\phi _s}(\mu ,t)} \right\|}_{L_x^4}}} \right|} dt.
\end{align}
Since $|{\partial _t}{\left\| {{\phi _s}(\mu ,t)} \right\|_{L_x^4}}|\le {\left\| {{\partial _t}{\phi _s}(\mu ,t)} \right\|_{L_x^4}}$, by H\"older, (\ref{gouq9}) and (\ref{7vgj}) show
\begin{align*}
\left| {{{\left\| {{\phi _s}(\mu ,t')} \right\|}_{L_x^4}} - {{\left\| {{\phi _s}(\mu ,{t_a})} \right\|}_{L_x^4}}} \right| \le {\left\| {{\partial _t}{\phi _s}(\mu ,t)} \right\|_{L_t^2L_x^4}}{(t' - a)^{\frac{1}{2}}} \le {\left\| {{\partial _t}{\phi _s}(\mu ,t)} \right\|_{L_t^2L_x^4}}.
\end{align*}
Thus we have by (\ref{shipo}) that for any $t\in[a,a+1]$,
\begin{align*}
{\left\| {{\phi _s}(\mu ,t)} \right\|_{L_x^4}}\le \epsilon.
\end{align*}
Since $a$ is arbitrary chosen, we obtain (\ref{8ding}). Therefore, Theorem 1.1 is proved,
\end{proof}

\section{Proof of remaining lemmas and claims}
We first collect some useful inequalities for the harmonic maps.
\begin{lemma}
Suppose that $Q$ is an admissible harmonic map in Theorem 1.1. If $0<\mu_1\ll 1$, then
\begin{align}
\|\nabla dQ\|_{L^2}&\lesssim \mu_1\label{shubai4}\\
\|\nabla^2 dQ\|_{L^2}&\lesssim \mu_1.\label{tianren78}
\end{align}
\end{lemma}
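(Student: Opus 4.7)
The strategy is to exploit the Eells--Sampson--Bochner identity for the harmonic map $Q$ combined with the pointwise smallness assumption $\|dQ\|_{L^\infty}+\|\nabla dQ\|_{L^\infty}+\|\nabla^2 dQ\|_{L^\infty}<\mu_1$ and the exponential decay $e^{\varrho r}|dQ|\in L^\infty$. The nontrivial point is that $\mathbb{H}^2$ is non-compact, so every integration by parts must be justified by a cutoff argument; the exponential decay of $|dQ|$ makes all boundary contributions vanish in the limit $R\to\infty$.

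\noindent\textbf{Step 1 (proof of (\ref{shubai4})).} For a harmonic map $Q:M\to N$ the Bochner identity reads
\begin{equation*}
\tfrac12\Delta_M|dQ|^2=|\nabla dQ|^2+\langle dQ,\mathrm{Ric}^M(dQ)\rangle-\sum_{i,j}\langle \mathbf{R}^N(dQ(e_i),dQ(e_j))dQ(e_j),dQ(e_i)\rangle.
\end{equation*}
With $M=N=\mathbb{H}^2$, $\mathrm{Ric}^M=-h$ so the first curvature term contributes $-|dQ|^2$, while the target curvature term is non-negative (it is the sum of squares of the $2\times 2$ minors of the differential, by the constant sectional curvature formula). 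Multiplying by a cutoff $\chi_R(x)=\chi(r(x)/R)$, integrating on $\mathbb{H}^2$ and sending $R\to\infty$ (the boundary contribution is $O(\int_{R\le r\le 2R}|dQ||\nabla dQ|R^{-1}\,d\mathrm{vol})\to 0$ by $e^{\varrho r}|dQ|\in L^\infty$ together with $\|\nabla dQ\|_{L^\infty}\le\mu_1$), one obtains
\begin{equation*}
\int_{\mathbb{H}^2}|\nabla dQ|^2\,d\mathrm{vol}_h+\int_{\mathbb{H}^2}(\text{non-negative})\,d\mathrm{vol}_h=\int_{\mathbb{H}^2}|dQ|^2\,d\mathrm{vol}_h\le \mu_1^2,
\end{equation*}
which yields (\ref{shubai4}).

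\noindent\textbf{Step 2 (proof of (\ref{tianren78})).} The Weitzenb\"ock identity applied to $dQ\in\Gamma(T^*M\otimes Q^*TN)$ combined with $\tau(Q)=0$ (hence $d\tau(Q)=0$) gives the pointwise equation
\begin{equation*}
\Delta dQ=\mathrm{Ric}^M(dQ)-\mathrm{tr}_h\mathbf{R}^N(dQ,dQ)dQ,
\end{equation*}
where $\Delta$ denotes the rough Laplacian on the tensor bundle. The right-hand side is pointwise bounded by $|dQ|+C|dQ|^3$, so taking $L^2$ norms and using $\|dQ\|_{L^2},\|dQ\|_{L^\infty}\lesssim\mu_1$ we get $\|\Delta dQ\|_{L^2}\lesssim \mu_1+\mu_1^3\lesssim\mu_1$. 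A second cutoff integration by parts produces the standard identity
\begin{equation*}
\int_{\mathbb{H}^2}|\nabla^2 dQ|^2\,d\mathrm{vol}_h=\int_{\mathbb{H}^2}|\Delta dQ|^2\,d\mathrm{vol}_h+\mathcal{R},
\end{equation*}
where the curvature correction $\mathcal{R}$ is a linear combination of integrals of $\mathrm{Ric}^M,\mathbf{R}^N$ paired against $\nabla dQ,dQ$, and is controlled by $\|\nabla dQ\|_{L^2}^2+\|dQ\|_{L^\infty}^2\|\nabla dQ\|_{L^2}^2+\|dQ\|_{L^\infty}^4\|dQ\|_{L^2}^2\lesssim\mu_1^2$ thanks to Step 1 and the pointwise smallness. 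Combining gives $\|\nabla^2 dQ\|_{L^2}\lesssim\mu_1$.

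\noindent\textbf{Main obstacle.} The only nontrivial analytic point is the cutoff-based justification of the two integrations by parts, because we cannot assume a priori that $|dQ|^2$ or $|\nabla dQ|^2$ lies in a Sobolev space with good decay. This is handled by writing $\chi_R=\chi(r/R)$ with $|\nabla\chi_R|\lesssim R^{-1}\mathbf{1}_{R\le r\le 2R}$ and noting that every boundary term carries at least one factor of $|dQ|$ (hence exponential decay from $e^{\varrho r}|dQ|\in L^\infty$), while the remaining factors ($|\nabla dQ|$, $|\nabla^2 dQ|$) are bounded in $L^\infty$; thus the boundary contributions tend to $0$ as $R\to\infty$.
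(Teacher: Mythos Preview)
Your overall strategy coincides with the paper's: both arguments integrate the Bochner identity for the harmonic map (giving $\|\nabla dQ\|_{L^2}^2\lesssim\|dQ\|_{L^2}^2+\|\tau(Q)\|_{L^2}^2$ with $\tau(Q)=0$), and for the second derivative both relate $\|\nabla^2 dQ\|_{L^2}^2$ to $\|\nabla\tau(Q)\|_{L^2}^2$ (equivalently $\|\Delta dQ\|_{L^2}^2$) plus curvature remainders. The only structural difference is cosmetic: the paper handles the cubic/quartic remainder via Gagliardo--Nirenberg and Sobolev on $\|\nabla dQ\|_{L^4}\|dQ\|_{L^4}$, using only $\|dQ\|_{L^2}\le\mu_1$, whereas you invoke the $L^\infty$ smallness of $dQ$ directly. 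Both are fine under the stated hypotheses.

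There is, however, a genuine flaw in your cutoff justification. You claim the boundary terms vanish because they carry a factor $|dQ|\lesssim e^{-\varrho r}$ while the other factors are merely in $L^\infty$. But on $\mathbb{H}^2$ the annulus $\{R\le r\le 2R\}$ has volume $\sim e^{2R}$, so
\[
R^{-1}\int_{R\le r\le 2R}|dQ|\,|\nabla dQ|\,d\mathrm{vol}_h
\;\lesssim\; R^{-1}\mu_1^2\int_R^{2R} e^{(1-\varrho)r}\,dr,
\]
which diverges whenever $\varrho<1$; and the paper explicitly takes $0<\varrho\ll 1$. The same issue affects your Step~2 boundary terms (not all of which even carry a factor of $|dQ|$, e.g.\ $\int\nabla\chi_R\cdot\langle\nabla^2 dQ,\nabla dQ\rangle$). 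The fix is immediate: Definition~1.1 already assumes $\|\nabla^k dQ\|_{L^2}<\infty$ for $k=0,1,2$, so by Cauchy--Schwarz every boundary term is dominated by $R^{-1}\|\nabla^a dQ\|_{L^2(r\ge R)}\|\nabla^b dQ\|_{L^2(r\ge R)}\to 0$. Replace your $L^\infty$-based justification with this, and the proof is complete.
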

\begin{proof}
By integration by parts and the non-positive sectional curvature of $N=\Bbb H^2$,
\begin{align*}
\|\nabla dQ\|^2_{L^2}&\lesssim \|dQ\|^2_{L^2}+\|\tau(Q)\|^2_{L^2}\\
\|\nabla^2 dQ\|^2_{L^2}&\lesssim \|\nabla\tau(Q)\|^2_{L^2}+\|\nabla dQ\|^3_{L^2}+\|\nabla dQ\|^2_{L^4}\| dQ\|^2_{L^4}+\| dQ\|^6_{L^2}.
\end{align*}
Hence by $\tau(Q)=0$, we have (\ref{shubai4}). And then (\ref{tianren78}) follows from (\ref{as4}), Gagliardo-Nirenberg inequality and Sobolev embedding.
\end{proof}

Now we prove Corollary 2.1.
\begin{lemma}
Fix $R_0>0$, let $0<\mu_1,\mu_2\ll\mu_3\ll1,$ then the initial data $(u_0,u_1)$ in Theorem 1.1 satisfy
\begin{align}
\|du_0\|_{L^2}+\|u_1\|_{L^2}+\|\nabla du_0\|_{L^2}+\|\nabla u_1\|_{L^2}\le \mu_3.\label{ojvbhuy}
\end{align}
\end{lemma}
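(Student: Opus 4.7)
The plan is to reduce the intrinsic estimate to the extrinsic one via Lemma \ref{new}, and to handle the three components $dQ$, $d(u_0-Q)$, and $u_1$ separately, exploiting that the image of $u_0$ stays in a bounded region of the Poincar\'e disk so that all target Christoffel symbols and metric components $g_{ij}(u_0)$ are controlled by $C(R_0)$.

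First, I would bound the harmonic map contribution. From the assumption $\|dQ\|_{L^2}<\mu_1$ we already have the $L^2$ bound on $dQ$. For $\|\nabla dQ\|_{L^2}$, I would apply (\ref{shubai4}), which follows from integration by parts, the non-positive curvature of the target, and $\tau(Q)=0$; this yields $\|\nabla dQ\|_{L^2}\lesssim\mu_1$. So the $Q$-part of every quantity on the left-hand side of (\ref{ojvbhuy}) is of size $O(\mu_1)$.

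Second, I would handle $u_0$. Since (\ref{as3}) gives $\|u_0^j-Q^j\|_{H^2}<\mu_2$ for $j=1,2$, we get $\|u_0\|_{\mathbf{H}^2_Q}<\mu_2$. Because ${\bf H}^2\hookrightarrow L^\infty$ (Remark 9.1) and $Q$ has image in the geodesic ball $B(0,R_0)\subset D$, we deduce $\|u_0^j\|_{L^\infty}\le C(R_0)$, so the target metric components $g_{kl}(u_0)=\mathrm{diag}(e^{-2u_0^2},1)$ and the Christoffel symbols $\overline{\Gamma}^k_{ij}(u_0)$ are bounded by $C(R_0)$. Writing $|du_0|^2 = h^{ij}g_{kl}(u_0)\partial_iu_0^k\partial_ju_0^l$ and expanding $\partial_iu_0^k=\partial_iQ^k+\partial_i(u_0^k-Q^k)$, the bound $\|du_0\|_{L^2}\le C(R_0)(\mu_1+\mu_2)$ follows. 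For $\|\nabla du_0\|_{L^2}$ the same strategy applies with an extra term involving $\overline{\Gamma}(u_0)du_0\otimes du_0$; this is a pointwise polynomial in $du_0$, and by Sobolev embedding plus Gagliardo-Nirenberg the resulting $L^2$ integrals are controlled by $C(R_0)(\mu_1+\mu_2)$. This step is exactly the content of (\ref{jia9}) in Lemma \ref{new}, which I would invoke directly.

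Third, for the tangent-vector component $u_1$: the hypothesis gives $\|u_1^j\|_{H^1}<\mu_2$. The intrinsic $L^2$ norm is $\int g_{kl}(u_0)u_1^ku_1^l\,\mathrm{dvol}_h$, which is bounded by $C(R_0)\|u_1\|_{L^2}^2\le C(R_0)\mu_2^2$ by the previous step. The covariant derivative in $u_0^*(TN)$ reads $\nabla_i u_1^l=\partial_i u_1^l+\overline{\Gamma}^l_{jk}(u_0)\partial_iu_0^j\,u_1^k$, so after H\"older, Sobolev embedding ${\bf H}^1\hookrightarrow L^p$ for $p<\infty$, and the $du_0$ bound obtained just now, I arrive at $\|\nabla u_1\|_{L^2}\le C(R_0)(\mu_1+\mu_2)+C(R_0)\|du_0\|_{L^4}\|u_1\|_{L^4}\le C(R_0)(\mu_1+\mu_2)$.

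Combining the three steps gives $\|du_0\|_{L^2}+\|\nabla du_0\|_{L^2}+\|u_1\|_{L^2}+\|\nabla u_1\|_{L^2}\le C(R_0)(\mu_1+\mu_2)$, which is $\le\mu_3$ once $\mu_1,\mu_2$ are taken small compared to $\mu_3$. The main (minor) obstacle is keeping track that the target Christoffel symbols in the coordinate representation of $\nabla u_1$ are evaluated at $u_0$ rather than at $Q$; this is handled by the $L^\infty$ bound on $u_0$ coming from Sobolev embedding plus the triangle inequality $\|u_0\|_{L^\infty}\le\|Q\|_{L^\infty}+\|u_0-Q\|_{L^\infty}\le C(R_0)+C\mu_2$.
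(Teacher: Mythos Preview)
Your proposal is correct and follows essentially the same route as the paper: obtain an $L^\infty$ bound on $u_0$ via Sobolev embedding and the compactness of $Q(D)$, use it to control $g_{kl}(u_0)$ and $\overline{\Gamma}(u_0)$ by $C(R_0)$, then convert the extrinsic $H^k$ bounds into intrinsic ones, finally choosing $\mu_1,\mu_2$ small relative to $\mu_3$. The paper packages the coordinate computation for $\|du_0\|_{L^2}+\|\nabla du_0\|_{L^2}$ into a citation of [Lemma~2.3,\,\cite{LZ}] (the proof of Lemma~\ref{new}), whereas you spell it out directly; you also treat $\|u_1\|_{L^2}+\|\nabla u_1\|_{L^2}$ explicitly, which the paper's written proof leaves implicit. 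One small caveat: invoking (\ref{jia9}) literally is not quite enough, since the inequality $\|u\|_{\mathfrak{H}^k}\le\mathcal{Q}(\|u\|_{{\bf H}^k_Q})C(R_0,\|u\|_{{\bf H}^2_Q})$ does not by itself exhibit smallness---you need the explicit coordinate expansion (which you in fact give) or the precise form of $\mathcal{Q}$ from \cite{LZ}, so it is better to rely on your direct computation rather than on the abstract statement of Lemma~\ref{new}.
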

\begin{proof}
First by (\ref{shubai4}), the harmonic map $Q$ satisfies
\begin{align}\label{kijncvbn}
\|\nabla dQ\|_{L^2}+\|dQ\|_{L^2}\le \mu_1.
\end{align}
By (1.4) and Sobolev embedding,
\begin{align}
\|u^k_0-Q^k\|_{L^{\infty}}\lesssim \|u^k_0-Q^k\|_{H^2}\le \mu_2.
\end{align}
Hence $|u^1_0|+|u^2_0|\lesssim R_0+\mu_2.$ Then choosing $R=CR_0+C\mu_2$ in [Lemma 2.3,\cite{LZ}], we have
\begin{align}\label{zxzxsdu}
\|du_0\|_{L^2}+\|\nabla du_0\|_{L^2}\le Ce^{8(CR_0+C\mu_2)}\big(\|\nabla^2 u^k_0\|_{L^2}+\|\nabla^2 u^k_0\|_{L^2}^2\big).
\end{align}
Again by [Lemma 2.3,\cite{LZ}] and (\ref{kijncvbn}),
\begin{align}\label{zw34vbg}
\|\nabla^2Q^k\|_{L^2}\le Ce^{8(R_0)}\big(\|\nabla dQ\|_{L^2}+\|\nabla dQ\|_{L^2}^2\big)\le Ce^{8(R_0)}\mu_1.
\end{align}
Therefore, (1.4), (\ref{zw34vbg}) and (\ref{zxzxsdu}) give
\begin{align}
\|du_0\|_{L^2}+\|\nabla du_0\|_{L^2}\le Ce^{8(CR_0+C\mu_2)}(\mu_1+\mu_2)
\end{align}
Let $\mu_1$ and $\mu_2$ be sufficiently small depending on $R_0$, we obtain
\begin{align}
\|du_0\|_{L^2}+\|\nabla du_0\|_{L^2}\le \mu_3.
\end{align}
\end{proof}

\begin{lemma}\label{symm}
Let $W$ be the magnetic operator defined in Lemma \ref{hushuo} as
\begin{align}
W\varphi  = - 2 {h^{ii}}A_i^\infty {\partial _i}\varphi  -{h^{ii}}A_i^\infty A_i^\infty \varphi  -{h^{ii}}\left( {\varphi  \wedge \phi _i^\infty } \right)\phi _i^\infty-h^{ii}(\partial_iA^{\infty}_{i}-\Gamma^k_{ii}A^{\infty}_k),
\end{align}
Then $W$ is symmetric with domain $C^{\infty}_c(\Bbb H^2,\Bbb C^2)$. And $-\Delta+W$ is strictly positive if $\mu_1$ is sufficiently small.
\end{lemma}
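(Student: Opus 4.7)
The plan is to recognize $-\Delta+W$ as a manifestly non-negative quadratic form obtained by ``completing the covariant square''. Introduce the metric connection $D_i=\partial_i+A^{\infty}_i$ on the trivial $\mathbb{C}^2$-bundle over $\mathbb{H}^2$. Since $\{\Xi_1(Q),\Xi_2(Q)\}$ is an orthonormal frame, the matrices $A^{\infty}_i$ are real and skew-symmetric, so $D$ is compatible with the Hermitian inner product on $\mathbb{C}^2$. A direct expansion of $D_iD_i\varphi$ yields the identity
\begin{align*}
-\Delta+W \;=\; -h^{ii}\bigl(D_iD_i-\Gamma^{k}_{ii}D_k\bigr) \;-\;h^{ii}(\,\cdot\,\wedge\phi^{\infty}_i)\phi^{\infty}_i ,
\end{align*}
where the four pieces of $W$ are exactly the terms produced when commuting $\partial_i$ with $A^{\infty}_i$ in the covariant Laplacian.

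Next I would compute the associated bilinear form on $C^{\infty}_c(\mathbb{H}^2;\mathbb{C}^2)$. Integrating by parts against $\mathrm{dvol}_h$ and using the $D$-compatibility transfers one factor of $D_i$ onto the test function, while the Christoffel terms absorb precisely the $\partial_i\sqrt{|h|}$ produced by integration by parts. Using the identity $(\varphi\wedge\phi^{\infty}_i)\phi^{\infty}_i=\langle\varphi,\phi^{\infty}_i\rangle\phi^{\infty}_i-|\phi^{\infty}_i|^2\varphi$ from the definition of $\wedge$, the outcome is
\begin{align*}
\langle(-\Delta+W)\varphi,\psi\rangle = \int h^{ii}\langle D_i\varphi,D_i\psi\rangle\,\mathrm{dvol}_h + \int h^{ii}\bigl[|\phi^{\infty}_i|^2\langle\varphi,\psi\rangle-\langle\varphi,\phi^{\infty}_i\rangle\langle\phi^{\infty}_i,\psi\rangle\bigr]\mathrm{dvol}_h,
\end{align*}
valid for every $\varphi,\psi\in C^{\infty}_c(\mathbb{H}^2;\mathbb{C}^2)$. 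This bilinear form is manifestly symmetric in $(\varphi,\psi)$, so $-\Delta+W$ is symmetric on $C^{\infty}_c$; since $-\Delta$ is itself symmetric, so is $W$, settling the first claim.

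Strict positivity then reduces to two elementary pointwise inequalities when I take $\psi=\varphi$. First, the curvature integrand satisfies $|\phi^{\infty}_i|^2|\varphi|^2-|\langle\varphi,\phi^{\infty}_i\rangle|^2\ge 0$ by Cauchy--Schwarz. Second, skew-symmetry of $A^{\infty}_i$ gives $\mathrm{Re}\langle A^{\infty}_i\varphi,\varphi\rangle_{\mathbb{C}^2}=0$, whence $\partial_i|\varphi|^2=2\,\mathrm{Re}\langle D_i\varphi,\varphi\rangle$ and the pointwise diamagnetic inequality $|\nabla|\varphi||\le|D\varphi|$ (the $\mathbb{C}^2$-valued analogue of Lemma 2.2(a)). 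Combining this with the spectral gap $-\Delta\ge \tfrac14$ on $L^2(\mathbb{H}^2)$ yields
\begin{align*}
\int h^{ii}|D_i\varphi|^2\,\mathrm{dvol}_h \;\ge\; \int|\nabla|\varphi||^2\,\mathrm{dvol}_h \;\ge\; \tfrac14\|\varphi\|_{L^2}^2,
\end{align*}
so altogether $\langle(-\Delta+W)\varphi,\varphi\rangle\ge \tfrac14\|\varphi\|_{L^2}^2$ uniformly for $\varphi\in C^{\infty}_c$. In particular $-\Delta+W$ is strictly positive; the smallness of $\mu_1$ is needed only to keep $A^{\infty}_i$ and $\phi^{\infty}_i$ in $L^\infty$ (so that all integrals converge absolutely) and, as in the preceding proof of Lemma 4.3, for the Kato perturbation step that upgrades this symmetric realization to a self-adjoint one.

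The main obstacle I anticipate is not analytic but bookkeeping: verifying, via a careful expansion of $-h^{ii}(D_iD_i-\Gamma^k_{ii}D_k)$, that the four first- and zeroth-order pieces of $W$ assemble with the correct signs to convert $-\Delta$ into the covariant Laplacian plus the $\wedge$-potential. Once this algebraic identity is pinned down, the quadratic-form argument above is essentially automatic and yields both symmetry and the explicit lower bound $\tfrac14$.
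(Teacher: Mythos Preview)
Your proof is correct and takes a genuinely different route from the paper's. The paper argues by hand: it verifies symmetry of each piece of $W$ separately via explicit integration by parts (using (\ref{kjc6789}) and the skew-symmetry of $A^{\infty}_i$), then for positivity it \emph{drops} the non-negative $\wedge$-term and the $-h^{ii}A^{\infty}_iA^{\infty}_i$ term and treats the remaining first-order piece $2h^{ii}A^{\infty}_i\partial_i$ as a perturbation of $-\Delta$, bounding $\langle 2h^{ii}A^{\infty}_i\partial_i f,f\rangle$ by $2\mu_1\|\nabla f\|_{L^2}\|f\|_{L^2}$ and absorbing it using the spectral gap only when $\mu_1$ is small.

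Your approach is cleaner and strictly stronger: by recognizing $-\Delta+W$ as the bundle rough Laplacian $-h^{ii}(D_iD_i-\Gamma^k_{ii}D_k)$ plus the non-negative curvature potential, and then applying the diamagnetic inequality $|\nabla|\varphi||\le|D\varphi|$ together with the scalar spectral gap, you obtain the uniform lower bound $\langle(-\Delta+W)\varphi,\varphi\rangle\ge\tfrac14\|\varphi\|_{L^2}^2$ \emph{without} any smallness assumption on $\mu_1$. In other words, your argument shows that the hypothesis ``$\mu_1$ sufficiently small'' in the statement is actually superfluous for strict positivity (the admissibility of $Q$ already puts $A^{\infty}_i,\phi^{\infty}_i$ in $L^\infty$). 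The paper's perturbative argument, by contrast, genuinely consumes the smallness. What your approach buys is a quantitative gap and a conceptual explanation (the operator is a covariant Laplacian plus a curvature term of the correct sign); what the paper's approach buys is that it avoids invoking the diamagnetic inequality and stays entirely elementary.
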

\begin{proof}
Since we work with complex valued functions here, the wedge operator $\wedge$ should be first extended to the complex number field by taking the inner product in (\ref{nb890km}) to be the complex inner product.
By the explicit formula for $\Gamma^{k}_{ii}$ and $h^{ii}$, one has
\begin{align}\label{kjc6789}
h^{ii}\Gamma^k_{ii}A^{\infty}_k=h^{11}\Gamma^2_{11}A^{\infty}_2=A^{\infty}_2.
\end{align}
It is easy to see by the non-positiveness  and symmetry of the sectional curvature that
$\varphi\longmapsto -{h^{ii}}\left( {\varphi  \wedge \phi _i^\infty } \right)\phi _i^\infty$ is a non-negative and symmetric operator on $L^2(\Bbb H^2,\Bbb C^2)$.
And by the skew-symmetry of $A^{\infty}_i$,
$\varphi\longmapsto -{h^{ii}}\left( {\varphi  \wedge A _i^\infty } \right)A_i^\infty$ is a non-negative and symmetric symmetric operator on $L^2(\Bbb H^2,\Bbb C^2)$.
We claim that
$$\varphi\longmapsto 2 {h^{ii}}A_i^\infty {\partial _i}\varphi +h^{ii}(\partial_iA^{\infty}_{i}-\Gamma^k_{ii}A^{\infty}_k)
$$
is a symmetric operator on $L^2(\Bbb H^2,\Bbb C^2)$ as well.
Indeed, by the skew-symmetry of $A^{\infty}_i$, $\partial_iA^{\infty}_i$, integration by parts and (\ref{kjc6789}),
\begin{align*}
&\left\langle {2{h^{ii}}A_i^\infty {\partial _i}f + {h^{ii}}({\partial _i}A_i^\infty  - \Gamma _{ii}^kA_k^\infty )f,g} \right\rangle  \\
&= \left\langle {2{h^{ii}}A_i^\infty {\partial _i}f + {h^{ii}}{\partial _i}A_i^\infty f - A_2^\infty f,g} \right\rangle  \\
&= \left\langle {{h^{ii}}{\partial _i}A_i^\infty f - A_2^\infty f,g} \right\rangle  - \left\langle {2{h^{ii}}{\partial _i}A_i^\infty f,g} \right\rangle  - \left\langle {2{h^{ii}}A_i^\infty f,{\partial _i}g} \right\rangle + \left\langle {2{h^{22}}A_2^\infty f,g} \right\rangle  \\
&= \left\langle { - {h^{ii}}{\partial _i}A_i^\infty f + A_2^\infty f,g} \right\rangle  - \left\langle {2{h^{ii}}A_i^\infty f,{\partial _i}g} \right\rangle  \\
&= \left\langle {f,{h^{ii}}{\partial _i}A_i^\infty g - A_2^\infty g} \right\rangle  + \left\langle {f,2{h^{ii}}A_i^\infty {\partial _i}g} \right\rangle  \\
&= \left\langle {f,2{h^{ii}}A_i^\infty {\partial _i}g + {h^{ii}}{\partial _i}A_i^\infty g - A_2^\infty g} \right\rangle.
\end{align*}
It remains to prove $-\Delta+W$ is positive. Since we have shown $\varphi\longmapsto -{h^{ii}}\left( {\varphi  \wedge \phi _i^\infty } \right)\phi _i^\infty$ and $\varphi\longmapsto -{h^{ii}}\left( {\varphi  \wedge A _i^\infty } \right)A_i^\infty$ are nonnegative, it suffices to prove for some $\delta>0$
$$\left\langle { - \Delta f + 2{h^{ii}}A_i^\infty {\partial _i}f + {h^{ii}}({\partial _i}A_i^\infty  - \Gamma _{ii}^kA_k^\infty )f,f} \right\rangle  \ge \delta \left\langle {f,f} \right\rangle.
$$
By the skew-symmetry of $A^{\infty}_i$ and $\partial_iA^{\infty}_i$, it reduces to
\begin{align*}
\left\langle { - \Delta f + 2{h^{ii}}A_i^\infty {\partial _i}f,f} \right\rangle  \ge \delta \left\langle {f,f} \right\rangle.
\end{align*}
H\"older, (\ref{{uv111}}) and (\ref{xuejin}) imply for some universal constant $c>0$
\begin{align*}
& \left\langle { - \Delta f + 2{h^{ii}}A_i^\infty {\partial _i}f,f} \right\rangle  \ge \left\| {\nabla f} \right\|_2^2 - 2{\left\| {\sqrt {{h^{ii}}} A_i^\infty } \right\|_\infty }{\left\| {\nabla f} \right\|_2}{\left\| f \right\|_2} \\
&\ge \frac{1}{2}\left\| {\nabla f} \right\|_2^2 + c\left\| f \right\|_2^2 - 2{\left\| {\sqrt {{h^{ii}}} A_i^\infty } \right\|_\infty }{\left\| {\nabla f} \right\|_2}{\left\| f \right\|_2} \\
&\ge \frac{1}{2}\left\| {\nabla f} \right\|_2^2 + c\left\| f \right\|_2^2 - 2{\mu _1}{\left\| {\nabla f} \right\|_2}{\left\| f \right\|_2}.
\end{align*}
Let $\mu_1$ be sufficiently small, then
\begin{align*}
\left\langle { - \Delta f + 2{h^{ii}}A_i^\infty {\partial _i}f,f} \right\rangle\ge \delta \left\langle {f,f} \right\rangle.
\end{align*}
\end{proof}

Recall the equation of the tension field $\phi_s$:
\begin{lemma}
The evolution of differential fields and the heat tension filed along the heat flow are given by the following:
\begin{align}
&\partial_s\phi_s=h^{ii}D_iD_i\phi_s-h^{ii}\Gamma^k_{ii}D_k\phi_s+h^{ii}(\phi_s\wedge\phi_i)\phi_i \label{poijn}\\
&{\partial_s}{\phi _s}-\Delta {\phi _s}= 2{h^{ii}}{A_i}{\partial _i}{\phi _s} + {h^{ii}}\left( {{\partial _i}{A_i}} \right){\phi _s} - {h^{ii}}\Gamma _{ii}^k{A_k}{\phi _s} + {h^{ii}}{A_i}{A_i}{\phi _s}\nonumber \\
&+ {h^{ii}}\left( {{\phi _s} \wedge {\phi _i}} \right){\phi _i}\label{991}\\
&{\partial _s}{\phi _t} - \Delta {\phi _t} = 2h^{ii}{A_i}{\partial _i}{\phi _t} + h^{ii}{A_i}{A_i}{\phi _t} + h^{ii}{\partial _i}{A_i}{\phi _t} - {h^{ii}}\Gamma _{ii}^k{A_k}{\phi _t} \nonumber \\
&+ {h^{ii}}\left( {{\phi _t} \wedge {\phi _i}} \right){\phi _i}.\label{yfcvbn}\\
&{\partial _s}{\partial _t}{\phi _s}= \Delta {\partial _t}{\phi _s} + 2{h^{ii}}\left( {{\partial _t}{A_i}} \right){\partial _i}{\phi _s} + 2{h^{ii}}{A_i}{\partial _i}{\partial _t}{\phi _s} + {h^{ii}}\left( {{\partial _i}{\partial _t}{A_i}} \right){\phi _s}\nonumber\\
&+ {h^{ii}}\left( {{\partial _i}{A_i}} \right){\partial _t}{\phi _s}
- {h^{ii}}\Gamma _{ii}^k\left( {{\partial _t}{A_k}} \right){\phi _s} - {h^{ii}}\Gamma _{ii}^k{A_k}{\partial _t}{\phi _s} + {h^{ii}}\left( {{\partial _t}{A_i}} \right){A_i}{\phi _s}\nonumber\\
&+ {h^{ii}}{A_i}\left( {{\partial _t}{A_i}} \right){\phi _s}
+ {h^{ii}}{A_i}{A_i}{\partial _t}{\phi _s} + {h^{ii}}\left( {{\partial _t}{\phi _s} \wedge {\phi _i}} \right){\phi _i}+ {h^{ii}}\left( {{\phi _s} \wedge {\partial _t}{\phi _i}} \right){\phi _i}\nonumber\\
& + {h^{ii}}\left( {{\phi _s} \wedge {\phi _i}} \right){\partial _t}{\phi _i}.\label{9923}
\end{align}
\end{lemma}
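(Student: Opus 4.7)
The plan is to derive the four identities essentially mechanically from three inputs: (i) the caloric gauge condition $A_s=0$, which implies $D_s=\partial_s$ on frame components; (ii) the torsion-free identity $D_\alpha\phi_\beta=D_\beta\phi_\alpha$ from \eqref{pknb}; and (iii) the two-dimensional commutator identity $[D_\alpha,D_\beta]\phi=(\phi_\alpha\wedge\phi_\beta)\phi$ from \eqref{nb90km}. The heat equation \eqref{heat} in the form $\phi_s=h^{ij}D_i\phi_j-h^{ij}\Gamma^k_{ij}\phi_k$ is then the starting point.

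For \eqref{poijn} I would apply $D_s$ to the heat equation: since $A_s=0$,
\[
\partial_s\phi_s=D_s\phi_s=h^{ij}D_sD_i\phi_j-h^{ij}\Gamma^k_{ij}D_s\phi_k,
\]
and then swap $D_sD_i=D_iD_s+[D_s,D_i]$ by \eqref{nb90km} to pick up the curvature term $h^{ij}(\phi_s\wedge\phi_i)\phi_j$, while the torsion-free identity converts $D_s\phi_j$ to $D_j\phi_s$ (and similarly $D_s\phi_k=D_k\phi_s$). To pass from \eqref{poijn} to \eqref{991} I would simply expand each covariant derivative as $D_\alpha=\partial_\alpha+A_\alpha$ and use that $h^{ij}$ is diagonal in the coordinates \eqref{vg}, collecting the terms $2h^{ii}A_i\partial_i\phi_s$, $h^{ii}(\partial_iA_i)\phi_s$, $h^{ii}A_iA_i\phi_s$ and $-h^{ii}\Gamma_{ii}^kA_k\phi_s$.

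For \eqref{yfcvbn} the shortcut is the same: write $\partial_s\phi_t=D_s\phi_t=D_t\phi_s$ by torsion-freeness, apply $D_t$ to the heat equation, commute $D_tD_i=D_iD_t+(\phi_t\wedge\phi_i)\cdot$ via \eqref{nb90km}, then use $D_t\phi_j=D_j\phi_t$ once more to arrive at a parabolic equation for $\phi_t$ with the same structural form as \eqref{poijn} but with $\phi_s$ replaced by $\phi_t$; expanding $D_iD_j$ in frame components yields \eqref{yfcvbn}. Finally, \eqref{9923} is obtained by differentiating \eqref{991} in $t$: since $\partial_t$ commutes with $\partial_s$ and with $\Delta$ (the spatial Laplacian does not see $t$), the product rule applied to each of the five terms on the right-hand side of \eqref{991} produces exactly the twelve terms listed in \eqref{9923}, with no new geometric input required.

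There is no serious analytic obstacle here; the work is entirely algebraic bookkeeping. The step requiring the most care is the expansion of $h^{ij}D_iD_j\phi_s$ in terms of $\partial_i,A_i,\Gamma^k_{ij}$ since one must keep track of the order of $A_i$ and $\partial_i$ (both act on the vector-valued $\phi_s$) to correctly produce the $2h^{ii}A_i\partial_i\phi_s$ coefficient and the $(\partial_iA_i)\phi_s$ term, and to see that using diagonality of $h^{ii}$ in the coordinates \eqref{vg} is what prevents stray cross terms. Once this expansion is fixed, every other identity in the lemma is a direct consequence.
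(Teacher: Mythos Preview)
Your proposal is correct and follows essentially the same approach as the paper: apply $D_s$ (respectively $D_t$) to the heat equation \eqref{heat}, use $A_s=0$, the torsion-free identity \eqref{pknb}, and the commutator identity \eqref{nb90km} to obtain \eqref{poijn} and \eqref{yfcvbn}, then expand $D_i=\partial_i+A_i$ for \eqref{991}, and finally differentiate \eqref{991} in $t$ by the product rule to get \eqref{9923}. The paper's proof is exactly this sequence of manipulations, with no additional ideas.
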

\begin{proof}
Recall that we use the orthogonal coordinates (\ref{vg}) throughout the paper.
Recall the equation of $\phi_s$:
\begin{align}\label{who}
\phi_s=h^{ii}D_i\phi_i-h^{ii}\Gamma^k_{ii}\phi_k.
\end{align}
Applying $D_s$ to (\ref{who}) yields
\begin{align*}
 &{D_s}{\phi _s} = {h^{ii}}{D_s}{D_i}{\phi _i} - {h^{ii}}\Gamma _{ii}^k{D_s}{\phi _k} = {h^{ii}}{D_i}{D_i}{\phi _s} - {h^{ii}}\Gamma _{ii}^k{D_k}{\phi _s} + {h^{ii}}\left( {{\phi _s} \wedge {\phi _i}} \right){\phi _i}\\
 &= \Delta {\phi _s} + 2{h^{ii}}{A_i}{\partial _i}{\phi _s} + {h^{ii}}\left( {{\partial _i}{A_i}} \right){\phi _s} - {h^{ii}}\Gamma _{ii}^k{A_k}{\phi _s} + {h^{ii}}{A_i}{A_i}{\phi _s} + {h^{ii}}\left( {{\phi _s} \wedge {\phi _i}} \right){\phi _i}.
\end{align*}
The tension free identity and commutator identity give
\begin{align*}
{D_s}{\phi _t} &= {D_t}{\phi _s} = {D_t}\left( {{h^{ii}}{D_i}{\phi _j} - {h^{ii}}\Gamma _{ii}^k{\phi _k}} \right) = {h^{ii}}{D_t}{D_i}{\phi _i} - {h^{ii}}\Gamma _{ii}^k{D_t}{\phi _k} \\
&= {h^{ii}}{D_i}{D_i}{\phi _t} - {h^{ii}}\Gamma _{ii}^k{D_t}{\phi _k} + {h^{ii}}\left( {{\partial _t}u \wedge {\partial _i}u} \right){\partial _i}u.
\end{align*}
Therefore the differential filed $\phi_t$ satisfies
\begin{align*}
{\partial _s}{\phi _t} - \Delta {\phi _t} = 2h^{ii}{A_i}{\partial _i}{\phi _t} + h^{ii}{A_i}{A_i}{\phi _t} + h^{ii}{\partial _i}{A_i}{\phi _t} - {h^{ii}}\Gamma _{ii}^k{A_k}{\phi _t} + {h^{ii}}\left( {{\phi _t} \wedge {\phi _i}} \right){\phi _i}.
\end{align*}
Applying $\partial_t$ to (\ref{991}) gives (\ref{9923}).
\end{proof}

\section{Acknowledgments}
We owe our thanks to the anonymous referee for helpful comments which greatly improved this paper.
We thank Prof. Daniel Tataru for helpful comments on our work and especially the necessity of adding Remark 1.1.

The first version of this paper is Chapter 3 of Li's thesis. We divide it into two parts for publication.
Li owes gratitude to Prof. Youde Wang, Hao Yin, Cong Song for guidance on geometric PDEs and geometric analysis.

\end{document}